\newtheorem{thm}{{{Theorem}}}[section]
\newtheorem{prop}[thm]{{Proposition}}
\newtheorem{lem}[thm]{{Lemma}}
\newtheorem{cond}[thm]{{Condition}}
\newtheorem{exa}[thm]{{Example}}
\numberwithin{equation}{section}
\def\N{\mathbb{N}}
\def\Z{\mathbb{Z}}
\def\Q{\mathbb{Q}}
\def\R{\mathbb{R}}
\def\C{\mathbb{C}}
\def\A{\mathbb{A}}
\def\GL{{\mathop{\mathrm{GL}}}}
\def\PGL{{\mathop{\mathrm{PGL}}}}
\def\SL{{\mathop{\mathrm{SL}}}}
\def\SO{{\mathop{\mathrm{SO}}}}
\def\U{{\mathop{\textnormal{U}}}}
\def\Sp{{\mathop{\mathrm{Sp}}}}
\def\GSp{{\mathop{\mathrm{GSp}}}}
\def\bK{{\mathbf{K}}}
\def\Re{{\mathop{\mathrm{Re}}}}
\def\Tr{{\mathop{\mathrm{Tr}}}}
\def\Mat{{\mathop{\textnormal{M}}}}
\def\Hom{{\mathop{\mathrm{Hom}}}}
\def\diag{{\mathop{\mathrm{diag}}}}
\def\Ad{{\mathop{\mathrm{Ad}}}} 
\def\vol{{\mathop{\mathrm{vol}}}}
\def\Gal{{\mathop{\mathrm{Gal}}}}
\def\d{{\mathrm{d}}}
\def\bsl{\backslash}
\def\inf{\infty}
\def\Rp{{(\R^\times)^0}}
\def\fo{{\mathfrak o}}
\def\fO{\mathfrak O}
\def\cO{{\mathcal O}}
\def\cS{\mathcal S}
\def\xd{{\mathfrak{x}}}
\def\ds{{\check d}}
\def\para{{\mathfrak Q}}
\def\trep{{\mathbf{1}}}
\def\a{{\mathfrak{a}}}
\def\L{{\mathcal{L}}}
\def\P{{\mathcal{P}}}
\def\F{{\mathcal{F}}}
\def\cU{{\mathcal{U}}}
\def\fg{{\mathfrak{g}}}
\def\fn{{\mathfrak{n}}}
\def\fc{{\mathfrak{c}}}
\def\fC{{\mathfrak{C}}}
\def\tri{{\mathrm{tri}}}
\def\min{{\mathrm{min}}}
\def\sub{{\mathrm{sub}}}
\def\reg{{\mathrm{reg}}}
\def\fin{{\mathrm{fin}}}
\def\ss{{\mathrm{ss}}}
\def\st{{\mathrm{st}}}
\begin{document}

\pagestyle{plain}
\title{On the geometric side of the Arthur trace formula for the symplectic group of rank~2}

\author{Werner Hoffmann}
\address{Fakult\"at f\"ur Mathematik, Universit\"at Bielefeld, 33615 Bielefeld, Germany}
\email{hoffmann@math.uni-bielefeld.de}

\author{Satoshi Wakatsuki}
\address{Faculty of Mathematics and Physics, Institute of Science and Engineering, Kanazawa University, Kakumamachi, Kanazawa, Ishikawa, 920-1192, Japan}
\email{wakatsuk@staff.kanazawa-u.ac.jp}

\begin{abstract}
We study the non-semisimple terms in the geometric side of the Arthur trace formula for the split symplectic similitude group or the split symplectic group of rank $2$ over any algebraic number field.
In particular, we show that the coefficients of unipotent orbital integrals are expressed by the Dedekind zeta function, Hecke $L$-functions, and the Shintani zeta function for the space of binary quadratic forms.
\end{abstract}

\subjclass[2010]{Primary 11F72, 11S90; Secondary 11R42, 11E45, 22E30, 22E35}

\maketitle

\tableofcontents

\section{Introduction}

The Arthur-Selberg trace formula is one of the main tools in the study of automorphic representations of a reductive group $G$ defined over an algebraic number field~$F$ (or rather, of the group $G(\A)$ of the points of~$G$ over the adele ring $\A$ of~$F$). This formula is an identity between two expansions of a distribution $J^T(f)$ on~$G(\A)$, which depends on a test function $f$ and a truncation parameter~$T$. The spectral side of the trace formula, which will not be discussed in this paper, is an expansion in terms of automorphic representations of~$G$ and its Levi subgroups~$M$. The geometric side is, roughly speaking, an expansion in terms of conjugacy classes of elements $\gamma$ of $G(F)$ and its Levi subgroups~\cite{Arthur4}. It is a linear combination of weighted orbital integrals $J_M(\gamma,f)$ with coefficients $a^M(\gamma)$, which in general depend on the choice of a sufficiently large finite set $S$ of places of~$F$.

Those coefficients have been determined only in special cases, viz. for semisimple elements $\gamma$ (see~\cite[Theorem~8.2]{Arthur4}), for $M$ of $F$-rank one (see~\cite{Hoffmann}) and for $M=\GL(3)$ (see~\cite{Flicker}, \cite{Matz}). The general lack of knowledge has not prevented applications to the Langlands correspondence, where the above distributions for two different groups had to be compared. However, this problem constrains the study of the asymptotic distribution of automorphic representations of a single group.

In the current paper we determine the coefficients $a^M(S,\gamma)$ when $G$ is one of the groups $\GSp(2)$ or $\Sp(2)$ over~$F$. The coefficients will be expressed in terms of certain zeta functions. We will encounter the Dedekind zeta function and Hecke $L$-functions familiar from the analogous results for the groups $\GL(2)$ and~$\GL(3)$. The subregular unipotent class is more interesting, because the Shintani zeta function for the space of binary quadratic forms enters the stage. Along the way, we also compute the weight factors of unipotent weighted orbital integrals.
The contribution of semisimple elements to the trace formula is well known, and we leave it aside.

By comparing the coefficients for the two groups considered, we find that the differences can be interpreted by coefficients of stable unipotent orbital integrals of elliptic endoscopic groups of~$\Sp(2)$.
Our explanation agrees with Assem's results on local unipotent orbital integrals of~$\Sp(2)$ (cf.~\cite{Assem}).
This means that our results provide part of the stabilization of the unipotent terms for~$\Sp(2)$.
One more point is that the difference of subregular unipotent terms is a sum of special values of Shintani zeta functions with non-trivial quadratic characters.
From \cite{Saito1} and \cite{Saito2} we will derive a formula expressing such Shintani zeta functions as products of the Dedekind zeta function and special values of Hecke $L$-functions.
The formula looks like a stabilization of Shintani zeta functions and is a generalization of \cite[Theorem 1]{IS}.

The aim of this paper is not just the computation of certain coefficients. Groups of rank two are a test field for a program to rewrite the geometric side of the trace formula in terms of zeta integrals \cite{Hoffmann2}. This approach replaces the invariance argument of~\cite{Arthur3} and removes the restriction on~$S$. It also gives a new interpretation of the weight factors appearing in singular weighted orbital integrals.

Using the methods of \cite{FL} and \cite{Matz}, it should be possible to extend the results to functions~$f$ in an $L^1$-Sobolev space. This would allow matrix coefficients of integrable discrete series as test functions and make Godement's dimension formula a special case of the trace formula (cf. \cite{Godement}).
Godement's formula has been used to obtain explicit formulas for multiplicities of integrable holomorphic discrete series of $\Sp(2,\R)$ (see, e.g., \cite{Wakatsuki}). In the present paper, however, we are content with compactly supported test functions.
Applying our current result to the invariant trace formula and using an argument similar to \cite{Arthur7}, we can perhaps obtain multiplicity formulas for individual discrete series and extend such explicit formulas to non-integrable cases.

\subsection{Basic objects}
\label{basobj}

Let us recall the pertinent definitions. For any connected linear algebraic $F$-group~$G$, we denote by $X(G)_F$ the group of $F$-rational characters of~$G$ and by $A_G$ the $F$-split part of the center of~$G$. We have the vector space $\a_G=\Hom_\Z(X(G)_F,\R)$ and the homomorphism $H_G:G(\A)\to\a_G$ characterized by $\langle H_G(g),\chi\rangle=\log|\chi(g)|$. There is a linear function $\rho_G$ on~$\a_G$ such that the character $\det\circ\Ad$ is mapped to $2\rho_G$ under the natural embedding of $X(G)_F$ into the dual space~$\a_G^*$. The kernel of $H_G$ will be denoted by~$G(\A)^1$. Here, the locally compact $F$-algebra $\A$ may be replaced by any closed subalgebra, e.~g. by $F_S=\prod_{v\in S}F_v$, where $S$ is a finite subset of the set $\Sigma$ of places of~$F$. We have $\A=F_S\A^S$, where $\A^S$ is the restricted product of the completions $F_v$ over all places $v\notin S$. If $S$ equals the set $\Sigma_\inf$ of all infinite places of $F$, we write this decomposition as~$\A=F_\inf\A_\fin$.

Now let $G$ be reductive (in which case $\rho_G=0$). For any parabolic subgroup $P$ of~$G$ with unipotent radical~$N_P$ and Levi component~$M_P$, all defined over~$F$, we have a unitary representation $R_P$ of $G(\A)^1$ on $L^2(N_P(\A)M_P(F)\backslash G(\A)^1)$. It can be integrated to a representation of the Banach algebra $L^1(G(\A)^1)$, and for an element $f$ of the latter, $R_P(f)$ is an integral operator with kernel
\[
K_P(g,h)=\sum_{\gamma\in M_P(F)}\int_{N_P(\A)}f(g^{-1}\gamma nh)\, \d n.
\]
We fix a maximal compact subgroup $\bK=\prod_{v\in\Sigma}\bK_v$ of $G(\A)$ such that $\bK_\fin=\prod_{v<\infty}\bK_v$ is open in $G(\A_\fin)$ and that $G(\A)=P(\A)\bK$ for all parabolic $F$-subgroups~$P$. We extend the maps $H_P$ to~$G(\A)$ by setting $H_P(pk)=H_P(p)$ for $p\in P(\A)$ and $k\in\bK$. For a parabolic subgroup $P'$ containing~$P$, the natural projection $\a_P\to\a_{P'}$ maps $H_P(g)$ to~$H_{P'}(g)$, and we denote its kernel by~$\a_P^{P'}$. We denote by $\hat\tau_P$ the characteristic function of the set of all $X\in\a_P$ such that $\rho_{P'}(X)>0$ for all such~$P'$ different from~$G$.

We also fix a minimal parabolic subgroup $P_0$ and a truncation parameter $T\in\a_{P_0}$. Then we have its projections $T_P\in\a_P$ for the standard parabolic subgroups~$P$ (i.~e. those containing~$P_0$). Arthur's trace distribution is defined for $f\in C_c^\infty(G(\A)^1)$ as
\begin{multline}\label{defJ}
J^T(f)=\int_{G(F)\backslash G(\A)^1 } 
\sum_{P\supset P_0}(-1)^{\dim\a_P^G}
\sum_{\delta\in P(F)\backslash G(F)} \\
K_P(\delta g,\delta g) \, \widehat{\tau}_P(H_P(\delta g)-T_P)
\,\d^1 g ,
\end{multline}
where $\d^1 g$ is a Haar measure on~$G(\A)^1$. Its convergence was proved in~\cite{Arthur6} under some regularity condition on~$T$, which was shown to be unnecessary in~\cite{Hoffmann3}.

We can define instances $T_P\in\a_P=\Hom(X(P),\R)$ of the truncation parameter $T$ for nonstandard parabolic subgroups $P$, too, so that the identity
\[
H_{\delta P\delta^{-1}}(\delta g)-T_{\delta P\delta^{-1}}=\Ad(\delta)(H_P(g)-T_P)
\]
holds for all $\delta\in G(F)$ and $g\in G(\A)$. In view of $H_{\delta P\delta^{-1}}(\delta g)=\Ad(\delta)(H_P(g)-H_P(\delta^{-1}))$, it suffices to set $T_{\delta P\delta^{-1}}=\Ad(\delta)(T_P-H_P(\delta^{-1}))$ for all standard parabolic subgroups $P$ and all $\delta\in G(F)$. In the formula for~$J^T(f)$, we may omit the sum over~$\delta$ if we extend the other sum over all parabolic subgroups~$P$. Thus, the choice of~$P_0$ is unimportant. In \cite[Section~2]{Arthur2}, a version $J(f)=J^{T_0}(f)$ was defined, where $T_0$ is determined by the choice of $\bK$ and a minimal Levi subgroup~$M_0$. We will, however, retain the dependence on~$T$. When expanding $J^T(f)$ in terms of weighted orbital integrals, it is useful to make those depend on $T$ as well.

This entails a minor modification of the weight factors introduced in~\cite{Arthur2}. Let $M$ be a Levi subgroup of~$G$. This means that $M$ is a Levi component of a parabolic subgroup~$P$, in which case there is a canonical isomorphism~$\a_M\to\a_P$. The set $\P(M)$ of such parabolic subgroups is in bijection with the set of chambers in~$\a_M$. We consider the Fourier-Laplace transform
\[
\theta_P(\lambda)^{-1}=\int_{\a_M}\widehat\tau_P(X)\,e^{\lambda(X)}\,\d X
\]
of $\widehat\tau_P$ defined for $\lambda\in\a_{M,\C}^*$ with $\Re(\lambda)$ negative on the chamber $\a_P^+$ corresponding to~$P$. Its inverse $\theta_P(\lambda)$ is a homogeneous polynomial and extends to all of~$\a_{M,\C}^*$.
For any $g\in G(\A)$ and any truncation parameter~$T$, the family of functions
\[ v_P(\lambda,g,T) = e^{\lambda(T_P-H_P(g))}   \]
on~$i\a_M^*$, indexed by $P\in\P(M)$, is a $(G,M)$-family.
Hence
\[  v_M(\lambda,g,T) = \sum_{P\in\P(M)} \frac{v_P(\lambda ,g,T)}{\theta_P(\lambda)}  \]
extends to a smooth function on $i\a_M^*$.
We put
\[
v_M(g,T)=\lim_{\lambda\to 0}v_M(\lambda,g,T).
\]
Then $v_{\delta M\delta^{-1}}(\delta g,T)=v_M(g,T)$ for all $\delta\in G(F)$.

Next we recall from~\cite{Arthur5} the definition of weighted orbital integrals $J_M^T(\gamma,f)$ for $\gamma\in M(F)$ and $f\in C_c^\inf(G(F_S))$, where $S$ is a finite subset of $\Sigma$ containing $\Sigma_\inf$.
We denote by $G_{\gamma,+}$ the centralizer of $\gamma$ in $G$, by $G_\gamma$ the Zariski-connected component of $1$ in $G_{\gamma,+}$ and by $\fg_\gamma$ its Lie algebra.
We fix a $G(F_S)$-invariant measure on~$G_\gamma(F_S)\bsl G(F_S)$ and set
\[
D(\gamma)=\det(1-\mathrm{Ad}(\sigma))_{\fg/\fg_\sigma}\in F,
\]
where $\sigma$ is the semisimple part of~$\gamma$. In the case $G_\gamma\subset M$, the function $v_M$ is left $G_\gamma(F_S)$-invariant, and one defines
\begin{equation}\label{defJM}
J_M^T(\gamma,f)=\bigl| D(\gamma) \bigr|_S^{1/2} \, \int_{G_\gamma(F_S)\bsl G(F_S)} f(g^{-1}\gamma g) \, v_M(g,T) \,\d g.
\end{equation}
In the case $G_\gamma\not\subset M$, one uses the weighted orbital integrals $J_L^T(a\gamma,f)$ with Levi subgroups $L$ containing~$M$ and $a\in A_M(F)$ such that $G_{a\gamma}\subset L$. Note that the set of such~$a\gamma$ has $\gamma$ as a limit point. Arthur has introduced a $(G,M)$-family $\{r_P(\lambda,\gamma,a)\mid P\in\P(M)\}$ such that, with the aid of the corresponding function
\[
r^G_M(\gamma,a)=\lim_{\lambda\to 0}\sum_{P\in\P(M)}\frac{r_P(\lambda,\gamma,a)}{\theta_P(\lambda)}
\]
and its analogues with $L$ in place of~$G$, one can define
\begin{equation}\label{defJM'}
J_M^T(\gamma,f)=\lim_{a\to 1} \sum_{L\in\L(M)}r^L_M(\gamma,a) \, J_L^T(a\gamma,f).
\end{equation}
We will recall the definition of $r_P$ in Section~\ref{2s4} below, where we introduce a minor modification in order to simplify the explicit formulas. One recovers the traditional $T$-independent distribution $J_M(\gamma,f)$ by replacing all points $T_P$ by zero. However, one then loses the property
\[
J_{\delta M\delta^{-1}}^T(\delta\gamma\delta^{-1},f)=J_M^T(\gamma,f)
\qquad(\delta\in G(F))
\]
and has to stick to the set $\L$ of standard Levi subgroups, i.~e., those containing~$M_0$.

Finally, we can state the fine geometric expansion of the trace distribution.
Suppose that $\gamma$, $\gamma'\in G(F)$ and let $\sigma$ (resp.~$\sigma'$) be the semi-simple part of the Jordan decomposition of $\gamma$ (resp.~$\gamma'$).
We say that $\gamma'$ is $(G,S)$-equivalent to $\gamma$ if there exists an element $\delta\in G(F)$ such that $\sigma=\delta^{-1}\sigma'\delta$ and the unipotent elements $\sigma^{-1}\gamma$ and $\sigma^{\prime \, -1}\delta^{-1}\gamma'\delta$ are $G_\sigma(F_S)$-conjugate.
We denote by $(M(F))_{M,S}$ the set of $(M,S)$-equivalence classes in $M(F)$.
Then, for every compact subset $\Delta$ of~$G(\A)^1$, there is a finite subset $S_\Delta\supset \Sigma_\inf$ of $\Sigma$ such that, for every finite subset $S$ of $\Sigma$ containing~$S_\Delta$, every $M\in\L$ and $\gamma\in M(F)$ there are numbers $a^M(S,\gamma)$ such that
\[
J^T(f)=\sum_{M\in\L}|W_0^M|\, |W_0^G|^{-1} \, \sum_{\gamma\in (M(F))_{M,S} } a^M(S,\gamma) \, J_M^T(\gamma,f) 
\]
for any $f\in C_c^\inf(G(F_S)^1)$ with $\operatorname{supp}f\subset\Delta$ (cf. \cite[Theorem~9.2]{Arthur4} for the case $T=T_0$). On the left-hand side, $f$ is extended to $G(\A)^1$ by setting $f(g_Sg^S)=f(g_S)\phi_{\bK^S}(g^S)$ for $g_S\in G(F_S)$ and $g^S\in G(\A^S)$, where $\phi_{\bK^S}$ is the characteristic function of~$\bK^S=\prod_{v\notin S}\bK_v$.
Due to the conjugacy invariance, we may replace the sum over $\L$ by a sum over the set of $G(F)$-conjugacy classes of Levi subgroups~$M$, in which case the factor $|W_0^M|\, |W_0^G|^{-1}$ disappears.

\subsection{Summary of results on coefficients}

We are going to summarize known results as well as our new results on the coefficients appearing on the geometric side of the Arthur trace formula. For groups of $F$-rank one, the coefficients can be expressed by prehomogeneous zeta functions (cf.~\cite{Hoffmann} for the non-adelic situation).
In the present paper, however, we work entirely in the adelic situation.
The coefficients $a^M(S,\gamma)$ for arbitrary elements $\gamma$ can be expressed in terms of the coefficients $a^{G_\sigma}(S,u)$, where $\sigma$ and $u$ are the semisimple and unipotent parts of~$\gamma$, resp. (cf. \cite[(8.1)]{Arthur4}).
Therefore it is enough to consider unipotent elements of groups $G$ which can occur as centralizers of semisimple elements. For unipotent elements, $(G,S)$-equivalence coincides with $G(F_S)$-conjugacy. Thus, we consider the set $(\cU_G(F))_{G,S}$ of unipotent $G(F_S)$-conjugacy classes which meet~$G(F)$.

We need the following additional notation related to the number field $F$ and its adele ring~$\A$. We denote by $| \; |$ the idele norm on $\A^\times$ and set $\A^1=\{ x\in\A^\times \,\big| \, |x|=1 \}$.
For each finite place $v$, let $\pi_v$ denote a prime element of $F_v$ and let $q_v$ denote the cardinality of the residue field of $F_v$. If $\chi=\prod_v\chi_v$ is a character of $\A^1/F^\times\cong \A^\times/F^\times(\R^\times)^0$, where $\chi_v$ is a character of $F_v^\times$, we set
\begin{multline}\label{lft}
 L_v(s,\chi_v)=\begin{cases}(1-\chi_v(\pi_v)q_v^{-s})^{-1} & \text{if $v<\inf$ and $\chi_v$ is unramified,}  \\ 1& \text{if $v<\inf$ and $\chi_v$ is ramified}, \end{cases} 
\end{multline}
\[
L^S(s,\chi)=\prod_{v\not\in S}L_v(s,\chi_v), \quad \text{and} \quad L(s,\chi)=\prod_{v<\inf} L_v(s,\chi_v),
\]
where $S$ always denotes a finite set of places of $F$ which contains all Archimedean places. It is well known that $L^S(s,\chi)$ is absolutely convergent and holomorphic for $\Re(s)>1$ and can be meromorphically continued to the whole complex $s$-plane.
We set
\begin{equation}\label{dzft}
\zeta_F^S(s)=L^S(s,\trep_F) \quad \text{and} \quad \zeta_F(s)=L(s,\trep_F) , 
\end{equation}
where $\trep_F$ denotes the trivial character of $\A^1/F^\times$.
It is known that $\zeta_F^S(s)$ has a simple pole at $s=1$.
If $\chi\neq \trep_F$, $L^S(s,\chi)$ is holomorphic at $s=1$.

For general conventions concerning measures on Levi subgroups, unipotent radicals, and maximal compact subgroups see Section~\ref{2s3} below.
The volume of $G(F)\bsl G(\A)^1$ will be denoted by~$\vol_G$.
Since $a^G(S,1)=\vol_G$ (cf. \cite{Arthur3}), it is enough to consider the case $u\neq 1$.
We must choose measures for $\a_M^G$ and $J_G(u,f)$.
For the choices in the following cases, we refer to Sections \ref{3s4}, \ref{5s2}, \ref{6s1}, and \ref{7s} and Appendices \ref{appen1} and \ref{appen2}.

First, we review known results for $\GL(2)$, $\SL(2)$, $\GL(3)$, and $\SL(3)$. For convenience, we set
\begin{equation}\label{1e1}
u_\alpha  =   \begin{pmatrix}1&\alpha \\ 0&1 \end{pmatrix} \in \SL(2). 
\end{equation}
If $G=\GL(n)$ or $\SL(n)$, then $M_0$ denotes the minimal Levi subgroup which consists of diagonal matrices in $G$.
In the case $G=\GL(2)$, we have $(\cU_G(F))_{G,S}=\{ 1 ,  u_1  \}$, where we identify each $G(F_S)$-conjugacy class with its representative element.
We denote by $c_F$ the residue of $\zeta_F(s)$ at $s=1$ and by $\fc_F(S)$ the constant term in the Laurent expansion of $\zeta_F^S(s)$ at $s=1$.
We know from~\cite{FL,JL,GJ} that
\[
a^{\GL(2)}(S,u_1) = \frac{\vol_{M_0}}{2\,c_F} \, \fc_F(S).
\]
In the case $G=\SL(2)$, we have $(\cU_G(F))_{G,S}=\{1, \,  u_\alpha \, | \, \alpha\in F^\times/(F^\times\cap(F_S^\times)^2) \}$.
We set $\chi_S=\prod_{v\in S}\chi_v$ and $|x|_S=\prod_{v\in S}|x|_v$.
The stabilization of unipotent terms in \cite[(5.11)]{LL} provides
\[
a^{\SL(2)}(S,u_\alpha) = \frac{\vol_{M_0}}{2\, c_F} \Big\{\fc_F(S)+ \sum_{\chi\neq \trep_F} \chi_S(\alpha) \, L^S(1,\chi) \Big\}
\]
for $\alpha\in F^\times/(F^\times\cap (F_S^\times)^2)$, where $\chi$ runs over all nontrivial quadratic characters of $\A^1/F^\times$ unramified outside~$S$ (cf. Section \ref{3s}).
Note that the right hand side is a finite sum.

For $G=\GL(3)$ or $\SL(3)$, let $M'$ be a maximal Levi subgroup containing~$M_0$ and the elements
\[
u'= \begin{pmatrix}1&0&0 \\ 0&1&1 \\ 0&0&1 \end{pmatrix} \quad \text{and} \quad u''_\alpha = \begin{pmatrix}1&1&0 \\ 0&1&\alpha \\ 0&0&1 \end{pmatrix} .
\]
In the case $G=\GL(3)$, we have $(\cU_G(F))_{G,S}=\{ \, 1 \, , \; u' \, , \; u''_1 \}$.
Let $c_F^S$ denote the residue of $\zeta_F^S(s)$ at $s=1$ and let $\fc_F'(S)$ denote the coefficient of $(s-1)$ in the Laurent expansion of $\zeta_F^S(s)$ at $s=1$.
We deduce from \cite{Flicker} (cf. also~\cite{Matz}) that
\begin{align*}
& a^{\GL(3)}(S,u')= \vol_{M'} \frac{ \frac{\d}{\d s}\zeta_F^S(s)|_{s=2} }{\zeta_F^S(2)} ,\\
&  a^{\GL(3)}(S,u''_1)= \frac{\vol_{M_0}}{3 \, c_F^2} \Big\{ \mathfrak c_F(S)^2 +  \fc_F'(S)\, c_F^S \Big\}.
\end{align*}
If $G=\SL(3)$, then $(\cU_G(F))_{G,S}=\{ \, 1 \, , \; u' \, , \; u''_\alpha \, | \, \alpha\in F^\times/(F^\times\cap(F_S^\times)^3) \}$.
Using the arguments for $\SL(2)$ and $\GL(3)$ we have
\begin{align*}
  a^{\SL(3)}(S,u') & = \vol_{M'} \frac{ \frac{\d}{\d s}\zeta_F^S(s)|_{s=2} }{\zeta_F^S(2)} , \\
a^{\SL(3)}(S,u''_\alpha) & =\frac{\vol_{M_0}}{3 \, c_F^2} \Big\{ \fc_F(S)^2 + \fc_F'(S)\, c_F^S  \\
& \qquad\qquad + \sum_{\chi\neq\trep_F}\chi_S(\alpha)\, L^S(1,\chi) \, L^S(1,\chi^{-1})  \Big\} 
\end{align*}
where $\chi$ runs over all nontrivial cubic characters on $\A^1/F^\times$ unramified outside~$S$ (cf. Appendix~\ref{appen2}).

In order to explain our formulas for $\GSp(2)$ and $\Sp(2)$, we recall the Shintani zeta function for the space of binary quadratic forms.  For the sake of simple statements, we assume that $S$ contains all places dividing two.
We denote the equivalence class of an element $d\in F^\times$ in $F^\times/(F^\times)^2$ by~$\ds$.
For each~$\ds$, we have the quadratic character $\chi_d=\prod_v \chi_{d,v}$ on $\A^1/F^\times$ by class field theory.
We set
\[
\para(F)=\{\ds\in F^\times/(F^\times)^2 \mid d\in F^\times - (F^\times)^2 \}
\]
and, for $d_S\in F_S^\times$,
\[
\para(F,S,d_S)=\{ \ds \in \para(F) \mid d \in d_S(F_S^\times)^2 \}. 
\]
The Shintani zeta function is given by
\[
\xi^S(s;d_S)= \frac{\zeta^S_F(2s-1) \, \zeta^S_F(2s)}{\zeta_F^S(2)} \,\sum_{\ds\in \para(F,S,d_S)} \frac{ L^S(1,\chi_d)  }{L^S(2s,\chi_d) \, N(\mathfrak{f}^S_d)^{s-\frac{1}{2}}},
\]
where $N(\mathfrak{f}^S_d)$ equals the product of the numbers $q_v$ over all $v\notin S$ for which $\chi_{d,v}$ is ramified.
This zeta function is essentially the same as the series $\xi_{\mathbf{x}_S}(s)$ of \cite{Datskovsky}.
However, his formulation and argument are not suitable for the trace formula.
We will reproduce them using Saito's results \cite{Saito1,Saito2,Saito3} in order to simplify some arguments and clear up relations for rational and adelic orbits as well as local and global measures.
Additionally, we need his results to study explicit forms of Shintani zeta functions with non-trivial quadratic characters.
The original Shintani zeta functions in \cite{Siegel1,Shintani} can be related to the zeta functions $\xi^S(s;d_S)$ via zeta integrals (cf. \cite[Section 2]{Saito2}).

The groups $\GSp(2)$ and $\Sp(2)$ have four classes of unipotent elements, which are (1)~unit, (2)~minimal, (3)~subregular, and (4)~regular.
For $\alpha\neq 0$ and $\det(x)\neq 0$ we set
\begin{align}\label{1e2}
& n_\min(\alpha)=\begin{pmatrix}1&0&\alpha&0 \\ 0&1&0&0 \\ 0&0&1&0 \\ 0&0&0&1 \end{pmatrix}  , \quad  n_\reg(\alpha)=\begin{pmatrix}1&1&0&\alpha \\ 0&1&0&\alpha \\ 0&0&1&0 \\ 0&0&-1&1 \end{pmatrix}, \\
& n_\sub(x)=\begin{pmatrix}I_2&x \\ O_2 &I_2 \end{pmatrix} \nonumber
\end{align}
where $I_2$ (resp.~$O_2$) is the unit (resp.~zero) matrix of degree two.
Note that $n_\sub(x)\in \Sp(2)$ if and only if $x$ is symmetric.
Denote by $V^\ss(F)$ the set of non-degenerate $2\times 2$ symmetric matrices over $F$ and by $\sim_S$ the equivalence relation on $V^\ss(F)$ such that $x\sim_S y$ if and only if $\det(x^{-1}y)\in (F_S^\times)^2$.
Then, we have
\[
(\cU_{\GSp(2)}(F))_{\GSp(2),S}=\{ 1 , \; n_\min(1) , \; n_\sub(x), \; n_\reg(1) \, | \, x\in V^\ss(F)/{\sim_S} \,  \}.
\]
When $G=\GSp(2)$ or $\Sp(2)$, then $M_0$ denotes the minimal Levi subgroup consisting of diagonal matrices in~$G$, and non-conjugate maximal Levi subgroups $M_1$ and $M_2$ containing $M_0$ will be chosen in Section~\ref{5s1}.
We will see in Section \ref{6s1} that
\begin{align*}
a^{\GSp(2)}(S,n_\min(1)) &= \frac{ \vol_{M_2}  }{2\, c_F} \, \zeta_F^S(2) , \\
a^{\GSp(2)}(S,n_\sub(x)) &=\frac{\vol_{M_1}}{2 \, c_F}  \, \fC_F(S,-\det(x)) \\
&\qquad + \frac{\vol_{M_1}}{2\, c_F}  \begin{cases} \zeta_F^S(3)^{-1} \frac{\d}{\d s}\zeta_F^S(s)|_{s=3}   & \text{if $x\sim_S \xd_1$} , \\ 0 & \text{if $x\not\sim_S \xd_1$} , \end{cases} \\
a^{\GSp(2)}(S,n_\reg(1)) &= \frac{\vol_{M_0}}{2\, c_F^2}\, (\fc_F(S))^2  +\frac{3\, \vol_{M_0}}{4\, c_F^2 }\, \fc_F'(S)  \, c_F^S,
\end{align*}
where $\xd_1=\begin{pmatrix}1&0 \\ 0& -1\end{pmatrix}$ and $\fC_F(S,\alpha)$ is the constant term in the Laurent expansion of $\xi^S(s;\alpha)$ at $s=3/2$.

Next, we treat $\Sp(2)$.
We denote the Hasse invariant of an element $x$ of~$V^\ss(F)$ over $F_v$ by $\varepsilon_v(x)$ and define an equivalence relation $\sim'_S$ on $V^\ss(F)$ by setting $x\sim'_S y$ if and only if $\det(x^{-1}y)\in (F_S^\times)^2$ and $\varepsilon_v(x)=\varepsilon_v(y)$ for all $v\in S$.
We easily see that
\begin{multline*}
(\cU_{\Sp(2)}(F))_{\Sp(2),S}=\{ 1 , \; n_\min(\alpha), \; n_\sub(x) , \; n_\reg(\alpha) \\
| \, \alpha\in F^\times/(F^\times\cap (F_S^\times)^2), \;  x\in V^\ss(F)/{\sim_S'} \}. 
\end{multline*}
We will see in Section \ref{7s} that
\[
a^{\Sp(2)}(S,n_\min(\alpha))= \frac{\vol_{M_2}}{2\, c_F} \sum_\chi \chi_S(\alpha) \, L^S(2,\chi),
\]
where $\chi$ runs over all quadratic characters of $\A^1/F^\times$ unramified outside~$S$, and also that
\begin{align*}
 a^{\Sp(2)}(S,n_\sub(x))&= \frac{\vol_{M_1}}{2\, c_F}  \fC_F(S,-\det(x)) \\
&\quad + \frac{\vol_{M_1}}{2\, c_F}  \begin{cases} \zeta_F^S(3)^{-1} \, \frac{\d}{\d s}\zeta_F^S(s)|_{s=3}  & \text{if $x\sim_S' \xd_1$} , \\ 0 & \text{if $x\not\sim_S' \xd_1$} \end{cases} \\
&\quad  + \frac{\vol_{M_1}}{2\, c_F} \Big(\prod_{v\in S} \varepsilon_v(x)  \Big) \sum_{\check d\in \para^{\mathrm{ur}}(F,S,-\det(x)) } L^S(1,\chi_d) 
\end{align*}
where
\[
\para^{\mathrm{ur}}(F,S,d_S)=\{\ds\in\para(F,S,d_S) \mid \text{$\chi_{d,v}$ is unramified for every $v\not\in S$ }  \} ,
\]
and
\begin{multline*}
 a^{\Sp(2)}(S,n_\reg(\alpha)) = \frac{\vol_{M_0}}{2\, c_F^2} \fc_F(S) \, \Big\{ \fc_F(S)+\sum_{\chi\neq \trep_F}\chi_S(\alpha) \, L^S(1,\chi) \Big\}   \\
 + \frac{3\, \vol_{M_0}}{4\, c_F^2 }\, \fc_F'(S)  \, c_F^S +\frac{ \vol_{M_0}}{4\, c_F^2 } c_F^S \sum_{\chi\neq \trep_F} \chi_S(\alpha) \, \frac{\d}{\d s}L^S(s,\chi)\Big|_{s=1},
\end{multline*}
where $\chi$ runs over all nontrivial quadratic characters on $\A^1/F^\times$ unramified outside~$S$.

Consulting the results on local unipotent orbital integrals in \cite{Assem}, we can find relations between the above coefficients and elliptic endoscopic groups of~$\Sp(2)$.
Assume that the value of $\vol_{M_k}$ $(k=0,1,2)$ is the same for $\GSp(2)$ and $\Sp(2)$.
Theorem 4.2 (ii) of \cite{Assem} suggests that the difference $a^{\Sp(2)}(S,n_\min(\alpha))-a^{\GSp(2)}(S,n_\min(1))$ comes from the central contributions of the quasi-split groups $\SO(4,\chi_d)$.
The value $L^S(2,\chi_d)$ in the coefficient $a^{\Sp(2)}(S,n_\min(\alpha))$ should stand for the volume $\vol_{\SO(4,\chi_d)}$.
Similarly, the value $L^S(1,\chi_d)$ in $a^{\Sp(2)}(S,n_\sub(x))$ should stand for $\vol_{\SO(2,\chi_d)}$.
This is compatible with \cite[Theorem 4.3 (ii)]{Assem} implying that the difference $a^{\Sp(2)}(S,n_\sub(x))-a^{\GSp(2)}(S,n_\sub(x))$ comes from the central contributions of $\SL(2)\times \SO(2,\chi_d)$.
Furthermore, we see that the difference $a^{\Sp(2)}(S,n_\reg(\alpha))-a^{\GSp(2)}(S,n_\reg(1))$ equals a linear combination of the coefficients of the regular stable unipotent orbital integrals of $\SO(4,\chi_d)$ and $\SL(2)\times \SO(2,\chi_d)$.
Such coefficients for $\SO(4,\chi_d)$ can be studied by the method of Labesse and Langlands (cf.\ the proof of Theorem~\ref{3t}).
This agrees with Theorems 4.2~(i) and 4.3~(i) of~\cite{Assem}.

\section{Preliminaries}

\subsection{Further notation}\label{2s1}

The cardinality of a finite set $X$ is denoted by $|X|$ and the subset of invertible elements of any ring $R$ by~$R^\times$.
We set $\Rp=\{ x\in\R \, | \, x>0  \}$ and denote by~$i\in\C$ the imaginary unit. If $K$ is a finite extension of a field $k$ of characteristic zero, we denote by $N_{K/k}(x)$ (resp.\ $\mathrm{Tr}_{K/k}(x)$) the norm (resp.\ trace) of an element $x\in K$.

Let $F$ be an algebraic number field.
In situations involving more than one field, we distinguish the objects introduced above by an additional subscript $F$.
Let $\fO$ be the ring of integers of~$F$.
We denote by $\Sigma_\inf$ (resp.\ $\Sigma_\fin$) the set of all the infinite (resp.\ finite) places of $F$.
For any $v\in\Sigma=\Sigma_\inf\cup\Sigma_\fin$, we denote by $F_v$ the completion of $F$ at $v$.
We set $\Sigma_\C=\{v\in\Sigma_\inf \, | \, F_v\cong \C\}$, $\Sigma_\R=\{v\in \Sigma_\inf \, | \, F_v\cong \R\}$, and $\Sigma_2=\{ v\in\Sigma_F \,\big| \,  v|2    \}$.
For each $v\in\Sigma_\fin$, we denote by $\fO_v$ the ring of integers of~$F_v$.
We choose a prime element $\pi_v$ of $\fO_v$ and set $q_v=|\fO_v/\pi_v\fO_v|$.

Let $\A$ be the adele ring of $F$, $\A_\fin=\{ (x_v)\in\A \, | \, x_v=0  \text{ for any }v\in\Sigma_\inf \}$, and $F_\inf=\prod_{v\in\Sigma_\inf}F_v$.
Then we have $\A=F_\inf \times \A_\fin$ and similarly $\A^\times=F_\infty^\times\times\A_\fin^\times$ for the idele group.
For a vector space $V$ over $F$, we denote by $\cS(V(F_v))$, $\cS(V(\A))$, $\cS(V(F_\inf))$, and $\cS(V(\A_\fin))$ the space of Schwartz-Bruhat functions on $V(F_v)$, $V(\A)$, $V(\A_\inf)$, and $V(\A_\fin)$ respectively.

Let $\d x$ denote the Haar measure on $\A$ normalized by $\int_{\A/F}\d x=1$. 
We fix a non-trivial additive character $\psi_\Q$ on $\A_\Q/\Q$ and set $\psi_F=\psi_\Q \circ \Tr_{F/\Q}$.
Then, $\d x$ is the self-dual Haar measure with respect to $\psi_F$, i.e., if we set
\[
\hat\phi(y)=\int_\A \phi(x)\psi_F(xy) \, \d x
\]
for $\phi\in\cS(\A)$ and~$y\in \A$, then
\[
\phi(x) =\int_\A \hat\phi(y)\psi_F(-xy)\, \d y. 
\]
Moreover, the Poisson summation formula then reads
\[
\sum_{x\in F}\phi(x)=\sum_{y\in F} \hat\phi(y). 
\]
For each $v\in\Sigma$, let $\d x_v$ denote a Haar measure on $F_v$.
We assume that
\[
\d x=\prod_{v\in\Sigma} \d x_v \quad \text{and} \quad \int_{\fO_v}\d x_v=1 \quad (v\in\Sigma_\fin) .
\]
In this setting, for $\psi_F=\prod_{v\in\Sigma}\psi_{F_v}$, we note that $\d x_v$ is not the self-dual measure with respect to $\psi_{F_v}$ in general.
We denote by $|\; |_v$ the normal valuation of $F_v$ and by $|\; |=|\; |_\A$  the idele norm on $\A^\times$, i.~e., $\d (a_vx_v)=|a_v|_v \d x_v$ for $a_v\in F_v^\times$ and
 $\d (ax)=|a| \, \d x$ for $a\in\A^\times$. Then $|x|=\prod_{v\in\Sigma} |x_v|_v$, and we have $|\pi_v|_v=q_v^{-1}$ if $v\in\Sigma_\fin$, $|x_v|_v=|x_v|$ if $v\in\Sigma_\R$, and $|z|_v=|z|^2$ if $v\in\Sigma_\C$.

Let $\d^\times x_v$ denote a Haar measure on $F_v^\times$ for each $v\in\Sigma$.
We normalize $\d^\times x_v$ for $v\in\Sigma_\fin$ by the condition $\int_{\fO_v^\times}\d^\times x_v=1$.
We also normalize $\d^\times x_v$ for $v\in\Sigma_\inf$ by the condition $\d^\times x_v=\d x_v/|x|_v$.
The idele norm induces an isomorphism $\A^\times/\A^1\to\Rp$.
We choose the Haar measure $\d^\times x=\prod_{v\in\Sigma}\d^\times x_v$
on $\A^\times$ and normalize the Haar measure $\d^1 x$ on $\A^1$ in such a way
that the quotient measure on $\Rp$ is $\d t/t$, where $\d t$ is the
Lebesgue measure on~$\R$.

%We define a Haar measure $\d^\times x$ on $\A^\times$ by $\d^\times x=\prod_{v\in\Sigma}\d^\times x_v$.
%Let $\d t$ be the Lebesgue measure on $\R$.
%Then, $\d t/t$ is the Haar measure on $\Rp$.
%A Haar measure $\d^1 x$ on $\A^1$ is defined by the quotient measure induced from $\d^\times x$ and $\d t/t$.

%For $t\in\Rp$, we set $\underline{t}=(y_v)\in\A^\times$ where $y_v=t^{1/[F:\Q]}$ $(\forall v\in \Sigma_\inf)$ and $y_v=1$ $(\forall v\in\Sigma_\fin)$.
%Note that $|\underline{t}|=t$.
%We identify $t$ with $\underline{t}$.
%Hence, the group $\Rp$ is identified with a subgroup of $\A^\times$.
%Then, we have the isomorphism $\A^1\times\Rp\cong \A^\times$.

For $(x_1,x_2,\dots,x_n)\in F_v^n$, we define
\[
\|(x_1,x_2,\dots,x_n)\|_v = \begin{cases}  |x_1|^2+|x_2|^2+ \dots + |x_n|^2  & \text{if $v\in \Sigma_\C$,} \\  (|x_1|^2+|x_2|^2+ \dots + |x_n|^2)^{1/2} & \text{if $v\in \Sigma_\R$,} \\   \mathrm{max}(|x_1|_v , \, |x_2|_v, \, \dots , \, |x_n|_v )  & \text{if $v\in \Sigma_\fin$.}  \end{cases}
\]
Let $x=(x_v)\in \A^n$.
If $\|x_v\|_v=1$ for almost all $v$, then the height of $x$ is defined by $\|x\|=\prod_{v\in\Sigma}\|x_v\|_v$.

Let $S$ be a finite subset of $\Sigma$.
We set $F_S=\prod_{v\in S}F_v$.
We define the norm $|\; |_S$ on $F_S$ by $|x|_S=\prod_{v\in S}|x_v|_v$ where $x=(x_v)\in F_S$.
We also define $\|\;\|_S$ on $F_S^n$ by $\|x\|_S=\prod_{v\in S}\|x_v\|_v$ where $x=(x_v)\in F_S^n$.
A measure $\d x_S$ (resp. $\d^\times x_S$) on $F_S$ (resp. $F_S^\times$) is defined by
\[
\d x_S=\prod_{v\in S} \d x_v \quad (\text{resp.} \quad \d^\times x_S=\prod_{v\in S} \d^\times x_v).
\]

For a commutative unital ring~$R$, we denote the ring of matrices of degree $n$ over $R$ by $\Mat(n,R)$, the unit matrix by $I_n$ and the zero matrix by~$O_n$.
We write $^tx$ for the transpose of $x\in\Mat(n,R)$ and $\diag(a_1,a_2,\ldots,a_n)$ for the diagonal matrix with entries $a_1$, $a_2$, \ldots, $a_n$.
For an algebraic group $G$ defined over $F$ and a commutative unital $F$-algebra~$R$, we denote by $G(R)$ the group of $R$-rational points of $G$.
Let $\mathbb{G}_m$ denote the multiplicative group and let $R_{E/F}(H)$ denote the group obtained from an algebraic group $H$ over $E$ by restricting scalars from $E$ to $F$.
If $G\subset \GL(n)$, we set $G(\fO)=G(F)\cap \GL(n,\fO)$ and $G(\fO_v)=G(F_v)\cap \GL(n,\fO_v)$ for each $v\in\Sigma_\fin$.
We set
\begin{multline*}
\GSp(2)=\Big\{  g\in \GL(4) \,  \Big|  \, \exists \, \mu(g)\in \GL(1) \,\, {\rm s.t.} \,\, \\
 ^tg \begin{pmatrix}O_2 & I_2 \\ -I_2 & O_2 \end{pmatrix} g=\mu(g) \begin{pmatrix} O_2 & I_2 \\ -I_2 & O_2 \end{pmatrix}  \Big\}  
\end{multline*}
and $\Sp(2)=\{g\in \GSp(2) \, | \, \mu(g)=1  \}$.

Let $C_c^\inf(G(F_\inf))$ denote the space of compactly supported smooth functions on $G(F_\inf)$ and let $C_c^\inf(G(\A_\fin))$ denote the space of compactly supported locally constant functions on $G(\A_\fin)$.
We set $C_c^\inf(G(\A))=C_c^\inf(G(F_\inf))\otimes C_c^\inf(G(\A_\fin))$.

% We put $\mathrm{U}(n)=\{ g\in \GL(n,\C) \, | g\, ^t\overline{g}=I_n \}$, $\mathrm{SU}(n)=\{g\in \mathrm U(n) \, | \, \det(g)=1\}$, $\mathrm{O}(n)=\{ g\in \GL(n,\R) \, | \, g\,^tg=I_n\}$, and $\mathrm{SO}(n)=\{g\in\mathrm O(n)\, | \, \det(g)=1\}$.

\subsection{Tate integral}\label{2s2}

Let $\chi$ be a character of $\A^1/F^\times$.
Since $\A^1/F^\times$ is identified with $\A^\times/F^\times(\R^\times)^0$, we may put $\chi =\prod_{v\in\Sigma}\chi_v$ where $\chi_v$ is a character of $F_v^\times$ for each $v\in\Sigma$.
For $\phi\in\cS(\A)$, we set
\[ \zeta(\phi,s,\chi)=\int_{\A^\times}|x|^s \, \chi(x)  \,\phi(x) \, \d^\times x=\int_{\A^\times/F^\times} |x|^s \chi(x) \sum_{y\in F^\times}  \, \phi(yx)  \, \d^\times x .  \]
The Tate integral $\zeta(\phi,s,\chi)$ is absolutely convergent and holomorphic for $\Re(s)>1$.
If we put $\zeta_+(\phi,s,\chi)=\int_{\A^\times/F^\times \, , \, |x|>1} |x|^s\, \chi(x) \, \phi(x) \, \d^\times x$, then $\zeta_+(\phi,s,\chi)$ converges absolutely for any $s\in\C$ and is entire on $\C$.
By the Poisson summation formula we have
\begin{equation}\label{zetapoisson}
\zeta(\phi,s,\chi)= \zeta_+(\phi,s,\chi) + \zeta_+(\hat\phi,1-s,\chi^{-1}) + \delta_\chi \,  \int_{F^\times\bsl\A^1}\d^1 x \Big( \frac{\hat\phi(0)}{s-1}-\frac{\phi(0)}{s} \Big)
\end{equation}
where $\delta_\chi=1$ if $\chi$ is the trivial character $\trep_F$ of $\A^1/F^\times$, and $\delta_\chi=0$ if $\chi\neq \trep_F$.
Hence, $\zeta(\phi,s,\chi)$ is meromorphically continued to the whole complex $s$-plane and satisfies the functional equation
\[
\zeta(\phi,s,\chi)=\zeta(\hat\phi,1-s,\chi^{-1}).
\]

Let $S$ be a finite subset of $\Sigma$ containing $\Sigma_\inf$ and $\chi_S=\prod_{v\in S}\chi_v$.
In \eqref{lft} and \eqref{dzft}, we introduced the functions $L^S(s,\chi)$, $\zeta_F^S(s)$, and $\zeta_F(s)$.
We set
\[
\zeta_S(\phi_S,s,\chi_S)=\int_{F_S^\times}\phi_S(x_S) \, |x_S|_S^s \, \chi_S(x_S) \, \d^\times x_S \quad (\phi_S\in\cS(\A_S) ) . 
\]
It is well known that $\zeta_S(\phi_S,s,\chi_S)$ is absolutely convergent and holomorphic for $\mathrm{Re}(s)>0$, and $\zeta_S(\phi_S,s,\chi_S)$ can be meromorphically continued to the whole complex $s$-plane.
Let $\phi_{0,v}$ denote the characteristic function of $\fO_v$ for each $v\in\Sigma_\fin$.
For $\phi_S\in\cS(F_S)$, we set $\phi = \phi_S\times \prod_{v\not\in S}\phi_{0,v}\in\cS(\A)$.
If we assume that $\chi$ is unramified outside~$S$ (i.~e., that $\chi_v$ is unramified for every $v\not\in S$), then we have
\[
\zeta(\phi,s,\chi)= \zeta_S(\phi_S,s,\chi_S)\times L^S(s,\chi).
\]
Some of the main properties of $L^S(s,\chi)$ are deduced from this equality and \eqref{zetapoisson}.
For example, it follows that the residue $c_F$ of $\zeta_F(s)$ at $s=1$ satisfies $c_F=\int_{F^\times\bsl \A^1}\d^1 x$.
We will use the Poisson summation formula to study several integrals and zeta functions.

Note that $c_S \, \d x_S=|x_S|_S\, \d^\times x_S$, where
\[
\quad c_S=\prod_{v\in S} c_v, \qquad
c_v=\begin{cases} (1-q_v^{-1})^{-1} & \text{if $v\in\Sigma_\fin$}, \\ 1 & \text{if $v\in\Sigma_\inf$}.  \end{cases}
\]
It is clear that the residue of $L^S(s,\trep_F)$ at $s=1$ is $c_F^S=c_F/c_S$.
We also need the constants
\begin{align*}
& \fc_F(S,\chi)=\lim_{s\to +1}\frac{\d}{\d s}(s-1) \, L^S(s,\chi), \\
& \fc_F'(S,\chi)=\frac{1}{2}\lim_{s\to +1}\frac{\d^2}{\d s^2}(s-1) \, L^S(s,\chi). 
\end{align*}
If $\chi\neq\trep_F$, then $\fc_F(S,\chi)=L^S(1,\chi)$ and $\fc_F'(S,\chi)=\frac{\d}{\d s}L^S(s,\chi)|_{s=1}$.
With the notation $\fc_F(S)=\fc_F(S,\trep_F)$ and $\fc_F'(S)=\fc_F'(S,\trep_F)$, the Laurent expansion of $\zeta_F^S(s)$ at $s=1$ reads
\[
\zeta_F^S(s)=\frac{c_F^S}{s-1} + \fc_F(S)+ \fc_F'(S)\, (s-1) + \cdots .
\]

\subsection{Algebraic groups}\label{2s3}

Let $G$ be a connected reductive algebraic group over $F$.
For the notation $X(G)_F$, $A_G$, $\a_G$, $\a_G^*$, $H_G$, and $G(\A)^1$, we refer to the first paragraph of Section~\ref{basobj}.
We have $\a^*_G=X(G)_F\otimes_\Z \R$, and $X(G)_F$ is identified with a subset of $\a_G^*$. Due to the duality between split tori and free abelian groups, every $F$-split torus is obtained from a $\Q$-split torus by base change. Thus considering $A_G$ as a $\Q$-split torus, we set $A_G^+=A_G(\R)^0$ and imbed it into $A_G(F_\infty)$ using the diagonal homomorphism $\R\to F_\infty$.

Fix a Haar measure $\d g$ (resp.\ $\d H$) on $G(\A)$ (resp.\ $\a_G$).
The homomorphism $H_G$ restricts to an isomorphism $A_G^+\to\a_G$, which transports $\d H$ to a Haar measure on $A_G^+$. 
Hence, a Haar measure $\d^1 g$ on $G(\A)^1$ is deduced from $\d g$ and $\d H$ by the isomorphism $A_G^+\to G(\A)/G(\A)^1$.
We set
\[
\vol_G=\int_{G(F)\bsl G(\A)^1}\d^1 g.
\]
Let $\d g_v$ denote a Haar measure on $G(F_v)$ for each $v\in\Sigma$.
We may assume $\d g=\prod_{v\in\Sigma}\d g_v$.
For a finite subset $S$ of $\Sigma$, a Haar measure $\d g_S$ on $G(F_S)$ is defined by $\d g_S=\prod_{v\in S}\d g_v$.

Let $M$ be a Levi subgroup of $G$ over $F$.
We denote by $\L(M)=\L^G(M)$ (resp.\ $\F(M)=\F^G(M)$) the set of Levi subgroups (resp.\ parabolic subgroups) of $G$ over $F$ that contain $M$.
For each $P\in\F(M)$, the Levi decomposition of $P$ is denoted by $P=M_PN_P$ where $M_P\in\L(M)$ and $N_P$ is the unipotent radical of $P$.
We set $\P(M)=\P^G(M)=\{P\in\F(M) \, | \, M_P=M\}$.

From now on, we fix a minimal Levi subgroup $M_0$.
We set $\L=\L^G=\L(M_0)$, $\F=\F^G=\F(M_0)$, and $\P=\P^G=\P(M_0)$.
For each $v\in\Sigma$, we take a maximal compact subgroup $\bK_v$ of $G(F_v)$ which is admissible relative to $M_0$ (cf. \cite[Section 1]{Arthur2}).
We set $\bK= \prod_{v\in\Sigma} \bK_v$.
The group $\bK$ is a maximal compact subgroup of $G(\A)$ and called admissible relative to $M_0$.
Then, we have $G(\A)=P(\A)\bK$ for any $P\in\P$.
Furthermore, $\bK\cap M(\A)$ is admissible relative to $M_0$ for any $M\in\L$.
We set $A_P=A_{M_P}$ and $\a_P=\a_{M_P}$ for each $P\in\F$.
A surjection $H_P \, : \, G(\A)\to \a_P$ is defined by
\[
H_P(nmk)=H_{M_P}(m),\quad n\in N_P(\A), \; m\in M_P(\A), \; k\in\bK  .
\]

Let $M\in\L$.
By the restriction $X(M)_F\to X(A_M)_F$, we have a bijection $\a_M^*\to\a_{A_M}^*$, by which we identify both vector spaces.
Let $M_1\in\L$, $M_2\in\L$, and $M_1\subset M_2$.
Then, we have $A_{M_2}\subset A_{M_1}\subset M_1 \subset  M_2$ over $F$.
Since the restriction $X(M_2)_F\to X(M_1)_F$ is injective, we obtain a linear injection $\a_{M_2}^*\to\a_{M_1}^*$ and a linear surjection $\a_{M_1}\to \a_{M_2}$.
Since the restriction $X(A_{M_1})_F\to X(A_{M_2})_F$ is surjective, we have a linear surjection $\a_{M_1}^*\to \a_{M_2}^*$ and a linear injection $\a_{M_2}\to \a_{M_1}$.
We set 
\[
\a_{M_1}^{M_2}=\{ a_1 \in \a_{M_1} \; | \; \langle a_1,a^*_2\rangle=0 , \; \forall a^*_2 \in \a_{M_2}^* \}
\]
and
\[
(\a_{M_1}^{M_2})^*= \{ a_1^* \in \a_{M_1}^* \; | \; \langle a_2,a^*_1\rangle=0 , \; \forall a_2 \in \a_{M_2} \}  .
\]
Then, we have
\[
\a_{M_1}=\a_{M_2}\oplus \a_{M_1}^{M_2}  \quad \text{and} \quad \a_{M_1}^*=\a_{M_2}^*\oplus (\a_{M_1}^{M_2})^*.
\]

For $P\in\F$, we set $\a_P^*=\a_{M_P}^*$, $\a_{P_1}^{P_2}=\a_{M_{P_1}}^{M_{P_2}}$, and $(\a_{P_1}^{P_2})^*=(\a_{M_{P_1}}^{M_{P_2}})^*$.
We denote by $\Phi_P\subset X(A_P)_F$ the set of roots of $A_P$ in the Lie algebra $\mathfrak n_P$ of $N_P$ and by $\Delta_P$ the subset of primitive roots.

We fix a minimal parabolic subgroup $P_0\in\P$.
Set $\a_0=\a_{M_0}$, $\a_0^*=\a_{M_0}^*$, $\a_0^P=\a_0^{M_P}=\a_{M_0}^{M_P}$, $(\a_0^P)^*=(\a_0^{M_P})^*=(\a_{M_0}^{M_P})^*$, and $\Phi_0=\Phi_{P_0}$.
Now, $\Phi_0\cup (-\Phi_0)$ is a root system in $(\a_0^G)^*$ and $\Phi_0$ is a system of positive roots.
Let $W_0=W_0^G$ denote the Weyl group of the root system $\Phi_0\cup (-\Phi_0)$ in $(\a_0^G)^*$.
We set $\Delta_0=\Delta_{P_0}$, i.e., $\Delta_0$ is the set of simple roots attached to $\Phi_0$.
Let $\widehat{\Delta}_0$ denote the set of simple weights corresponding to $\Delta_0$.

Let $P\in \F$.
Assume that $P\supset P_0$.
A subset $\Delta_0^P$ of $\Delta_0$ is defined by
\[
\a_P=\{  a\in\a_0 \; | \; \langle a ,\alpha \rangle=0,\; \, \forall \alpha\in\Delta_0^P  \}.
\]
We set
\[
\widehat{\Delta}_P=\{ \varpi_{\alpha} \in (\a_P^G)^* \; | \; \alpha\in\Delta_0-\Delta_0^P  \},
\]
where $\varpi_{\alpha}$ is the fundamental weight corresponding to $\alpha$.
Assume that $P_1$, $P_2\in\F$ and $P_0\subset P_1\subset P_2$.
We set
\[
\Delta_{P_1}^{P_2}=\{ \alpha|_{\a_{P_1}^{P_2}}\in (\a_{P_1}^{P_2})^* \; | \; \alpha\in\Delta_0^{P_2}-\Delta_0^{P_1}\} ,
\]
\[
\widehat{\Delta}_{P_1}^{P_2}=\{ \varpi|_{\a_{P_1}^{P_2}} \in (\a_{P_1}^{P_2})^* \; | \; \varpi\in\widehat{\Delta}_{P_1}-\widehat{\Delta}_{P_2} \}.
\]
Note that $\Delta_{P_1\cap M_{P_2}}=\Delta_{P_1}^{P_2}$ and $\widehat{\Delta}_{P_1\cap M_{P_2}}=\widehat{\Delta}_{P_1}^{P_2}$.

Let $P\in\F$.
Recall the element $\rho_P\in(\a_P^G)^*$ defined in Section \ref{basobj}.
The element $\rho_P$ satisfies
\[
\rho_P=\frac{1}{2}\sum_{\alpha\in\Phi_P} (\dim \fn_{\alpha}) \alpha,
\]
and $\delta_P(x)=e^{2\rho_P(H_P(x))}$ $(x\in P(\A))$ is the modular character of $P$.
Fix a Haar measure $\d g$ on $G(\A)$ and a Haar measure $\d H$ on $\a_P$.
We denote by $\d n$ the Haar measure on $N_P(\A)$ normalized by $\int_{N_P(F)\bsl N_P(\A)}\d n=1$.
Let $\d k$ denote the Haar measure on $\bK$ normalized by $\int_\bK \d k=1$ and let $\d a$ denote the Haar measure on $A_{M_P}^+$ induced from $\d H$.
Then, there exists an unique Haar measure $\d^1 m$ on $M_P(\A)^1$ such that
\begin{multline*}
\int_{G(\A)}f(g)\d g=\\
\int_{N_P(\A)}\int_{M_P(\A)^1}\int_{A_{M_P}^+}\int_{\bK} f(nmak)e^{-2\rho_P(H_P(a))} \, \d k\, \d^1 m \,  \d a \, \d n
\end{multline*}
for any $f\in C_c^\inf(G(\A))$.
A Haar measure $\d m$ on $M_P(\A)$ is determined by $\d m=\d^1m \, \d a$.
For each $v\in\Sigma$, let $\d g_v$ denote a Haar measure on $G(F_v)$ and let $\d n_v$, $\d m_v$, and $\d k_v$ denote Haar measures on $N_P(F_v)$, $M_P(F_v)$ and $\bK_v$ respectively.
We may assume that $\d g=\prod_{v\in \Sigma}\d g_v$, $\d k=\prod_{v\in \Sigma} \d k_v$, $\d n=\prod_{v\in \Sigma} \d n_v$, and $\d m=\prod_{v\in \Sigma} \d m_v$. 
Furthermore, we may also assume that
\[ \int_{\bK_v}\d k_v=1\quad (v\in\Sigma), \quad \int_{N_P(F_v)\cap \bK_v}\d n_v=1 \quad (v\in\Sigma_\fin)\]
and
\begin{multline*}
 \int_{G(F_v)}f_v(g_v)\d g_v=\\
\int_{N_P(F_v)}\int_{M_P(F_v)}\int_{\bK_v}f_v(n_v m_v k_v)e^{-2\rho_P(H_P(m_v))} \, \d k_v \, \d m_v  \, \d n_v 
\end{multline*}
for any $v\in \Sigma$ and any $f_v\in C_c^\inf(G(F_v))$.
We also have the Haar measures $\d n_S=\prod_{v\in S}\d n_v$, $\d m_S=\prod_{v\in S}\d m_v$, and $\d k_S=\prod_{v\in S}\d k_v$ on $N(F_S)$, $M(F_S)$, and $\bK_S=\prod_{v\in S}\bK_v$ respectively.

\subsection{Weighted orbital integrals}\label{2s4}

We revisit the definition~\eqref{defJM'} of weighted orbital integrals $J_M^T(\gamma,f)$, where $M$ is a Levi subgroup, $\gamma\in G(F)$ and $f\in C_c^\infty(G(F_S)^1)$ for some finite subset $S$ of $\Sigma$ containing~$\Sigma_\infty$. There is an alternative description depending on the choice of a parabolic subgroup $P_1\in\P(M)$. We decompose the integral~\eqref{defJM} defining $J_M(a\gamma,f)$ in the case $G_{a\gamma}\subset M$ by writing $g=nmk$ with $n\in N_{P_1}(F_S)$, $m\in M_\gamma(F_S)\bsl M(F_S)$ and $k\in\bK_S=\prod_{v\in S}\bK_v$. Then $\mu=m^{-1}\gamma m$ runs through the $M(F_S)$-orbit $O_\gamma(M(F_S))$ of~$\gamma$, and a further substitution $n^{-1}a\mu n=a\mu\nu$ yields the formula
\begin{multline*}
J_M^T(a\gamma,f)=|D^M(a\gamma)|_S^{1/2}\delta_{P_1}(a\gamma)^{1/2} \\
\int_{\bK_S}\int_{O_\gamma(M(F_S))}\int_{N_{P_1}(F_S)}f(k^{-1}a\mu\nu k)\,v_M(n,T)\,\d\nu\,\d\mu\,\d k,
\end{multline*}
where $n$ is to be regarded as a function of~$\nu$ and~$a\mu$.
%, and $\delta_{P_1}$ denotes the modular character of~$P_1(F_S)$. 
Recall that the weight factor was obtained from the $(G,M)$-family of functions $v_P(\lambda,n,T)$. A modified $(G,M)$-family is given by
\[
w_P(\lambda,a,\mu\nu,T)=v_P(\lambda,n,T)
\prod_{\beta}r_{\beta}(\lambda,\gamma,a),
\]
where $\beta$ runs through the reduced roots of $(N_P/(N_P\cap N_{P_1}),A_M)$ and
\[
r_\beta(\lambda,\gamma,a)
=|a^\beta|_S^{\lambda(\beta^\vee_\gamma)/2}|1-a^{-\beta}|_S^{\lambda(\beta^\vee_\gamma)}
=|(a^\beta-1)(1-a^{-\beta})|^{\lambda(\beta^\vee_\gamma)/2}.
\]
Here $\beta^\vee_\gamma\in\a_M$ is a coroot scaled by a nonnegative factor in such a way that the limits $w_P(\lambda,1,\mu\nu)$ (and their analogues for Levi subgroups $L\in\L(M)$ in place of~$G$) exist and are nonzero for generic $\lambda$ and $\nu$. With the corresponding function
\begin{equation}\label{2e1}
w_M(1,\mu\nu,T)=\lim_{\lambda\to 0}\sum_{P\in\P(M)} \frac{w_P(\lambda,1,\mu\nu,T)}{\theta_P(\lambda)} 
\end{equation}
one now has
\begin{multline}\label{defJM''}
J_M^T(\gamma,f)=|D^M(\gamma)|_S^{1/2}\delta_{P_1}(\gamma)^{1/2} \\
\int_{\bK_S}\int_{O_\gamma(M(F_S))}\int_{N_{P_1}(F_S)}f(k^{-1}\mu\nu k)\,w_M(1,\mu\nu,T)\,\d\nu\,\d\mu\,\d k.
\end{multline}

Arthur actually defines $r_\beta$ differently, namely as $|a^\beta-a^{-\beta}|_S^{\lambda(\beta^\vee_\gamma)}$. Since this produces additional constants in explicit formulas, we have adopted the present modified definition. It also has the property~\cite[(3.5)]{Arthur5}. Thus, if we define a $(G,M)$-family as in \cite[(5.1)]{Arthur5} by
\[
r_P(\lambda,\gamma,a)=\prod_\beta r_\beta(\lambda/2,\gamma,a),
\]
where $\beta$ ranges over the reduced roots of $(P,A_M)$, then the proof of~\cite[Lemma~5.3]{Arthur5} applies (slightly generalized by the inclusion of the truncation parameter~$T$) and shows that the formulas \eqref{defJM'} and \eqref{defJM''} agree.

The version of formula~\eqref{defJM''} given on top of~\cite[p.~256]{Arthur5} looks more complicated because it is intermingled with descent to the unipotent case. This descent is based on the following fact. If $\gamma$ has semisimple part $\sigma$ and unipotent part~$u\in G_\sigma(F)$, then  $r_P(\lambda,\gamma,a)=r_{P_\sigma}(\lambda_\sigma,u,a)$ for $a\in A_M(F)$, where $\lambda_\sigma\in\a_{M_\sigma,\C}^*$ is the image of $\lambda\in\a_{M,\C}^*$ under the natural map induced by the embedding $A_{M_\sigma}\to M$ (cf. \cite[(5.1)]{Arthur5}). There is also a global version of this descent introduced in~\cite{Arthur4}. Before stating it, we have to recall another form of the geometric side of the trace formula.

\subsection{The coarse geometric expansion}
\label{coarse}

Two elements of $G(F)$ are called $\mathcal O$-equivalent if their semisimple parts are $G(F)$-conjugate. Let $\mathcal O=\mathcal O^G$ denote the set of $\mathcal O$-equivalence classes in $G(F)$. For any parabolic subgroup~$P$, any $\fo\in\mathcal O$ and $f\in C_c^\infty(G(\A)^1)$, we define the partial kernel function
\[ K_{P,\fo}(g,h)= \sum_{\gamma\in M_P(F)\cap\fo} \int_{N_P(\A)}
f(g^{-1}\gamma n h) \d n.  \]
Since $P(F)\cap\fo=(M(F)\cap\fo)N_P(F)$, it does not depend on the choice of~$M_P$. If we replace $K_P$ by $K_{P,\fo}$ in the definition~\eqref{defJ} of~$J^T(f)$, we obtain a distribution $J_\fo^T(f)$, and the coarse geometric expansion is
\[ J^T(f)=\sum_{\fo\in\mathcal O}J^T_{\fo}(f).  \]
It was proved in~\cite{Arthur6} that the sum over $\mathcal O$ together with the integral over $G(F)\bsl G(\A)^1$ implicit in $J_\fo^T(f)$ is absolutely convergent. The unipotent elements of $G(F)$ make up an $\mathcal O$-equivalence class, and the corresponding term is denoted by~$J_\mathrm{unip}^T(f)$.

\subsection{Descent to the unipotent case}
\label{desc}

Let $\sigma\in G(F)$ be semisimple and assume that $\sigma$ is $F$-elliptic in~$M$, i.~e., $A_M=A_{M_\sigma}$. Let $\bK_\sigma$ be a maximal compact subgroup of~$G_\sigma(\A)$ with the analogous properties as~$\bK$. As always, we consider distributions that depend on a truncation parameter. In \cite[Cor.~8.7]{Arthur5} and~\cite[p.~197]{Arthur4}, the components of $T$ resp.~$T_\sigma$ corresponding to nonstandard parabolics are treated incorrectly. In fact, the truncation parameters $T$ for $G$ and $T_\sigma$ for~$G_\sigma$ are independent of each other. For $R\in\F^\sigma(M_\sigma)$ (the analogue of $\F(M)$ with $G$ replaced by~$G_\sigma$), $g\in G_\sigma(\A)$ and $h\in G(\A)$, we define
\begin{multline*}
\Gamma_R(g,h,T_\sigma,T)\\
=\sum_{\substack{S\in\F^\sigma(M_\sigma)\\S\supset R}}
\tau_R^S(H_R(g)-T_{\sigma,R})
\sum_{\substack{Q\in\F(M)\\Q_\sigma=S}}
(-1)^{\dim\a_Q^G}\,\hat\tau(H_Q(gh)-T_Q).
\end{multline*}
Then
\begin{multline*}
\sum_{\substack{P\in\F(M)\\P_\sigma=R}}
(-1)^{\dim\a_P^G}\,\hat\tau(H_P(gh)-T_P) \\
=\sum_{\substack{S\in\F^\sigma(R)\\S\supset R}}
(-1)^{\dim\a_R^S}\,
\hat\tau_R^S(H_R(g)-T_{\sigma,R})\Gamma_S(g,h,T_\sigma,T).
\end{multline*}
This follows from the equivalence of $(4.1)^*$ and $(4.2)^*$ in~\cite{Arthur4} applied to the $(G,M)$-orthogonal set that is given for $P\in\P(M)$ by
\[
Y_P^{T_\sigma,T}(g,h)=(H_{P_\sigma}(g)-T_{P_\sigma})
-(H_P(gh)-T_P).
\]
Thus, if we denote the integral of $\Gamma_R(a,h,T_\sigma,T)$ over $a\in(R(\A)\cap G_\sigma(\A)^1)/R(\A)^1$ by $v_R'(h,T_\sigma,T)$ and set
\[
f_{R,h}^{T_\sigma,T}(m)=\delta_R(m)^{1/2}\int_{\bK_\sigma}\int_{N_R(F_S)}
f(h^{-1}\sigma k^{-1}mnkh)v_R'(kh,T_\sigma,T)\,\d n\,\d k,
\]
then Lemma~6.2 in~\cite{Arthur4} takes the following form. If $\fo$ is an $\fO$-equivalence class in~$G(F)$ containing the semisimple element~$\sigma$ and $\iota^G(\sigma)=G_{\sigma,+}(F)/G_\sigma(F)$, then for $f\in C_c^\infty(G(\A)^1)$ we have
\[
J_\fo^T(f)=|\iota^G(\sigma)|^{-1}\int_{G_\sigma(\A)\bsl G(\A)}
\sum_{R\in\F^\sigma(M)}|W_0^{M_R}||W_0^{G_\sigma}|^{-1}
J_{\mathrm{unip}}^{M_R,T_\sigma^R}\bigl(f_{R,h}^{T_\sigma,T}\bigr)\,dh
\]
where $T^R$ is a truncation parameter for $M_R$ derived from $T$ in
the obvious way.

Now we fix $\gamma\in G(F)$ with Jordan decomposition $\gamma=\sigma u$ and return to the local situation. Let again $S$ be a finite subset of~$\Sigma$ containing~$\Sigma_\infty$ and set $\bK_{\sigma,S}=\bK_\sigma\cap G_\sigma(F_S)$. For a parabolic subgroup $R$ of~$G_\sigma$, let $K_R(g)$ denote the second component of $g\in G_\sigma(F_S)$ in the decomposition $G_\sigma(F_S)=R(F_S)\bK_{\sigma,S}$. Then, for $P\in\P(M)$ and $h\in G(F_S)$, we have
\[
Y_P^{T_\sigma,T}(g,h)=
T_P-H_P(K_{P_\sigma}(g)h)-T_{P_\sigma}.
\]
These points form a $(G,M)$-orthogonal set that can replace
\[
\{-H_P(K_{P_\sigma}(g)h)\mid P\in\P(M)\}
\]
in the formulas in~\cite[p.~250/251]{Arthur5}.
The resulting version of~\cite[Cor.~8.4]{Arthur5} then reads
\[
v_L(gh,T)=\sum_{R\in\F^\sigma(M_\sigma)} v_{L_\sigma}^{R,T_\sigma}(g)
v_R'(K_R(g)h,T_\sigma,T),
\]
and \cite[Cor.~8.7]{Arthur5} becomes
\[
J_M^T(\gamma,f)=|D(\gamma)|^{1/2}\int_{G_\sigma(F_S)\bsl G(F_S)}
\sum_{R\in\F^\sigma(M_\sigma)}
J_{M_\sigma}^{M_R,T_\sigma}\bigl(u,f_{R,h}^{T_\sigma,T}\bigr)\,\d h
\]
for $f\in C_c^\infty(G(F_S)^1)$.

Now the proof of \cite[Lemma~7.1]{Arthur4} is correct and applies to general truncation parameters. The dependence on~$T_\sigma$ drops out, as the left-hand side does not depend on it.
If we set
\[ \varepsilon^G(\sigma)=\begin{cases} 1 & \text{if $A_{G_\sigma}=A_G$} ,  \\
0 &  \text{otherwise} ,  \end{cases}  \]
we have the descent formula for coefficients
\[
a^G(S,\gamma)=\varepsilon^G(\sigma)\, |\iota^G(\sigma)|^{-1} \, \sum_{ \{u\, | \, \sigma u\sim\gamma \} }a^{G_\sigma}(S,u),
\]
where the sum is taken over all $u\in (\mathcal{U}_{G_\sigma}(F))_{G_\sigma,S}$ such that $\sigma u$ is $(G,S)$-equivalent to $\gamma$ (cf.~\cite[Theorem~8.1]{Arthur4}).

\subsection{Mean value formula for $\SL(n)$}\label{2s6}
Assume that $n$ is a natural number larger than $1$.
Let $G$ denote the algebraic group $\SL(n)$ over $F$ and let $V$ denote the vector space of $1\times n$ matrices.
Let $\d g$ be a Haar measure on $G(\A)$.
We denote by $\d x_j$ $(1\leq j \leq n)$ the Haar measure on $\A$ defined in Section \ref{2s1}.
Then, $\d x=\d x_1 \, \d x_2 \, \cdots \, \d x_n$ is a Haar measure on $V(\A)$ as $x=\begin{pmatrix}x_1&x_2&\cdots&x_n\end{pmatrix}$.
We define a right action of $G$ on $V$ by the matrix multiplication $(x,g)\mapsto xg$ $(x\in V, \, g\in G)$.
We set $V^0$ equal to the scheme-theoretic complement of $0$ in $V$. For any $F$-algebra~$B$, a point $x\in V(B)$ belongs to $V^0(B)$ if and only if there exists an $F$-linear function $l$ on $V$ such that $l(x)=1$.
It is known that
\begin{equation}\label{2e4}
\int_{G(F)\bsl G(\A)}\sum_{x\in V^0(F)} f( x\cdot g  )\, \d g = \vol_G \,  \int_{V(\A)} f(x)\, \d x
\end{equation}
where $f\in C_c^\inf(V(\A))$ (cf. \cite{Siegel2,Weil,Ono}), and that the complement of $V^0(\A)$ in $V(\A)$ has measure zero.
This formula is called a mean value formula and will be applied to the proof of Theorem \ref{6t3}.

\section{A formula of Labesse and Langlands}\label{3s}

In this section, we generalize the formula \cite[(5.11)]{LL} and discuss its application to the trace formula.
It can be regarded as a generalization of the non-adelic formula \cite[Theorem 1.2]{HH}, which was applied to dimension formulas for spaces of Hilbert cusp forms.
A further generalization was given by Wright~\cite[Theorem I.1]{Wright} in the study of prehomogeneous zeta functions.
However, the original method of Labesse and Langlands gives not only an alternative proof for \cite[Theorem I.1]{Wright} but also of more general formulas.

\subsection{A formula of Labesse and Langlands}

Let $F$ be an algebraic number field and $E$ a finite algebraic extension of $F$.
For each $w\in\Sigma_E$, we denote by $|\; |_w$ the normal valuation of $E_w$.
For $v\in\Sigma_F$ and $w\in\Sigma_E$, the notation $w|v$ means that $F_v$ is the completion of $F$ via $w$.
In this case, $|x|_w=|N_{E_w/F_v}(x)|_v$ for $x\in E_w$.

We fix a natural number $n$ and consider the $n$-fold direct sum $A=E^n$ of algebras and the $n$-fold direct
product $M=R_{E/F}(\mathbb{G}_m)^n$ of algebraic groups.
Hence, $A^\times=M(F)$.
A Haar measure $\d m$ on $M(\A)$ is defined by $\d m=\prod_{j=1}^n \d m_j$, where $\d m_j$ is the Haar measure on $\A_E^\times$ given in Section \ref{2s1}.
We identify $t\in\Rp$ with $(y_w)\in\A_E^\times$, $y_w=t^{1/[E:\Q]}$ $(\forall w\in\Sigma_{E,\inf})$, $y_w=1$ $(\forall w\in\Sigma_{E,\fin})$.
For each $k$, we denote by $\d^\times t_k$ the Haar measure on $(\R^0)^\times\subset \A_E^\times$ of Section \ref{2s1}.
We define the Haar measure $\d^\times t$ on $A^+_M=((\R^\times)^0)^n$ by $\d^\times t=\prod_{k=1}^n \d^\times t_k$.
From this we have a normalization of $\d^1 m$.
Let $V$ denote the vector space with the $F$-structure $V(F)=A$.
For $s=(s_1,s_2,\dots,s_n)\in\C^n$ and $m=(m_1,m_2,\dots,m_n)\in M(\A)$, we set
\[  |m|_A^s = \prod_{j=1}^n |m_j|_{\A_E}^{s_j} \quad \text{and} \quad  |m|_A = \prod_{j=1}^n |m_j|_{\A_E}.  \]
A zeta integral $\zeta_A(\phi,s,\chi)$ for $A$ is defined by
\[ \zeta_A(\phi,s,\chi)=\int_{M(\A)}\phi(m)\, |m|_A^s \, \chi(m) \, \d m   , \]
where $s\in\C^n$, $\phi\in \cS(V(\A))$, and $\chi$ is a character of $M(\A)^1/M(F)$.
We easily see that $\zeta_A(\phi,s,\chi)$ converges absolutely for $\mathrm{Re}(s_k)>1$ $(k=1,2,\dots, n)$ and can be meromorphically continued to all $s\in\C^n$.

Let $H$ be a connected reductive algebraic group over $F$.
Fix a Haar measure $\d h$ on $H(\A)$ and let $\d^1 h$ be the Haar measure on $H(\A)^1$ according to Section \ref{2s3}.
Suppose that we are given an $F$-rational homomorphism $\iota:H\to M$, which will be denoted by
\[ h^\iota=(h_1,h_2,\dots,h_n)   \]
for $h\in H$. We also consider the $F$-rational homomorphism $\kappa \, : \, M\to M$ defined by
\[ m^\kappa=(m_1^{\kappa_1},m_2^{\kappa_2},\dots,m_n^{\kappa_n}),  \]
where $\kappa_j\in\N$ $(1\leq j\leq n)$ are given numbers.
The algebraic group
\[ G=M\times H \]
over $F$ acts on $V$ by
\[ (m,h)\cdot x=m^\kappa h^\iota x= (m_1^{\kappa_1}h_1x_1,m_2^{\kappa_2}h_2x_2,\dots,m_n^{\kappa_n}h_nx_n),   \]
where $m=(m_1,m_2,\dots,m_n)\in M$, $h\in H$ and $x=(x_1,x_2,\dots,x_n)\in V$.
The pair $(G,V)$ is a prehomogeneous vector space over $F$.
We define a zeta integral by
\[ \zeta_A^{\kappa,\iota}(\phi,s)=\int_{M(\A)/M(F)}\int_{H(\A)^1/H(F)}  |m^\kappa|^s_A \sum_{x\in A^\times} \phi(m^\kappa h^\iota x)\, \d^1 h \, \d m , \]
where $s\in\C^n$ and $\phi\in\cS(V(\A))$.
If $\mathrm{Re}(s_k)>1$ $(k=1,2,\dots,n)$, then $\zeta_A^{\kappa,\iota}(\phi,s)$ is absolutely convergent.
The following is a generalization of the formula \cite[(5.11)]{LL}.
\begin{thm}\label{3t}
Let $\phi\in\cS(V(\A))$.
The zeta integral $\zeta_A^{\kappa,\iota}(\phi,s)$ can be meromorphically continued to the whole of~$\C^n$.
It satisfies the formula
\[ \zeta_A^{\kappa,\iota}(\phi,s)=\Big( \prod_{j=1}^n\kappa_j^{-1}\Big) \vol_H\sum_\chi \zeta_A(\phi,s,\chi),   \]
where $\chi$ runs over all characters of $M(\A)^1/M(F)$ which are trivial on $\kappa(M(\A)^1)$ and $\iota(H(\A)^1)$.
\end{thm}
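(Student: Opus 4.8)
\emph{Plan.} I would follow the method of Labesse and Langlands: package the sum over $A^\times$ into a theta function, interchange the $M$- and $H$-integrations, split off the non-compact directions of $M(\A)/M(F)$, and carry out Fourier analysis on the remaining compact quotient; the meromorphic continuation then comes for free from the right-hand side.

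By linearity we may assume $\phi=\phi_S\otimes\prod_{v\notin S}\phi_{0,v}$ with $\phi_{0,v}$ the characteristic function of $V(\fO_v)$ and $S$ a suitable finite set; then $\zeta_A(\phi,s,\chi)=0$ unless $\chi$ is unramified outside $S$, so the sum on the right is finite, and it suffices to establish the identity for $\Re(s_k)>1$ $(1\le k\le n)$, where every integral below converges absolutely --- the meromorphic continuation of $\zeta_A^{\kappa,\iota}(\phi,s)$ being then inherited from that of the finitely many $\zeta_A(\phi,s,\chi)$ (Tate's thesis, Section~\ref{2s2}). Set $\theta_\phi(y)=\sum_{x\in A^\times}\phi(xy)$ for $y\in M(\A)$; it is invariant under right translation by $A^\times=M(F)$, and the same unfolding as in Section~\ref{2s2} gives the folded form $\zeta_A(\phi,s,\chi)=\int_{M(\A)/M(F)}|m|_A^s\,\chi(m)\,\theta_\phi(m)\,\d m$. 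Since $\sum_{x\in A^\times}\phi(m^\kappa h^\iota x)=\theta_\phi(\kappa(m)\iota(h))$, and since for $h\in H(\A)^1$ every component of $h^\iota$ lies in $\A_E^1$ (apply the $F$-rational character $N_{E/F}$ of $H$ to each), whence $|\kappa(m)|_A=|\kappa(m)\iota(h)|_A$, interchanging the integrations yields
\[
\zeta_A^{\kappa,\iota}(\phi,s)=\int_{H(\A)^1/H(F)}\Bigl(\int_{M(\A)/M(F)}g\bigl(\kappa(m)\,\iota(h)\bigr)\,\d m\Bigr)\,\d^1h,\qquad g(y)=|y|_A^s\,\theta_\phi(y).
\]

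Now I would evaluate the inner integral for fixed $h$. Writing $M(\A)/M(F)=(M(\A)^1/M(F))\times A_M^+$, the map $\kappa$ acts on $A_M^+\cong(\R_{>0})^n$ by $(t_j)_j\mapsto(t_j^{\kappa_j})_j$, and the substitution $u_j=t_j^{\kappa_j}$ rescales the measure $\d^\times t$ by $\prod_j\kappa_j^{-1}$: this is where that factor is born. On the compact group $\bar K=M(\A)^1/M(F)$ the map $\kappa$ induces $\bar\kappa\colon\bar K\to\bar K$, and Pontryagin duality gives, for continuous $G$ on $\bar K$,
\[
\int_{\bar K}G\bigl(\bar\kappa(\bar m)\bigr)\,\d^1\bar m=\sum_{\psi:\ \psi\circ\bar\kappa=1}\ \int_{\bar K}G(\bar m)\,\overline{\psi}(\bar m)\,\d^1\bar m,
\]
the condition $\psi\circ\bar\kappa=1$ being exactly $\psi|_{\kappa(M(\A)^1)}=1$ (expand $G$ into characters, then use $\int_{\bar K}(\text{character})=0$ unless the character is trivial; the normalizations of $\d^1\bar m$ cancel). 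Applying this, translating $\bar m\mapsto\bar m\,[\iota(h)]^{-1}$ to pull out a factor $\psi(\iota(h))$, and reassembling the $A_M^+$-integral into an integral over all of $M(\A)/M(F)$, I expect
\[
\int_{M(\A)/M(F)}g\bigl(\kappa(m)\iota(h)\bigr)\,\d m=\Bigl(\prod_{j=1}^n\kappa_j^{-1}\Bigr)\sum_{\psi:\ \psi|_{\kappa(M(\A)^1)}=1}\psi\bigl(\iota(h)\bigr)\,\zeta_A(\phi,s,\overline{\psi}).
\]
Integrating over $H(\A)^1/H(F)$: as $\psi$ is trivial on $M(F)\supset\iota(H(F))$, the map $h\mapsto\psi(\iota(h))$ is a character of $H(\A)^1/H(F)$, so its integral is $\vol_H$ if $\psi|_{\iota(H(\A)^1)}=1$ and $0$ otherwise; hence only the characters of $M(\A)^1/M(F)$ trivial on both $\kappa(M(\A)^1)$ and $\iota(H(\A)^1)$ survive, and replacing $\psi$ by $\overline\psi$ (a bijection of this set) gives the asserted formula.

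\emph{Main obstacle.} Conceptually this is a routine unfolding; the real difficulty --- precisely the point the paper stresses --- is the bookkeeping of rational versus adelic data and of local versus global measures. The awkward feature is that $\bar\kappa$ is in general neither injective nor surjective (both its kernel and its cokernel are infinite, coming from $\prod_v(\ker\kappa)(F_v)$ and from the units at the places dividing none of the $\kappa_j$), so one cannot simply push Haar measures forward along $\kappa$; one must instead decompose on the \emph{compact} quotient $\bar K$ as above, and then verify that the normalizations of $\d^1m$, $\d^\times t$ and the Haar measure on $\bar K$ conspire so that no class numbers, root-of-unity counts or conductor factors survive --- only $\prod_j\kappa_j^{-1}$ and $\vol_H$.
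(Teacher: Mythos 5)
Your argument is correct and is essentially the paper's own proof, which likewise implements the Labesse--Langlands method: the paper performs Poisson summation on the pair $M(F)/\kappa(M(F))\iota(H(F))$ and $M(\A)/\kappa(M(F))\iota(H(F))$ and extracts the factor $\prod_j\kappa_j^{-1}$ by the same change of variables, while you phrase the identical harmonic analysis as Fourier duality for the pushforward under $\bar\kappa$ on the compact quotient $M(\A)^1/M(F)$. Your concluding orthogonality step over $H(\A)^1/H(F)$ (producing $\vol_H$) and the selection of characters trivial on $\kappa(M(\A)^1)$ coincide with the paper's evaluation of $\int_{H(\A)^1/H(F)}\chi(h^\iota)^{-1}\,\d^1 h$ and $\int_{M(\A)^1/M(F)}\chi(m^\kappa)^{-1}\,\d^1 m$ at the end of its proof.
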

\begin{proof}
We will mimick the proof of \cite[(5.11)]{LL}.
Assume $\mathrm{Re}(s_k)>1$ $(k=1,2,\dots,n)$.
Then we have
\begin{multline*}
\zeta_A^{\kappa,\iota}(\phi,s)=\\
\sum_{x\in M(F)/\kappa(M(F))\iota(H(F))}\int_{(M(\A)\times H(\A)^1)/I(F)}\phi((m,h)\cdot x) \, |m^\kappa|^s_A \, \d^\times m\, \d^1 h,
\end{multline*}
where $I(F)=\{(m,h)\in G(F) \, | \, m^\kappa h^\iota=1\}$.
By the Poisson summation formula for $M(F)/\kappa(M(F))\iota(H(F))$ and $M(\A)/\kappa(M(F))\iota(H(F))$, we have
\begin{multline*}
\zeta_A^{\kappa,\iota}(\phi,s)=  \vol_M^{-1} \sum_{\chi\in (M(F)/\kappa(M(F))\iota(H(F)))^\perp} \int_{M(\A)^1/\kappa(M(F)) \iota(H(F))} \d^1 m' \\
\int_{(M(\A)\times H(\A)^1)/I(F)} \d^\times m\, \d^1 h \; \phi(m^\kappa h^\iota m') \, |m^\kappa|_A^s\, \chi(m')  .
\end{multline*}
By change of variable we get
\begin{multline*}
\zeta_A^{\kappa,\iota}(\phi,s)= \Big( \prod_{j=1}^n\kappa_j^{-1}\Big)\, \vol_M^{-1} \sum_{\chi\in (M(F)/\kappa(M(F))\iota(H(F)))^\perp} \\
\int_{M(\A)/\kappa(M(F))\iota(H(F))}  \d m' \int_{(M(\A)^1\times H(\A)^1)/I(F)}\d^1m\, \d^1 h \\
\phi(m^\kappa h^\iota m') \, |m'|_A^s\, \chi(m') .
\end{multline*}
Since the map $G(F)/I(F)\to \kappa(M(F))\iota(H(F))$ $((m,h)\mapsto m^\kappa h^\iota)$ is bijective, we find
\begin{multline*}
\zeta_A^{\kappa,\iota}(\phi,s)= \Big( \prod_{j=1}^n\kappa_j^{-1}\Big)\, \vol_M^{-1} \sum_{\chi\in (M(F)/\kappa(M(F))\iota(H(F)))^\perp} \zeta_A(\phi,s,\chi) \\
  \int_{H(\A)^1/H(F)}\chi(h^\iota)^{-1} \d^1h\,  \int_{M(\A)^1/M(F)}\chi(m^\kappa)^{-1} \d^1 m 
\end{multline*}
by change of variable.
This implies the formula in the theorem for $\mathrm{Re}(s_k)>1$, $k=1,2,\dots,n$.
For a fixed test function $\phi$, the right hand side is a finite sum and can therefore be meromorphically continued to the whole $s$-space.
\end{proof}

\subsection{Modified zeta integral}\label{3s2}

We identify $\a_M$ with $\R^n$ such that
\[
H_M((m_1,\dots,m_n))=(\log|m_1|_{\A_E},\dots,\log|m_1|_{\A_E})
\]
for $m_i\in\A_{E}^\times$.
We denote the set of subsets of $\{1, \, 2 , \, \dots \, , n\}$ by $\mathcal{Y}_n$.
Let $T=(T_1,T_2,\dots,T_n)\in \a_M$ and $Y\in \mathcal Y_n$.
We set $\overline{Y}=\{1,2,\dots,n\}-Y$.
Let $\tau_{T,Y}(t_1,t_2,\dots,t_n)$ denote the characteristic function of
\[
\{ (t_1,t_2,\dots,t_n)\in \R^n \, | \,    t_k<-T_k \; (\forall k\in Y)  \} .
\]
Let $\d v$ denote the Haar measure on $\A_E$ normalized by $\int_{\A_E/E}\d v=1$.
We set
\[
\d v_Y=\prod_{k\in Y} \d v_k \quad \text{for $v=(v_1,v_2,\dots,v_n)\in V(\A)$}, 
\]
\[
A_Y=\{ (x_1,x_2,\dots,x_n)\in A \, | \, x_k=0 \; (\forall k \in \overline{Y})\}   \}  , 
\]
and
\[
V_Y(\A)=\{ (x_1,x_2,\dots,x_n)\in V(\A) \, | \, x_k=0 \; (\forall k\in \overline{Y}) \}.
\]
For $v_Y=(v_1,v_2,\dots,v_n)\in V_Y(\A)$, we set $\d v_Y=\prod_{j\in Y}\d v_j$.
We define a modified zeta integral $\zeta_A^{\kappa,\iota}(\phi,s,T)$ by
\begin{multline*}
 \zeta_A^{\kappa,\iota}(\phi,s,T)= \int_{M(\A)/M(F)}\d m \, \int_{H(\A)^1/H(F)}  \d^1 h \;  |m^\kappa|^s_A  \\
 \sum_{Y\in\mathcal Y_n} (-1)^{|Y|}  \sum_{x_{\overline{Y}} \in A_{\overline{Y}}^\times}\int_{V_Y(\A)} \phi(m^\kappa h^\iota (x_{\overline{Y}}+v_Y))  \, \d v_Y \, \tau_{T,Y}(H_M(m)). 
\end{multline*}
An additive character $\psi_Y$ on $V_Y(\A)$ is defined similarly to Section \ref{2s1}.
For $\phi\in\cS(V(\A))$ and $Y\in\mathcal Y_n$, we set
\[ \hat\phi^Y(x_{\overline{Y}}+x_Y)=\int_{V_Y(\A)}  \phi(x_{\overline{Y}}+y_Y)\, \psi_Y(x_Yy_Y)\, \d y_Y \]
where $x_Y\in V_Y(\A)$ and $x_{\overline{Y}}\in V_{\overline{Y}}(\A)$.
Let $1_n$ denote the element $(1,1,\dots,1)$ of $\C^n$.
Then, the Poisson summation formula reads
\[ \sum_{ x_Y\in A_Y }\phi(a(x_{\overline{Y}}+x_Y)) =  \sum_{ x_Y\in A_Y } \hat\phi^Y(ax_{\overline{Y}}+a^{-1}x_Y) \, |a_Y|_A^{-1_n},  \]
where $a\in M(\A)$.
From this formula we obtain
\begin{multline}\label{3e1}
\sum_{ x_Y\in A_Y^\times }\phi(ax_Y)= \\
 \sum_{Y''\subset Y'\subset Y}(-1)^{|Y-Y'|} \sum_{x_{Y''}\in A_{Y''}^\times } \hat\phi^{Y'}(0_{Y-Y''}+a^{-1}x_{Y''})\, |a_{Y'}|_A^{-1_n} .
\end{multline}
We set
\[ D_{Y,T}^+=\{    (t_1,\dots,t_n)\in ((\R^\times)^0)^n \, | \,    t_k>e^{-T_k} \; (\forall k \in Y) \, , \; t_k=1 \; (\forall k\in \overline{Y})  \}, \]
\[ D_{Y,T}^-=\{    (t_1,\dots,t_n)\in ((\R^\times)^0)^n \, | \,    t_k<e^{-T_k} \; (\forall k \in Y) \, , \; t_k=1 \; (\forall k\in \overline{Y})  \}, \]
and $\d^\times t_Y=\prod_{k\in Y}\d^\times t_k$.
By using (\ref{3e1}), we have
\begin{multline*}
 \zeta_A^{\kappa,\iota}(\phi,s,T) = \int_{G(\A)^1/G(F)}\d^1 m \, \d^1 h  \sum_{Y\in\mathcal Y_n} (-1)^{|Y|} \sum_{Z\subset Y} \int_{D_{\overline{Y},T}^+\cup D_{Z,T}^-} \d^\times t_{\overline{Y}\cup Z} \\
\sum_{x_{\overline{Y}} \in A_{\overline{Y}}^\times \, , \; x_{Z} \in A_Z^\times} |t_{\overline{Y}\cup Z}^\kappa|^s_A  \sum_{Z \subset Y} \hat\phi^{Z}( m^\kappa h^\iota t^\kappa x_{\overline{Y}} + (m^{-1})^\kappa(h^{-1})^\iota (t^{-1})^\kappa x_{Z}+0_{Y-Z})  \\
\times (-1)^{|Z|} \, |t^\kappa_Z|_A^{-1_n} \prod_{k\in Y-Z}\frac{e^{-\kappa_ks_kT_k}}{\kappa_k s_k}  .
\end{multline*}
Thus, we find that $\zeta_A^{\kappa,\iota}(\phi,s,T)$ converges absolutely for $\mathrm{Re}(s_k)>0$ $(k=1,2,\dots,n)$.
We also have
\begin{multline*}
 \zeta_A^{\kappa,\iota}(\phi,s) = \int_{G(\A)^1/G(F)}\d^1 m \, \d^1 h \\
\sum_{Y\in\mathcal Y_n} (-1)^{|Y|} \sum_{Z\subset Y} \int_{D_{\overline{Y},T}^+\cup D_{Z,T}^-} \d^\times t_{\overline{Y}\cup Z}  \sum_{x_{\overline{Y}} \in A_{\overline{Y}}^\times \, , \; x_{Z} \in A_Z^\times} |t_{\overline{Y}\cup Z}^\kappa|^s_A \\
 \sum_{Z\subset  U \subset Y} \hat\phi^{U}( m^\kappa h^\iota t^\kappa x_{\overline{Y}} + (m^{-1})^\kappa (h^{-1})^\iota (t^{-1})^\kappa x_{Z}+0_{Y-Z})  \\
\times  (-1)^{|U|}\, |t^\kappa_Z|_A^{-1_n} \prod_{k\in Y-U}\frac{e^{-\kappa_ks_kT_k}}{\kappa_k s_k} \times \prod_{j\in U-Z} \frac{e^{-(\kappa_ks_k-\kappa_k)T_k}}{\kappa_j s_j-\kappa_j} .
\end{multline*}
By the above two equalities we easily find that
\begin{multline*}
\lim_{s\to 1_n} \prod_{k=1}^n \frac{\partial}{\partial s_k}\prod_{j=1}^n (s_j-1) \; \zeta_A^{\kappa,\iota}(\phi,s) \\
=\sum_{Y\in\mathcal Y_n} (-1)^{|\overline{Y}|}\,  \Big(\prod_{l\in\overline{Y}}T_l\Big) \, \zeta_{A_Y}^{\kappa_Y,\iota_Y}(\phi_Y,1_{|Y|},T_Y)
\end{multline*}
where $\kappa_Y=(\kappa_k)_{k\in Y}$, $\iota_Y(h)=(h_k)_{k\in Y}$, $\phi_Y(x_Y)=\int_{V_{\overline{Y}}(\A)}\phi(x_Y+x_{\overline{Y}})\, \d x_{\overline{Y}}$, and $T_Y=(T_k)_{k\in Y}$.
Hence, we obtain
\begin{equation}\label{3e2}
\zeta_A^{\kappa,\iota}(\phi,1_n,T)= \sum_{Y\in\mathcal Y_n} \Big(\prod_{l\in\overline{Y}}T_l\Big) \, \lim_{s_Y\to 1_{|Y|}}\prod_{k\in Y} \frac{\partial}{\partial s_k} \prod_{j\in Y}^n (s_j-1) \; \zeta_{A_Y}^{\kappa_Y,\iota_Y}(\phi_Y,s_Y) ,
\end{equation}
where $s_Y=(s_k)_{k\in Y}$.
In particular, if we substitute $\mathcal T_0=(0,0,\dots,0)\in\a_M$ for $T$, we get
\begin{equation}\label{3e3}
\zeta_A^{\kappa,\iota}(\phi,1_n,\mathcal T_0)=\lim_{s\to 1_n} \prod_{k=1}^n \frac{\partial}{\partial s_k}\prod_{j=1}^n (s_j-1) \; \zeta_A^{\kappa,\iota}(\phi,s).
\end{equation}

\subsection{Orbits and Decompositions}

Let $S$ be a finite subset of $\Sigma_F$, $\chi$ a character of $M(\A)^1/M(F)$, and $\phi\in \cS(V(\A))$.
We set
\[
S_E=\bigcup_{v\in S} \{ \, w\in \Sigma_E \,\big| \, w|v \, \}.
\]
There exist characters $\chi_1$, $\chi_2,\dots,\chi_n$ on $\A_E^1/E^\times$ such that $\chi=\prod_{k=1}^n\chi_k$.
We set
\[  \chi_k=\prod_{w\in\Sigma_E}\chi_{k,w}, \quad \chi_v=\prod_{k=1}^n\prod_{w|v} \chi_{k,w},  \quad  \chi_S=\prod_{k=1}^n\prod_{w\in S_E} \chi_{k,w}  , \]
and
\[ L_A^S(s,\chi)=\prod_{k=1}^n L_E^{S_E}(s_k,\chi_k) \]
where $L_E^{S_E}(s,\chi)$ means $L^{S_E}(s,\chi)$ defined over $E$ (cf. \eqref{lft}).
Let $\phi_{0,v}$ denote the characteristic function of $V(\fO_v)$.
We may assume the following condition on~$S$.
\begin{cond}\label{cond1}
The finite set $S$ satisfies $S\supset \Sigma_\inf$, there exists a function $\phi_S\in\cS(V(F_S))$ such that $\phi=\phi_S \prod_{v\not\in S} \phi_{0,v}$, and $V(\fO_{F,v})= \bigoplus_{w|v} (\fO_{E,w})^{\oplus n} $ for any $v\not\in S$.
\end{cond}
This is clearly satisfied for $S$ large enough. In this case,
\begin{multline*}
\int_{V(\fO_v)}|m_v|_A^s \, \chi_v(m_v) \, \d m_v \\
=\begin{cases} \prod_{k=1}^n \prod_{w|v}L_{E,w}(s_k,\chi_{k,w})   & \text{if $\chi_v$ is unramified,}  \\  0 & \text{otherwise}  \end{cases}
\end{multline*}
for any $v\not\in S$, where $L_{E,w}(s,\chi_w)$ stands for $L_w(s,\chi_w)$ defined over $E_w$ (cf. \eqref{lft}).
If $\chi$ is unramified outside~$S$, then we get
\[
\zeta_A(\phi,s,\chi) =\int_{M(F_S)}\phi_S(m_S)|m_S|_A^s\, \chi_S(m_S)\, \d m_S  \times L_A^S(s,\chi) .
\]
For $Y\in\mathcal Y_n$, we set
\[
\chi_S^Y=\prod_{k\in \overline{Y}}\prod_{w\in S_E} \chi_{k,w} ,\quad |m|_{Y,S}=\prod_{k\in Y} |m_k|_{S_E} , \quad \fc_E^Y(S,\chi)=\prod_{k\in\overline{Y}}\fc_E(S_E,\chi_k), 
\]
\[ \mathfrak K_{Y,S}=\left\{ \chi=\prod_{k=1}^n \chi_k  \Big| \begin{array}{l} \text{$\chi|_{\kappa(M(\A)^1)}=1$, $\chi|_{\iota(H(\A)^1)}=1$},\\ \text{$\chi_k=\trep_E \, (\forall k\in Y)$,} \\ \text{$\chi_{k,w}$ is unramified for every $w\not\in S_E$} \\ \text{and every $k=1,2,\dots,n$} \end{array} \right\} . \]
Note that $|\mathfrak K_{Y,S}|$ is a finite set.
The following theorem is derived from Theorem \ref{3t} and (\ref{3e3}).
\begin{thm}\label{3t3}
Fix a test function $\phi\in\cS(V(\A))$.
Assume that $S$ satisfies Condition \ref{cond1}.
If we substitute $\mathcal T_0=(0,0,\dots,0)\in\a_M$ into (\ref{3e2}) as a polynomial of $T$, then we have
\begin{align*}
 \zeta_A^{\kappa,\iota}(\phi,1_n,\mathcal T_0)=& \frac{\vol_H}{\prod_{j=1}^n\kappa_j} \sum_{Y\in\mathcal Y_n} (c_E^S)^{|Y|} \sum_{\chi\in\mathfrak K_{Y,S}} \fc_E^Y(S,\chi) \\
& \times \int_{M(F_S)}\phi_S(m_S) \, |m_S|_A \, \chi_S^Y(m_S) \Big( \prod_{k\in Y}\log|m_k|_{S_E}\Big) \, \d m_S .
\end{align*}
\end{thm}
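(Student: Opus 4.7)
The plan is to combine Theorem~\ref{3t} with identity~\eqref{3e3} and then to compute the resulting limit of derivatives by a Leibniz-rule expansion, matching the combinatorial structure of $\mathfrak K_{Y,S}$ to the vanishing/non-vanishing of the Laurent coefficients of the Hecke $L$-functions.

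First I would apply Theorem~\ref{3t} to rewrite
\[
\zeta_A^{\kappa,\iota}(\phi,s) = \Big(\prod_{j=1}^n \kappa_j^{-1}\Big)\vol_H \sum_{\chi} \zeta_A(\phi,s,\chi),
\]
where $\chi=\prod_k\chi_k$ ranges over characters of $M(\A)^1/M(F)$ trivial on $\kappa(M(\A)^1)$ and $\iota(H(\A)^1)$. Under Condition~\ref{cond1}, those $\chi$ ramified at some place outside $S$ contribute zero to $\zeta_A(\phi,s,\chi)$, and for the remaining $\chi$ the factorization
\[
\zeta_A(\phi,s,\chi) = \int_{M(F_S)}\phi_S(m_S)\,|m_S|_A^{s}\,\chi_S(m_S)\,\d m_S \cdot \prod_{k=1}^{n}L_E^{S_E}(s_k,\chi_k)
\]
holds. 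Combined with \eqref{3e3}, this reduces the theorem to computing, for each admissible $\chi$, the limit
\[
\lim_{s\to 1_n}\prod_{k=1}^n\frac{\partial}{\partial s_k}\!\left[\,|m_S|_A^{s}\prod_{k=1}^n (s_k-1)L_E^{S_E}(s_k,\chi_k)\right].
\]

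The second step is the Leibniz expansion. The bracketed expression factors as $\prod_{k=1}^n h_k(s_k)\,g_k(s_k)$ with $h_k(s_k)=|m_k|_{S_E}^{s_k}$ and $g_k(s_k)=(s_k-1)L_E^{S_E}(s_k,\chi_k)$, so
\[
\prod_{k=1}^n\frac{\partial}{\partial s_k}\prod_{k=1}^n h_k g_k \;=\; \sum_{Y\in\mathcal Y_n}\prod_{k\in Y}h_k'(s_k)g_k(s_k)\prod_{k\in\overline Y}h_k(s_k)g_k'(s_k).
\]
Evaluating at $s=1_n$ I use $h_k(1)=|m_k|_{S_E}$, $h_k'(1)=|m_k|_{S_E}\log|m_k|_{S_E}$, $g_k'(1)=\fc_E(S,\chi_k)$, and the key dichotomy
\[
g_k(1) = \begin{cases} c_E^S & \text{if }\chi_k=\trep_E,\\ 0 & \text{if }\chi_k\neq\trep_E,\end{cases}
\]
which forces $\chi_k=\trep_E$ for every $k\in Y$ in any non-vanishing summand. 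Hence for such contributing $\chi$ one has $\chi_S=\chi_S^Y$ (since the components in $Y$ are trivial), $\prod_{k\in\overline Y}\fc_E(S,\chi_k)=\fc_E^Y(S,\chi)$, and the factor $(c_E^S)^{|Y|}$ appears from the indices in $Y$.

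The final step is to reorganize the double sum as $\sum_{\chi}\sum_{Y}=\sum_{Y}\sum_{\chi\in\mathfrak K_{Y,S}}$: the inner sum runs precisely over characters in $\mathfrak K_{Y,S}$, because the constraints "trivial on $\kappa(M(\A)^1)$ and $\iota(H(\A)^1)$, unramified outside $S$, and $\chi_k=\trep_E$ for $k\in Y$" are exactly its defining conditions. Collecting the pieces yields the asserted identity. The main obstacle is simply the bookkeeping of the Leibniz expansion together with the Laurent-coefficient case analysis; convergence is automatic because, for the fixed $\phi$, only finitely many $\chi$ actually contribute to the sum in Theorem~\ref{3t}, so the termwise differentiation and limit are legitimate.
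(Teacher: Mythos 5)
Your proposal is correct and is essentially the paper's own derivation: the paper proves Theorem~\ref{3t3} exactly by combining Theorem~\ref{3t} with \eqref{3e3}, discarding characters ramified outside $S$ via Condition~\ref{cond1}, factoring $\zeta_A(\phi,s,\chi)$ into the local integral times $L_A^S(s,\chi)$, and evaluating the limit of derivatives through the Laurent data $g_k(1)\in\{c_E^S,0\}$, $g_k'(1)=\fc_E(S_E,\chi_k)$, which is precisely your Leibniz expansion and the reindexing $\sum_\chi\sum_Y=\sum_Y\sum_{\chi\in\mathfrak K_{Y,S}}$.
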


For $Y\in\mathcal Y_n$, we set
\[ M_Y=\{m=(m_1,m_2,\dots,m_n)\in M \, |\, m_j=1 \, \; (\forall j\in\overline{Y})\} ,    \]
and
\[ \mathscr O_{F,S}^Y=M_{\overline{Y}}(F)/\{ M_{\overline{Y}}(F)\cap (\kappa(M(F_S))\, \iota(H(F_S)))\}.  \]
Since $\mathscr O_{F,S}^Y=M_{\overline{Y}}(F_S)/\{ M_{\overline{Y}}(F_S)\cap (\kappa(M(F_S))\, \iota(H(F_S)))\}$, we deduce the following theorem from Theorem \ref{3t3}.
\begin{thm}\label{3t4}
Fix a test function $\phi\in\cS(V(\A))$.
Assume that $S$ satisfies Condition \ref{cond1}.
If we substitute $\mathcal T_0=(0,0,\dots,0)\in\a_M$ into (\ref{3e2}) as a polynomial of $T$, then we have
\begin{multline*}
\zeta_A^{\kappa,\iota}(\phi,1_n,\mathcal T_0)= \frac{\vol_H}{\prod_{j=1}^n\kappa_j}\sum_{Y\in\mathcal Y_n}  \sum_{\gamma\in\mathscr O^Y_{F,S}} (c_E^S)^{|Y|} \sum_{\chi\in \mathfrak K_{Y,S}} \chi_S^Y(\gamma) \, \fc_E^Y(S,\chi) \\ 
 \times \int_{ M_Y(F_S)\{ \gamma \, \kappa(M_{\overline{Y}}(F_S))\, \iota(H(F_S))\} }\phi_S(m_S) \, |m_S|_A \Big( \prod_{k\in Y}\log|m_k|_{S_E}\Big)\, \d m_S  .
\end{multline*}
\end{thm}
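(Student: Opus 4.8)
The plan is to start from Theorem~\ref{3t3}, which already gives the value of $\zeta_A^{\kappa,\iota}(\phi,1_n,\mathcal T_0)$ as a sum over $Y\in\mathcal Y_n$ of a term involving an integral of $\phi_S(m_S)\,|m_S|_A\,\chi_S^Y(m_S)\bigl(\prod_{k\in Y}\log|m_k|_{S_E}\bigr)$ over the full local group $M(F_S)$. The only work left is to rewrite that integral over $M(F_S)$ as a sum over orbits $\gamma\in\mathscr O^Y_{F,S}$ of integrals over the pieces $M_Y(F_S)\{\gamma\,\kappa(M_{\overline Y}(F_S))\,\iota(H(F_S))\}$, checking that the integrand is constant on the relevant cosets so that $\chi_S^Y$ and $\fc_E^Y(S,\chi)$ can be pulled out.

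\medskip

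First I would decompose $M(F_S)=M_Y(F_S)\times M_{\overline Y}(F_S)$ according to the splitting $M=M_Y\times M_{\overline Y}$, so that the integral factors and the $\prod_{k\in Y}\log|m_k|_{S_E}$ factor and the $|m_S|_A$ factor split accordingly. On the $M_{\overline Y}(F_S)$ component the integrand carries the character $\chi_S^Y=\prod_{k\in\overline Y}\prod_{w\in S_E}\chi_{k,w}$, and by the defining property of $\mathfrak K_{Y,S}$ each such $\chi_k$ ($k\in\overline Y$) is trivial on $\kappa(M(\A)^1)$ and $\iota(H(\A)^1)$; restricting to the $S$-components, $\chi_S^Y$ is trivial on $\kappa(M_{\overline Y}(F_S))$ and on $\iota(H(F_S))$ after suitable normalization (here one uses that the relevant infinite-place contributions are already absorbed, since $S\supset\Sigma_\infty$). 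Hence on each coset $\gamma\,\kappa(M_{\overline Y}(F_S))\,\iota(H(F_S))$ the character $\chi_S^Y$ takes the constant value $\chi_S^Y(\gamma)$, and likewise $|m_S|_A$ on the $\overline Y$-part is constant (it is $1$ after the normalization built into $|m|_A$ restricted to the relevant subgroup, or more precisely the $\log$ and $|\cdot|$ factors live only on the $Y$-part $M_Y(F_S)$ which is integrated freely). That is exactly what converts the single integral over $M_{\overline Y}(F_S)$, weighted by $\chi_S^Y$, into the sum over $\gamma\in M_{\overline Y}(F)/\{M_{\overline Y}(F)\cap(\kappa(M(F_S))\iota(H(F_S)))\}$ with coefficients $\chi_S^Y(\gamma)$, using the identification $\mathscr O^Y_{F,S}=M_{\overline Y}(F_S)/\{M_{\overline Y}(F_S)\cap(\kappa(M(F_S))\iota(H(F_S)))\}$ stated just before Theorem~\ref{3t4}.

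\medskip

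The only genuinely delicate point is the bookkeeping of which version of the group one sums over: the orbit set is defined using $F$-points $M_{\overline Y}(F)$ but the statement uses the equality with the $F_S$-point version, so I would carefully justify that $M_{\overline Y}(F)$ surjects onto $M_{\overline Y}(F_S)/\{M_{\overline Y}(F_S)\cap(\kappa(M(F_S))\iota(H(F_S)))\}$ with the stated kernel — this is an approximation-type statement for the torus $M_{\overline Y}=R_{E/F}(\mathbb G_m)^{|\overline Y|}$, and it rests on strong approximation away from $S$ for $\mathbb G_m$ together with the hypothesis that $\chi$ is unramified outside $S$, which is why $\mathfrak K_{Y,S}$ was defined with that constraint. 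Everything else is a change of variables and a reorganization of sums; there is no analytic difficulty beyond what was already handled in establishing Theorem~\ref{3t3}.

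\medskip

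The main obstacle, then, is not analytic but combinatorial/measure-theoretic: correctly matching the normalizations of Haar measures on $M_Y(F_S)$, $\kappa(M_{\overline Y}(F_S))$ and $\iota(H(F_S))$ so that the disjoint cosets tile $M_{\overline Y}(F_S)$ compatibly with the factor $\prod_{j=1}^n\kappa_j^{-1}$ already present, and verifying that no extra Jacobian appears when passing from the $m_S$-integral to the $\gamma$-indexed sum of coset integrals. I expect this to be routine given the measure conventions fixed in Sections~\ref{2s1}–\ref{2s3}, so the proof reduces to citing Theorem~\ref{3t3} and performing this reorganization.
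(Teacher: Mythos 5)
Your proposal is essentially the paper's own proof: the paper deduces Theorem~\ref{3t4} from Theorem~\ref{3t3} in one line, by identifying $\mathscr O^Y_{F,S}$ with $M_{\overline{Y}}(F_S)/\{M_{\overline{Y}}(F_S)\cap(\kappa(M(F_S))\,\iota(H(F_S)))\}$ and splitting the integral over $M(F_S)$ into the corresponding coset pieces, on each of which $\chi_S^Y$ is constant — exactly the reorganization you describe. Two small corrections to your justifications. First, the $F$-points versus $F_S$-points identification does not rest on strong approximation for $\mathbb G_m$ (which fails) nor on the unramifiedness of $\chi$ outside $S$; it only needs that $E^\times$ is dense in $E_{S_E}^\times$ (weak approximation) and that the subgroup one divides by is open of finite index, since it contains $\kappa(M_{\overline{Y}}(F_S))$; the unramifiedness condition in $\mathfrak K_{Y,S}$ was already used in Theorem~\ref{3t3} and plays no role here. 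Second, there is no need for $|m_S|_A$ (or the logarithmic factors) to be constant on the cosets — they simply remain inside the coset integrals on the right-hand side; the only factor that must be pulled out is $\chi_S^Y$, and its constancy follows because $\chi$, extended to $M(\A)$ trivially on $A_M^+$, is trivial on all of $\kappa(M(\A))$ and $\iota(H(\A))$ (not just on the norm-one subgroups), so that every local, in particular every $S$-, component of $\chi\circ\kappa$ and $\chi\circ\iota$ is trivial; here $S\supset\Sigma_\inf$ enters as you indicate. With these points made precise, your argument coincides with the paper's deduction and no extra Jacobian or normalization issue arises, since the constant $\prod_j\kappa_j^{-1}$ and the measures are inherited unchanged from Theorem~\ref{3t3}.
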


The contribution of regular unipotent elements of $\SL(n,F)$ to the trace formula for $n\geq 3$ seems to be related to $\zeta_A^{\kappa,\iota}(\phi,1_n,\mathcal T_0)$ with $A=F^{n-1}$, $H=\GL(1)^{n-2}$,
\begin{align*}
\kappa(m_1,m_2,\dots,m_{n-1})&=(m_1^n,m_2^n,\ldots,m_{n-1}^n), \\ \iota(h_1,h_2,\dots,h_{n-2})&=(h_1,h_2,\dots,h_{n-2},h_1h_2^2\dots h_{n-2}^{n-2}).
\end{align*}
Hence, the value $L^S(1,\chi)\, L^S(1,\chi^2)\dots L^S(1,\chi^{n-1})$ should appear in such coefficients for any character $\chi$ on $\A^1/F^\times$ satisfying that $\chi^n=\trep_F$ and $\chi_v$ unramified $(\forall v\not\in S)$.
In Appendix \ref{appen2}, we will apply Theorem \ref{3t4} to the case $\SL(3)$.

\subsection{Application}\label{3s4}

We use Theorem \ref{3t4} to express some of the coefficients of unipotent orbital integrals in terms of $\fc_F(S,\chi)$ and $\chi_S$.
In Propositions \ref{5pm0}, \ref{5pm1}, and \ref{5pm2}, we will give a classification for $\cO$-equivalence classes containing non-semisimple elements in $\GSp(2,F)$.
A similar classification for $\Sp(2,F)$ can be obtained from Lemma~\ref{5l}.
The group $\mathcal G$ in each of the following examples appears as the group $G_\sigma$, where $G=\GSp(2)$ or $\Sp(2)$ and
where $\sigma$ is a semi-simple element of such an $\cO$-equivalence class.
We do not mention all possibilities for such centralizers~$G_\sigma$ because the other ones can be treated similarly.
In each case, $\bK$ denotes a suitable maximal compact subgroup of $\mathcal G(\A)$ and $\mathcal M_0$ denotes a minimal Levi subgroup of $\mathcal G$.
Let $T_0$ denote the point in $\a_0$ defined in \cite[Section~2]{Arthur2}.
Fix a finite set $S\supset \Sigma_\inf$ and assume that $f\in C_c^\inf(\mathcal G(F_S))$.
%We recall that $t\in\Rp$ is identified with $(y_w)\in\A_E^\times$, $y_w=t^{1/[E:\Q]}$ $(\forall w\in\Sigma_\inf)$, $y_w=1$ $(\forall w\in\Sigma_\fin)$.

\begin{exa}\normalfont\label{3ex1}
Let $\mathcal G=\GL(2)$ and $\bK$ the group $\bK_2$ from Section \ref{4s9}.
It is clear that $(\cU_{\mathcal G}(F))_{\mathcal G,S}=\{1, \,  u_1 \}$ in the notation of~(\ref{1e1}).
Let $\mathcal M_0=\{\diag(a,b)\in \mathcal G\}$.
We choose the Haar measures $t^{-1}\d t$ on $\{ \diag(t^{-1},t)\in A_{\mathcal M_0}^+/A_{\mathcal G}^+ \}$ where $\d t$ is the Lebesgue measure on $\R$.
Normalizing a measure on $O_{u_1}(\mathcal G(F_S))$, the orbital integral $J_{\mathcal G}(u_1,f)$ is given by
\[ J_{\mathcal G}(u_1,f)=c_S\, \int_{\bK_S}\int_{F_S} f(k_S^{-1}u_{x_S}k_S) \,  \d x_S \, \d k_S \]
where $\d x_S$ (resp. $\d k_S$) is the Haar measure on $F_S$ (resp. $\bK_S$) defined in Section \ref{2s1} (resp. \ref{2s3}).
We set $\phi(x)=\int_\bK f(k^{-1}u_x k)\, \d k$.
Then, we have
\[ J_{\mathrm{unip}}^{T_0}(f)=\vol_{\mathcal G}f(1)+ \frac{\vol_{\mathcal M_0}}{c_F^{2}} \, \zeta_A^{\kappa,\iota}(\phi,1,\mathcal T_0) \]
where $A=F$, $H=\mathbb G_m$, $\iota=\mathrm{Id}$, and $\kappa_1=2$.
Here, $\mathrm{Id}$ denotes the identity map.
Thus, we have
\[ a^{\mathcal G}(S,u_1) = \frac{\vol_{\mathcal M_0}}{2 c_F}\, \fc_F(S). \]
\end{exa}
In the following examples, the test function $\phi$ is defined as in Example~\ref{3ex1}.
\begin{exa}\normalfont\label{3ex2}
Let $\mathcal G=\SL(2)$, $\mathcal M_0=\{\diag(a,a^{-1})\in \mathcal G \}$, and $\bK$ be the same group as $\bK$ in Section \ref{2s6}.
We take the Haar measure $\d t/t$ on $\{ \diag (t^{-1},t)\in A_{\mathcal M_0}^+ \}$.
We have $(\cU_{\mathcal G}(F))_{\mathcal G,S}=\{1, \,  u_\alpha \, | \, \alpha\in F^\times/(F^\times\cap(F_S^\times)^2) \}$.
The distribution $J_{\mathrm{unip}}^{T_0}(f)$ is equal to $\vol_{\mathcal G}f(1)+ c_F^{-1}\, \vol_{\mathcal M_0}\, \zeta_A^{\kappa,\iota}(\phi,1,\mathcal T_0)$ where $A=F$, $H=\{1\}$, and $\kappa_1=2$.
Now, we set
\[ J_{\mathcal G}(u_\alpha,f)=c_S \, \int_{\bK_S}\int_{\alpha(F_S^\times)^2} f(k_S^{-1}u_{x_S}k_S) \, \d x_S \, \d k_S \quad (\alpha\in F^\times). \]
Hence, we have
\[ a^{\mathcal G}(S,u_\alpha) = \frac{\vol_{\mathcal M_0}}{2\, c_F} \sum_\chi \chi_S(\alpha) \fc_F(S,\chi) \quad (\alpha\in F^\times)\]
where $\chi$ runs over all quadratic characters of $\A^1/F^\times$ unramified outside~$S$.
\end{exa}

In the following examples, for $u\in (\cU_{\mathcal G}(F))_{\mathcal G,S}$, measures $\d\mu$ on $O_u(\mathcal G(F_S))$ are determined by $\d k_S$ and $\d m_S$ similarly to Examples \ref{3ex1} and \ref{3ex2}.
\begin{exa}\normalfont\label{3ex4}
Let $E$ be a quadratic extension of $F$ and $\sigma$ be the non-trivial element in $\mathrm{Gal}(E/F)$.
We set
\[
\mathcal G=\mathrm{U}_{E/F}(1,1)=\Big\{ g\in R_{E/F}(\GL(2)) \, \Big| \,  g\begin{pmatrix}0&1 \\ 1&0 \end{pmatrix}\, \sigma(^t g)= \begin{pmatrix}0&1 \\ 1&0 \end{pmatrix} \Big\}
\]
and
\[
\mathcal M_0=\{ \diag(a^{-1},\sigma(a)) \, | \, a\in R_{E/F}(\mathbb G_m)  \}\subset \mathcal G.
\]
We choose a suitable maximal compact subgroup $\bK$ such that $T_0=0$ and the Haar measure $t^{-1}\d t$ on $\{ \diag(t^{-1},t)\in A_{\mathcal M_0}^+ \}$.
In this case, we have
\[
(\cU_{\mathcal G}(F))_{\mathcal G,S}=\{1, \,  u_\alpha   \, | \, \alpha \in F^\times/(F^\times\cap N_{E/F}(E_{S_E}^\times)) \}
\]
and $J_{\mathrm{unip}}^{T_0}(f)=\vol_{\mathcal G}f(1)+c_F^{-1} \, c_E^{-1} \, \vol_{\mathcal M_0} \,  \zeta_A^{\kappa,\iota}(\phi,1,\mathcal T_0)$ where $A=F$, $H=R_{E/F}(\mathbb G_m)$, $\iota=N_{E/F}$, and $\kappa_1=2$.
Let $P_0=\Big\{\begin{pmatrix}*&* \\ 0&* \end{pmatrix}\in \mathcal G \Big\}$.
Assume that $E=F(\omega)$ and $N_{P_0}(F_v)\cap \bK_v=\{ u_{x\omega} \, | \, x\in\fO_{F,v} \}$ $(\forall v\not\in S)$.
It follows from Theorem \ref{3t4} that
\[ a^{\mathcal G}(S,u_\alpha)= \frac{\vol_{\mathcal M_0}}{2\, c_F} \{ \fc_F(S)+ \chi_{E,S}(\alpha)  \fc_F(S,\chi_E) \, \delta_{E,S} \}, \]
where $\chi_E$ is the character of $\A^1/F^\times$ corresponding to $E$, $\delta_{E,S}=1$ if $\chi_{E,v}$ is unramified for every $v\not\in S$, and $\delta_{E,S}=0$ if there exists a place $v\not\in S$ such that $\chi_{E,v}$ is ramified.
\end{exa}
\begin{exa}\normalfont
Set
\[ \mathcal G=\{ g\in R_{E/F}(\GL(2)) \, | \, \det(g)\in\mathbb G_m \}
\]
and
\[
\mathcal M_0=\{ \diag(a^{-1},ab) \, | \, a\in R_{E/F}(\mathbb G_m) \, , \; b\in\mathbb G_m  \}.
\]
It is possible to choose a suitable maximal compact subgroup $\bK$ such that $T_0=0$.
We choose the Haar measure $t^{-1}\d t$ on $\{ \diag(t^{-1},t)\in A_{\mathcal M_0}^+/A_{\mathcal G}^+\}$.
Then, we have
\[
(\cU_{\mathcal G}(F))_{\mathcal G,S}=\{1, \,  u_\alpha   \, | \, \alpha \in E^\times/(E^\times\cap((E_{S_E}^\times)^2 F_S^\times)) \}
\]
and $J_{\mathrm{unip}}^{T_0}(f)=\vol_{\mathcal G}f(1)+c_F^{-1} \, c_E^{-1} \, \vol_{\mathcal M_0} \, \zeta_A^{\kappa,\iota}(\phi,1,\mathcal T_0)$ where $A=E$, $H=\mathbb G_m$, $\iota=\mathrm{Id}$, and $\kappa_1=2$.
Let $P_0=\Big\{\begin{pmatrix}*&* \\ 0&* \end{pmatrix}\in \mathcal G \Big\}$.
If we assume $N_{P_0}(F_v)\cap \bK_v=\{ u_x \, | \, x\in \oplus_{w|v}\fO_{E,w} \}$ $(\forall v\not\in S)$, then we find that
\[ a^{\mathcal G}(S,u_\alpha)= \frac{\vol_{\mathcal M_0}}{2 \, c_E} \sum_\chi \fc_E(S_E,\chi) \]
where $\chi$ runs over all quadratic characters on $\A_E^1/E^\times$ unramified outside~$S$ and such that $\chi|_{\A_F^1}=1$.
\end{exa}
\begin{exa}\label{3ex10}\normalfont
We set
\[
\mathcal G=\{ (g_1,g_2)\in \GL(2)\times\GL(2) \, | \, \det(g_1)=\det(g_2) \},
\]
\[
\mathcal M_1=\{ (\diag(a^{-1},ab),h)\, | \, a,b\in\mathbb G_m , \; h\in\GL(2), \; b=\det(h) \},\]
\[
\mathcal M_2=\{ (h,\diag(a^{-1},ab))\, | \, a,b\in\mathbb G_m , \; h\in\GL(2) , \; b=\det(h) \},
\]
and
\[
\mathcal M_0=\{ (\diag(a^{-1},ac),\diag(b^{-1},bc)) \, | \, a,b,c\in\mathbb G_m  \}.
\]
A maximal compact subgroup $\bK$ is defined by $\mathcal{G}(\A)\cap(\bK_2\times\bK_2)$, where $\bK_2$ is
given in Section~\ref{4s9}.
We fix the Haar measure $t^{-1}\d t$ on the groups $\{ (\diag(t^{-1},t),I_2)\in A_{\mathcal M_1}^+/A_{\mathcal G}^+\}$ and $\{(I_2,\diag(t^{-1},t))\in A_{\mathcal M_2}^+/A_{\mathcal G}^+\}$ and we also fix the Haar measure $t_1^{-1}t_2^{-1}\d t_1 \, \d t_2$ on the group $\{ (\diag(t_1^{-1},t_1),\diag(t_2^{-1},t_2))\in A_{\mathcal M_0}^+/A_{\mathcal G}^+ \}$.
We set $u_{\alpha,\beta}=(u_\alpha,u_\beta)$.
Test functions $\phi_1$, $\phi_2$, and $\phi_3$ are defined as $\phi_1(x)=\int_{\bK}f(k^{-1}u_{x,0}k)\d k$, $\phi_2(x)=\int_{\bK}f(k^{-1}u_{0,x}k)\d k$, and $\phi_3(x,y)=\int_{\bK}f(k^{-1}u_{x,y}k)\d k$.
We put $A'=F$, $H'=\mathbb G_m$, $\iota'=\mathrm{Id}$, and $\kappa'(m)=m^2$.
We also put $A=F\oplus F$, $H=\mathbb G_m$, $\iota(a)=(a,a)$, and $\kappa(m_1,m_2)=(m_1^2,m_2^2)$.
Then, we get
\[
(\cU_{\mathcal G}(F))_{\mathcal G,S}=\{1, \, u_{1,0}, \, u_{0,1} ,  \, u_{\alpha,1} \, | \, \alpha \in F^\times/(F^\times\cap(F_S^\times)^2)\}
\]
and
\begin{align*}
J_{\mathrm{unip}}^{T_0}(f) = &\, \vol_{\mathcal G}f(1)+ \frac{\vol_{\mathcal M_1}}{c_F^2}\zeta_{A'}^{\kappa',\iota',H'}(\phi_1,1,\mathcal T_0) \\
&+ \frac{\vol_{\mathcal M_2}}{c_F^2}\zeta_{A'}^{\kappa',\iota',H'}(\phi_2,1,\mathcal T_0) + \frac{\vol_{\mathcal M_0}}{c_F^3}\zeta_A^{\kappa,\iota}(\phi_3,(1,1),\mathcal T_0). 
\end{align*}
Thus, we see that
\[ a^{\mathcal G}(S,u_{1,0})=\frac{\vol_{\mathcal M_1}}{2 \, c_F}\, \fc_F(S), \quad  a^{\mathcal G}(S,u_{0,1})=\frac{\vol_{\mathcal M_2}}{2 \, c_F} \fc_F(S),\]
and
\[ a^{\mathcal G}(S,u_{\alpha,1})= \frac{\vol_{\mathcal M_0}}{4\, c_F^{2}}  \sum_\chi  \chi_S(\alpha)\,  \fc_F(S,\chi)^2 \]
where $\chi$ runs over all quadratic characters on $\A^1/F^\times$ unramified outside~$S$.
\end{exa}

\section{Shintani zeta function for the space of binary quadratic forms}\label{4s}

\subsection{Zeta integral}\label{4s1}

Let $F$ be an algebraic number field.
We define algebraic groups over $F$ by
\begin{align*}
& H'=\GL(2),  \qquad  G'=\GL(1)\times H' , \\
& H=H'/A_{H'}\,(=\PGL(2)), \qquad G=\GL(1)\times H. 
\end{align*}
We set
\[
V=\{ x\in \Mat(2) \, | \, x=\, ^tx   \}
\]
Note that $A_G=\GL(1)$ and $G=A_G\times H$.
We will consider the following right actions of $G'$ and $G$ on $V$:
\begin{equation}\label{aG'}
x\cdot g'= a\, ^th'x\,h' \, ,\quad x\in V \, , \; g'=(a,h')\in G'  \, , \; a\in \GL(1) \, , \; h'\in H' 
\end{equation}
and
\begin{equation}\label{aG}
x\cdot g= \frac{a}{\det(h)}\, ^thx\,h \, ,\quad x\in V \, , \; g=(a,h)\in G  \, , \; a\in \GL(1) \, , \; h\in H. 
\end{equation}
A homomorphism $G'\to G$ is defined by
\[
\varrho(a,h')=(a \, \det(h'),h'A_{H'}).
\]
Then, $\varrho$ is compatible with the actions \eqref{aG'} and \eqref{aG}, i.e.,
\[
x\cdot g'=x\cdot \varrho(g')\quad (x\in V\, , \;g'\in G').
\]
The pairs $(G',V)$ and $(G,V)$ are prehomogeneous vector spaces.
Their prehomogeneous zeta integrals are essentially the same since we can relate them by the homomorphism $\varrho$.
In Section \ref{5s1}, we introduce a parabolic subgroup $P_1$ of $\GSp(2)$ and its Levi subgroup $M_1$.
The group $M_1$ is identified with $G'$ by the isomorphism
\begin{equation}\label{M1}
M_1 \ni \begin{pmatrix}^th^{\prime -1}&O_2 \\ O_2& ah' \end{pmatrix}\mapsto (a,h')\in G'.
\end{equation}
The action \eqref{aG'} is also identified with the adjoint action of $M_1$ on $N_{P_1}$.
Hence, a zeta integral defined by \eqref{aG'} naturally appears in the subregular unipotent contribution of $\GSp(2)$.
On the other hand, the action \eqref{aG} is faithful and stabilizers of generic points satisfy certain conditions (cf. Lemmas \ref{4l2} and \ref{4l4}) which are required in Saito's formulation \cite{Saito2}.
This will enable us to apply the results of \cite{Saito1,Saito2,Saito3} directly to the zeta integral corresponding to the action~\eqref{aG}.
After that, we will relate it to the zeta integrals associated to the action~\eqref{aG'} which arise naturally in the subregular contributions of $\GSp(2)$ and~$\Sp(2)$.

We introduce a zeta integral defined by the action~\eqref{aG}.
Let $\d a$ denote a Haar measure on $A_G(\A)$ and let $\d h$ denote a Haar measure on $H(\A)$.
Then, $\d g=\d a \, \d h$ is a Haar measure on $G(\A)$.
We can identify $\a_G$ with $\R$ such that $H_G((a,h))=\log|a|$ for $(a,h)\in G(\A)$.
We choose the Haar measure on $\a_G$ induced from the Lebesgue measure on $\R$ by the identification.
Furthermore, a Haar measure $\d^1 a$ (resp. $\d^1 g$) on $A_G(\A)^1$ (resp. $G(\A)^1$) is induced from $\d a$ (resp. $\d g$) and the Lebesgue measure.
We have $\d^1 g=\d^1 a \, \d h$.
We set
\[ 
V^\ss=\{  x\in V \, | \, \det(x)\neq 0  \}
\]
(ss stands for ``semi-stable") and
\[
V^\st(F)=\{  x\in V^\ss(F) \, | \, -\det(x)\not\in(F^\times)^2  \}
\]
(st stands for ``strictly semi-stable").
Let $\chi$ be a quadratic character of $\A^1/F^\times$.
We set $\chi(g)=\chi(a \, \det(h^{-1}))$ and $\omega(g)=a^2$ for $g=(a,h)\in G(\A)$.
We define a zeta integral by
\[
Z(\Phi,s,\chi)=\int_{G(F)\bsl G(\A)} |\omega(g)|^s \, \chi(g)  \sum_{x\in V^\st(F)}\Phi(x\cdot g)\, \d g
\]
for $s\in\C$ and $\Phi\in \cS(V(\A))$.
\begin{lem}\label{4l1}
The prehomogeneous zeta integral $Z(\Phi,s,\chi)$ converges absolutely for $\Re(s)>3/2$ and defines a holomorphic function in this range.
\end{lem}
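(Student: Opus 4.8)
The plan is to prove absolute convergence by a reduction‑theory estimate of the theta series attached to $V^\st(F)$; the point on which everything turns is that passing from $V^\ss$ to $V^\st$, i.e.\ discarding the forms with $-\det(x)\in(F^\times)^2$, removes exactly the split orbits whose contribution would make the bare integral diverge. (One could instead verify the conditions on stabilizers of semi‑stable points — this is the content of Lemmas~\ref{4l2} and~\ref{4l4} — and quote a general convergence theorem for zeta integrals of prehomogeneous vector spaces; but the direct estimate is short, so I would give it.)

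Fix $\sigma=\Re(s)$. Since the summand is non‑negative once absolute values are taken, it is enough to bound $\int_{G(F)\bsl G(\A)}|\omega(g)|^{\sigma}\,\Theta(g)\,\d g$ with $\Theta(g)=\sum_{x\in V^\st(F)}|\Phi|(x\cdot g)$. As $G=A_G\times H$ with $A_G=\GL(1)$ and $H=\PGL(2)$, I would cover $G(F)\bsl G(\A)$ with bounded multiplicity by a Siegel set $\mathfrak S=\omega_1\,A_G^+\,\mathfrak S_H$, where $\omega_1\subset\A^1/F^\times$ is compact, $A_G^+\cong\{t_0>0\}$ is the positive ray in $A_G(F_\infty)$, and $\mathfrak S_H=\omega_N\,A_0^+\,\bK$ is a Siegel set for $H(F)\bsl H(\A)$ with $\omega_N\subset N(\A)$ compact, $A_0^+=\{\,\diag(t,t^{-1})\bmod\text{center}\mid t\ge u_0\,\}\subset H(F_\infty)$, and $\bK$ maximal compact. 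Using that $V^\st(F)$ is stable under $H(F)$, and that translation by the fixed compact sets $\omega_1$, $\omega_N$, $\bK$ and the finite‑place data of $\Phi$ can be absorbed into one majorant $\Psi$ — Schwartz at the infinite places, the characteristic function of a lattice at the finite ones — this reduces to estimating $\sum_{x\in V^\st(F)}\Psi\bigl(x\cdot(t_0,\diag(t,1))\bigr)$ for $t_0>0$ and $t\ge u_0$.

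Writing $x=\begin{pmatrix}p&q\\q&r\end{pmatrix}$, the twisted point has archimedean coordinates of size $\asymp t_0t\,|p|,\ t_0\,|q|,\ t_0t^{-1}|r|$, so $\Psi$ is non‑zero only if $x$ lies in a fixed lattice of $V(F)$ meeting a box $B$ with sides $\asymp(t_0t)^{-1},\ t_0^{-1},\ t/t_0$ of volume $\asymp t_0^{-3}$. Whenever one of these sides is smaller than a fixed constant, the product formula forces the corresponding coordinate of any $x\in V(F)\cap B$ to vanish; and $p=0$ gives $-\det(x)=q^2\in(F^\times)^2$, hence $x\notin V^\st(F)$. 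It follows that $\#\bigl(V^\st(F)\cap B\bigr)\lesssim t_0^{-3}$ uniformly in $t\ge u_0$, and moreover $\Theta(g)=0$ once $t_0$ exceeds a constant (for $t_0$ large the $q$‑coordinate, then the $p$‑coordinate, are forced to zero, leaving only degenerate $x$). Since the invariant measure on $\mathfrak S_H$ carries the factor $e^{-2\rho_P(H_P(\diag(t,t^{-1})))}=t^{-2}$ of the modular character of the Borel, one gets
\[
\int_{G(F)\bsl G(\A)}|\omega(g)|^{\sigma}\,\Theta(g)\,\d g\ \lesssim\ \Bigl(\int_{u_0}^{\infty}t^{-3}\,\d t\Bigr)\int_{0}^{c}t_0^{2\sigma}\,t_0^{-3}\,\frac{\d t_0}{t_0}\ \asymp\ \int_{0}^{c}t_0^{2\sigma-4}\,\d t_0,
\]
which is finite exactly when $\sigma>3/2$.

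Holomorphy in $\{\Re(s)>3/2\}$ is then immediate: the bound above is locally uniform in $s$ (replace $\sigma$ by its infimum over a compact subdomain, still $>3/2$), so the integral defining $Z(\Phi,s,\chi)$ converges locally uniformly while its truncations are holomorphic, whence so is $Z(\Phi,s,\chi)$. The main obstacle is making the estimate $\#\bigl(V^\st(F)\cap B\bigr)\lesssim t_0^{-3}$ and the vanishing of $\Theta$ for large $t_0$ genuinely uniform: this is where one must carefully exploit the exclusion of the square‑discriminant forms (equivalently, anisotropy of all the stabilizers that occur) and control, uniformly in $t_0$ and $t$, the absorption of the finite‑place data of $\Phi$ and of the compact sets $\omega_1,\omega_N,\bK$.
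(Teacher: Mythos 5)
Your argument is correct, but it takes a different route from the paper. The paper's own proof of Lemma~\ref{4l1} is a two-line citation: it invokes Saito's general convergence theorem for prehomogeneous zeta integrals (\cite[Theorem~1.1]{Saito3}) together with Lemma~\ref{4l4} (triviality of $X(G_{\xd_d})_F$ for $d\notin(F^\times)^2$, i.e.\ exactly the anisotropy of generic stabilizers you allude to parenthetically) to get absolute convergence for $\Re(s)\gg0$, and then defers the sharp abscissa $3/2$ to the Poisson-summation/Siegel-domain estimates carried out in the proof of Theorem~\ref{6t2}. So the ``quote a general theorem'' alternative you mention and set aside is precisely what the paper does. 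Your direct lattice-point count is more self-contained and makes visible why the abscissa is $3/2$: the box has covolume $\asymp t_0^{-3}$, the $t$-dependence cancels, and the exclusion of $p=0$ (equivalently of $-\det(x)\in(F^\times)^2\cup\{0\}$) kills the split contribution that would otherwise force a larger half-plane. Two points deserve more care in a written-up version: (i) $\Phi$ is only Schwartz at the archimedean places, so the ``box'' must be replaced by a rapidly weighted sum of dyadic boxes (routine, but it is where uniformity in $t_0,t$ is actually checked); (ii) your parametrization is inconsistent by a factor of two --- you compute the action with $\diag(t,1)$ but use the modular factor $t^{-2}$ of $\diag(t,t^{-1})$ --- though since the $t$-integral converges for either normalization this does not affect the conclusion. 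The holomorphy argument via locally uniform convergence is fine and matches what the paper implicitly uses.
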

\begin{proof}
It follows from \cite[Theorem 1.1]{Saito3} and Lemma \ref{4l4} that $Z(\Phi,s,\chi)$ is absolutely convergent if $\Re(s)$ is sufficiently large.
Thus, we can easily prove this lemma using the proof of Theorem~\ref{6t2} below.
\end{proof}

\subsection{Orbits and Stabilizers}\label{4s2}

We need some lemmas in order to apply Saito's formula \cite{Saito2,Saito3} to $Z(\Phi,s,\chi)$.
Lemma~\ref{4l6} appears in \cite[p.597]{Saito2}.
Note that the Hasse principle, which is required for Saito's formula, holds for $G$ (cf. \cite{PR}).

Let $\ds$ denote the element of $F^\times/(F^\times)^2$ represented by $d\in F^\times$.
For $\alpha\in \mathbb{G}_m$, we set
\begin{equation}\label{xd}
\xd_\alpha=\begin{pmatrix}1&0 \\ 0&-\alpha \end{pmatrix}\in V^\ss \, .
\end{equation}
For each $x\in V(F)$, we put $G_{x,+}=\{g\in G \, | \, x\cdot g= x\}$.
Let $G_x$ denote the connected component of $1$ in $G_{x,+}$.
Let $H^1(F,\mathcal G)$ denote the first Galois cohomology set for an algebraic group $\mathcal G$ over $F$ (see, e.g., \cite[Section 1.3]{PR}).
We set $\Pi_d=G_{\xd_d}\bsl G_{\xd_d,+}$ for any $d\in F^\times$.
The following two lemmas can be proved by direct calculation.
\begin{lem}\label{4l2}
Let $d$ be an element of $F^\times$.
Then, we have
\[
G_{\xd_d,+}=\left\{ ((-1)^k \, , \, \begin{pmatrix}(-1)^k&0 \\ 0&1 \end{pmatrix} \begin{pmatrix}\alpha& d \beta \\ \beta& \alpha \end{pmatrix} ) \in  G \, \Bigl| \,\begin{array}{c} k=0\text{ or }1  , \\  \alpha^2-d\beta^2\neq 0 \end{array} \right\} . 
\]
If $d\in (F^\times)^2$, then $G_{\xd_d} \cong  \mathbb{G}_m$ over $F$. 
Assume $d \not\in (F^\times)^2$ and let $L=F(\sqrt{d})$.
Then $G_{\xd_d} \cong \mathbb{G}_m \bsl R_{L/F}(\mathbb{G}_m)$ over $F$.
Note that $G_{\xd_d} \cong R_{L/F}^{(1)}(\mathbb{G}_m)=\{ y\in R_{L/F}(\mathbb{G}_m) \, | \, N_{L/F}(y)=1 \}$ over $F$ (cf. \cite[p.76]{PR}).
For any field $k\supset F$ and any $d\in F^\times$, we have $|\Pi_d(k)|=2$.
\end{lem}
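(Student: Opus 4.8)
The plan is to prove all parts of the lemma by an explicit solution of the stabilizer equation, followed by a short structural identification of the group so obtained; no deep input is needed beyond the description of the action~\eqref{aG} and Hilbert's Theorem~90.

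\textbf{Step 1: the group $G_{\xd_d,+}$.} Write a point of $H=\PGL(2)$ as the projective class of $h=\begin{pmatrix}p&q\\r&s\end{pmatrix}$ with $ps-qr\ne 0$, and a point of $G$ as $g=(a,h)$ with $a\in\GL(1)$. Since $\xd_d=\begin{pmatrix}1&0\\0&-d\end{pmatrix}$, the condition $\xd_d\cdot g=\xd_d$ coming from~\eqref{aG} is the matrix identity
\[
\frac{a}{ps-qr}\,{}^th\,\xd_d\,h=\xd_d,\qquad {}^th\,\xd_d\,h=\begin{pmatrix}p^2-dr^2 & pq-drs\\ pq-drs & q^2-ds^2\end{pmatrix}.
\]
The vanishing of the off-diagonal entry forces $pq=drs$. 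I would then split into the cases $r=0$ and $r\ne 0$ (scaling $r$ to $1$ in the latter); in each case the two diagonal equations fix $a$ and force, up to the projective class of $h$, either $a=1$ and $h\sim\begin{pmatrix}\alpha&d\beta\\\beta&\alpha\end{pmatrix}$, or $a=-1$ and $h\sim\begin{pmatrix}-1&0\\0&1\end{pmatrix}\begin{pmatrix}\alpha&d\beta\\\beta&\alpha\end{pmatrix}$, with $\det h=\pm(\alpha^2-d\beta^2)\ne 0$. Conversely, the identity ${}^th\,\xd_d\,h=(\alpha^2-d\beta^2)\,\xd_d$, which holds for both families by a one-line check, shows that every such $g$ stabilizes $\xd_d$. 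This yields the displayed description of $G_{\xd_d,+}$.

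\textbf{Step 2: connected component and component group.} Because $\begin{pmatrix}0&d\\1&0\end{pmatrix}^2=d\,I_2$, the matrices $\begin{pmatrix}\alpha&d\beta\\\beta&\alpha\end{pmatrix}$ are exactly the image in $\GL(2)$ of the unit group of the quadratic \'etale $F$-algebra $L=F[x]/(x^2-d)$ (a field $F(\sqrt d)$ when $d\notin(F^\times)^2$, and $F\times F$ otherwise). Hence the $k=0$ family is the image of the connected group $R_{L/F}(\mathbb{G}_m)$ under the quotient by its central $\mathbb{G}_m$, so $G_{\xd_d}\cong\mathbb{G}_m\bsl R_{L/F}(\mathbb{G}_m)$; this is $\mathbb{G}_m$ when $d$ is a square (the torus is then split), and in general the map $y\mapsto y\,\sigma(y)^{-1}$ (surjective onto the norm-one torus, with kernel $\mathbb{G}_m$, by Hilbert's Theorem~90) identifies it with $R^{(1)}_{L/F}(\mathbb{G}_m)$. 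Since this family is connected and contains $1$, while the $k=1$ family has $a=-1\ne 1$ and is therefore a disjoint coset, $G_{\xd_d}$ is precisely the $k=0$ component and $\Pi_d=G_{\xd_d}\bsl G_{\xd_d,+}$ is a group of order $2$. Finally $(-1,\diag(-1,1))$ lies in the $k=1$ family and is defined over $F$, so $\Pi_d$ is the constant group scheme $\Z/2\Z$, whence $|\Pi_d(k)|=2$ for every field $k\supset F$.

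Everything here is elementary; the only points needing a little care are the bookkeeping in the $r=0$ versus $r\ne 0$ case split for the stabilizer equation, and — in the non-square case — checking that $\mathbb{G}_m\bsl R_{L/F}(\mathbb{G}_m)$ has the expected $F$-points $F^\times\bsl L^\times$, which is once more Hilbert~90. I do not anticipate a genuine obstacle.
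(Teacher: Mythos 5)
Your proposal is correct, and it is essentially the argument the paper intends: the paper disposes of Lemma~\ref{4l2} with the single remark that it ``can be proved by direct calculation,'' and your explicit solution of the stabilizer equation for the action~\eqref{aG}, together with the identification of the $k=0$ component as $\mathbb{G}_m\bsl R_{L/F}(\mathbb{G}_m)\cong R^{(1)}_{L/F}(\mathbb{G}_m)$ via Hilbert~90 and the observation that the $k=1$ coset contains the $F$-point $(-1,\diag(-1,1))$, is exactly that calculation carried out. No gaps; the $r=0$ versus $r\neq 0$ bookkeeping you flag does close as you expect.
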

\begin{lem}\label{4l3}
There is a one-to-one correspondence between $F^\times/(F^\times)^2$ and the set of $G(F)$-orbits in $V^\ss(F)$, which is given by
\[
\ds\in F^\times/(F^\times)^2 \longrightarrow \xd_d \cdot G(F)\in  V^\ss(F)/G(F) \, .
\]
For $d$, $d_1\in F^\times$ and $\sigma\in \Gal(\bar F/F)$, let
\[
z_{d,d_1}(\sigma)=\frac{\bigl(\sqrt{d/d_1}\,\bigr)^\sigma}{\sqrt{d/d_1}}.
\]
Here, $\sqrt{d/d_1}$ is identified with $g=(\sqrt{d/d_1},\diag(1,\sqrt{d/d_1}))\in G$.
Then the $1$-cocycle $z_{d,d_1}$ is trivial in $H^1(F,\Pi_d)$ if and only if  $\check d_1=\check d$.
Hence, we have a bijection $F^\times/(F^\times)^2  \to H^1(F,\Pi_d)$, $\check d_1 \mapsto[z_{d,d_1}]$.
\end{lem}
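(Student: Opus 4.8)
The plan is to establish the two assertions of Lemma~\ref{4l3} separately: the orbit classification by elementary manipulations of binary quadratic forms, and the cohomological statement by Kummer theory; this is the ``direct calculation'' referred to above, and it avoids the general orbit--cohomology dictionary (which here would involve the nontrivial set $H^{1}(F,\PGL(2))$).

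\emph{Orbit classification.} For $g=(a,h)\in G$ acting by~\eqref{aG} one has $\det(x\cdot g)=a^{2}\det(x)$, so the class of $\det(x)$ in $F^{\times}/(F^{\times})^{2}$ is a $G(F)$-invariant of $x\in V^{\ss}(F)$. Conversely, diagonalising the quadratic form attached to $x$ and then rescaling by a suitable element of $A_{G}(F)$ shows that $x$ is $G(F)$-equivalent to $\diag(1,t)$ for some $t\in F^{\times}$ with $\check t$ equal to the class of $\det(x)$. Hence two elements of $V^{\ss}(F)$ lie in one $G(F)$-orbit if and only if their determinants agree modulo $(F^{\times})^{2}$; since $\det(\xd_{d})=-d$ by~\eqref{xd}, this makes $\check d\mapsto\xd_{d}\cdot G(F)$ a well-defined bijection onto $V^{\ss}(F)/G(F)$.

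\emph{The cohomology of $\Pi_{d}$.} By Lemma~\ref{4l2} we have $|\Pi_{d}(\bar F)|=2=|\Pi_{d}(F)|$, so the order-two finite group scheme $\Pi_{d}$ is the constant group $\Z/2\Z$ over $F$ (its nontrivial $F$-point is the image of $\bigl((-1),\diag(-1,1)\bigr)\in G_{\xd_{d},+}(F)$, which is the case $k=1,\ \alpha=1,\ \beta=0$ of Lemma~\ref{4l2}). Therefore $H^{1}(F,\Pi_{d})=\Hom_{\mathrm{cont}}(\Gal(\bar F/F),\Z/2\Z)$, which Kummer theory identifies with $F^{\times}/(F^{\times})^{2}$, the class $\check t$ corresponding to the cocycle $\sigma\mapsto\sigma(\sqrt t)/\sqrt t$. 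Now put $g=\bigl(\sqrt{d/d_{1}},\,\diag(1,\sqrt{d/d_{1}})\bigr)\in G(\bar F)$; a direct computation with~\eqref{aG} gives $\xd_{d}\cdot g=\diag(1,-d^{2}/d_{1})$, which lies in $V^{\ss}(F)$ and, by the orbit classification above, in the $G(F)$-orbit of $\xd_{d_{1}}$. Since $\xd_{d}\cdot g$ is $F$-rational, $\sigma\mapsto\sigma(g)g^{-1}$ is a $1$-cocycle with values in $G_{\xd_{d},+}(\bar F)$, and comparing with Lemma~\ref{4l2} one sees that its image in $\Pi_{d}(\bar F)=\{\pm1\}$ is exactly $z_{d,d_{1}}(\sigma)=\sigma(\sqrt{d/d_{1}})/\sqrt{d/d_{1}}$ — the $G$-element $(\eta,\diag(1,\eta))$ lies in the identity component $G_{\xd_{d}}$ when $\eta=1$ and in the other component when $\eta=-1$. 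Under the Kummer isomorphism, $[z_{d,d_{1}}]$ corresponds to the class of $d/d_{1}$, equivalently of $dd_{1}$, which is trivial precisely when $\check d_{1}=\check d$; and $\check d_{1}\mapsto[z_{d,d_{1}}]$ is the translation $\check d_{1}\mapsto\check d\,\check d_{1}$ followed by the Kummer isomorphism, hence a bijection $F^{\times}/(F^{\times})^{2}\to H^{1}(F,\Pi_{d})$.

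\emph{Main obstacle.} The one genuine input is the completeness of the determinant invariant in the first step — i.e.\ the classical normal form $\langle 1\rangle\perp\langle\mathrm{disc}\rangle$ for a nondegenerate binary quadratic form over a field — after which everything is cocycle bookkeeping; the care there lies in checking against Lemma~\ref{4l2} that $z_{d,d_{1}}$ genuinely lands in $\Pi_{d}$ and not in a larger subgroup of $G_{\xd_{d},+}$, and in pinning down the correct square class ($d/d_{1}$, i.e.\ $dd_{1}$ modulo squares).
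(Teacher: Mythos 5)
Your proof is correct. The paper gives no written argument for Lemmas \ref{4l2} and \ref{4l3} beyond the remark that they "can be proved by direct calculation," and your verification — the discriminant-modulo-squares classification of $G(F)$-orbits on $V^\ss(F)$, the observation that $\Pi_d$ is constant $\Z/2\Z$ (with the nontrivial component detected by the $\GL(1)$-coordinate $(-1)^k$ of Lemma \ref{4l2}), and the Kummer-theoretic identification sending $[z_{d,d_1}]$ to the class of $d/d_1$ — is precisely such a direct calculation, so it matches the paper's intended approach.
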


For an algebraic group $\mathcal G$, we denote by $X(\mathcal G)$ the group of the characters of $\mathcal G$.
\begin{lem}\label{4l4}
Let $d\in F^\times$ and $L=F(\sqrt{d})$.
The group $X(G_{\xd_d})$ is the free $\Z$-module $\Z \langle \omega \rangle$, where $\omega$ is defined by
\[
\omega \, : \, \mathbb{G}_m \begin{pmatrix}\alpha & d \beta \\ \beta & \alpha \end{pmatrix}  \; \longrightarrow \; \frac{\alpha+\beta \sqrt{d} }{\alpha-\beta\sqrt{d}} \, .
\]
If $d\in(F^\times)^2$, then $\omega$ is defined over $F$ and $X(G_{\xd_d})_F \neq \{ 1 \}$.
If $d\not\in(F^\times)^2$, then $\omega$ is defined over $L$ and $X(G_{\xd_d})_F= \{ 1 \}$.
\end{lem}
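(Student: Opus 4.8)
The plan is to deduce everything from the explicit description of $G_{\xd_d}$ furnished by Lemma~\ref{4l2} and then argue by a direct computation. By that lemma, the identity component $G_{\xd_d}$ consists precisely of the pairs $\bigl(1,\begin{pmatrix}\alpha&d\beta\\\beta&\alpha\end{pmatrix}A_{H'}\bigr)$ with $\alpha^2-d\beta^2\ne0$; in particular the $\GL(1)$-coordinate is constantly $1$ on $G_{\xd_d}$, so one may identify $G_{\xd_d}$ with the image in $H=\PGL(2)$ of the commutative matrix group $\bigl\{\begin{pmatrix}\alpha&d\beta\\\beta&\alpha\end{pmatrix}\mid\alpha^2-d\beta^2\ne0\bigr\}$, and the whole question takes place inside $\PGL(2)$.

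First I would check that $\omega$ is a well-defined character. From the product rule $\begin{pmatrix}\alpha_1&d\beta_1\\\beta_1&\alpha_1\end{pmatrix}\begin{pmatrix}\alpha_2&d\beta_2\\\beta_2&\alpha_2\end{pmatrix}=\begin{pmatrix}\alpha'&d\beta'\\\beta'&\alpha'\end{pmatrix}$ with $\alpha'\pm\beta'\sqrt{d}=(\alpha_1\pm\beta_1\sqrt{d})(\alpha_2\pm\beta_2\sqrt{d})$, it follows that $\begin{pmatrix}\alpha&d\beta\\\beta&\alpha\end{pmatrix}\mapsto\frac{\alpha+\beta\sqrt{d}}{\alpha-\beta\sqrt{d}}$ is multiplicative, and it sends scalar matrices to $1$, so it descends to a character $\omega$ of $G_{\xd_d}$. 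Next I would show $\omega\colon G_{\xd_d}\to\mathbb{G}_m$ is an isomorphism over $\bar F$ (equivalently over $L$): its kernel is represented by the matrices with $\beta=0$, i.e. by scalars, so $\omega$ is injective; and it is surjective because, given $t\ne0$, the matrix with $\alpha=1+t$ and $\beta\sqrt{d}=t-1$ satisfies $\alpha-\beta\sqrt{d}=2\ne0$, hence lies in $G_{\xd_d}$ and is mapped by $\omega$ to $t$. As both sides are one-dimensional tori this bijective morphism is an isomorphism, so $X(G_{\xd_d})=\Z\langle\omega\rangle$, the pullback of $X(\mathbb{G}_m)$.

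It remains to track the field of definition. Since $\omega$ only involves $\sqrt{d}$, it is defined over $F(\sqrt{d})=L$. If $d\in(F^\times)^2$ then $L=F$, so $\omega$ — and with it the isomorphism $G_{\xd_d}\xrightarrow{\ \sim\ }\mathbb{G}_m$ — is defined over $F$, whence $X(G_{\xd_d})_F=\Z\langle\omega\rangle\ne\{1\}$. If $d\notin(F^\times)^2$, the nontrivial element $\sigma$ of $\Gal(L/F)$ sends $\sqrt{d}$ to $-\sqrt{d}$, hence $\omega$ to $\frac{\alpha-\beta\sqrt{d}}{\alpha+\beta\sqrt{d}}=\omega^{-1}$; thus $\sigma$ acts on $X(G_{\xd_d})=\Z\langle\omega\rangle$ by $-1$. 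An $F$-rational character must be $\Gal(\bar F/F)$-invariant, so $n\omega=-n\omega$, forcing $n=0$; therefore $X(G_{\xd_d})_F=\{1\}$. This is consistent with the description $G_{\xd_d}\cong R_{L/F}^{(1)}(\mathbb{G}_m)$ from Lemma~\ref{4l2}, which is an $F$-anisotropic torus.

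I do not expect a genuine obstacle; the statement is a direct calculation. The only points I would take care over are confirming that the $\GL(1)$-coordinate is trivial on the identity component (so that the computation really happens inside $\PGL(2)$) and checking that $\omega$ generates the whole character lattice and not a proper subgroup — which is why I would verify surjectivity of $\omega$ explicitly rather than merely observe that $\omega$ is a nonzero character. Granted these, the Galois-descent step settles the non-split case at once.
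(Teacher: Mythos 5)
Your proposal is correct and follows the same route as the paper, which simply deduces the lemma from the explicit description of $G_{\xd_d}$ in Lemma~\ref{4l2}; you have merely written out the details (multiplicativity and bijectivity of $\omega$, and the Galois action $\sigma(\omega)=\omega^{-1}$ forcing $X(G_{\xd_d})_F=\{1\}$ in the non-split case) that the paper leaves implicit. No gaps.
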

\begin{proof}
This follows from Lemma \ref{4l2}.
\end{proof}
We set $X_*(\mathcal G)=\Hom(\mathbb{G}_m,\mathcal G)$ for an algebraic group $\mathcal G$.
\begin{lem}\label{4l5}
Let $d\in F^\times$.
The group $X_*(G_{\xd_d})$ is the free $\Z$-module $\Z \langle \omega^* \rangle$, where $\omega^*$ is defined by
\[
\omega^* \, : \,  \alpha   \; \longrightarrow \; \mathbb{G}_m \begin{pmatrix}(\alpha+1)/2 & d(\alpha-1)/(2\sqrt{d}) \\ (\alpha-1)/(2\sqrt{d}) & (\alpha+1)/2 \end{pmatrix} \, .
\]
\end{lem}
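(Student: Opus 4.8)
The plan is to compute $X_*(G_{\xd_d})$ directly from the explicit description of $G_{\xd_d,+}$ in Lemma~\ref{4l2}. By that lemma, the identity component $G_{\xd_d}$ is the group of pairs $\bigl(1,\begin{pmatrix}\alpha&d\beta\\\beta&\alpha\end{pmatrix}\bigr)\in G$ with $\alpha^2-d\beta^2\neq 0$, which is isomorphic over $\bar F$ (indeed over $L=F(\sqrt d)$) to $\mathbb G_m$ via the diagonalizing change of basis $\begin{pmatrix}\alpha&d\beta\\\beta&\alpha\end{pmatrix}\mapsto\alpha+\beta\sqrt d$. Since $X_*(\mathbb G_m)=\Z$, it follows abstractly that $X_*(G_{\xd_d})$ is free of rank one; the content of the lemma is to exhibit the canonical generator explicitly and check it is defined over $F$.

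First I would record the isomorphism explicitly. Write an element of $G_{\xd_d}$ over $\bar F$ in the eigenbasis: conjugating by $\begin{pmatrix}1&-\sqrt d\\1&\sqrt d\end{pmatrix}$ (or its inverse, as appropriate) sends $\begin{pmatrix}\alpha&d\beta\\\beta&\alpha\end{pmatrix}$ to $\diag(\alpha+\beta\sqrt d,\alpha-\beta\sqrt d)$. A cocharacter $\mathbb G_m\to G_{\xd_d}$ is determined by its composite with $\omega$ of Lemma~\ref{4l4}, which identifies $G_{\xd_d}$ with $\mathbb G_m$ via $\begin{pmatrix}\alpha&d\beta\\\beta&\alpha\end{pmatrix}\mapsto\frac{\alpha+\beta\sqrt d}{\alpha-\beta\sqrt d}$; since $\omega\circ X_*$ lands in $X_*(\mathbb G_m)=\Z$, the cocharacters of $G_{\xd_d}$ are exactly the powers of the one that maps to the identity character of $\mathbb G_m$ under $\omega$. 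Solving $\frac{\alpha+\beta\sqrt d}{\alpha-\beta\sqrt d}=\alpha'$ for $\alpha,\beta$ in terms of $\alpha'$ gives $\alpha=\tfrac{\alpha'+1}{2}$ and $\beta\sqrt d=\tfrac{\alpha'-1}{2}$, i.e.\ $\beta=\tfrac{\alpha'-1}{2\sqrt d}$, which is precisely the formula for $\omega^*$ in the statement (after renaming $\alpha'$ to $\alpha$).

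Next I would verify that $\omega^*$ is defined over $F$, i.e.\ is Galois-stable, even though a priori it involves $\sqrt d$. The entries of $\omega^*(\alpha)$ are $\tfrac{\alpha+1}{2}$ on the diagonal and $\tfrac{d(\alpha-1)}{2\sqrt d}$, $\tfrac{\alpha-1}{2\sqrt d}$ off-diagonal. Although $\sqrt d\notin F$ when $d\notin(F^\times)^2$, the matrix as a whole has entries $\tfrac{\alpha+1}{2}$, $\tfrac{\sqrt d(\alpha-1)}{2}$, $\tfrac{(\alpha-1)}{2\sqrt d}$: the point is that the \emph{conjugacy class of matrices} $\begin{pmatrix}\alpha&d\beta\\\beta&\alpha\end{pmatrix}$ that constitute $G_{\xd_d}(\bar F)$ is cut out over $F$ by the equations defining $G_{\xd_d}$, and the map $\omega^*$, being inverse to the $F$-rational quotient-type parametrization $(\alpha,\beta)\mapsto\frac{\alpha+\beta\sqrt d}{\alpha-\beta\sqrt d}$ composed with the $F$-rational form $\alpha^2-d\beta^2$, descends to $F$; concretely, one checks that applying the nontrivial element of $\Gal(L/F)$ (which sends $\sqrt d\mapsto-\sqrt d$) fixes $\omega^*(\alpha)$ as a matrix because the off-diagonal entries get multiplied by $-1$ simultaneously in both positions while the product $d\beta\cdot\beta$ structure is preserved — equivalently, $\omega^*$ is the cocharacter dual to the $F$-rational generator $\omega^2=\Nm$ of a suitable character lattice. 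I expect the only real obstacle to be this descent/rationality bookkeeping: making fully precise why the explicitly $\sqrt d$-laden formula nonetheless defines an $F$-morphism $\mathbb G_m\to G_{\xd_d}$. This is handled cleanly by noting that $\omega^*$ is characterized intrinsically as the unique generator of $X_*(G_{\xd_d})$ with $\langle\omega,\omega^*\rangle=1$ in the case $d\in(F^\times)^2$, and then invoking that $X_*$ is a Galois-module invariant unchanged under the form twist, so the same formula persists over $F$ for all $d$; alternatively one simply verifies the matrix identity by hand. Finally, that $\omega^*$ generates (not merely lies in) $X_*(G_{\xd_d})$ follows because $\omega\circ\omega^*=\mathrm{id}_{\mathbb G_m}$, so $\omega^*$ is a section of the identification $X_*(G_{\xd_d})\xrightarrow{\sim}\Z$, hence a $\Z$-basis.
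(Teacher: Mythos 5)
Your core computation is correct and is exactly the ``direct calculation'' the paper has in mind: over $L=F(\sqrt d)$ the torus $G_{\xd_d}$ is identified with $\mathbb G_m$ via the character $\omega$ of Lemma~\ref{4l4}, solving $\frac{\alpha+\beta\sqrt d}{\alpha-\beta\sqrt d}=\alpha'$ with the normalization $\alpha-\beta\sqrt d=1$ gives precisely the matrix in the statement, and $\langle\omega,\omega^*\rangle=1$ shows $\omega^*$ is a $\Z$-basis of $X_*(G_{\xd_d})\cong\Z$.

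However, the paragraph in which you assert and ``verify'' that $\omega^*$ is defined over $F$ is wrong, and the error is not cosmetic. For $d\notin(F^\times)^2$ the group $G_{\xd_d}\cong R^{(1)}_{L/F}(\mathbb G_m)$ is an anisotropic $F$-torus, so it admits \emph{no} nontrivial $F$-rational cocharacter; correspondingly, the nontrivial element $\sigma$ of $\Gal(L/F)$ sends $\omega^*(\alpha)$ to the matrix with both off-diagonal entries negated, which represents $\omega^*(\alpha^{-1})$ (its image under $\omega$ is $1/\alpha$), i.e.\ $\sigma$ acts on $X_*(G_{\xd_d})$ by $-1$. Your claim that negating both off-diagonal entries ``fixes $\omega^*(\alpha)$ as a matrix'' is false, even modulo scalars in $\PGL(2)$: the diagonal forces the scalar to be $1$ and the off-diagonal forces it to be $-1$. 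Note also that your conclusion would contradict Lemma~\ref{4l6}, whose computation $A(G_{\xd_d})=\Z/2\Z$ for $d\notin(F^\times)^2$ relies precisely on this sign action on $X_*$. Fortunately the lemma only asserts the structure of the full cocharacter lattice (over $\bar F$, equivalently over $L$), parallel to Lemma~\ref{4l4} for characters, so no rationality statement is needed: the offending paragraph should simply be deleted, after which your argument is complete.
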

\begin{proof}
We can prove this by direct calculation.
\end{proof}
Let $A(\mathcal G)$ denote the torsion group of $\mathcal G$ as defined in \cite{Borovoi}.
\begin{lem}\label{4l6}
Let $d\in F^\times$.
If $d\in (F^\times)^2$, then $A(G_{\xd_d})=\{1\}$, and if $d\not\in (F^\times)^2$, then $A(G_{\xd_d})=\Z/2\Z$.
The canonical map from $A(G_{\xd_d})$ to $A(G)=\Z/2\Z$ is injective.
\end{lem}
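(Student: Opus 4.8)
The plan is to reduce everything to the explicit description of the centralizer $G_{\xd_d}$ already obtained in Lemma~\ref{4l2}, together with the generators of its character and cocharacter lattices recorded in Lemmas~\ref{4l4} and~\ref{4l5}, and then to read off the torsion group $A(\cdot)$ directly from its definition in~\cite{Borovoi}. Since $G_{\xd_d}$ is a torus, the general (reductive) definition of $A$ simplifies: for a torus $T$ over $F$ the group $A(T)$ depends only on the cocharacter module $X_*(T)$ as a $\Gamma_F=\Gal(\bar F/F)$-module, being the torsion subgroup of the group of coinvariants $X_*(T)_{\Gamma_F}$. So first I would invoke Lemma~\ref{4l2} to get $G_{\xd_d}\cong\mathbb{G}_m$ when $d\in(F^\times)^2$ and $G_{\xd_d}\cong R_{L/F}^{(1)}(\mathbb{G}_m)$ with $L=F(\sqrt d)$ otherwise, and Lemma~\ref{4l5} to get $X_*(G_{\xd_d})=\Z\langle\omega^*\rangle$. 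Lemma~\ref{4l4} then pins down the Galois action: the dual character $\omega$ satisfies $\sigma(\omega)=\omega^{-1}$ for every $\sigma\in\Gamma_F$ not fixing $\sqrt d$ (apply $\sigma$ to $\tfrac{\alpha+\beta\sqrt d}{\alpha-\beta\sqrt d}$), so dually $\Gamma_F$ acts on $X_*(G_{\xd_d})$ trivially when $d\in(F^\times)^2$ and by $-1$ through $\Gal(L/F)$ otherwise. Taking coinvariants gives $\Z$ (torsion-free) in the first case and $\Z/2\Z$ in the second, whence $A(G_{\xd_d})=\{1\}$ resp.\ $A(G_{\xd_d})=\Z/2\Z$.

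For the injectivity of the canonical map $A(G_{\xd_d})\to A(G)$ induced by the inclusion $G_{\xd_d}\hookrightarrow G$, I would first record that $G=\mathbb{G}_m\times\PGL(2)$, so $\pi_1(G)=\Z\oplus\pi_1(\PGL(2))=\Z\oplus\Z/2\Z$ with trivial $\Gamma_F$-action, and $A(G)=\Z/2\Z$ is the torsion part contributed by the $\PGL(2)$-factor. The case $d\in(F^\times)^2$ is trivial since $A(G_{\xd_d})=\{1\}$. In the case $d\notin(F^\times)^2$ both groups are $\Z/2\Z$, so it suffices to show the map is nonzero. Here I would trace the generator $\omega^*$ through the inclusion: by Lemma~\ref{4l2} the connected centralizer lies in the factor $\{1\}\times\PGL(2)$, and by the explicit formula in Lemma~\ref{4l5}, after extending scalars to $L$ and diagonalizing the given matrix (whose eigenvalues are $\tfrac{\alpha+1}{2}\pm\tfrac{\alpha-1}{2}=\alpha,1$), $\omega^*$ sends $\alpha$ to the class of $\diag(\alpha,1)$, i.e.\ to the fundamental cocharacter of $\PGL(2)$. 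Its class modulo the coroot lattice generates $\pi_1(\PGL(2))=\Z/2\Z$; passing to $\Gamma_F$-coinvariants and torsion, the generator of $A(G_{\xd_d})$ maps to the generator of $A(G)$, so the map is an isomorphism $\Z/2\Z\to\Z/2\Z$, in particular an injection.

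The computations involved are all short once this structure is set up; the only things requiring a little care are the identification of the Galois action on $X_*(G_{\xd_d})$ and the bookkeeping of the image of $\omega^*$ under the composite to $\pi_1(G)$ modulo coroots — that is, correctly matching the ambient maximal torus of $\PGL(2)$ with its coroot lattice. The main (mild) obstacle is really just stating the argument in terms of whichever concrete model of $A(\cdot)$ from~\cite{Borovoi} is adopted; because $G_{\xd_d}$ is a torus and $G$ is a product of a torus with $\PGL(2)$, every version of the definition collapses to the elementary lattice computation above, so there is no serious difficulty.
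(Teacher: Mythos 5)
Your argument is correct and is essentially the paper's own (the paper simply says the lemma follows from Lemma~\ref{4l5}, i.e.\ from the explicit generator $\omega^*$ of $X_*(G_{\xd_d})$, which is exactly the computation you carry out: $A$ of a torus is the torsion of the $\Gamma_F$-coinvariants of $X_*$, the Galois action is trivial resp.\ by $-1$ according as $d$ is or is not a square, and $\omega^*$ maps to the fundamental cocharacter of the $\PGL(2)$-factor, hence to the generator of $A(G)=\Z/2\Z$). Your writeup just makes explicit the details the paper leaves to the reader.
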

\begin{proof}
This follows from Lemma \ref{4l5}.
\end{proof}

\subsection{Quadratic characters}\label{4s3}

We denote by $(\; , \;)_v$ the Hilbert symbol on $F_v^\times \times F_v^\times$ for each $v\in\Sigma$.
For $d$, $x\in F_v^\times$ we set $\chi_{d,v}(x)=(d,x)_v$, that is,
\[
\chi_{d,v}(x)=\begin{cases} 1 & \text{if $d\alpha^2+x\beta^2=\gamma^2$ has a non-trivial solution in $F_v$}, \\ -1 & \text{otherwise}. \end{cases}  
\]

Let $d\in F^\times- (F^\times)^2$ and $L=F(\sqrt{d})$.
Let $\chi_d$ denote the quadratic character of $\A^1/F^\times$ corresponding to $L$ via class field theory.
We know that $\chi_d$ is an isomorphism $\A_F^\times/F^\times N_{L/F}(\A^\times_L) \to \{\pm 1\}$ and satisfies $\chi_d=\prod_{v\in\Sigma} \chi_{d,v}$.
For any non-trivial quadratic character $\chi$ on $\A^1/F^\times$, there is a unique nontrivial class $\check d\in F^\times/(F^\times)^2$ such that $\chi=\chi_d$.

\subsection{Local zeta functions}\label{4s4}

Fix a place $v\in \Sigma$.
For $d_v\in F_v^\times$ we set $\ds_v=d(F_v^\times)^2\in F_v^\times/(F_v^\times)^2$.
The following lemma is trivial.
\begin{lem}\label{4l7}
There is a one-to-one correspondence between $F_v^\times/(F_v^\times)^2$ and the set of $G(F_v)$-orbits in $V^\ss(F_v)$, which is given by
\[
\ds_v \in F_v^\times/(F_v^\times)^2 \longrightarrow \xd_{d_v}\cdot G(F_v) \in  V^\ss(F_v)/G(F_v) \, .
\]
In particular, $|F_v^\times/(F_v^\times)^2|$ is finite.
\end{lem}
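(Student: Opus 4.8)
The plan is to show that the class of the determinant in $F_v^\times/(F_v^\times)^2$ is a complete invariant of the $G(F_v)$-action on $V^\ss(F_v)$ and that it is realised precisely by the elements $\xd_{d_v}$.

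First I would record the elementary computation underlying everything. Since $H^1(F_v,\mathbb G_m)$ is trivial, every element of $H(F_v)=\PGL(2,F_v)$ is represented by some $h\in\GL(2,F_v)$, and for $g=(a,h)\in G(F_v)$ the action reads $x\cdot g=\tfrac{a}{\det(h)}\,{}^thxh$, independently of the chosen lift $h$. Hence $\det(x\cdot g)=a^2\det(x)$, so $x\mapsto\det(x)\bmod(F_v^\times)^2$ is constant on $G(F_v)$-orbits in $V^\ss(F_v)$. Because $\det(\xd_{d_v})=-d_v$, the assignment $\ds_v\mapsto\xd_{d_v}\cdot G(F_v)$ is well defined: if $d_v'=d_vt^2$ then $\xd_{d_v'}=\xd_{d_v}\cdot(t,\diag(1,t))$, so the orbit does not change. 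The same computation gives injectivity: if $\xd_{d_v}$ and $\xd_{d_v'}$ lie in one $G(F_v)$-orbit, then $-d_v\equiv-d_v'\pmod{(F_v^\times)^2}$, i.e. $\ds_v=\ds_v'$.

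For surjectivity I would use that a non-degenerate symmetric bilinear form over a field of characteristic zero has an orthogonal basis: given $x\in V^\ss(F_v)$, choose $h\in\GL(2,F_v)$ with ${}^thxh=\diag(a,b)$, put $\diag(c,e):=x\cdot(1,h)=\tfrac1{\det(h)}\diag(a,b)$, so that $ce=\det(x)$, and note $x\cdot(c^{-1},h)=\diag(1,e/c)=\xd_{-e/c}$, where the class of $-e/c$ equals that of $-\det(x)$. Thus every $G(F_v)$-orbit in $V^\ss(F_v)$ has the form $\xd_{d_v}\cdot G(F_v)$ with $\ds_v$ the class of $-\det(x)$, which together with the injectivity above yields the stated bijection. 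The final assertion, finiteness of $F_v^\times/(F_v^\times)^2$, is the standard fact for local fields of characteristic zero (for instance via $F_v^\times\cong\Z\times\fO_v^\times$ when $v$ is non-archimedean). The only step requiring any care is the bookkeeping of scalar factors when diagonalising and the matching of signs so that the invariant comes out as $\ds_v$ rather than a twist of it; there is no genuine obstacle here, in accordance with the paper's remark that the lemma is trivial.
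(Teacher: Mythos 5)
Your proof is correct, and it supplies exactly the standard argument (the square class of the determinant as a complete invariant, plus diagonalization of symmetric bilinear forms for surjectivity) that the paper leaves implicit by declaring the lemma trivial. The scalar bookkeeping in your surjectivity step checks out: $x\cdot(c^{-1},h)=\xd_{-e/c}$ with $-e/c\equiv-\det(x)\pmod{(F_v^\times)^2}$, as required.
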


Recall that $H'=\GL(2)$ acts on $V$ as \eqref{aG'}.
The Hasse invariant of $x_v \in  V^\ss(F_v)$ is defined by
\[
\varepsilon_v(x_v)=(\alpha,\alpha)_v(\alpha,\beta)_v(\beta,\beta)_v 
\]
if $^th' x_v h'=\diag (\alpha,\beta)$ for some $h'\in H'(2,F_v)$.
We set
\begin{multline*}
\mathcal E(F_v)= \Big\{  (\ds_v,\epsilon)\in F_v^\times/(F_v^\times)^2 \times \{\pm 1\} \\
\big| \, \begin{array}{c}  d_v\in F_v^\times ,\, \; \text{$\epsilon=1$ or $-1$} ,  \\  \text{$\exists x_v\in V^\ss(F_v)$ s.t. $-\det(x_v)=d_v$ and $\epsilon=\varepsilon_v(x_v)$} \end{array} \, \Big\}.
\end{multline*}
The following lemma is well-known.
\begin{lem}\label{4l8}
There is a one-to-one correspondence between the set of $H'(F_v)$-orbits in $V^\ss(F_v)$ and $\mathcal E(F_v)$, which is given by
\[
x_v\cdot H'(F_v) \in  V^\ss(F_v)/H'(F_v) \longrightarrow  (\ds_v,\varepsilon_v(x_v)) \in \mathcal E(F_v)
\]
where $d_v=-\det(x_v)$.
\end{lem}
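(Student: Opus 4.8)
This lemma is the local classification of nondegenerate binary quadratic forms, rephrased in terms of symmetric matrices, so the plan is to reduce it to the standard classification theorem and otherwise merely keep track of the Hilbert symbol. Note first that surjectivity onto $\mathcal E(F_v)$ is automatic: by construction $\mathcal E(F_v)$ is exactly the set of pairs $(\ds_v,\epsilon)$ realized as $(\ds_v,\varepsilon_v(x_v))$ with $d_v=-\det(x_v)$ for some $x_v\in V^\ss(F_v)$. Hence only two things remain to check: that the assignment $x_v\mapsto(\ds_v,\varepsilon_v(x_v))$, with $d_v=-\det(x_v)$, is constant on $H'(F_v)$-orbits, and that it separates distinct orbits.

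For well-definedness, if $y_v={}^th'x_vh'$ with $h'\in H'(F_v)=\GL(2,F_v)$, then $\det(y_v)=\det(h')^2\det(x_v)$, so $-\det(y_v)$ and $-\det(x_v)$ have the same class in $F_v^\times/(F_v^\times)^2$; this takes care of the first coordinate. For $\varepsilon_v$, I would recall that every element of $V^\ss(F_v)$ is congruent to a diagonal matrix (an orthogonal basis exists in characteristic $\neq 2$), so the defining formula applies, and that $\varepsilon_v$ is a well-defined invariant of the congruence class of a nondegenerate symmetric matrix: indeed $(\alpha,\alpha)_v(\beta,\beta)_v=(\alpha\beta,-1)_v$ depends only on $\alpha\beta=\det$ modulo squares, hence only on the congruence class, and $(\alpha,\beta)_v$ is, for the diagonal matrix $\diag(\alpha,\beta)$, the classical Hasse invariant $\prod_{i<j}(a_i,a_j)_v$, which is likewise an invariant of the congruence class. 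Composing $x_v$ with $h'$ and then diagonalizing shows $\varepsilon_v(y_v)=\varepsilon_v(x_v)$, so the whole map descends to $V^\ss(F_v)/H'(F_v)$.

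For injectivity, suppose $x_v,y_v\in V^\ss(F_v)$ have the same image, i.e.\ $\det(x_v)\equiv\det(y_v)$ modulo squares and $\varepsilon_v(x_v)=\varepsilon_v(y_v)$. The associated nondegenerate quadratic forms $w\mapsto{}^tw\,x_v\,w$ and $w\mapsto{}^tw\,y_v\,w$ then have the same dimension $2$, the same discriminant, and, by the previous paragraph, the same Hasse invariant. By the classical classification of nondegenerate quadratic forms over the local field $F_v$ in terms of dimension, discriminant and Hasse invariant, the two forms are isometric, which means precisely that $y_v={}^th'x_vh'$ for some $h'\in\GL(2,F_v)=H'(F_v)$; this is injectivity. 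One can, if desired, use Lemma~\ref{4l7} together with explicit values of the Hilbert symbol to describe $\mathcal E(F_v)$ concretely and to double-check the resulting count of $H'(F_v)$-orbits, but this is not needed for the statement.

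I do not anticipate a genuine obstacle: the only non-elementary ingredient is the local classification theorem just quoted, which is standard, while everything else is routine manipulation with the Hilbert symbol — in keeping with the paper's remark that the lemma is well known.
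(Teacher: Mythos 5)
Your proof is correct and follows exactly the route the paper intends: the paper offers no argument beyond calling the lemma well known, i.e.\ it appeals to the classical classification of nondegenerate binary quadratic forms over a local field by discriminant and Hasse invariant, which is precisely your reduction (with the only extra point being the routine check that the paper's $\varepsilon_v$, differing from the classical Hasse invariant by the factor $(\alpha\alpha)_v(\beta\beta)_v=(\det,-1)_v$, is still a congruence invariant).
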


Let $\chi_v$ be a quadratic character of $F_v^\times$.
The trivial representation of $F_v^\times$ is denoted by $\trep_{F_v}$.
Fix an element $d_v\in F^\times_v$.
Assume that $\chi_v$ is $\trep_{F_v}$ or $\chi_{d_v,v}$.
A function $\chi_v$ on $V(F_v)$ is defined by
\[
\chi_v(x_v)=\begin{cases} 1 & \text{if $\chi_v=\trep_{F_v}$}, \\ \varepsilon_v(x_v) \, \varepsilon_v(\xd_{d_v}) & \text{if $\chi_v=\chi_{d_v,v}$}. \end{cases}
\]
If $\chi_v=\chi_{d_v,v}$, then we have
\[
\chi_v( \xd_{d_v}\cdot (a_v,h_v))=\chi_v(a_v \, \det(h_v^{-1}))  \qquad ((a_v,h_v)\in G(F_v)).
\]
Note that $\varepsilon_v(\xd_{d_v})=1$ if $v\not\in\Sigma_\inf\cup\Sigma_2$ and $\chi_{d_v,v}$ is unramified.

We define the measure $\d x_v$ on $V(F_v)$ by
\[
\d x_v=\d x_{1,v} \, \d x_{12,v} \, \d x_{2,v} \, \text{ for }\,  x_v=\begin{pmatrix}x_{1,v}&x_{12,v}\\x_{12,v}&x_{2,v}\end{pmatrix}\in V(F_v)
\]
where $\d x_{*,v}$ is the measure on $F_v$ defined in Section \ref{2s1}.
Set
\[
V^\ss(F_v,d_v)=\xd_{d_v}\cdot G(F_v).
\]
We define a local zeta function by
\[
Z_v(\Phi_v,s,\chi_v;d_v)=\int_{V^\ss(F_v,d_v)}|\det(x_v)|_v^{s-\frac{3}{2}} \, \chi_v(x_v) \, \Phi_v(x_v) \, \d x_v, 
\]
where $s\in\C$ and $\Phi_v\in\cS(V(F_v))$.

From now on, $\varepsilon_v$ is $1$ or $-1$.
Let $V^\ss(F_v,d_v,\varepsilon_v)$ denote the preimage of $(\ds_v,\varepsilon_v)\in\mathcal E(F_v)$ via the map of Lemma \ref{4l8}.
A partial local zeta function is defined by
\[ \tilde Z_v(\Phi_v,s;d_v,\varepsilon_v)=\int_{V^\ss(F_v,d_v,\varepsilon_v)}|\det(x_v)|_v^{s-\frac{3}{2}}  \, \Phi_v(x_v) \, \d x_v,    \]
where $s\in\C$ and $\Phi_v\in\cS(V(F_v))$.
We set $\mathcal E(F_v,t_v)=\{(\check t_v,\varepsilon_v)\in\mathcal E(F_v)  \}$ for each $t_v\in F_v^\times$.
Clearly, $|\mathcal E(F_v,t_v)|=1$ or $2$.
Since $\chi_v$ is a constant on each $H'(F_v)$-orbit, we have
\begin{equation}\label{4e1}
Z_v(\Phi_v,s,\chi_v;d_v)=\sum_{(\ds_v,\varepsilon_v) \in \mathcal E(F_v,d_v)}\frac{\varepsilon_v}{\varepsilon_v(\xd_{d_v})} \, \tilde Z_v(\Phi_v,s;d_v,\varepsilon_v) 
\end{equation}
if $\chi_v=\chi_{d_v,v}$.
\begin{lem}\label{4l9}
If $v\in\Sigma_\fin$, then
$Z_v(\Phi_v,s,\chi_v;d_v)$ and $\tilde Z_v(\Phi_v,s;d_v,\varepsilon_v)$ are absolutely convergent for $\Re(s)>3/2$ and can be meromorphically continued to the whole complex $s$-plane.
Their poles are contained in $\{ \, 0 \, , \, 1/2\, \}$.
\end{lem}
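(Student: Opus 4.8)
The plan is to fibre the integrals $Z_v$ and $\tilde Z_v$ over the value of the determinant, reducing them to a one-variable Tate-type integral whose poles can be read off from the behaviour of a fibre integral near the origin.

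Absolute convergence for $\Re(s)>3/2$ is immediate: for $v\in\Sigma_\fin$ the function $\Phi_v$ is locally constant with compact support, hence bounded and supported in some compact $K\subset V(F_v)$; on $K$ one has $|\det x|_v\le C$ for a constant $C$, so $|\det x|_v^{\Re(s)-3/2}\le\max(1,C)^{\Re(s)-3/2}$ is bounded there, and the integral of a bounded function over a set of finite measure converges. The same applies to $\tilde Z_v$. It suffices to treat $\tilde Z_v$: by \eqref{4e1} when $\chi_v=\chi_{d_v,v}$, and by the decomposition of $V^\ss(F_v,d_v)$ into the finitely many $H'(F_v)$-orbits $V^\ss(F_v,d_v,\varepsilon_v)$ it contains (Lemma \ref{4l8}) when $\chi_v=\trep_{F_v}$, the function $Z_v(\Phi_v,s,\chi_v;d_v)$ is a finite $\C$-linear combination of the $\tilde Z_v(\Phi_v,s;d_v,\varepsilon_v)$.

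Now $V^\ss(F_v,d_v,\varepsilon_v)$ is a single $H'(F_v)$-orbit, open in $V(F_v)$, and equals the union over $t\in-d_v(F_v^\times)^2$ of the $H'(F_v)$-orbit inside $\det^{-1}(t)$ of Hasse invariant $\varepsilon_v$. Writing $\d x=\d\mu_t\,\d t$ with $\{\d\mu_t\}$ the fibre measures of $\det$, one gets
\[
\tilde Z_v(\Phi_v,s;d_v,\varepsilon_v)=\int_{F_v^\times}|t|_v^{s-3/2}\,\Theta(t)\,\d t,\qquad
\Theta(t)=\int_{\det^{-1}(t)\cap V^\ss(F_v,d_v,\varepsilon_v)}\Phi_v(x)\,\d\mu_t(x),
\]
where $\Theta$ is locally constant on $F_v^\times$, supported on $t\in-d_v(F_v^\times)^2$, and vanishes for $|t|_v$ large. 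The crux is the behaviour of $\Theta(t)$ as $t\to0$. Here I would invoke the resolution of singularities of the zero locus $\{\det=0\}$: since $\det$ is a non-degenerate quadratic form on the three-dimensional space $V$, its zero locus is a quadratic cone, smooth away from the origin, and a single blow-up of the origin gives an embedded resolution; its numerical data (the exceptional divisor appears with multiplicity $2$ in $\det$ and $2$ in the Jacobian, the strict transform with multiplicities $1$ and $0$) --- equivalently, the $b$-function of the prehomogeneous vector space $(\GL(1)\times\GL(2),\mathrm{Sym}^2)$ has roots $-1$ and $-3/2$ --- shows that near $t=0$ the fibre integral has an expansion with only the exponents $|t|_v^{0}$ and $|t|_v^{1/2}$ (with a quadratic-character twist), the remainder being locally constant and vanishing near $t=0$; the character twist in $Z_v$ and the choice of $\varepsilon_v$ change only the coefficients, not the exponents. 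Substituting this into the integral and using
\[
\int_{|t|_v\le1}|t|_v^{s-3/2}\,\d t=\frac{1-q_v^{-1}}{1-q_v^{-(s-1/2)}},\qquad
\int_{|t|_v\le1}|t|_v^{(s-3/2)+1/2}\,\d t=\frac{1-q_v^{-1}}{1-q_v^{-s}}
\]
(up to a harmless restriction to a square class), one sees that the two leading terms give meromorphic functions with poles only on $\Re(s)=1/2$ and $\Re(s)=0$ respectively, while the remainder contributes a Laurent polynomial in $q_v^{-s}$. Hence $\tilde Z_v$, and therefore $Z_v$, continues meromorphically to all of $\C$ with poles contained in $\{0,1/2\}$.

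The main obstacle is the asymptotic statement for $\Theta(t)$ near $t=0$: one must verify that no exponent other than $0$ and $1/2$ appears and that the remainder genuinely vanishes in a neighbourhood of the origin. This can be done either by quoting the standard theory of $p$-adic local zeta functions of prehomogeneous vector spaces and of fibre integrals of non-degenerate quadratic forms, applied through Lemmas \ref{4l7} and \ref{4l8}, or by a direct scaling argument based on the identity $\Theta_{\Phi_v}(\lambda^2t)=|\lambda|_v\,\Theta_{\Phi_v^{(\lambda)}}(t)$, where $\Phi_v^{(\lambda)}(x)=\Phi_v(\lambda x)$, combined with a decomposition of $\Phi_v$ near the origin into functions constant on $\fO_v^\times$-orbits; in any case the required input is contained in Saito's results (\cite[Theorem 1.1]{Saito3}, \cite{Saito2}).
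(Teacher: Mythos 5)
Your argument is correct and takes essentially the same route as the paper, whose proof consists of citing Igusa's Theorems 1 and 2 on $p$-adic complex powers together with Lemmas \ref{4l7}, \ref{4l8} and \eqref{4e1}. You have in effect unpacked that citation: the reduction to $\tilde Z_v$ via \eqref{4e1} and the orbit decomposition, the fibration over $\det$, and the $b$-function roots $-1,-3/2$ (equivalently the blow-up data) producing candidate poles at $s=1/2$ and $s=0$ after the shift by $3/2$ — all of which is exactly the content of the quoted results of Igusa.
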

\begin{proof}
This statement follows from \cite[Theorems 1 and 2]{Igusa1}, Lemmas \ref{4l7} and \ref{4l8}, and (\ref{4e1}).
\end{proof}
\begin{lem}\label{4l10}
If $v\in\Sigma_\inf$, then
$Z_v(\Phi_v,s,\chi_v;d_v)$ and $\tilde Z_v(\Phi_v,s,;d_v,\varepsilon_v)$ are absolutely convergent for $\Re(s)>3/2$ and can be meromorphically continued to the whole complex $s$-plane.
Their poles are contained in $\Z_{\leq 0} \cup (\frac{1}{2}+\Z_{\leq 0})$.
\end{lem}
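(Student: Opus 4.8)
The plan is to argue as in the proof of Lemma~\ref{4l9}, with the $p$-adic input replaced by its archimedean counterpart. First I would reduce to the partial zeta functions $\tilde Z_v(\Phi_v,s;d_v,\varepsilon_v)$: when $\chi_v=\chi_{d_v,v}$ this is exactly \eqref{4e1}, which expresses $Z_v(\Phi_v,s,\chi_v;d_v)$ as a finite $\C$-linear combination of such functions, and when $\chi_v=\trep_{F_v}$ one has $\chi_v(x_v)=1$ and, by Lemma~\ref{4l8}, $V^\ss(F_v,d_v)$ is the disjoint union of the sets $V^\ss(F_v,d_v,\varepsilon_v)$ over $(\ds_v,\varepsilon_v)\in\mathcal E(F_v,d_v)$, so $Z_v(\Phi_v,s,\trep_{F_v};d_v)$ is again a finite sum of partial zeta functions. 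Hence it suffices to prove the assertion for each $\tilde Z_v(\Phi_v,s;d_v,\varepsilon_v)$.

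Now $V(F_v)$ is a $3$-dimensional vector space over $F_v$, the relative invariant $\det$ of $(G,V)$ is a nondegenerate quadratic form on it, and the hypersurface $\{\det=0\}$ is smooth away from the origin. By Lemma~\ref{4l8} the set $\Omega:=V^\ss(F_v,d_v,\varepsilon_v)$ is a single $H'(F_v)$-orbit, hence an open semialgebraic subset of $V(F_v)$ whose topological boundary lies in $\{\det=0\}$ (for instance, when $F_v\cong\R$ and $d_v<0$ it is one of the two connected components of $\{x:\det(x)>0\}$, separated by the sign of $\Tr(x)$; in the remaining archimedean cases $V^\ss(F_v,d_v)$ is a single such orbit). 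Absolute convergence of $\tilde Z_v(\Phi_v,s;d_v,\varepsilon_v)=\int_\Omega|\det(x)|_v^{\,s-3/2}\Phi_v(x)\,\d x$ for $\Re s>\tfrac12$, and in particular for $\Re s>\tfrac32$, then follows from the local integrability of $|\det(x)|_v^{\,\Re s-3/2}$ near $\{\det=0\}$: a one-variable estimate along the smooth part of $\{\det=0\}$ together with a homogeneity estimate at the origin.

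For the meromorphic continuation I would invoke the theory of local zeta functions over archimedean fields (the archimedean analogue of the input to Lemma~\ref{4l9}; equivalently, Bernstein's theorem on the analytic continuation of $|f|^s$). The key point is that $\tilde Z_v$ is the integral of $|\det(x)|_v^{\,s-3/2}$ against a Schwartz function over a \emph{semialgebraic} set, so its possible poles are controlled by the numerical data of a log-resolution of $\{\det=0\}$ — equivalently, by the Bernstein--Sato polynomial $b_{\det}(w)=(w+1)(w+\tfrac32)$ of the space of binary quadratic forms. Since $\det$ is a nondegenerate quadratic form in three variables, one log-resolution is the single blow-up of the origin: the exceptional divisor has numerical data giving candidate poles in $w$ at $-\tfrac32,-2,-\tfrac52,\dots$, while the strict transform of $\{\det=0\}$ gives candidate poles at $-1,-2,\dots$, and their union $\{-1,-2,\dots\}\cup\{-\tfrac32,-\tfrac52,\dots\}$ is independent of the semialgebraic domain $\Omega$. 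Substituting $w=s-\tfrac32$ shows that $\tilde Z_v(\Phi_v,s;d_v,\varepsilon_v)$ continues meromorphically with poles contained in $\Z_{\leq0}\cup(\tfrac12+\Z_{\leq0})$, and by the first paragraph the same holds for $Z_v(\Phi_v,s,\chi_v;d_v)$.

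The main obstacle is that $\tilde Z_v$ is an integral over a single open orbit rather than over all of $V(F_v)$ with a Schwartz weight, so Bernstein's theorem cannot be quoted verbatim; one must make precise that the archimedean machinery (resolution of singularities of $\det$, or the local functional equation for the prehomogeneous vector space of binary quadratic forms) governs the poles of the restricted integral and that restricting to $\Omega$ introduces no pole outside $\Z_{\leq0}\cup(\tfrac12+\Z_{\leq0})$ — which, as noted, holds because the candidate-pole set of a log-resolution of $\det$ does not depend on the domain of integration. An alternative that bypasses this is to use the Iwasawa (or Cartan) decomposition of $H'(F_v)$ to rewrite $\tilde Z_v$ as a group integral and read off the poles from the resulting archimedean $\Gamma$-factors; this is routine but somewhat longer.
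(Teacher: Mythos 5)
Your argument is correct and is essentially the content of the references \cite{SS} and \cite{Igusa2} that the paper's one-line proof cites: the archimedean continuation with candidate poles read off from a log-resolution (equivalently the $b$-function $(w+1)(w+\tfrac32)$ of the ternary quadratic form $\det$), applied to integrals over unions of connected components of the complement of $\{\det=0\}$, which is exactly what the $H'(F_v)$-orbits are over $\R$ and $\C$. The reduction via \eqref{4e1} and Lemma~\ref{4l8}, and the substitution $w=s-\tfrac32$ yielding poles in $\Z_{\leq0}\cup(\tfrac12+\Z_{\leq0})$, all check out.
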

\begin{proof}
See \cite{SS} or \cite{Igusa2}.
\end{proof}

Let $S$ be a finite set of places,
\[
d_S=(d_v)_{v\in S} \in F_S^\times, \quad \varepsilon_S=(\varepsilon_v)_{v\in S}\in \{\pm 1\}^{|S|}, \quad \Phi_S\in\cS(V(F_S)).
\]
Assume that $\chi_v=\trep_{F_v}$ or $\chi_{d_v,v}$.
We set
\[
\chi_S=\prod_{v\in S}\chi_v, \quad \d x_S=\prod_{v\in S}\d x_v, \quad V^\ss(F_S,d_S)=\prod_{v\in S}V^\ss(F_v,d_v),
\]
\[
\text{and}\quad  V^\ss(F_S,d_S,\varepsilon_S)=\prod_{v\in S}V^\ss(F_v,d_v,\varepsilon_v).
\]
We also set
\[  Z_S(\Phi_S,s,\chi_S;d_S)=\int_{V^\ss(F_S,d_S)}|\det (x_S)|_S^{s-\frac{3}{2}} \, \chi_S(x_S) \,  \Phi_S(x_S) \, \d x_S  \]
and
\[  \tilde Z_S(\Phi_S,s;d_S,\varepsilon_S)=\int_{ V^\ss(F_S,d_S,\varepsilon_S) }|\det (x_S)|_S^{s-\frac{3}{2}} \,  \Phi_S(x_S) \, \d x_S.  \]
We set $\mathcal E(F_S,d_S)= \prod_{v\in S}  \mathcal E(F_v,d_v)  $.
By \eqref{4e1} we have
\begin{equation}\label{4e2}
Z_S(\Phi_v,s,\chi_S;d_S)=\sum_{(d_S,\varepsilon_S) \in \mathcal E(F_S,d_S)}\Big(\prod_{v\in S} \mathfrak e_v  \Big) \tilde Z_S(\Phi_S,s;d_S,\varepsilon_S) .
\end{equation}
where $\mathfrak e_v=1$ if $\chi_v=\trep_{F_v}$ and $\mathfrak e_v=\varepsilon_v \times \varepsilon_v(\xd_{d_v})$ if $\chi_v=\chi_{d_v,v}$.
\begin{lem}\label{4l11}
The integrals $Z_S(\Phi_S,s,\chi_S;d_S)$ and $\tilde Z_S(\Phi_S,s;d_S,\varepsilon_S)$ are absolutely convergent for $\Re(s)>3/2$ and can be meromorphically continued to the whole complex $s$-plane. Moreover, they are holomorphic for $\Re(s)>1/2$.
\end{lem}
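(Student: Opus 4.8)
The plan is to reduce both assertions to the local statements of Lemmas~\ref{4l9} and~\ref{4l10} by factorizing the $F_S$-integrals over the places $v\in S$. Since $\cS(V(F_S))$ is spanned by pure tensors $\Phi_S=\bigotimes_{v\in S}\Phi_v$ with $\Phi_v\in\cS(V(F_v))$, and both $Z_S$ and $\tilde Z_S$ are linear in $\Phi_S$, it suffices to treat such a $\Phi_S$. For a pure tensor one has, using the product decompositions $V^\ss(F_S,d_S)=\prod_{v\in S}V^\ss(F_v,d_v)$ and $V^\ss(F_S,d_S,\varepsilon_S)=\prod_{v\in S}V^\ss(F_v,d_v,\varepsilon_v)$ together with $\d x_S=\prod_{v\in S}\d x_v$, $|\det(x_S)|_S=\prod_{v\in S}|\det(x_v)|_v$ and $\chi_S(x_S)=\prod_{v\in S}\chi_v(x_v)$, the factorizations
\[
Z_S(\Phi_S,s,\chi_S;d_S)=\prod_{v\in S}Z_v(\Phi_v,s,\chi_v;d_v),\qquad \tilde Z_S(\Phi_S,s;d_S,\varepsilon_S)=\prod_{v\in S}\tilde Z_v(\Phi_v,s;d_v,\varepsilon_v),
\]
wherever the right-hand factors converge.

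First I would establish absolute convergence for $\Re(s)>3/2$: by Lemmas~\ref{4l9} and~\ref{4l10} each local factor is absolutely convergent in that half-plane, and since $S$ is finite, Fubini's theorem yields the displayed factorizations as identities of holomorphic functions on $\Re(s)>3/2$. Next, meromorphic continuation to all of $\C$ follows at once, since each local factor continues meromorphically by Lemmas~\ref{4l9} and~\ref{4l10} and a finite product of meromorphic functions is meromorphic. Finally, for holomorphy on $\Re(s)>1/2$ I would invoke the location of the local poles: those of $Z_v$ and $\tilde Z_v$ lie in $\{0,1/2\}$ when $v\in\Sigma_\fin$ and in $\Z_{\le 0}\cup(\frac12+\Z_{\le 0})$ when $v\in\Sigma_\inf$, hence in every case have real part at most $1/2$; therefore each local factor, and so the product, is holomorphic on $\Re(s)>1/2$.

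There is no genuine obstacle here: the lemma is bookkeeping built on the local results already proved. The only point worth a word of care is the validity of the factorization --- that a general element of $\cS(V(F_S))$ is a finite linear combination of pure tensors, and that in each local factor the hypothesis ``$\chi_v=\trep_{F_v}$ or $\chi_{d_v,v}$'' of Lemmas~\ref{4l9} and~\ref{4l10} is satisfied, which holds because of the standing assumption $\chi_S=\prod_{v\in S}\chi_v$ with each $\chi_v$ of that form. One should also note that $s=1/2$ can in fact be a pole --- for instance coming from an archimedean or a ramified non-archimedean place via Lemmas~\ref{4l9} and~\ref{4l10} --- which is why holomorphy is claimed only on the open half-plane $\Re(s)>1/2$ and not on the closed region $\Re(s)\ge 1/2$.
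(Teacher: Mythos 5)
Your overall strategy---reduce to the local statements of Lemmas~\ref{4l9} and~\ref{4l10} and multiply---is the same as the paper's, and everything you say about the local factors (convergence for $\Re(s)>3/2$, meromorphy, poles of real part $\le 1/2$, hence holomorphy of the product on $\Re(s)>1/2$) is correct. The one step that does not hold as stated is the reduction itself: $\cS(V(F_S))$ is \emph{not} algebraically spanned by pure tensors $\bigotimes_{v\in S}\Phi_v$ once $F$ has more than one archimedean place. For the finite places in $S$ this is fine (Schwartz--Bruhat functions there are locally constant with compact support, so the tensor decomposition is algebraic), but $\cS(V(F_\inf))$ is the Schwartz space of the real vector space $\prod_{v\in\Sigma_\inf}V(F_v)$, of which the algebraic span of pure tensors is only a dense subspace; a function such as $e^{-\|x_{v_1}\|^2-\langle x_{v_1},x_{v_2}\rangle-\|x_{v_2}\|^2}$ is not a finite sum of products. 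Since you are proving meromorphic continuation and locating poles, you cannot pass from the dense subspace to all of $\cS(V(F_S))$ without a continuity statement for the continued zeta function in the Schwartz topology, which you neither state nor prove---and supplying it would amount to invoking the general archimedean theory anyway.

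The paper sidesteps this by treating the entire archimedean block at once: it identifies $(G(F_\inf),V(F_\inf))$ with a prehomogeneous vector space over $\R$ (restriction of scalars) and applies \cite{SS} and \cite{Igusa2} to the full integral over $\prod_{v\in\Sigma_\inf}V^\ss(F_v,d_v)$, rather than factoring it into the single-place integrals of Lemma~\ref{4l10}; only the places in $S\cap\Sigma_\fin$ are then split off as genuine local factors via Lemma~\ref{4l9}. Your argument becomes correct if you make the same move: keep the archimedean variables together, quote the convergence, continuation, and pole locations for that one real prehomogeneous zeta integral (the poles still lie in $\Z_{\le 0}\cup(\tfrac12+\Z_{\le 0})$), and reserve the place-by-place factorization for the nonarchimedean part, where it is legitimate. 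For the absolute convergence assertion alone your reduction is harmless, since $|\Phi_S|$ can be dominated by a positive pure tensor; it is only the continuation and holomorphy claims that need the fix.
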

\begin{proof}
We may identify the pair $(G(F_\inf),V(F_\inf))$ with a prehomogeneous vector space over $\R$.
Hence, this lemma follows from \cite{SS}, \cite{Igusa2}, and Lemma \ref{4l9}.
\end{proof}

\subsection{Explicit formulas}\label{4s5}

Let $\Phi\in\cS(V(\A))$ and $\chi=\prod_{v\in\Sigma}\chi_v$ be a quadratic character.
We set
\begin{equation}\label{para}
\para(F)=\{ \ds \in F^\times/(F^\times)^2 \, | \, d\in F^\times -(F^\times)^2 \} 
\end{equation}
and
\[ Z(\Phi,s,\chi;d)=\int_{G(F)\bsl G(\A)} |\omega(g)|^s \, \chi(g) \sum_{x\in \xd_d\cdot G(F)}\Phi(x\cdot g)\, \d g \, ,\]
where $d\in F^\times-(F^\times )^2$.
By Lemma \ref{4l3} we have
\[  Z(\Phi,s,\chi)=\sum_{\ds\in \para(F)} Z(\Phi,s,\chi;d).  \]
Let $\Phi_{0,v}$ denote the characteristic function of $V(\fO_v)$.
We will assume the following condition for $S$.
\begin{cond}\label{cond2}
The finite set $S$ satisfies $S\supset\Sigma_\inf\cup\Sigma_2$ and there exists a function $\Phi_S\in\cS(V(F_S))$ such that $\Phi=\Phi_S \times \prod_{v\not\in S}\Phi_{0,v}$.
\end{cond}
For $d\in F^\times-(F^\times)^2$, we set
\[ N(\mathfrak{f}^S_d) = \prod_{v\not\in S \, , \, \text{$\chi_{d,v}$ is ramified} } q_v  \]
for $S\supset\Sigma_\inf$.
If $S$ contains $\Sigma_\inf\cup\Sigma_2$, then $\mathfrak{f}^S_d$ means the conductor of $\prod_{v\not\in S}\chi_{d,v}$ since $\chi_d$ is quadratic.
Let $\trep_S$ denote the trivial character of $F_S^\times$.
We set $\chi_{d,S}=\prod_{v\in S}\chi_{d,v}$.
\begin{thm}\label{4t13}
Fix $d\in F^\times-(F^\times)^2$, a quadratic character $\chi$ on $\A^1/F^\times$, and a test function $\Phi\in\cS(V(\A))$.
Assume that $\Re(s)>3/2$.
We have
\[ Z(\Phi,s,\chi;d)=0 \]
unless $\chi= \trep_F$ or $\chi_d$.
Assume that $S$ satisfies Condition \ref{cond2}.
If $\chi=\trep_F$, then we have
\begin{multline*}
Z(\Phi,s,\trep_F;d) \\
= \frac{\vol_G \, 2^{|S|}\,  L^S(1,\chi_d)}{2 \, c_F^{S}\, \zeta_F^S(2)} \times \frac{ \zeta^S_F(2s-1)  \zeta^S_F(2s) }{L^S(2s,\chi_d) \, N(\mathfrak{f}^S_d)^{s-\frac{1}{2}}} \times Z_S(\Phi_S,s,\trep_S;d) .
\end{multline*}
If $\chi=\chi_d$ and $\chi_{d,v}$ is unramified for each $v\not\in S$, then we have
\[ Z(\Phi,s,\chi_d;d)=\frac{ \vol_G\, 2^{|S|}\, L^S(1,\chi_d)}{2 \, c^S_F\, \zeta^S_F(2)} \times \zeta_F^S(2s-1) \times  Z_S(\Phi_S,s,\chi_{d,S};d). \]
If $\chi=\chi_d$ and there exists a place $v\not\in S$ such that $\chi_{d,v}$ is ramified, then we have
\[
Z(\Phi,s,\chi;d)=0.
\]
\end{thm}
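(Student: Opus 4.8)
The strategy is to apply Saito's results on prehomogeneous zeta functions \cite{Saito1,Saito2,Saito3}, whose hypotheses have been prepared by the lemmas of Sections~\ref{4s2} and~\ref{4s4}: the Hasse principle for $G$, the determination of the stabilizers $G_{\xd_d}\cong R_{L/F}^{(1)}(\mathbb{G}_m)$ with $L=F(\sqrt d)$ and of the order-$2$ component groups $\Pi_d$ (Lemma~\ref{4l2}), the vanishing $X(G_{\xd_d})_F=\{1\}$ for $d\notin(F^\times)^2$ (Lemma~\ref{4l4}), and the torsion-group computation of Lemma~\ref{4l6}. All of the argument takes place in the region $\Re(s)>3/2$, where both sides converge absolutely (Lemmas~\ref{4l1} and~\ref{4l11}). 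First I would unfold: by Lemma~\ref{4l3} the inner sum runs over the single orbit $\xd_d\cdot G(F)\cong G_{\xd_d,+}(F)\bsl G(F)$, so
\[
Z(\Phi,s,\chi;d)=\int_{G_{\xd_d,+}(F)\bsl G(\A)}|\omega(g)|^s\,\chi(g)\,\Phi(\xd_d\cdot g)\,\d g,
\]
and then factor the Haar measure through $G_{\xd_d}(\A)$, using $G_{\xd_d,+}(F)\supset G_{\xd_d}(F)$ with index $|\Pi_d(F)|=2$ (Lemma~\ref{4l2}).

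For the first vanishing assertion I would use that $G_{\xd_d}$ is anisotropic, so $G_{\xd_d}(F)\bsl G_{\xd_d}(\A)$ is compact, and that $\omega$ is trivial on $G_{\xd_d}$ (its $A_G$-component is $1$ by Lemma~\ref{4l2}). After pulling out the integration over the stabilizer, the contribution of $G_{\xd_d}(F)\bsl G_{\xd_d}(\A)$ is $\vol(G_{\xd_d}(F)\bsl G_{\xd_d}(\A))$ if the restriction of $\chi$ to $G_{\xd_d}(\A)$ is trivial and $0$ otherwise. On $G_{\xd_d}$ the character $g\mapsto\chi(a\det(h)^{-1})$ defining $\chi(g)$ reduces to $\chi\circ N_{L/F}$ (by Lemma~\ref{4l2} and the fact that $\chi$ is quadratic), so by class field theory for $L/F$ the restriction is trivial exactly when $\chi$ is trivial on $F^\times N_{L/F}(\A_L^\times)$, i.e. when $\chi\in\{\trep_F,\chi_d\}$. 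Hence $Z(\Phi,s,\chi;d)=0$ for all other $\chi$.

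In each of the two surviving cases Saito's formula evaluates the remaining integral over $G_{\xd_d}(\A)\bsl G(\A)$ as an Euler product of local orbital integrals against a global factor built from the Tamagawa-type volume of the stabilizer; combined with the two torus integrations and Condition~\ref{cond2}, this gives
\[
Z(\Phi,s,\chi;d)=C_d\cdot Z_S(\Phi_S,s,\chi_S;d)\cdot\prod_{v\notin S}Z_v(\Phi_{0,v},s,\chi_v;d).
\]
The final step is explicit. For $v\notin S$ (hence $v$ finite with $v\nmid 2$, since $S\supset\Sigma_\inf\cup\Sigma_2$) a lattice-point count on $V(\fO_v)$ evaluates $Z_v(\Phi_{0,v},s,\chi_v;d)$. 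When $\chi_v=\trep_{F_v}$ one obtains the $v$-factor of $\zeta_F^S(2s-1)\,\zeta_F^S(2s)\,L^S(2s,\chi_d)^{-1}$, with an extra $q_v^{-(s-\frac{1}{2})}$ precisely at the finitely many $v$ where $\chi_{d,v}$ is ramified, whose product equals $N(\mathfrak{f}^S_d)^{-(s-\frac{1}{2})}$. When $\chi_v=\chi_{d,v}$ the count gives $(1-q_v^{-(2s-1)})^{-1}$ if $\chi_{d,v}$ is unramified and $0$ if $\chi_{d,v}$ is ramified; the latter forces $Z(\Phi,s,\chi;d)=0$, which is the third assertion, and otherwise the product over $v\notin S$ is $\zeta_F^S(2s-1)$. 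Finally I would evaluate $C_d$ from $\vol_G$, the analytic class number formula for $L$ (which is where $L^S(1,\chi_d)$ enters, via $\vol(G_{\xd_d}(F)\bsl G_{\xd_d}(\A))$), the measure normalizations of Sections~\ref{2s2} and~\ref{2s3}, and the order-$2$ component groups $\Pi_d$ (contributing the combinatorial factors $2^{|S|}$ and $\tfrac{1}{2}$), obtaining $C_d=\dfrac{\vol_G\,2^{|S|}\,L^S(1,\chi_d)}{2\,c_F^S\,\zeta_F^S(2)}$ and hence the two stated formulas.

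The main obstacle is not conceptual but the careful bookkeeping of measures and constants: reconciling Saito's normalization of the local and adelic Tamagawa measures with the explicit conventions of Sections~\ref{2s2} and~\ref{2s3}, tracking the component groups $\Pi_d$ place by place, and computing the ramified local integrals at the places $v\mid d$ with $v\notin S$ --- both the modified factor for $\chi_v=\trep_{F_v}$ and the vanishing $Z_v(\Phi_{0,v},s,\chi_{d,v};d)=0$ for $\chi_v=\chi_{d,v}$, which is the heart of the third assertion. This is precisely the matching of rational with adelic orbits and of local with global measures foreshadowed at the end of Section~\ref{4s1}.
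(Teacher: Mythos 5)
Your proposal is correct and follows essentially the same route as the paper: unfold over the single $G(F)$-orbit of $\xd_d$, kill all $\chi\notin\{\trep_F,\chi_d\}$ by integrating $\chi\circ\det$ over the compact quotient of the norm-one torus $G_{\xd_d}\cong R^{(1)}_{L/F}(\mathbb G_m)$ (the paper phrases this as Lemma~\ref{4l2} plus orthogonality of characters), invoke Saito's formula \cite[Theorem 2.1]{Saito2} with the hypotheses checked in Lemmas~\ref{4l2}--\ref{4l6}, and evaluate the local factors at $v\notin S$ via \cite[Theorem 2.2]{Saito1} (your ``lattice-point count''), including the extra $q_v^{-s+1/2}$ at ramified places and the vanishing of $Z_v(\Phi_{0,v},s,\chi_{d,v};d)$ when $\chi_{d,v}$ is ramified. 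The constant $C_d$ you give matches the paper's product of $\vol_G/2$ (Tamagawa number of $\PGL(2)$), $L^S(1,\chi_d)/c_F$, and the convergence factors $2c_v L_v(1,\chi_{d,v})^{-1}$.
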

\begin{proof}
We easily see that
\begin{align*}
Z(\Phi,s,\chi;d)=& \frac{1}{2}\int_{G_{\xd_d}(\A)\bsl G(\A)}\chi\left(\frac{a}{\det(h_1)}\right)\, |a|^{2s} \, \Phi(\xd_d\cdot (a,h_1)) \\
& \times \int_{G_{\xd_d}(F)\bsl G_{\xd_d}(\A)} \chi(\det(h_2))\,\d h_2 \, \d h_1 \, \d^\times a 
\end{align*}
where $h_1\in G_{\xd_d}(\A)\bsl H(\A)$, $h_2\in G_{\xd_d}(F)\bsl G_{\xd_d}(\A)$, and $\d h=\d h_1 \, \d h_2$.
By Lemma \ref{4l2} and the orthogonality of characters, $Z(\Phi,s,\chi;d)=0$ unless $\chi=\trep_F$ or $\chi_d$.
From now on we assume that $\chi=\trep_F$ or $\chi_d$.
By Lemmas \ref{4l2}, \ref{4l3}, \ref{4l4}, \ref{4l6}, and \cite[Proof of Theorem 2.1]{Saito2}, we obtain
\begin{align*}
Z(\Phi,s,\chi;d)=& \frac{\vol_G}{2} \times \frac{L(1,\chi_d)}{c_F}\times Z_S(\Phi_S,s,\chi_S;d) \\
& \times \prod_{v\in S}( 2 \, c_v) \times\prod_{v\in S\cap\Sigma_\fin} L_v(1,\chi_{d,v})^{-1} \\
& \times \prod_{v\not\in S}  \{ 2 \, c_v \, L_v(1,\chi_{d,v})^{-1}\, Z_v(\Phi_{0,v},s,\chi_v;d)  \} \, ,
\end{align*}
where $\chi_S=\trep_S$ (resp. $\chi_{d,S}$) and $\chi_v=\trep_{F_v}$ (resp. $\chi_{d,v}$) if $\chi=\trep_F$ (resp. $\chi_d$).
Note that Tamagawa measures are required for his formulation (cf. \cite[p.599]{Saito2}) and the Tamagawa number of $\PGL(2)$ is $2$ (see e.g., \cite[Section 5.3]{PR}).
The factor $\vol_G/2$ came from them.

Assume $v\not\in S$.
From \cite[Theorem 2.2]{Saito1} we see that
\begin{align*}
& 2 \, c_v \, L_v(1,\chi_{d,v})^{-1}\, Z_v(\Phi_{0,v},s,\trep_{F_v};d) \\
& =\frac{(1-q_v^{-2s+1})^{-1}(1-q_v^{-2s})^{-1}}{(1-q_v^{-2})^{-1} \, L_v(2s,\chi_{d,v})} \times \begin{cases} 1 & \text{if $\chi_{d,v}$ is unramified}, \\ q_v^{-s+\frac{1}{2}} & \text{if $\chi_{d,v}$ is ramified}. \end{cases}  
\end{align*}
If $\chi_{d,v}$ is unramified, we also find
\[ 2 \, c_v \, L_v(1,\chi_{d,v})^{-1}\, Z_v(\Phi_{0,v},s,\chi_{d,v};d)  =\frac{(1-q_v^{-2s+1})^{-1}}{(1-q_v^{-2})^{-1} }  \]
from \cite[Theorem 2.2]{Saito1}.
If $\chi_{d,v}$ is ramified, it is trivial that
\[
Z_v(\Phi_{0,v},s,\chi_{d,v};d)=0.
\]
Thus, we have proved the theorem.
\end{proof}

The assumption $S\supset\Sigma_2$ in Theorem \ref{4t13} is made for the sake of simplicity. It could be removed if we used the explicit form of $Z_v(\Phi_{0,v},s,\chi_v,d)$ computed in \cite[Theorem 2.2]{Saito1} for any $v\in\Sigma_2$.

\subsection{Zeta function}\label{4s6}

Let $d_S\in F_S^\times$ and $S$ be a finite set of $\Sigma$ satisfying $S\supset\Sigma_\inf\cup\Sigma_2$.
We set
\[ \para(F,S,d_S)=\{ \ds\in\para(F)\, | \, d\in d_S(F_S^\times)^2 \}  \]
We define the zeta function $\xi^S(s;d_S)$ by
\[\xi^S(s;d_S)=\frac{\zeta^S_F(2s-1) \, \zeta^S_F(2s)}{\zeta^S_F(2)} \sum_{\ds\in \para(F,S,d_S)} \frac{ L^S(1,\chi_d) }{L^S(2s,\chi_d) \, N(\mathfrak{f}^S_d)^{s-\frac{1}{2}}} . \]
\begin{lem}\label{4l14}
The zeta function $\xi^S(s;d_S)$ is absolutely convergent and holomorphic for $\Re(s)>3/2$.
\end{lem}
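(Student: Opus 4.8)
The plan is to obtain Lemma~\ref{4l14} from the explicit evaluation of Theorem~\ref{4t13} together with the convergence of the global zeta integral in Lemma~\ref{4l1}, rather than estimating the defining series directly. The key structural remark is that, for $\ds\in\para(F,S,d_S)$, the local factor $Z_S(\Phi_S,s,\trep_S;d)$ occurring on the right-hand side of Theorem~\ref{4t13} depends on $d$ only through the class $d_S$: by definition $\para(F,S,d_S)$ consists of those $\ds$ with $d\in d_S(F_S^\times)^2$, and then each component orbit $\xd_{d_v}\cdot G(F_v)=V^\ss(F_v,d_v)$ depends only on the square class of $d_v$ by Lemma~\ref{4l7}, so $V^\ss(F_S,d)=V^\ss(F_S,d_S)$. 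I would therefore fix the finite set $S$ of the statement and choose once and for all a \emph{non-negative} test function $\Phi=\Phi_S\prod_{v\notin S}\Phi_{0,v}$ with $\Phi_S\ge 0$ not identically zero on the non-empty open set $V^\ss(F_S,d_S)\subset V(F_S)$ (a product of non-empty open $G(F_v)$-orbits, by Lemma~\ref{4l7}); then $(\Phi,S)$ satisfies Condition~\ref{cond2}, and $Z_S(\Phi_S,s,\trep_S;d_S)=\int_{V^\ss(F_S,d_S)}|\det(x_S)|_S^{s-3/2}\Phi_S(x_S)\,\d x_S>0$ for every real $s$.

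Summing the $\chi=\trep_F$ formula of Theorem~\ref{4t13} over $\ds\in\para(F,S,d_S)$, pulling the $d$-independent quantities out of the sum, and using $\zeta_F^S(2s-1)\zeta_F^S(2s)\sum_{\ds\in\para(F,S,d_S)}\frac{L^S(1,\chi_d)}{L^S(2s,\chi_d)\,N(\mathfrak{f}^S_d)^{s-1/2}}=\zeta_F^S(2)\,\xi^S(s;d_S)$, one would obtain the identity
\[
\sum_{\ds\in\para(F,S,d_S)}Z(\Phi,s,\trep_F;d)=\frac{\vol_G\,2^{|S|}}{2\,c_F^S}\,Z_S(\Phi_S,s,\trep_S;d_S)\,\xi^S(s;d_S),
\]
valid term by term for $\Re(s)>3/2$. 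The left-hand side is a subseries of $Z(\Phi,s,\trep_F)=\sum_{\ds\in\para(F)}Z(\Phi,s,\trep_F;d)$, the orbit decomposition from Lemma~\ref{4l3}; since $\Phi\ge 0$, each $Z(\Phi,s,\trep_F;d)$ is non-negative for real $s$, so for every real $s_1>3/2$ the left-hand side at $s_1$ is bounded by $Z(\Phi,s_1,\trep_F)$, which is finite by Lemma~\ref{4l1}. Dividing by the positive number $Z_S(\Phi_S,s_1,\trep_S;d_S)$ shows that the series of non-negative terms $\sum_{\ds\in\para(F,S,d_S)}\frac{L^S(1,\chi_d)}{L^S(2s_1,\chi_d)\,N(\mathfrak{f}^S_d)^{s_1-1/2}}$ converges for every real $s_1>3/2$.

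Finally I would upgrade this to absolute and locally uniform convergence on $\{\Re(s)>3/2\}$. For $\Re(2s)\ge 3$ the Euler products $L^S(2s,\chi_d)=\prod_{v\notin S}(1-\chi_{d,v}(\pi_v)q_v^{-2s})^{-1}$ satisfy $\prod_v(1+q_v^{-3})^{-1}\le |L^S(2s,\chi_d)|\le \zeta_F(3)$, uniformly over all quadratic $\chi_d$, so for $\Re(s)\ge s_1>3/2$ one has $\bigl|\tfrac{L^S(1,\chi_d)}{L^S(2s,\chi_d)\,N(\mathfrak{f}^S_d)^{s-1/2}}\bigr|\le C\,\tfrac{L^S(1,\chi_d)}{L^S(2s_1,\chi_d)\,N(\mathfrak{f}^S_d)^{s_1-1/2}}$ with $C$ absolute; since every compact subset of $\{\Re(s)>3/2\}$ lies in such a half-plane for a suitable $s_1>3/2$, the previous step gives absolute and locally uniform convergence there, and each summand is holomorphic on $\{\Re(s)>1/2\}$, so the series represents a holomorphic function on $\{\Re(s)>3/2\}$. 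Multiplying by the prefactor $\zeta_F^S(2s-1)\zeta_F^S(2s)/\zeta_F^S(2)$, which is holomorphic on $\{\Re(s)>1\}\supset\{\Re(s)>3/2\}$ because $\zeta_F^S(2s-1)$ and $\zeta_F^S(2s)$ have their only poles at $s=1$ and $s=1/2$ respectively and $\zeta_F^S(2)=\prod_{v\notin S}(1-q_v^{-2})^{-1}\ne 0$, yields the lemma. The one delicate point — and the reason for routing through the zeta integral instead of estimating the Dirichlet series directly (which would require classical bounds on $L^S(1,\chi_d)$ and on the number of square classes of bounded conductor) — is the non-vanishing of $Z_S(\Phi_S,s_1,\trep_S;d_S)$; this is why $\Phi$ must be chosen non-negative and non-trivial on the relevant $F_S$-orbit, and a single such test function suffices for the entire argument.
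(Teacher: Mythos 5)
Your proof is correct and follows essentially the same route as the paper's: both deduce the lemma from Theorem~\ref{4t13} combined with the convergence of the global zeta integral $Z(\Phi,s,\trep_F)$ (Lemma~\ref{4l1}) for a test function concentrated on the square class of $d_S$. The only differences are cosmetic — the paper chooses $\Phi_S$ with $\operatorname{supp}\Phi_S\subset V^\ss(F_S,d_S)$ so that the full integral equals $\xi^S(s;d_S)$ times an explicitly non-vanishing local factor, whereas you bound the relevant subseries by positivity of a general non-negative $\Phi_S$; and you make explicit the passage from convergence at real points to absolute, locally uniform convergence via the uniform Euler-product bounds on $L^S(2s,\chi_d)$, a step the paper leaves implicit in its citation of Lemmas~\ref{4l9} and~\ref{4l10}.
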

\begin{proof}
For $v\not\in S$, we set $\Phi_v=\Phi_{0,v}$.
For $v\in S\cap \Sigma_\fin$, we assume that $\Phi_v$ is the characteristic function of $\xd_{d_v}+\pi_v^{n_v}V(\fO_v)$ for an integer $n_v$ large enough.
The support of $\Phi_v$ is contained in $\xd_{d_v}\cdot G(F_v)$ because $-\det(\xd_{d_v}+\pi_v^{n_v}V(\fO_v))\subset d_v(F_v^\times)^2$.
For $v\in\Sigma_\inf$, it is trivial that there exists a test function $\Phi_v\in\cS(V(F_v))$ such that the support of $\Phi_v$ is contained in $\xd_{d_v}\cdot G(F_v)$.
If we set $\Phi_S=\prod_{v\in S}\Phi_v$ for such $\Phi_v$, then we get
\begin{multline*}
Z(\Phi,s,\trep_F) \\
= \frac{\vol_G \, 2^{|S|}}{2\, c_F^{S}\; \zeta_F^S(2)} \times \xi^S(s;d_S) \times \prod_{v\in S}\int_{V^\ss(F_v,d_v)}|\det(x_v)|_v^{s-\frac{3}{2}} \, \Phi_v(x_v) \, \d x_v 
\end{multline*}
by Theorem \ref{4t13}.
Hence, we deduce this lemma from Lemmas \ref{4l1}, \ref{4l9}, and \ref{4l10}.
\end{proof}

We obtain the following from Theorem \ref{4t13} and Lemma \ref{4l14}.
\begin{thm}\label{4t15}
Fix a quadratic character $\chi$ on $\A^1/F^\times$ and a test function $\Phi\in\cS(V(\A))$.
Assume that $\Re(s)>3/2$ and $S$ satisfies Condition \ref{cond2}.
If $\chi=\trep_F$, we have
\[ Z(\Phi,s,\trep_F)= \frac{\vol_G \, 2^{|S|}}{2\, c_F^S} \sum_{d_S\in F_S^\times/(F_S^\times)^2} Z_S(\Phi_S,s,\trep_S;d_S) \, \xi^S(s;d_S)  . \]
Let $d\in F^\times-(F^\times)^2$.
If $\chi=\chi_d$ and $\chi_{d,v}$ is unramified for each $v\not\in S$, then we have
\[ Z(\Phi,s,\chi_d)= \frac{\vol_G \, 2^{|S|}}{2\, c_F^S} \times Z_S(\Phi_S,s,\chi_{d,S};d) \, L^S(1,\chi_d) \, \frac{\zeta_F^S(2s-1)}{\zeta_F^S(2)} .  \]
If $\chi=\chi_d$ and there exists a place $v\not\in S$ such that $\chi_{d,v}$ is ramified, then we have
\[
Z(\Phi,s,\chi_d)=0.
\]
\end{thm}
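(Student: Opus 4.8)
The plan is to combine the orbit decomposition $Z(\Phi,s,\chi)=\sum_{\ds\in\para(F)}Z(\Phi,s,\chi;d)$ recorded in Section~\ref{4s5} --- which holds with absolute convergence for $\Re(s)>3/2$ by Lemmas~\ref{4l1} and~\ref{4l3} --- with the explicit evaluation of each summand supplied by Theorem~\ref{4t13}, and then to regroup the resulting sum according to the image of $\ds$ in $F_S^\times/(F_S^\times)^2$.

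Consider first $\chi=\trep_F$. For each $\ds\in\para(F)$ the first case of Theorem~\ref{4t13} gives
\[
Z(\Phi,s,\trep_F;d)=\frac{\vol_G\,2^{|S|}}{2\,c_F^S}\cdot\frac{\zeta_F^S(2s-1)\,\zeta_F^S(2s)}{\zeta_F^S(2)}\cdot\frac{L^S(1,\chi_d)}{L^S(2s,\chi_d)\,N(\mathfrak{f}^S_d)^{s-\frac{1}{2}}}\cdot Z_S(\Phi_S,s,\trep_S;d).
\]
The key point is that the local factor $Z_S(\Phi_S,s,\trep_S;d)$ depends on $d$ only through its class $d_S\in F_S^\times/(F_S^\times)^2$: indeed $V^\ss(F_S,d_S)=\prod_{v\in S}\xd_{d_v}\cdot G(F_v)$, and by Lemma~\ref{4l7} each of these orbits is unchanged if $d_v$ is multiplied by a square in $F_v^\times$. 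Hence this factor may be written as $Z_S(\Phi_S,s,\trep_S;d_S)$ and pulled out of the partial sum over those $\ds$ lying above a fixed $d_S$. Since $\para(F)$ is the disjoint union of the sets $\para(F,S,d_S)$ as $d_S$ runs over the finite set $F_S^\times/(F_S^\times)^2$ (when $\para(F,S,d_S)=\emptyset$ the partial sum is empty, in agreement with $\xi^S(s;d_S)=0$), summing over $\ds\in\para(F)$ and extracting the common factor $\zeta_F^S(2s-1)\zeta_F^S(2s)/\zeta_F^S(2)$ reconstitutes exactly $\xi^S(s;d_S)$, so that
\[
Z(\Phi,s,\trep_F)=\frac{\vol_G\,2^{|S|}}{2\,c_F^S}\sum_{d_S\in F_S^\times/(F_S^\times)^2}Z_S(\Phi_S,s,\trep_S;d_S)\,\xi^S(s;d_S).
\]
The interchange of the two summations is legitimate for $\Re(s)>3/2$: the outer sum over $d_S$ is finite, and each inner sum converges absolutely there by Lemma~\ref{4l14}.

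Now let $\chi=\chi_d$ for a fixed $d\in F^\times-(F^\times)^2$. By Theorem~\ref{4t13}, for $\check{d'}\in\para(F)$ one has $Z(\Phi,s,\chi_d;d')=0$ unless $\chi_d=\trep_F$ --- impossible since $d\notin(F^\times)^2$ --- or $\chi_d=\chi_{d'}$, and the uniqueness statement of Section~\ref{4s3} then forces $\check{d'}=\ds$. Thus only the term $d'=d$ survives in the orbit decomposition, so $Z(\Phi,s,\chi_d)=Z(\Phi,s,\chi_d;d)$, and the remaining two cases of Theorem~\ref{4t13} yield at once the asserted identity when $\chi_{d,v}$ is unramified for every $v\notin S$, and $Z(\Phi,s,\chi_d)=0$ otherwise.

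Beyond Theorem~\ref{4t13} the argument is largely bookkeeping; the one step requiring genuine care is the regrouping of the orbit sum, which rests on the finiteness of $F_S^\times/(F_S^\times)^2$ (Lemma~\ref{4l7}), the absolute convergence furnished by Lemmas~\ref{4l1} and~\ref{4l14}, and the elementary observation that $Z_S(\Phi_S,s,\trep_S;d)$ is a class function of $d_S$ modulo squares.
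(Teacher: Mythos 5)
Your proposal is correct and follows exactly the route the paper intends: the paper deduces Theorem~\ref{4t15} directly from Theorem~\ref{4t13} and Lemma~\ref{4l14}, i.e., by summing the orbit-wise formula $Z(\Phi,s,\chi)=\sum_{\ds\in\para(F)}Z(\Phi,s,\chi;d)$, regrouping by the class of $d$ in $F_S^\times/(F_S^\times)^2$ to reconstitute $\xi^S(s;d_S)$ in the trivial-character case, and using the vanishing/uniqueness of $\chi_d$ to isolate the single surviving orbit when $\chi=\chi_d$. Your spelled-out bookkeeping (the $S$-class invariance of $Z_S(\Phi_S,s,\trep_S;d)$ and the convergence justifications via Lemmas~\ref{4l1} and~\ref{4l14}) is exactly what the paper leaves implicit.
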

The formula for $\chi=\chi_d$ is a generalization of \cite[Theorem 1]{IS}.

\subsection{Meromorphic continuation}\label{4s7}

\begin{prop}\label{4p16}
Let $\chi$ be a non-trivial quadratic character of $\A^1/F^\times$.
The zeta integral $Z(\Phi,s,\chi)$ is holomorphic for $\Re(s)>1/2$ and can be meromorphically continued to the whole complex $s$-plane.
\end{prop}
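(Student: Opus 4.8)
The plan is to reduce the statement to the explicit formula of Theorem~\ref{4t13}. Since $Z(\Phi,s,\chi)$ depends linearly on $\Phi$ and $\cS(V(\A))$ is spanned by pure tensors, I may assume $\Phi=\Phi_S\prod_{v\not\in S}\Phi_{0,v}$, where $S$ is chosen large enough to satisfy Condition~\ref{cond2} and, moreover, to contain every finite place at which $\chi$ ramifies. As $\chi$ is a non-trivial quadratic character of $\A^1/F^\times$, Section~\ref{4s3} provides a unique non-trivial class $\ds_0\in F^\times/(F^\times)^2$ with $\chi=\chi_{d_0}$, and by the choice of $S$ the local component $\chi_{d_0,v}$ is unramified for every $v\not\in S$.

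Next I single out the one orbit that contributes. By Lemma~\ref{4l3} one has $V^\st(F)=\bigsqcup_{\ds\in\para(F)}\xd_d\cdot G(F)$, so $Z(\Phi,s,\chi)=\sum_{\ds\in\para(F)}Z(\Phi,s,\chi;d)$ for $\Re(s)>3/2$ by Lemma~\ref{4l1}. Theorem~\ref{4t13} says $Z(\Phi,s,\chi;d)=0$ unless $\chi\in\{\trep_F,\chi_d\}$; since $\chi$ is non-trivial, only the term $\ds=\ds_0$ survives, and its second case applies because $\chi_{d_0,v}$ is unramified outside $S$. Thus, for $\Re(s)>3/2$,
\[
Z(\Phi,s,\chi)=\frac{\vol_G\,2^{|S|}\,L^S(1,\chi_{d_0})}{2\,c_F^S\,\zeta_F^S(2)}\;\zeta_F^S(2s-1)\;Z_S(\Phi_S,s,\chi_{d_0,S};d_0).
\]

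The meromorphic continuation to all of $\C$ is now automatic: the prefactor is a constant, $\zeta_F^S(2s-1)$ is meromorphic on $\C$ with a single simple pole at $s=1$, and $Z_S(\Phi_S,s,\chi_{d_0,S};d_0)$ is meromorphic on $\C$ by Lemma~\ref{4l11}. For the holomorphy statement I would then inspect the poles of the right-hand side in the half-plane $\Re(s)>1/2$: by Lemma~\ref{4l11} the factor $Z_S$ contributes none there, the $\zeta_F^S$-factor is holomorphic there away from $s=1$, and it remains to pin down the behaviour at $s=1$, whose potential simple pole has residue a nonzero constant multiple of $L^S(1,\chi_{d_0})\,Z_S(\Phi_S,1,\chi_{d_0,S};d_0)$.

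The behaviour at $s=1$ is the one delicate point; everything before it is bookkeeping with results already in hand. Here one should bring in the structure of the prehomogeneous zeta integral itself: splitting off the $A_G$-variable as in the proof of Lemma~\ref{4l1} (via Theorem~\ref{6t2}), the part of the integral over $|\omega|\ge 1$ is entire, while the part over $|\omega|<1$ is treated by Poisson summation, and the contribution of the origin — the source of the ``main'' pole for the trivial character — is annihilated by $\int_{G(F)\bsl G(\A)^1}\chi(g)\,\d^1 g=0$, since $\chi$ is non-trivial on $G(\A)^1/G(F)$. Carrying out this analysis and matching it against the formula above is what controls the remaining degenerate contributions and yields holomorphy in $\Re(s)>1/2$; I expect that step, rather than the reduction, to be the substantive part of the argument.
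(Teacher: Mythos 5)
Your reduction is, word for word, the paper's own proof: the paper disposes of the proposition with the single sentence that it follows from Lemma~\ref{4l11} and Theorem~\ref{4t15} (equivalently, Theorem~\ref{4t13} applied to the unique class $\ds_0$ with $\chi=\chi_{d_0}$, after enlarging $S$), so everything up to and including your displayed formula is exactly the intended argument, and the meromorphic continuation to $\C$ is indeed immediate from it.

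The gap is in your final paragraph, and it is a real one: the mechanism you propose cannot produce holomorphy at $s=1$. The orthogonality $\int_{G(F)\bsl G(\A)^1}\chi(g)\,\d^1g=0$ kills the contribution of the origin, i.e.\ the term $\hat\Phi(0)$, which is responsible for the pole at $s=3/2$ (and dually $\Phi(0)$ at $s=0$, cf.\ Proposition~\ref{4p17} and Theorem~\ref{4t25}); it says nothing about $s=1$. The potential pole at $s=1$ sits in the factor $\zeta_F^S(2s-1)$, i.e.\ it is produced by the Euler factors at $v\notin S$ (in the stratification picture, by the rank-one forms, not by the origin), and your own formula shows it cannot be cancelled in general: the residue at $s=1$ is a nonzero constant times $L^S(1,\chi_{d_0})\,Z_S(\Phi_S,1,\chi_{d_0,S};d_0)$, and taking $\Phi_S\ge0$ supported in a small open neighbourhood of $\xd_{d_0}$ inside the orbit $V^\ss(F_S,d_0)$, on which the locally constant sign $\chi_{d_0,S}$ is constant, makes this nonzero. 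So the step you call the substantive part cannot be carried out; what the reduction actually yields --- and what the paper's citation of Lemma~\ref{4l11} and Theorem~\ref{4t15} yields, no more --- is holomorphy on $\Re(s)>1/2$ apart from a possible simple pole at $s=1$ (in particular holomorphy for $\Re(s)>1$, and at $s=3/2$). The stronger claim at $s=1$ is not addressed by the paper's proof either, and it is only the weaker conclusion that is used later: just the value at $s=3/2$ enters \eqref{4e4}, Theorems~\ref{4t22}--\ref{4t23} and Proposition~\ref{4p26}, where the functional equation $Z(\Phi,s,\chi)=Z(\hat\Phi,\tfrac32-s,\chi)$ merely exchanges the possible poles at $s=1$ and $s=1/2$.
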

\begin{proof}
This follows from Lemma \ref{4l11} and Theorem \ref{4t15}.
\end{proof}

We set $\d x=\prod_{v\in\Sigma}\d x_v$, where $\d x_v$ was defined in Section \ref{4s4}.
For $\Phi\in\cS(X(\A))$, the Fourier transform $\hat\Phi$ of $\Phi$ is defined by
\[
\hat\Phi(y)=\int_{V(\A)}  \Phi(x) \, \psi_F(\Tr(xy))  \d x  .
\]

\begin{prop}\label{4p17}
The zeta integral $Z(\Phi,s,\trep_F)$ is holomorphic for $\Re(s)>3/2$ and can be meromorphically continued to the whole complex $s$-plane.
If $\hat\Phi(0)\neq 0$, it has a simple pole at $3/2$ and with residue $\hat\Phi(0)\, \vol_G/2$, otherwise it is holomorphic at $3/2$.
\end{prop}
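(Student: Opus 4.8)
The holomorphy of $Z(\Phi,s,\trep_F)$ for $\Re(s)>3/2$ is Lemma~\ref{4l1}, so it remains to produce the meromorphic continuation and the pole at $s=3/2$. For the continuation, the plan is to invoke Theorem~\ref{4t15}. By linearity in $\Phi$ we may assume $\Phi=\Phi_S\prod_{v\notin S}\Phi_{0,v}$ for a finite $S$ satisfying Condition~\ref{cond2}; then for $\Re(s)>3/2$ Theorem~\ref{4t15} gives the \emph{finite} expansion
\begin{equation*}
Z(\Phi,s,\trep_F)=\frac{\vol_G\,2^{|S|}}{2\,c_F^S}\sum_{d_S\in F_S^\times/(F_S^\times)^2}Z_S(\Phi_S,s,\trep_S;d_S)\,\xi^S(s;d_S).
\end{equation*}
The factors $Z_S(\Phi_S,s,\trep_S;d_S)$ are meromorphic on $\C$ and holomorphic for $\Re(s)>1/2$ by Lemma~\ref{4l11}, so the problem reduces to the continuation of $\xi^S(s;d_S)$ and its behaviour at $s=3/2$. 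The function $\xi^S(s;d_S)$ is the Shintani zeta function for binary quadratic forms (up to the elementary renormalisation relating it to the series $\xi_{\mathbf{x}_S}$ of \cite{Datskovsky}), and its meromorphic continuation to $\C$ is supplied by Saito's analytic theory of the prehomogeneous zeta integral attached to $(G,V)$ in \cite{Saito1,Saito2,Saito3}. In the defining formula the factors $\zeta_F^S(2s-1)$ and $\zeta_F^S(2s)$ are holomorphic at $s=3/2$, so the pole there comes entirely from the conductor Dirichlet series $\sum_{\ds\in\para(F,S,d_S)}L^S(1,\chi_d)\,L^S(2s,\chi_d)^{-1}N(\mathfrak f^S_d)^{-(s-1/2)}$, which converges for $\Re(s)>3/2$ and has a simple pole at the edge of convergence.

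To identify the residue transparently I would return to the zeta integral itself and apply Poisson summation on the central slice. Using $G=A_G\times H$ with $A_G=\GL(1)$ and $H=\PGL(2)$, decompose $G(\A)=G(\A)^1\cdot A_G^+$, identify $A_G^+$ with $\R_{>0}$ through $g\mapsto|\omega(g)|^{1/2}=:t$, and note that the corresponding element $a_t$ scales $V(\A)$ by an idele of norm $t$, hence with Jacobian $t^3$. Then
\begin{equation*}
Z(\Phi,s,\trep_F)=\int_0^\infty t^{2s}\,\Xi_\Phi(t)\,\frac{\d t}{t},\qquad \Xi_\Phi(t)=\int_{G(F)\bsl G(\A)^1}\sum_{x\in V^\st(F)}\Phi\bigl(t(x\cdot g)\bigr)\,\d^1 g.
\end{equation*}
The contribution of $t\ge1$ is entire, because $\Xi_\Phi(t)$ is rapidly decreasing as $t\to\infty$ — this is the estimate underlying Lemma~\ref{4l1} and the proof of Theorem~\ref{6t2}. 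For $t\le1$ I would complete the sum over $V^\st(F)$ to a sum over all of $V(F)$ and apply Poisson summation; since every $g\in G(\A)^1$ acts on $V(\A)$ with $|\det|=1$, this turns $\sum_{x\in V(F)}\Phi(t(x\cdot g))$ into $t^{-3}\sum_{y\in V(F)}\hat\Phi(t^{-1}(y\cdot g^\vee))$, and the summand $y=0$ contributes $t^{-3}\hat\Phi(0)$. Integrating this summand against $\int_{G(F)\bsl G(\A)^1}\d^1 g=\vol_G$ and then against $\int_0^1 t^{2s-4}\,\d t=(2s-3)^{-1}$ gives $\hat\Phi(0)\,\vol_G/(2s-3)$; as this is the only term among those just produced that is singular at $s=3/2$, it follows that $Z(\Phi,s,\trep_F)$ has at worst a simple pole there with residue $\hat\Phi(0)\,\vol_G/2$, and is holomorphic at $3/2$ when $\hat\Phi(0)=0$.

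The main obstacle is the regularisation of the remaining terms that Poisson summation produces from the complement $V(F)\smallsetminus V^\st(F)$, namely the degenerate forms and the forms with $-\det$ a square: the associated theta integrals over the non-compact quotient $\PGL(2,F)\bsl\PGL(2,\A)$ diverge, so one cannot simply split $\Xi_\Phi$ along the $G$-orbit decomposition. I would handle this either by an Arthur-type truncation of the $\PGL(2)$-integral, letting the truncation parameter tend to infinity and cancelling the truncation-dependent pieces between $\Phi$ and $\hat\Phi$ via the functional equations of the resulting lower-rank zeta functions, or by showing that these contributions have abscissa of convergence strictly below $3/2$ (the split and singular orbits only produce poles at $s\le1$) and are therefore holomorphic near $s=3/2$, leaving the residue computed above untouched. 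Matching the two residue computations forces $\mathrm{Res}_{s=3/2}\xi^S(s;d_S)=c_F^S\,2^{-|S|}$, which one can also check directly from the conductor Dirichlet series; the factor $\vol_G/2$ is the global counterpart of the Tamagawa number $2$ of $\PGL(2)$ that was already isolated in the proof of Theorem~\ref{4t13}.
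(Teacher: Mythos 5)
The paper's entire proof of this proposition is a citation: ``This follows from \cite[Theorem 4.2]{Yukie}.'' Your second argument --- Iwasawa-decomposing along $A_G^+$, applying Poisson summation in the region $t\le1$, and extracting the pole at $s=3/2$ from the $y=0$ term of the dual sum --- is precisely the skeleton of Yukie's (and Shintani's) proof, and your residue computation $\hat\Phi(0)\,\vol_G\int_0^1 t^{2s-4}\,\d t=\hat\Phi(0)\,\vol_G/(2s-3)$ is correct, including the factor $1/2$ from $2(s-3/2)$.

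Two problems remain, though. First, your opening route via Theorem~\ref{4t15} is circular relative to the paper's own development: the meromorphic continuation of $\xi^S(s;d_S)$ is Proposition~\ref{4p18}, which the paper \emph{deduces from} Proposition~\ref{4p17} (together with Theorem~\ref{4t15}); Saito's papers supply convergence and explicit local factors, not an independent continuation of the sum over $\ds\in\para(F,S,d_S)$, so you cannot take that continuation as given. Second, and more seriously, the step you defer --- regularising the contributions of $V(F)\smallsetminus V^\st(F)$, whose theta integrals over $\PGL(2,F)\bsl\PGL(2,\A)$ diverge --- is not a peripheral ``obstacle'' but the entire technical content of the cited theorem; Yukie handles it with the smoothed Eisenstein series and the adjusted integral $Z_{\mathrm{ad}}$ of Section~\ref{4s9}, whose correction terms $\mathfrak T_0,\mathfrak T_1$ indeed only carry poles at $s=1/2$ and $s=1$. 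Your claim that these orbits ``only produce poles at $s\le1$'' is true but is itself a conclusion of that regularisation, not an input to it. As written, then, the proof is incomplete at exactly the point where the real work lies; the honest options are to carry out the truncation argument in full or to do what the paper does and invoke \cite[Theorem 4.2]{Yukie}.
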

\begin{proof}
This follows from \cite[Theorem 4.2]{Yukie}.
\end{proof}
\begin{prop}\label{4p18}
Assume that $S\supset\Sigma_\inf\cup\Sigma_2$.
The zeta function $\xi^S(s;d_S)$ is holomorphic for $\Re(s)>3/2$ and can be meromorphically continued to the whole complex $s$-plane.
It has a simple pole at $3/2$ with residue $2^{-|S|}c_F^S$ (which does not depend on $d_S\in F_S^\times$).
\end{prop}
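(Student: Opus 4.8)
The plan is to deduce the assertion from the behaviour of the global zeta integral $Z(\Phi,s,\trep_F)$ established in Proposition~\ref{4p17}, after choosing a test function $\Phi$ that isolates a single class $d_S$. Fix a representative $d_S=(d_v)_{v\in S}\in F_S^\times$. For $v\in S\cap\Sigma_\fin$ take $\Phi_v\in\cS(V(F_v))$ to be the characteristic function of $\xd_{d_v}+\pi_v^{n_v}V(\fO_v)$ with $n_v$ large enough that $-\det(\xd_{d_v}+\pi_v^{n_v}V(\fO_v))\subset d_v(F_v^\times)^2$; by Lemma~\ref{4l7} the support of $\Phi_v$ then lies in the single $G(F_v)$-orbit $V^\ss(F_v,d_v)=\xd_{d_v}\cdot G(F_v)$, as in the proof of Lemma~\ref{4l14}. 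For $v\in\Sigma_\inf$ the orbit $V^\ss(F_v,d_v)$ is open in $V(F_v)$, since by Lemma~\ref{4l7} there are only finitely many orbits; so we may pick $\Phi_v\in\cS(V(F_v))$ nonnegative, not identically zero, and supported in $V^\ss(F_v,d_v)$. Put $\Phi_S=\prod_{v\in S}\Phi_v$ and $\Phi=\Phi_S\prod_{v\notin S}\Phi_{0,v}$; then Condition~\ref{cond2} holds. Since the support of $\Phi_S$ is contained in $V^\ss(F_S,d_S)$, the integral $Z_S(\Phi_S,s,\trep_S;t_S)$ vanishes identically for every $t_S\in F_S^\times$ with $t_S\notin d_S(F_S^\times)^2$, so Theorem~\ref{4t15} reduces to the single term
\[
Z(\Phi,s,\trep_F)=\frac{\vol_G\,2^{|S|}}{2\,c_F^S}\,Z_S(\Phi_S,s,\trep_S;d_S)\,\xi^S(s;d_S)\qquad(\Re(s)>3/2).
\]

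Next I would read off the analytic properties of $\xi^S(s;d_S)$ from this identity. By Lemma~\ref{4l11}, $Z_S(\Phi_S,s,\trep_S;d_S)=\prod_{v\in S}Z_v(\Phi_v,s,\trep_{F_v};d_v)$ is holomorphic for $\Re(s)>1/2$, extends meromorphically to $\C$, and is not identically zero (for real $s>3/2$ each local factor is a strictly positive integral). Hence
\[
\xi^S(s;d_S)=\frac{2\,c_F^S}{\vol_G\,2^{|S|}}\cdot\frac{Z(\Phi,s,\trep_F)}{Z_S(\Phi_S,s,\trep_S;d_S)},
\]
and since $Z(\Phi,s,\trep_F)$ continues meromorphically to $\C$ by Proposition~\ref{4p17}, this gives the meromorphic continuation of $\xi^S(s;d_S)$ to the whole plane; holomorphy for $\Re(s)>3/2$ is Lemma~\ref{4l14}. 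At $s=3/2$ the denominator is holomorphic with value
\[
Z_S(\Phi_S,3/2,\trep_S;d_S)=\prod_{v\in S}\int_{V^\ss(F_v,d_v)}\Phi_v(x_v)\,\d x_v=\int_{V(F_S)}\Phi_S(x_S)\,\d x_S=\hat\Phi(0),
\]
using $\int_{V(\fO_v)}\d x_v=1$ for $v\notin S$; this value is strictly positive by construction, in particular nonzero. By Proposition~\ref{4p17}, $Z(\Phi,s,\trep_F)$ has a simple pole at $s=3/2$ with residue $\hat\Phi(0)\,\vol_G/2$. Dividing, $\xi^S(s;d_S)$ has a simple pole at $3/2$ with residue
\[
\frac{2\,c_F^S}{\vol_G\,2^{|S|}}\cdot\frac{\hat\Phi(0)\,\vol_G/2}{\hat\Phi(0)}=2^{-|S|}c_F^S,
\]
which is manifestly independent of the choice of $d_S$.

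I do not anticipate a serious obstacle, since the deep analytic input is already available: the explicit Saito-type formula for the adelic zeta integral (Theorems~\ref{4t13} and~\ref{4t15}) and the location and residue of the pole of $Z(\Phi,s,\trep_F)$ at $3/2$ (Proposition~\ref{4p17}, ultimately Yukie's theorem). The two points needing care are the construction of local test functions supported in a single semistable orbit, which relies on $G$ acting with open orbits on $V^\ss$ (finiteness of $F_v^\times/(F_v^\times)^2$), and the normalization bookkeeping behind $Z_S(\Phi_S,3/2,\trep_S;d_S)=\hat\Phi(0)$, which uses the measure conventions of Sections~\ref{4s1} and~\ref{4s4} and the equality $\int_{V(\fO_v)}\d x_v=1$; together these force the constants to collapse to exactly $2^{-|S|}c_F^S$.
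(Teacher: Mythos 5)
Your proposal is correct and follows essentially the same route as the paper, whose proof cites exactly the ingredients you use: the test functions supported in a single orbit from the proof of Lemma~\ref{4l14}, the identity of Theorem~\ref{4t15}, the local analytic facts of Lemma~\ref{4l11}, and the pole of $Z(\Phi,s,\trep_F)$ from Proposition~\ref{4p17}. Your residue bookkeeping, including the identification $Z_S(\Phi_S,3/2,\trep_S;d_S)=\hat\Phi(0)$ via the normalization $\int_{V(\fO_v)}\d x_v=1$, is accurate.
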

\begin{proof}
This is deduced from Lemma \ref{4l11}, Theorem \ref{4t15}, Proposition \ref{4p17}, and the proof of Lemma \ref{4l14}.
\end{proof}

\subsection{Modified zeta integral}\label{4s8}

Let $T_1\in\R$.
The function $\tau_{T_1}(t)$ on $\R$ is defined by $\tau_{T_1}(t)=1$ if $t<-{T_1}$, and $\tau_{T_1}(t)=0$ if $t\geq -{T_1}$.
We define a modified zeta integral $Z(\Phi,s,\trep_F,T_1)$ by
\begin{multline*}
Z(\Phi,s,\trep_F,T_1)=  \\
\int_{G(F)\bsl G(\A)} |\omega(g)|^s   \Bigl\{ \sum_{x\in V^\st(F)} \Phi(x\cdot g)  - |\omega(g)|^{-3/2} \hat\Phi(0)\, \tau_{T_1}(H_G(g))  \Bigr\} \d g .
\end{multline*}
\begin{lem}\label{4l19}
The zeta integral $Z(\Phi,s,\trep_F,T_1)$ is absolutely convergent for $\Re(s)>5/4$.
Furthermore, $Z(\Phi,s,\trep_F,T_1)$ is holomorphic for $\Re(s)>1$.
\end{lem}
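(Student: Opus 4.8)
The plan is to split the integration over $G(F)\bsl G(\A)$ according to whether $H_G(g)\ge -T_1$ or $H_G(g)<-T_1$, i.e.\ according to whether the truncation term $\tau_{T_1}(H_G(g))$ vanishes or equals~$1$. On $\{H_G(g)\ge -T_1\}$ the subtracted term disappears and one is left with $\int_{H_G(g)\ge -T_1}|\omega(g)|^s\sum_{x\in V^\st(F)}\Phi(x\cdot g)\,\d g$, and I claim this is entire in~$s$. Writing $g=g^1\mathfrak{a}_t$ with $g^1\in G(\A)^1$ and $t=e^{H_G(g)}$, and decomposing $g^1$ along a Siegel set for $G(F)\bsl G(\A)^1$ so that the $\PGL(2)$-component is $n\,\mathfrak{a}'(y)\,k$ with $y$ bounded below, the $\GL(1)$-factor scales $x=\begin{pmatrix}p&q\\ q&r\end{pmatrix}$ in such a way that $x\cdot g$ can meet $\operatorname{supp}\Phi$ only if $|p|\ll(ty)^{-1}$, $|q|\ll t^{-1}$, $|r|\ll y/t$. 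Since $T_1$ is fixed, $t$ is bounded below, so for $t$ or $y$ large this forces $p=0$, whence $-\det x=q^2\in(F^\times)^2$ (or $\det x=0$), contradicting $x\in V^\st(F)$. Hence the theta sum is supported in a fixed compact subset of $\{H_G(g)\ge -T_1\}$ and this integral converges, uniformly for $s$ in compacta.

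For the region $\{H_G(g)<-T_1\}$ I would use $t=e^{H_G(g)}\in(0,e^{-T_1})$ as coordinate on $A_G^+$ and rewrite that part of the integral as $\int_0^{e^{-T_1}}t^{2s}\bigl(I(t)-t^{-3}\hat\Phi(0)\,\vol_G\bigr)\,\frac{\d t}{t}$, where $I(t)=\int_{G(F)\bsl G(\A)^1}\sum_{x\in V^\st(F)}\Phi(x\cdot g^1\mathfrak{a}_t)\,\d g^1$ (finite for each $t>0$ by the support considerations above). The key is the behaviour of $I(t)$ as $t\to0^+$. The linear map $x\mapsto x\cdot g$ on $V(\A)$ has modulus $|\omega(g)|^{3/2}=t^3$, so the Poisson summation formula on $V(F)\subset V(\A)$ extracts the term $t^{-3}\hat\Phi(0)\,\vol_G$ from $\sum_{x\in V(F)}$, while the passage from $V(F)$ to $V^\st(F)$ removes the contributions of the rank $\le 1$ cone and of the non-degenerate isotropic forms. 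Estimating the resulting dual-lattice sum and these two boundary sums over a Siegel set — using the elementary lattice-point count for the elongated ellipsoids that occur in the $\PGL(2)$-cusp, together with the anisotropy argument of the previous paragraph for the genuinely semistable part — yields $I(t)-t^{-3}\hat\Phi(0)\,\vol_G=O(t^{-5/2+\varepsilon})$ as $t\to0$. Therefore $\int_0^{e^{-T_1}}t^{2\Re(s)-1}\bigl|I(t)-t^{-3}\hat\Phi(0)\,\vol_G\bigr|\,\d t<\infty$ for $\Re(s)>5/4$, which together with the first paragraph gives the absolute convergence of $Z(\Phi,s,\trep_F,T_1)$ for $\Re(s)>5/4$.

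For the holomorphy on $\Re(s)>1$, observe that for $\Re(s)>3/2$ the same splitting gives
\[
Z(\Phi,s,\trep_F,T_1)=Z(\Phi,s,\trep_F)-\hat\Phi(0)\,\vol_G\,\frac{e^{-(2s-3)T_1}}{2s-3},
\]
since $\int_{H_G(g)<-T_1}|\omega(g)|^{s-3/2}\hat\Phi(0)\,\d g$ equals the second summand on that range. By Proposition~\ref{4p17} the right-hand side continues meromorphically and is holomorphic for $\Re(s)>1$ except possibly at $s=3/2$; there the residues of the two terms agree (both equal $\hat\Phi(0)\,\vol_G/2$) and cancel. Thus the right-hand side is holomorphic for $\Re(s)>1$, and since on $\Re(s)>5/4$ it coincides with the absolutely convergent integral defining $Z(\Phi,s,\trep_F,T_1)$, the latter extends holomorphically to $\Re(s)>1$.

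The main obstacle is the estimate $I(t)-t^{-3}\hat\Phi(0)\,\vol_G=O(t^{-5/2+\varepsilon})$. One has to keep careful track of the $\PGL(2)$-cusp, where the ellipsoids $\{x:x\cdot g\in\operatorname{supp}\Phi\}$ become highly elongated: one must verify that the subtracted term $t^{-3}\hat\Phi(0)\,\vol_G$ really captures the leading asymptotics of $I(t)$, including the cusp contribution (which is cut off at height $y\sim t^{-1}$ and so feeds into the next-order terms), control the dual-lattice sum there, and handle the sum over non-degenerate isotropic forms, whose $\PGL(2)$-orbits are non-compact. Everything else is routine once this estimate is in place.
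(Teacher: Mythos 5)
Your first paragraph (compactness of the support of the $V^\st$-theta sum on the region $H_G(g)\ge -T_1$) is fine, and your last step — holomorphy on $\Re(s)>1$ via the identity $Z(\Phi,s,\trep_F,T_1)=Z(\Phi,s,\trep_F)-\hat\Phi(0)\,\vol_G\,e^{-(2s-3)T_1}/(2s-3)$ and cancellation of the residues at $s=3/2$ — is a clean alternative to the paper's appeal to Yukie's argument, except that Proposition~\ref{4p17} by itself does not assert that $Z(\Phi,s,\trep_F)$ has no poles in $1<\Re(s)<3/2$ other than $s=3/2$; for that you need Theorem~\ref{4t25} combined with the holomorphy of $\mathfrak{T}(R_0\Phi_\bK,s)$ for $\Re(s)>1$.

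The convergence statement, however, rests entirely on the estimate $I(t)-t^{-3}\hat\Phi(0)\vol_G=O(t^{-5/2+\varepsilon})$, which you assert and then yourself flag as "the main obstacle". That estimate is not a routine supplement: it \emph{is} the lemma, and the route you sketch for it does not go through as stated. Writing $\sum_{V^\st(F)}=\sum_{V(F)}-\sum_{\mathrm{rank}\le1}-\sum_{\mathrm{isotropic}}$ and integrating termwise over $G(F)\bsl G(\A)^1$ fails, because for fixed $t$ each subtracted piece diverges at the cusp of $\PGL(2)$: once $p=0$ is forced (for $y\gg t^{-1}$), the remaining entries contribute a lattice count $\asymp y/t$ in $r$, and $\int^\infty (y/t)\,\d y/y^2=\infty$. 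Only the $V^\st$-sum is cut off at height $y\asymp t^{-1}$; the full lattice sum, its Poisson dual, and the two boundary sums are not, so the decomposition converges only after one first localizes to $y\ll t^{-1}$ (and then the truncation boundary and the dual sum concentrated there produce exactly the delicate secondary terms), or, as in the paper, after decomposing along the parabolics with $F^{P_1}$ and $\tau_{P_0}^{P_1}$ and applying Poisson summation only in selected coordinate directions with the terms grouped so as to exploit cancellation. This is precisely the content of the proof of Theorem~\ref{6t2}, to which the paper reduces the present lemma via \eqref{4e3}; the exponent $5/4$ emerges from a single term there, the analogue of \eqref{6ezeta7}, where a one-dimensional rational sum is bounded by $|a_1|^{-1}$ times a fixed Schwartz function. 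Until that analysis (or an equivalent one) is actually carried out, the claimed range $\Re(s)>5/4$ is unsupported.
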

\begin{proof}
The first assertion follows from \eqref{4e3} and the proof of Theorem \ref{6t2}.
The second assertion follows from the argument of \cite[p.369--p.373]{Yukie}, because it is enough to replace $\hat\Phi(0)$ by $\hat\Phi(0)\{1-\tau_{T_1}(H_G(g)) \}$.
If we replace the condition $\Re(s)>1$ by $\Re(s)>5/4$, it also follows from \eqref{4e3} and the proof of Theorem \ref{6t2}.
The condition $\Re(s)>5/4$ is enough for our purpose.
\end{proof}
\begin{lem}\label{4l20}
If $T_1>0$, then we have
\[  Z(\Phi,3/2,\trep_F,T_1)=\lim_{s\to 3/2}\frac{\d}{\d s}\Bigl(s-\frac{3}{2}\Bigr) Z(\Phi,s,\trep_F)+\vol_G\, \hat\Phi(0) \, T_1 \, . \]
\end{lem}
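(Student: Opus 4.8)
Lemma~\ref{4l20} relates the value at $s=3/2$ of the truncated zeta integral to the constant term of the Laurent expansion of the untruncated one at its pole. The plan is to compute the difference
\[
Z(\Phi,s,\trep_F,T_1)-Z(\Phi,s,\trep_F)=-\hat\Phi(0)\int_{G(F)\bsl G(\A)}|\omega(g)|^{s-3/2}\,\tau_{T_1}(H_G(g))\,\d g
\]
explicitly as a meromorphic function of $s$, and then evaluate the combination $\lim_{s\to 3/2}\frac{\d}{\d s}(s-\tfrac32)(\,\cdot\,)$ applied to both sides. Here one must be careful about the domain of convergence: $Z(\Phi,s,\trep_F,T_1)$ is holomorphic for $\Re(s)>1$ by Lemma~\ref{4l19}, while $Z(\Phi,s,\trep_F)$ has a simple pole at $3/2$ by Proposition~\ref{4p17}, so the integral on the right must supply exactly that pole.

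First I would unfold the integral over $G(F)\bsl G(\A)$. Since $G=A_G\times H$ with $A_G=\GL(1)$ and $H=\PGL(2)$, and $|\omega(g)|=|a|^2$, $H_G(g)=\log|a|$ for $g=(a,h)$, the factor $|\omega(g)|^{s-3/2}\tau_{T_1}(H_G(g))$ depends only on the $A_G$-coordinate. Writing $\A^\times=A_G^+\times\A^1$ and using $\vol_G=\int_{G(F)\bsl G(\A)^1}\d^1g$ (with the normalization $\vol_H\cdot c_F$ absorbed appropriately — one should check this against the measure conventions of Section~\ref{4s1} and the fact $c_F=\int_{F^\times\bsl\A^1}\d^1x$), the integral collapses to
\[
-\hat\Phi(0)\,\vol_G\int_{-\infty}^{\infty}e^{2(s-3/2)t}\,\tau_{T_1}(t)\,\d t
=-\hat\Phi(0)\,\vol_G\int_{-\infty}^{-T_1}e^{2(s-3/2)t}\,\d t
=-\hat\Phi(0)\,\vol_G\,\frac{e^{-2(s-3/2)T_1}}{2(s-3/2)},
\]
valid for $\Re(s)>3/2$. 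This exhibits the simple pole at $s=3/2$ with residue $-\hat\Phi(0)\vol_G/2$, consistent with Proposition~\ref{4p17}, and gives the meromorphic continuation of the difference to all of $\C$.

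Next I would apply $\lim_{s\to 3/2}\frac{\d}{\d s}(s-\tfrac32)$ to the identity $Z(\Phi,s,\trep_F,T_1)=Z(\Phi,s,\trep_F)-\hat\Phi(0)\vol_G\,e^{-2(s-3/2)T_1}/(2(s-3/2))$. On the left, since $Z(\Phi,s,\trep_F,T_1)$ is holomorphic at $3/2$, the operator $\frac{\d}{\d s}(s-\tfrac32)$ evaluated in the limit simply returns its value $Z(\Phi,3/2,\trep_F,T_1)$. On the right, the first term contributes $\lim_{s\to 3/2}\frac{\d}{\d s}(s-\tfrac32)Z(\Phi,s,\trep_F)$, the quantity appearing in the statement. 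For the second term, write $g(s)=-\hat\Phi(0)\vol_G\,e^{-2(s-3/2)T_1}/2$, so that $(s-\tfrac32)\cdot\bigl(g(s)/(s-\tfrac32)\bigr)=g(s)$ and $\lim_{s\to 3/2}\frac{\d}{\d s}g(s)=g'(3/2)=-\hat\Phi(0)\vol_G\cdot(-T_1)=\hat\Phi(0)\vol_G\,T_1$; subtracting (the sign from the minus in front) yields the stated $+\vol_G\hat\Phi(0)T_1$. Thus
\[
Z(\Phi,3/2,\trep_F,T_1)=\lim_{s\to 3/2}\frac{\d}{\d s}\Bigl(s-\frac32\Bigr)Z(\Phi,s,\trep_F)+\vol_G\,\hat\Phi(0)\,T_1.
\]

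The main obstacle will be justifying the interchange of the $\frac{\d}{\d s}(s-\tfrac32)$ limit with the decomposition of $Z(\Phi,s,\trep_F,T_1)$ — more precisely, confirming that the truncated integral really is holomorphic at $3/2$ (this is Lemma~\ref{4l19}, so I may cite it) and that the elementary integral computation above is legitimate, i.e.\ that the measure normalizations genuinely produce the clean factor $\vol_G$ with no stray constant. The hypothesis $T_1>0$ enters precisely to ensure $e^{-2(s-3/2)T_1}$ decays as $\Re(s)\to+\infty$ along the integration region $t<-T_1$, so that the elementary integral converges for $\Re(s)>3/2$ and the formal manipulation is valid there before continuing meromorphically; this should be remarked explicitly.
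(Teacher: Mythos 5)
Your argument is correct, but it takes a more self-contained route than the paper. The paper disposes of Lemma~\ref{4l20} essentially by citation: it invokes Lemma~\ref{4l19} together with Yukie's results \cite[Theorem~4.2, p.~373]{Yukie} (or, alternatively, \eqref{4e3} and the convergence analysis in the proof of Theorem~\ref{6t2}), so the cancellation of the pole and the appearance of the term $\vol_G\,\hat\Phi(0)\,T_1$ are delegated to Yukie's treatment of the truncated integral. You instead note that for $\Re(s)>3/2$ all three integrals converge absolutely (Lemmas~\ref{4l1} and~\ref{4l19}), so the difference $Z(\Phi,s,\trep_F,T_1)-Z(\Phi,s,\trep_F)$ is literally the elementary integral $-\hat\Phi(0)\,\vol_G\int_{-\infty}^{-T_1}e^{(2s-3)u}\,\d u=-\hat\Phi(0)\,\vol_G\,e^{-2(s-3/2)T_1}/\bigl(2(s-3/2)\bigr)$; the measure conventions of Section~\ref{4s1} ($\d g=\d^1 g\,\d H$ with $\d H$ Lebesgue via $H_G$) do produce the clean factor $\vol_G$ with no stray constant, as you suspected. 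Applying $\lim_{s\to3/2}\frac{\d}{\d s}(s-\tfrac32)$ termwise, with the holomorphy at $3/2$ from Lemma~\ref{4l19} on the left, then gives the formula, and as a by-product re-derives the residue $\hat\Phi(0)\,\vol_G/2$ of Proposition~\ref{4p17}. What your route buys is transparency: the only external inputs are the two convergence lemmas, and the pole/constant-term bookkeeping is explicit rather than imported. Two small remarks: your explanation of the hypothesis $T_1>0$ is off --- the elementary integral converges for $\Re(s)>3/2$ for any real $T_1$, since convergence is governed by the behaviour at $u\to-\infty$ and not by the finite endpoint, so your argument in fact needs no sign condition on $T_1$; and the phrase about ``subtracting the sign from the minus in front'' is muddled, though the arithmetic $g'(3/2)=\hat\Phi(0)\,\vol_G\,T_1$ and hence the final sign are right.
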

\begin{proof}
This lemma follows from Lemma \ref{4l19}, \cite[Theorem 4.2]{Yukie}, and \cite[p.373]{Yukie}.
We can also deduce this lemma from  Lemma \ref{4l19}, \eqref{4e3}, and the proof of Theorem \ref{6t2}, because our argument for Theorem \ref{6t2} can be applied to $Z(\Phi,s,\trep_F)$ and $Z(\Phi,s,\trep_F,T_1)$.
\end{proof}
For a finite subset $S\supset \Sigma_\inf$ and an element $d_S\in F_S^\times/(F_S^\times)^2$, we define the constant $\fC_F(S,d_S)$ by
\[ \fC_F(S,d_S)=\lim_{s\to 3/2}\frac{\d}{\d s}\Bigl(s-\frac{3}{2}\Bigr)\; \xi^S(s;d_S) .  \]
\begin{lem}\label{4l21}
Fix a test function $\Phi\in C_c^\inf(V(\A))$.
If $S$ satisfies Condition \ref{cond2}, then we have
\begin{align*}
&\lim_{s\to 3/2}\frac{\d}{\d s}\Bigl(s-\frac{3}{2}\Bigr) Z(\Phi,s,\trep_F)\\
&=\frac{\vol_G}{2}\,  \, \int_{V(F_S)}\Phi_S(x_S) \, \log|\det(x_S)|_S\, \d x_S \\
&\quad  +\frac{2^{|S|}\, \vol_G}{2\, c_F^S} \sum_{d_S\in F_S^\times/(F_S^\times)^2}\fC_F(S,d_S) \, Z_S(\Phi_S,3/2,\trep_S;d_S)\, .
\end{align*}
\end{lem}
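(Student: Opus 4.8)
The plan is to read everything off the factorization in Theorem~\ref{4t15} together with the pole information of Proposition~\ref{4p18}. By Theorem~\ref{4t15}, for $\Re(s)>3/2$ --- and hence, by meromorphic continuation, in a punctured neighbourhood of $s=3/2$ --- one has
\[
Z(\Phi,s,\trep_F)=\frac{\vol_G\,2^{|S|}}{2\,c_F^S}\sum_{d_S\in F_S^\times/(F_S^\times)^2}Z_S(\Phi_S,s,\trep_S;d_S)\,\xi^S(s;d_S),
\]
a finite sum, since $F_S^\times/(F_S^\times)^2$ is finite by Lemma~\ref{4l7}. By Lemma~\ref{4l11} each factor $Z_S(\Phi_S,s,\trep_S;d_S)$ is holomorphic at $s=3/2$, while by Proposition~\ref{4p18} the function $g_{d_S}(s):=(s-\tfrac32)\,\xi^S(s;d_S)$ extends holomorphically across $s=3/2$ with $g_{d_S}(\tfrac32)=2^{-|S|}c_F^S$ and, by the very definition of $\fC_F(S,d_S)$ recalled above, $g_{d_S}'(\tfrac32)=\fC_F(S,d_S)$. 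Hence $(s-\tfrac32)\,Z(\Phi,s,\trep_F)$ is holomorphic near $s=3/2$, and I may differentiate the finite sum termwise and set $s=3/2$.

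Applying the product rule and using $\tfrac{\vol_G\,2^{|S|}}{2\,c_F^S}\cdot 2^{-|S|}c_F^S=\tfrac{\vol_G}{2}$, this yields
\begin{multline*}
\lim_{s\to 3/2}\frac{\d}{\d s}\Bigl(s-\tfrac32\Bigr)Z(\Phi,s,\trep_F)\\
=\frac{\vol_G}{2}\sum_{d_S}\frac{\d}{\d s}Z_S(\Phi_S,s,\trep_S;d_S)\Big|_{s=3/2}+\frac{\vol_G\,2^{|S|}}{2\,c_F^S}\sum_{d_S}\fC_F(S,d_S)\,Z_S(\Phi_S,\tfrac32,\trep_S;d_S).
\end{multline*}
The second sum is exactly the claimed term. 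For the first, differentiating under the integral sign in the definition of $Z_S$ gives $\frac{\d}{\d s}Z_S(\Phi_S,s,\trep_S;d_S)\big|_{s=3/2}=\int_{V^\ss(F_S,d_S)}\Phi_S(x_S)\log|\det(x_S)|_S\,\d x_S$; summing over $d_S$ and invoking Lemma~\ref{4l7} at each place of $S$ --- so that the orbits $V^\ss(F_S,d_S)=\prod_{v\in S}V^\ss(F_v,d_v)$ partition $V(F_S)$ up to the zero locus of $\det$, which has Haar measure zero --- one obtains $\sum_{d_S}\int_{V^\ss(F_S,d_S)}\Phi_S(x_S)\log|\det(x_S)|_S\,\d x_S=\int_{V(F_S)}\Phi_S(x_S)\log|\det(x_S)|_S\,\d x_S$, which is the remaining term.

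The one genuinely analytic point, and the step I would take most care over, is the interchange of $\frac{\d}{\d s}$ with the integral defining $Z_S$ near $s=3/2$ --- equivalently, the absolute convergence of $\int_{V(F_S)}\Phi_S(x_S)\log|\det(x_S)|_S\,\d x_S$. Since $\Phi_S$ is compactly supported and smooth, this reduces to the local integrability of $\log|\det|$ near the quadric $\{\det=0\}$, a standard fact for the logarithm of the absolute value of a polynomial over each local field, which also furnishes, on compact sets, a dominating function locally uniform in $s$ near $3/2$ that legitimates the differentiation. Granting this, the remaining steps are the bookkeeping above, and no input beyond Theorems~\ref{4t13} and~\ref{4t15}, Proposition~\ref{4p18}, and Lemmas~\ref{4l7} and~\ref{4l11} is required.
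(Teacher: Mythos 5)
Your argument is correct and is precisely the proof the paper intends: its own proof is the one-line remark that the lemma follows from Theorem~\ref{4t15} and Proposition~\ref{4p18}, with the observation that the local integrals (and, after differentiation, the integral of $\Phi_S\log|\det|_S$) converge because $\Phi\in C_c^\inf(V(\A))$. Your expansion via the product rule, the cancellation $\frac{2^{|S|}}{c_F^S}\cdot 2^{-|S|}c_F^S=1$, and the reassembly of the orbit integrals into $\int_{V(F_S)}\Phi_S\log|\det|_S$ using Lemma~\ref{4l7} fill in exactly the bookkeeping the paper leaves implicit.
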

\begin{proof}
We can easily show this by using Theorem \ref{4t15} and Proposition \ref{4p18}.
Note that the local integrals converge absolutely at $s=3/2$ since $\Phi\in C_c^\inf(V(\A))$.
\end{proof}

In application to the trace formula, $V$ will appear as the unipotent radical of the parabolic subgroups $P_1\subset \GSp(2)$ and $P_1\cap\Sp(2)$ (cf. Section \ref{5s}). We will encounter the modified zeta integrals
\begin{multline*}
Z^{\GSp(2)}(\Phi,s,T_1)= \int_{A_G(F)\bsl A_G(\A)^1}\int_{H'(F)\bsl H'(\A)} |\det(h')^2|^s  \\
\times \Big\{ \sum_{x\in V^\st(F)} \Phi(a\,^th' xh')   - |\det(h')^2|^{-3/2} \hat\Phi(0)\, \tau_{T_1}(H_{H'}(h')) \Big\}   \, \d h' \d^1 a 
\end{multline*}
and
\begin{multline*}
Z^{\Sp(2)}(\Phi,s,T_1)= \int_{H'(F)\bsl H'(\A)} |\det(h')^2|^s \\
 \times\Big\{ \sum_{x\in V^\st(F)} \Phi(\,^th' xh')   - |\det(h')^2|^{-3/2} \hat\Phi(0)\, \tau_{T_1}(H_{H'}(h')) \Big\} \, \d h',
\end{multline*}
where $\Phi\in \cS(V(\A))$.
We identify $\a_{A_{H'}}$ with $\R$ such that 
\[
H_{A_{H'}}(\diag(b,b))=\log|b| \quad (b\in\A^\times),
\]
and we choose the Haar measure $\d^1 z$ on $A_{H'}(\A)^1$ induced from $\d z$ and the Lebesgue measure on $\R$ by the identification.
Then, we have
\begin{equation}\label{4e3}
Z^{\GSp(2)}(\Phi,s,T_1) = \frac{\vol_{A_{H'}}}{2} \, Z(\Phi,s,\trep_F,T_1). 
\end{equation}
By the proof of Theorem \ref{6t2}, $Z^{\Sp(2)}(\Phi,s,T_1)$ is absolutely convergent and holomorphic for $\Re(s)>5/4$.
Applying the method of \cite[(5.11)]{LL} to $Z^{\Sp(2)}(\Phi,s,T_1)$ (cf. the proof of Theorem \ref{3t}), we obtain
\begin{equation}\label{4e4}
Z^{\Sp(2)}(\Phi,s,T_1)=  \frac{\vol_{A_{H'}}}{2\, \vol_{A_G}} \Bigl\{Z(\Phi,s,\trep_F,T_1) + \sum_\chi Z(\Phi,s,\chi) \Bigr\},
\end{equation}
where $\chi$ runs over all non-trivial quadratic characters on $\A^1/F^\times$.
We set
\[ \para^{\mathrm{ur}}(F,S)=\{ \ds \in \para(F) \, | \, \text{$\chi_{d,v}$ is unramified for each $v\not\in S$}   \} . \]
We fix a measure on $\a_{H'}$ by the identification \eqref{M1} and Condition \ref{5c1}.
Note that this measure equals to the product of $1/2$ and the chosen measure on $\a_{A_{H'}}(=\a_{H'})$.
These normalizations were implicitly used in the proof of Theorem \ref{4t13} and are in line with the rule of Tamagawa measures.
\begin{thm}\label{4t22}
Fix a test function $\Phi\in C_c^\inf(V(\A))$.
Assume that $S$ satisfies Condition \ref{cond2}.
If $T_1>0$, then we have
\begin{align*}
&Z^{\GSp(2)}(\Phi,3/2,T_1)\\
&=\frac{ \vol_{G'}}{2}\,  \, \int_{V(F_S)}\Phi_S(x_S) \, \log|\det(x_S)|_S\, \d x_S +   \vol_{G'} \, \hat\Phi(0) \, T_1 \\
& \quad +\frac{2^{|S|}\,  \vol_{G'}}{2\, c_F^S} \sum_{\alpha\in F^\times/(F^\times\cap(F_S^\times)^2)}\fC_F(S,\alpha) \, Z_S(\Phi_S,3/2,\trep_S;\alpha)
\end{align*}
and
\begin{align*}
& Z^{\Sp(2)}(\Phi,3/2,T_1) \\
&=\frac{\vol_{H'}}{2}\,  \, \int_{V(F_S)}\Phi_S(x_S) \, \log|\det(x_S)|_S\, \d x_S +  \vol_{H'} \, \hat\Phi(0) \, T_1 \\
& \quad +\frac{2^{|S|}\, \vol_{H'}}{2\, c_F^S} \sum_{\alpha\in F^\times/(F^\times\cap(F_S^\times)^2)}\fC_F(S,\alpha) \, Z_S(\Phi_S,3/2,\trep_F;\alpha)\\
& \quad +\frac{2^{|S|}\, \vol_{H'}}{2\, c_F^S} \sum_{\ds\in \para^{\mathrm{ur}}(F,S)}L^S(1,\chi_d) \, Z_S(\Phi_S,3/2,\chi_{d,S};d).
\end{align*}
\end{thm}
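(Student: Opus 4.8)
The plan is to substitute $s=3/2$ into the two decomposition formulas \eqref{4e3} and \eqref{4e4} and then evaluate each resulting term by means of the analytic results obtained above. Throughout we use the standing hypotheses $\Phi\in C_c^\inf(V(\A))$, $S$ subject to Condition~\ref{cond2}, and $T_1>0$, so that Lemmas~\ref{4l19}, \ref{4l20}, \ref{4l21} and Theorem~\ref{4t15} all apply.

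First I would treat $\GSp(2)$. By \eqref{4e3} we have $Z^{\GSp(2)}(\Phi,3/2,T_1)=\frac{1}{2}\,\vol_{A_{H'}}\,Z(\Phi,3/2,\trep_F,T_1)$, and the right-hand side is defined since $3/2>5/4$ (Lemma~\ref{4l19}). Lemma~\ref{4l20}, which requires $T_1>0$, rewrites $Z(\Phi,3/2,\trep_F,T_1)$ as $\lim_{s\to 3/2}\frac{\d}{\d s}\bigl(s-\tfrac{3}{2}\bigr)Z(\Phi,s,\trep_F)+\vol_G\,\hat\Phi(0)\,T_1$, and Lemma~\ref{4l21} evaluates this limit explicitly as $\frac{\vol_G}{2}\int_{V(F_S)}\Phi_S(x_S)\log|\det(x_S)|_S\,\d x_S$ plus $\frac{2^{|S|}\vol_G}{2\,c_F^S}\sum_{d_S}\fC_F(S,d_S)\,Z_S(\Phi_S,3/2,\trep_S;d_S)$. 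Substituting yields the first displayed formula once one knows $\vol_{G'}=\tfrac{1}{2}\vol_{A_{H'}}\vol_G$; I would deduce this from $G=A_G\times H$, $G'=A_G\times H'$ and the relation $\vol_{H'}=\tfrac{1}{2}\vol_{A_{H'}}\vol_H$ forced by the normalization, recorded just before the statement, according to which the measure on $\a_{H'}$ is one half of the chosen measure on $\a_{A_{H'}}$ (the same normalization accounts for the Tamagawa number $2$ of $\PGL(2)$ in the proof of Theorem~\ref{4t13}). I would also note that the sum over $d_S\in F_S^\times/(F_S^\times)^2$ and the sum over $\alpha\in F^\times/(F^\times\cap(F_S^\times)^2)$ in the statement agree, since $(F_S^\times)^2$ is open in $F_S^\times$, $F^\times$ is dense in $F_S^\times$, and both $\fC_F(S,\cdot)$ and $Z_S(\Phi_S,3/2,\trep_S;\cdot)$ depend only on the class modulo $(F_S^\times)^2$.

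Next I would treat $\Sp(2)$. By \eqref{4e4}, $Z^{\Sp(2)}(\Phi,3/2,T_1)=\frac{\vol_{A_{H'}}}{2\,\vol_{A_G}}\bigl(Z(\Phi,3/2,\trep_F,T_1)+\sum_{\chi\ne\trep_F}Z(\Phi,3/2,\chi)\bigr)$, where each summand with $\chi\ne\trep_F$ is well defined (Proposition~\ref{4p16}) and only the finitely many $\chi$ unramified outside $S$ contribute (Theorem~\ref{4t15}). The $\trep_F$-term is handled exactly as in the $\GSp(2)$ case. For the remaining sum I would use the identification of the non-trivial quadratic characters of $\A^1/F^\times$ with the classes $\ds\in\para(F)$ via $\chi=\chi_d$ (Section~\ref{4s3}): by Theorem~\ref{4t15}, $Z(\Phi,s,\chi_d)=0$ when some $\chi_{d,v}$ with $v\notin S$ is ramified, and otherwise $Z(\Phi,s,\chi_d)=\frac{\vol_G\,2^{|S|}}{2\,c_F^S}\,Z_S(\Phi_S,s,\chi_{d,S};d)\,L^S(1,\chi_d)\,\frac{\zeta_F^S(2s-1)}{\zeta_F^S(2)}$. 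This identity is stated for $\Re(s)>3/2$, but both sides are holomorphic near $s=3/2$ (Proposition~\ref{4p16}, Lemma~\ref{4l11}), so it persists there by analytic continuation, and at $s=3/2$ the factor $\zeta_F^S(2s-1)/\zeta_F^S(2)$ equals $1$. Summing over $\ds\in\para^{\mathrm{ur}}(F,S)$ and collecting constants with $\vol_{H'}=\frac{\vol_{A_{H'}}\vol_G}{2\,\vol_{A_G}}$ produces the second displayed formula.

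The hard part is purely the constant bookkeeping in the last two paragraphs: reconciling the Tamagawa-measure factors that enter through Theorems~\ref{4t13} and~\ref{4t15}, keeping the character argument of the local integrals straight ($\trep_S$ in the $\fC_F$-sum but $\chi_{d,S}$ in the $L^S(1,\chi_d)$-sum), and confirming that $\vol_{G'}=\tfrac{1}{2}\vol_{A_{H'}}\vol_G$ (respectively $\vol_{H'}=\frac{\vol_{A_{H'}}\vol_G}{2\,\vol_{A_G}}$) is precisely the identity dictated by the conventions of this section. All the analytic input — absolute convergence and meromorphic continuation up to $s=3/2$ — has already been provided by Lemmas~\ref{4l11}, \ref{4l19}, \ref{4l20}, \ref{4l21} and Propositions~\ref{4p16}, \ref{4p17}, \ref{4p18}.
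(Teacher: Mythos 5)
Your proposal is correct and follows essentially the same route as the paper, whose proof of Theorem~\ref{4t22} consists precisely of citing Theorem~\ref{4t15}, Lemmas~\ref{4l20} and~\ref{4l21}, and the decompositions \eqref{4e3} and \eqref{4e4}. Your additional care with the volume normalizations ($\vol_{G'}=\tfrac{1}{2}\vol_{A_{H'}}\vol_G$, etc.) and with the passage from $\Re(s)>3/2$ to $s=3/2$ via holomorphy simply makes explicit what the paper leaves implicit.
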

\begin{proof}
This lemma follows from Theorem \ref{4t15}, Lemmas \ref{4l20} and \ref{4l21}, (\ref{4e3}), and (\ref{4e4}).
\end{proof}
For $d_S\in F_S^\times$, we set
\[
\para^{\mathrm{ur}}(F,S,d_S)=\{ \ds \in \para^{\mathrm{ur}}(F,S) \, | \, d\in d_S(F_S^\times)^2  \} .
\]
We also set
\[
\mathcal E(F,S)=\Big\{ (\alpha,\varepsilon_S) \in \prod_{v\in S}\mathcal E(F_v) \, \Big| \,  \alpha\in F^\times/(F^\times\cap(F_S^\times)^2)  \Big\}.
\]
The following is deduced from \eqref{4e2} and Theorem \ref{4t22}.
Note that $\prod_{v\in S}\varepsilon_v(\xd_d)=1$ for $\ds\in \para^{\mathrm{ur}}(F,S)$ under Condition \ref{cond2}.
\begin{thm}\label{4t23}
Fix a test function $\Phi\in C_c^\inf(V(\A))$.
Assume that $S$ satisfies Condition \ref{cond2}.
If $T_1>0$, then we have
\begin{align*}
& Z^{\Sp(2)}(\Phi,3/2,T_1)\\
& =\frac{\vol_{H'}}{2}\,  \, \int_{V(F_S)}\Phi_S(x_S) \, \log|\det(x_S)|_S\, \d x_S +  \vol_{H'}\, \hat\Phi(0) \, T_1 \\
& \quad +\frac{2^{|S|}\, \vol_{H'}}{2\, c_F^S} \sum_{(\alpha,\varepsilon_S)\in\mathcal E(F,S)}   \tilde Z_S(\Phi_S,3/2;\alpha,\varepsilon_S)     \\
& \qquad \times \Big\{  \fC_F(S,\alpha) + \Big( \prod_{v\in S}\varepsilon_v \Big)  \sum_{\ds\in\para^{\mathrm{ur}}(F,S,\alpha)}  L^S(1,\chi_d)  \Big\}
\end{align*}
where $\varepsilon_S=(\varepsilon_v)_{v\in S}$.
\end{thm}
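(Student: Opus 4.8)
The plan is to deduce Theorem~\ref{4t23} directly from Theorem~\ref{4t22}, by inserting the decomposition \eqref{4e2} of the $S$-adic integrals $Z_S$ into partial integrals $\tilde Z_S$ and re-indexing the resulting sums over the set $\mathcal E(F,S)$.

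First I would start from the formula for $Z^{\Sp(2)}(\Phi,3/2,T_1)$ provided by Theorem~\ref{4t22}. Its first two summands, $\tfrac{\vol_{H'}}{2}\int_{V(F_S)}\Phi_S(x_S)\,\log|\det(x_S)|_S\,\d x_S$ and $\vol_{H'}\,\hat\Phi(0)\,T_1$, will be carried over verbatim. The remaining part is a sum over $\alpha\in F^\times/(F^\times\cap(F_S^\times)^2)$ with summand $\fC_F(S,\alpha)\,Z_S(\Phi_S,3/2,\trep_S;\alpha)$ plus a sum over $\ds\in\para^{\mathrm{ur}}(F,S)$ with summand $L^S(1,\chi_d)\,Z_S(\Phi_S,3/2,\chi_{d,S};d)$, both carrying the common factor $2^{|S|}\vol_{H'}/(2c_F^S)$. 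To each $Z_S$ I apply \eqref{4e2}. In the first sum all local characters are trivial, so every $\mathfrak e_v$ equals $1$ and $Z_S(\Phi_S,3/2,\trep_S;\alpha)=\sum_{(\alpha,\varepsilon_S)\in\mathcal E(F_S,\alpha)}\tilde Z_S(\Phi_S,3/2;\alpha,\varepsilon_S)$. In the second sum the local characters are the $\chi_{d,v}$, so $\mathfrak e_v=\varepsilon_v\,\varepsilon_v(\xd_d)$; here I invoke the fact noted just before the theorem that $\prod_{v\in S}\varepsilon_v(\xd_d)=1$ for $\ds\in\para^{\mathrm{ur}}(F,S)$ under Condition~\ref{cond2}. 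This is the one place where both constraints on $S$ enter: for $v\notin S$ one has $v\nmid 2$ and $\chi_{d,v}$ unramified, whence $\varepsilon_v(\xd_d)=(-1,-d)_v=1$, and then the product formula for the Hilbert symbol gives $\prod_{v\in S}(-1,-d)_v=\prod_{v\in\Sigma}(-1,-d)_v=1$. Consequently the factors $\varepsilon_v(\xd_d)$ cancel out in the product over $v\in S$, and $Z_S(\Phi_S,3/2,\chi_{d,S};d)=\sum_{(d_S,\varepsilon_S)\in\mathcal E(F_S,d_S)}\big(\prod_{v\in S}\varepsilon_v\big)\,\tilde Z_S(\Phi_S,3/2;d_S,\varepsilon_S)$, where $d_S$ denotes the image of $d$ in $F_S^\times$.

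Next I would merge these two double sums into a single sum over $(\alpha,\varepsilon_S)\in\mathcal E(F,S)$. The relevant bookkeeping facts are: the natural map $F^\times/(F^\times\cap(F_S^\times)^2)\to F_S^\times/(F_S^\times)^2$ is injective, so a global class $\alpha$ and its $S$-adic image encode the same datum; the set $\mathcal E(F_S,d_S)$ and the value $\tilde Z_S(\Phi_S,3/2;d_S,\varepsilon_S)$ depend on $d_S$ only through its class in $F_S^\times/(F_S^\times)^2$; and $\para^{\mathrm{ur}}(F,S)=\bigsqcup_\alpha\para^{\mathrm{ur}}(F,S,\alpha)$, the union being taken over the local class $\alpha$ of $\ds$. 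After collecting, for each fixed $(\alpha,\varepsilon_S)\in\mathcal E(F,S)$, the common factor $\tilde Z_S(\Phi_S,3/2;\alpha,\varepsilon_S)$, its coefficient becomes precisely
\[
\fC_F(S,\alpha)+\Big(\prod_{v\in S}\varepsilon_v\Big)\sum_{\ds\in\para^{\mathrm{ur}}(F,S,\alpha)}L^S(1,\chi_d),
\]
and restoring the factor $2^{|S|}\vol_{H'}/(2c_F^S)$ together with the two leading summands yields the asserted identity.

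The argument is essentially bookkeeping, and I do not expect a genuine obstacle. The one step deserving real care is the vanishing $\prod_{v\in S}\varepsilon_v(\xd_d)=1$: it is exactly where $S\supset\Sigma_\inf\cup\Sigma_2$ and the unramifiedness condition defining $\para^{\mathrm{ur}}$ are used, through the product formula for the Hilbert symbol applied to the local invariants $(-1,-d)_v$. A secondary point is to verify that the re-indexing bijections are compatible with the local data $(\check d_v,\varepsilon_v)$, so that no term is lost or double-counted when one passes from the sums over $\alpha$ and $\ds$ to the single sum over $\mathcal E(F,S)$.
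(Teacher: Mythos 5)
Your proposal is correct and follows the same route as the paper, which simply deduces Theorem~\ref{4t23} from \eqref{4e2} and Theorem~\ref{4t22} together with the observation that $\prod_{v\in S}\varepsilon_v(\xd_d)=1$ for $\ds\in\para^{\mathrm{ur}}(F,S)$ under Condition~\ref{cond2}. Your write-up merely makes explicit the bookkeeping and the Hilbert-symbol product-formula justification that the paper leaves implicit.
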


\subsection{Functional equation}\label{4s9}

In order to state the functional equation, we have to include the contribution from split quadratic forms in the zeta integral.

Let $\bK$ be the maximal compact subgroup of $G'$ which is given by the identification \eqref{M1} and the intersection of $M_1(\A)$ and $\bK$ of Section \ref{5s1}.
For $\Phi\in\cS(V(\A))$, we set
\[
\Phi_\bK(x)=\int_\bK\Phi(x\cdot k)\d k  .
\]
Note that $Z(\Phi,s,\trep_F)=Z(\Phi_\bK,s,\trep_F)$.
We also set
\[
R_0\Phi_\bK(\alpha,\beta)=\Phi_\bK(\begin{pmatrix}\beta&\alpha/2\\ \alpha/2&0\end{pmatrix})\in\cS(\A^2) .
\]
Let $\d u$ (resp. $\d^\times y$) denote the Haar measure on $\A$ (resp. $\A^\times$) defined in Section \ref{2s1}.
Yukie defined the functions
\[
\mathfrak{T}(R_0\Phi_\bK,s,s_1)=\int_\A \int_{\A^\times}   R_0\Phi_\bK (y,yu)\, |y^2|^s \,\|(1,u)\|^{-s_1}\, \d^\times y \, \d u 
\]
and
\[
\mathfrak{T}(R_0\Phi_\bK,s)=-\frac{\d}{\d s_1}\mathfrak{T}(R_0\Phi_\bK,s,s_1)\Bigl|_{s_1=0}.
\]
By \cite[Proposition 2.12]{Yukie} and its proof, the function $\mathfrak{T}(R_0\Phi_\bK,s)$ is absolutely convergent and holomorphic for $\Re(s)>1$ and satisfies
\[
\mathfrak{T}(R_0\Phi_\bK,s)= \int_{\A^\times}\int_\A R_0\Phi_\bK(y,yu)\, |y^2|^s\, \log\|(1,u)\| \, \d u \, \d^\times y .
\]
Let $M$ denote the minimal Levi subgroup of $H'$ which consists of diagonal matrices.
Then, $M$ is the preimage of $M_0$ of $\Sp(2)$ by \eqref{M1}.
A Haar measure on $\a_M$ is fixed by Condition \ref{5c1} and the identification.
An adjusted zeta integral $Z_\mathrm{ad}(\Phi,s)$ is defined by
\[
Z_\mathrm{ad}(\Phi,s)= Z(\Phi,s,\trep_F)+\frac{\vol_M \, \vol_{A_G}}{c_F \, \vol_{A_{H'}}}\, \mathfrak{T}(R_0\Phi_\bK,s) .
\]
\begin{thm}\label{4t25}
{\rm \cite[Theorem 4.2]{Yukie}.}
The adjusted zeta integral $Z_\mathrm{ad}(\Phi,s)$ is holomorphic for $\Re(s)>3/2$ and can be meromorphically continued to the whole complex $s$-plane. It satisfies the functional equation
\[ Z_\mathrm{ad}(\Phi,s)=Z_\mathrm{ad}\Big(\hat\Phi,\frac{3}{2}-s\Big). \]
If $\hat\Phi(0)\neq 0$ (resp. $\Phi(0)\neq 0$), then $Z_\mathrm{ad}(\Phi,s)$ has a simple pole at $s=3/2$ (resp. $s=0$) and the residue is $\hat\Phi(0) \vol_G /2$ (resp. $-\Phi(0) \vol_G /2$).
If $\hat\Phi(0)= 0$ (resp. $\Phi(0)=0$), then $Z_\mathrm{ad}(\Phi,s)$ has no pole at $s=3/2$ (resp. $s=0$).
\end{thm}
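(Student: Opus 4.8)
The paper cites \cite[Theorem~4.2]{Yukie} for this statement, and were one to reprove it the natural route is Shintani's method (cf.\ \cite{Shintani}): compare the prehomogeneous zeta integral with its Fourier dual across the wall $|\omega(g)|=1$, refined as by Yukie to accommodate the non-closed orbit of split binary quadratic forms. The plan is as follows. First I would rewrite $Z(\Phi,s,\trep_F)$ by splitting the integration over $G(F)\bsl G(\A)$ according to the value $t$ of $|\omega(g)|^{1/2}$ read off from $H_G$, into the ranges $t\ge 1$ and $t<1$. On the range $t\ge 1$ the sum $\sum_{x\in V^\st(F)}\Phi(x\cdot g)$ decays rapidly and the resulting integral is entire in~$s$; restricting to $V^\st$ rather than $V^\ss$ is exactly what keeps the split forms out of this range.

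On the range $t<1$ one wants to apply the Poisson summation formula on the three-dimensional space $V(\A)\supset V(F)$. To do so the sum must first be completed from $V^\st(F)$ to all of $V(F)$; the missing terms are the split non-degenerate forms ($-\det x\in(F^\times)^2$) and the singular forms, i.e.\ the rank~$\le 1$ symmetric matrices over~$F$. Adding them makes the $t<1$ integral diverge, so the core of the argument is to regularize: one subtracts the contributions of $0\in V(F)$ and of the rank-one stratum before integrating, then restores them in closed form. Poisson summation sends $\Phi$ to~$\hat\Phi$; since the action scales Haar measure on $V(\A)$ by $|\omega(g)|^{3/2}$ (the symmetric square of the standard representation of $\GL(2)$ has determinant $(\det)^3$), the factor $|\omega(g)|^s$ is effectively replaced by $|\omega(g)|^{s-3/2}$, and after a change of variables the $t<1$ integral for $\Phi$ becomes the $t\ge 1$ integral for $\hat\Phi$ with $s$ replaced by $3/2-s$. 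This produces the functional equation $Z_\mathrm{ad}(\Phi,s)=Z_\mathrm{ad}(\hat\Phi,\tfrac32-s)$ once the regularization terms are matched on both sides.

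It remains to evaluate the two classes of regularization terms. The term from $0\in V(F)$, after integration over $A_G(F)\bsl A_G(\A)^1\times H(F)\bsl H(\A)$, contributes a simple pole at $s=3/2$ with residue a multiple of $\hat\Phi(0)$; by the functional equation this is mirrored by a simple pole at $s=0$ with residue a multiple of $-\Phi(0)$, the constant being $\vol_G/2$, the factor $\tfrac12$ reflecting the Tamagawa/stabilizer normalizations exactly as in the proof of Theorem~\ref{4t13} (cf.\ Lemma~\ref{4l2}, where the generic stabilizer has order two). The rank-one stratum is the delicate point: a rank-one symmetric matrix over $F$ can, after averaging over $\bK$ and acting by the diagonal torus $M\subset H'$, be brought to the degenerate shape $\begin{pmatrix}\beta&\alpha/2\\ \alpha/2&0\end{pmatrix}$, so summing over that orbit and integrating over $M$ yields a degenerate Eisenstein type zeta integral; its ``second term'', the coefficient of the derivative in the auxiliary parameter at $s_1=0$, is by \cite[Proposition~2.12]{Yukie} precisely $\mathfrak{T}(R_0\Phi_\bK,s)$, with the $\log\|(1,u)\|$ weight arising from that derivative. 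Adding $\tfrac{\vol_M\vol_{A_G}}{c_F\vol_{A_{H'}}}\,\mathfrak{T}(R_0\Phi_\bK,s)$ symmetrically to both sides of the comparison is exactly what cancels the divergences and restores the symmetry under $\Phi\mapsto\hat\Phi$, $s\mapsto\tfrac32-s$.

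The main obstacle is this last step: the sum over split binary quadratic forms does not converge after integration, so one must carefully align the truncation at $t=1$ on the group side with the Fourier-dual contribution of the singular stratum, tracking the logarithmic boundary term; establishing the absolute convergence of $\mathfrak{T}(R_0\Phi_\bK,s)$ for $\Re(s)>1$, its meromorphic continuation, and its correct transformation under the Fourier transform is the technically demanding heart of the argument. Everything else --- convergence of the $t\ge 1$ piece, the measure computations, and the identification of residues --- is bookkeeping with the normalizations already fixed in Sections~\ref{2s3} and~\ref{4s4}.
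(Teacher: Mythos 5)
The paper does not prove this statement at all: it is quoted directly from \cite[Theorem 4.2]{Yukie}, so there is no internal argument to compare with, and your sketch is in effect a reconstruction of Yukie's proof. In outline it is the right one — Shintani's splitting of the integral at $|\omega(g)|=1$, Poisson summation on the three-dimensional space $V(\A)$ on the non-compact range (your scaling factor $|\omega(g)|^{3/2}$ is correct, since the representation has determinant $(\det)^3$ and $\omega(g)=a^2$), regularization of the strata that obstruct convergence, and matching of the leftover terms to produce the functional equation and the residues $\hat\Phi(0)\vol_G/2$ at $s=3/2$ and $-\Phi(0)\vol_G/2$ at $s=0$.

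One correction of substance: you attribute the adjusting term $\mathfrak{T}(R_0\Phi_\bK,s)$ to the rank-one stratum, but it comes from the split \emph{nondegenerate} forms. Indeed $R_0\Phi_\bK(y,yu)=\Phi_\bK\bigl(\begin{smallmatrix}yu&y/2\\ y/2&0\end{smallmatrix}\bigr)$ with $y\in\A^\times$ ranges over rank-two isotropic forms, i.e.\ the orbit with $-\det x\in(F^\times)^2$ that is excluded from $V^\st(F)$; its stabilizer is a split torus with nontrivial $F$-rational characters (Lemma~\ref{4l4} with $d\in(F^\times)^2$), which is exactly why the naive orbital integral over it diverges and why the adjustment is needed — compare the opening sentence of Section~\ref{4s9} of the paper (``we have to include the contribution from split quadratic forms''). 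The zero form is what produces the simple poles at $s=3/2$ and $s=0$ with the stated residues, while the rank-$\le 1$ forms feed only lower-order terms in the continuation, not $\mathfrak{T}$ itself. With that relabelling, your plan coincides with the argument of \cite[Theorem 4.2]{Yukie} that the paper invokes.
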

We want to restate this result, for application to the trace formula,
in terms of the adjusted zeta integrals
\begin{align*}
&Z^{\GSp(2)}_\mathrm{ad}(\Phi,s) \\
& =  \int_{A_G(F)\bsl A_G(\A)^1}\int_{H'(F)\bsl H'(\A)} |\det(h')^2|^s   \sum_{x\in V^\st(F)} \Phi(a\,^th' xh')   \, \d h' \d^1 a \\
& \quad + \frac{\vol_M \, \vol_{A_G}}{2\, c_F}  \mathfrak{T}(R_0\Phi_\bK,s) 
\end{align*}
and
\begin{align*}
Z^{\Sp(2)}_\mathrm{ad}(\Phi,s)=& \int_{H'(F)\bsl H'(\A)} |\det(h')^2|^s  \sum_{x\in V^\st(F)} \Phi(\,^th' xh')   \d h' \\
&+ \frac{\vol_M}{2\, c_F}  \mathfrak{T}(R_0\Phi_\bK,s).
\end{align*}
In order to allow for truncation, we need the functions
\[
\mathfrak{T}_0(R_0\Phi_\bK,s)= \int_{\A^\times}\int_\A R_0\Phi_\bK(a,au)\, |a|^{2s}\, \log|a| \, \d u \, \d^\times a
\]
and
\[
\mathfrak{T}_1(R_0\Phi_\bK,s)= \int_{\A^\times}\int_\A R_0\Phi_\bK(a,au)\, |a|^{2s} \, \d u \, \d^\times a .
\]
It is clear that they are absolutely convergent and holomorphic for $\Re(s)>1$.
We also easily see that these functions can be meromorphically continued to the whole complex $s$-plane and satisfy the functional equations
\[
\mathfrak{T}_0(R_0\Phi_\bK,s)=-\mathfrak{T}_0\Bigl(\widetilde{R_0\Phi_\bK},\frac{3}{2}-s\Bigr),\;\,
\mathfrak{T}_1(R_0\Phi_\bK,s)=\mathfrak{T}_1\Bigl(\widetilde{R_0\Phi_\bK},\frac{3}{2}-s\Bigr),
\]
where
\[
\widetilde{R_0\Phi_\bK}(z,y)=\int_\A R_0\Phi_\bK(x,y) \, \psi_F(xz) \d x.
\]
The function $\mathfrak{T}_0(R_0\Phi_\bK,s)$ (resp. $\mathfrak{T}_1(R_0\Phi_\bK,s)$) has only double (resp.\ simple) poles at $s=1/2$ and $1$.
\begin{prop}\label{4p26}
Let $T_j\in \R$ and $T_j>0$ $(j=1,2)$.
Set
\[ \mathfrak{T}_2(R_0\Phi_\bK,s,T_2)=\mathfrak{T}(R_0\Phi_\bK,s)+\mathfrak{T}_0(R_0\Phi_\bK,s)+2T_2 \, \mathfrak{T}_1(R_0\Phi_\bK,s)  \]
and
\[ \mathfrak{T}_3(R_0\Phi_\bK,s,T_2)=-\mathfrak{T}_0(R_0\Phi_\bK,s)+2T_2
\, \mathfrak{T}_1(R_0\Phi_\bK,s) . \]
If we assume $\hat\Phi(0)=0$, then we have
\begin{multline*}
 Z^{\GSp(2)}(\Phi,3/2,T_1)+\frac{\vol_M\, \vol_{A_G}}{2 \, c_F} \mathfrak{T}_2(R_0\Phi_\bK,3/2,T_2)  \\
= Z^{\GSp(2)}_\mathrm{ad}(\hat\Phi,0) + \frac{\vol_M\, \vol_{A_G}}{2 \,
c_F} \mathfrak{T}_3(\widetilde{R_0\Phi_\bK},0,T_2)
\end{multline*}
and
\begin{multline*}
Z^{\Sp(2)}(\Phi,3/2,T_1)+\frac{\vol_M}{2\, c_F} \mathfrak{T}_2(R_0\Phi_\bK,3/2,T_2) \\
= Z^{\Sp(2)}_\mathrm{ad}(\hat\Phi,0)+\frac{\vol_M}{2\,c_F} \mathfrak{T}_3(\widetilde{R_0\Phi_\bK},0,T_2) . 
\end{multline*}
\end{prop}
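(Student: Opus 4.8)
The plan is to strip off the auxiliary functions $\mathfrak{T}_0$ and $\mathfrak{T}_1$ first, and then to reduce each of the two identities to Yukie's functional equation in Theorem~\ref{4t25}. The only poles of $\mathfrak{T}_0(R_0\Phi_\bK,\cdot)$ and $\mathfrak{T}_1(R_0\Phi_\bK,\cdot)$ (and of their partial Fourier transforms) lie at $s=1/2$ and $s=1$, so all of them are holomorphic at $s=3/2$ and at $s=0$; evaluating the functional equations of Section~\ref{4s9} at $s=3/2$ gives $\mathfrak{T}_0(R_0\Phi_\bK,\tfrac32)=-\mathfrak{T}_0(\widetilde{R_0\Phi_\bK},0)$ and $\mathfrak{T}_1(R_0\Phi_\bK,\tfrac32)=\mathfrak{T}_1(\widetilde{R_0\Phi_\bK},0)$, hence
\[
\mathfrak{T}_2(R_0\Phi_\bK,\tfrac32,T_2)=\mathfrak{T}(R_0\Phi_\bK,\tfrac32)+\mathfrak{T}_3(\widetilde{R_0\Phi_\bK},0,T_2).
\]
Thus I would reduce the proposition to the two cleaner identities
\[
Z^{\GSp(2)}(\Phi,\tfrac32,T_1)+\tfrac{\vol_M\vol_{A_G}}{2c_F}\,\mathfrak{T}(R_0\Phi_\bK,\tfrac32)=Z^{\GSp(2)}_\mathrm{ad}(\hat\Phi,0)
\]
and its $\Sp(2)$-analogue with $\tfrac{\vol_M\vol_{A_G}}{2c_F}$ replaced by $\tfrac{\vol_M}{2c_F}$.

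For the $\GSp(2)$-identity: since $\hat\Phi(0)=0$, Proposition~\ref{4p17} shows $Z(\Phi,s,\trep_F)$ is holomorphic at $s=3/2$, so Lemma~\ref{4l20} (which uses $T_1>0$) gives $Z(\Phi,\tfrac32,\trep_F,T_1)=Z(\Phi,\tfrac32,\trep_F)$, and \eqref{4e3} then yields $Z^{\GSp(2)}(\Phi,\tfrac32,T_1)=\tfrac{\vol_{A_{H'}}}{2}Z(\Phi,\tfrac32,\trep_F)$. The normalization underlying \eqref{4e3}, applied without the truncation term and with the same $\mathfrak{T}$-term on both sides, likewise gives $Z^{\GSp(2)}_\mathrm{ad}(\Psi,s)=\tfrac{\vol_{A_{H'}}}{2}Z_\mathrm{ad}(\Psi,s)$ for every $\Psi\in\cS(V(\A))$. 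Since $\hat\Phi(0)=0$, Theorem~\ref{4t25} shows $Z_\mathrm{ad}(\Phi,s)$ has no pole at $s=3/2$ and $Z_\mathrm{ad}(\hat\Phi,s)$ has no pole at $s=0$ (the relevant residue being a multiple of $\hat\Phi(0)=0$), so its functional equation at $s=3/2$ gives $Z_\mathrm{ad}(\Phi,\tfrac32)=Z_\mathrm{ad}(\hat\Phi,0)$. Folding $\mathfrak{T}$ into $Z_\mathrm{ad}$ via $\tfrac{\vol_M\vol_{A_G}}{2c_F}=\tfrac{\vol_{A_{H'}}}{2}\cdot\tfrac{\vol_M\vol_{A_G}}{c_F\vol_{A_{H'}}}$ then yields the $\GSp(2)$-identity.

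For $\Sp(2)$ I would argue the same way using \eqref{4e4}: it gives $Z^{\Sp(2)}(\Phi,\tfrac32,T_1)=\tfrac{\vol_{A_{H'}}}{2\vol_{A_G}}\bigl(Z(\Phi,\tfrac32,\trep_F)+\sum_\chi Z(\Phi,\tfrac32,\chi)\bigr)$, with $\chi$ running over the nontrivial quadratic characters of $\A^1/F^\times$, while comparing \eqref{4e4} with the definitions gives $Z^{\Sp(2)}_\mathrm{ad}(\Psi,s)=\tfrac{\vol_{A_{H'}}}{2\vol_{A_G}}\bigl(Z_\mathrm{ad}(\Psi,s)+\sum_\chi Z(\Psi,s,\chi)\bigr)$, the $\mathfrak{T}$-correction being attached only to the trivial-character component. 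So the only extra input needed beyond Theorem~\ref{4t25} is the functional equation $Z(\Phi,s,\chi)=Z(\hat\Phi,\tfrac32-s,\chi)$ for each nontrivial quadratic $\chi$; I would deduce this from Theorem~\ref{4t15} together with the functional equations of the Dedekind zeta function and of the local zeta integrals $Z_v$ of Lemmas~\ref{4l9} and \ref{4l10}, the conductor and gamma factors cancelling. By Proposition~\ref{4p16} both $Z(\Phi,\tfrac32,\chi)$ and $Z(\hat\Phi,0,\chi)$ are finite, so $Z(\Phi,\tfrac32,\chi)=Z(\hat\Phi,0,\chi)$; adding these terms to the trivial-character computation produces the $\Sp(2)$-identity.

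The hard part will be the $\Sp(2)$ bookkeeping: keeping track of all the Tamagawa-type measure normalizations fixed in Sections~\ref{2s3} and \ref{4s4}, one must verify both the adjusted analogue of \eqref{4e4} (so that the $\mathfrak{T}$-correction really lands only on the trivial-character part) and the self-dual functional equation for the nontrivial quadratic twists. The $\GSp(2)$ case and the manipulation of $\mathfrak{T}_0$, $\mathfrak{T}_1$ are comparatively routine.
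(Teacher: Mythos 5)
Your argument has the same architecture as the paper's (very terse) proof: Lemma~\ref{4l20} and \eqref{4e3} to pass from $Z^{\GSp(2)}(\Phi,3/2,T_1)$ to $Z(\Phi,3/2,\trep_F)$ when $\hat\Phi(0)=0$, Theorem~\ref{4t25} for the untwisted functional equation, \eqref{4e4} for the $\Sp(2)$ case, and the twisted functional equation $Z(\Phi,s,\chi)=Z(\hat\Phi,\tfrac32-s,\chi)$ for the nontrivial quadratic characters; your preliminary reduction via the functional equations of $\mathfrak{T}_0$ and $\mathfrak{T}_1$ and the measure bookkeeping are both correct. The one place you diverge is the twisted functional equation itself: the paper simply quotes it from Yukie's Theorem~4.2, whereas you propose to rederive it from Theorem~\ref{4t15} plus local functional equations ``with conductor and gamma factors cancelling.'' As sketched this is the weakest step — the partial functions $\zeta_F^S$ and $L^S$ appearing in Theorem~\ref{4t15} do not satisfy clean functional equations on their own, and the Fourier transform of $\prod_{v\notin S}\Phi_{0,v}$ is not again $\prod_{v\notin S}\Phi_{0,v}$ for the measures fixed in Section~\ref{2s1}, so the claimed cancellation would require assembling the full completed functional equation rather than term-by-term matching. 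Since the needed identity is exactly the statement of Yukie's theorem, you should cite it directly (as the paper does) rather than attempt this detour; with that substitution the proof is complete.
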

\begin{proof}
The first formula is derived from Lemma \ref{4l20}, (\ref{4e3}), (\ref{4e4}), and Theorem \ref{4t25}.
By \cite[Theorem 4.2]{Yukie} we know that $Z(\Phi,s,\chi)=Z(\hat\Phi,\frac{3}{2}-s,\chi)$ if $\chi$ is a non-trivial quadratic character.
Hence, the second formula follows.
\end{proof}

\section{Structure of $\GSp(2)$}\label{5s}

Throughout this section, we denote
\[
G=\GSp(2).
\]
Most arguments and results can be easily transferred to $\Sp(2)$.%the subgroup~$\Sp(2)$.

\subsection{Setup}\label{5s1}

For each $v\in \Sigma_\inf$ we set
\[
\bK_v=\left\{ \begin{pmatrix}A&cB \\
-\overline{B}& c\overline{A}\end{pmatrix}  \in G(F_v) \, \Bigl| \, \begin{array}{c} A\,^tB=B\,^tA , \\ A\,^t\overline{A}+B\,^t\overline{B}=I_2 , \; \, c\in\C^1 \end{array} \right\}.
\]
We see that $\bK_v$ is isomorphic to the semi-direct product of $\U(1)$ and the compact real form of $\Sp(2)$ if $v\in\Sigma_\C$, and $\bK_v\cong \{\pm 1\}\ltimes \U(2)$ if $v\in\Sigma_\R$.
For each $v\in\Sigma_\fin$ we set
\[
\bK_v= \GSp(2,\fO_v).
\]
The group $\bK_v$ is a maximal compact subgroup of $G(F_v)$ for any $v\in \Sigma$.
We set
\[
\bK=\prod_{v\in\Sigma}\bK_v.
\]
The group $\bK$ is a maximal compact subgroup of $G(\A)$.
We fix the minimal Levi subgroup $M_0$ consisting of all diagonal matrices contained in~$G$.
Note that $\bK$ is admissible relative to $M_0$ in the sense of~\cite[Section~1]{Arthur2}.
We fix the minimal Levi subgroup $M_0$ and the minimal parabolic subgroup $P_0$ as
\[
M_0=\left\{ \begin{pmatrix} *&0&0&0 \\ 0&*&0&0 \\ 0&0&*&0 \\ 0&0&0&* \end{pmatrix} \in G \right\} \quad \text{and} \quad  P_0=\left\{ \begin{pmatrix} *&*&*&* \\ 0&*&*&* \\ 0&0&*&0 \\ 0&0&*&* \end{pmatrix} \in G \right\}.
\]

A homomorphism $\mu \, : \, G\to \mathbb{G}_m$ over $F$ is defined by
\[
g\begin{pmatrix}O_2&I_2 \\ -I_2&O_2 \end{pmatrix}\,^tg=\mu(g) \begin{pmatrix}O_2&I_2 \\ -I_2&O_2 \end{pmatrix}.
\]
A homomorphism $\chi_j$ $(j=1$ or $2)$ over $F$ is defined by
\[
\chi_j(\diag(a_1,a_2,a_1^{-1}a_0,a_2^{-1}a_0))=a_j . 
\]
Then, $\{ \mu$, $\chi_1$, $\chi_2 \}$ is a basis of $X(M_0)_F$.
We define $e_0$, $e_1$, $e_2\in\a_0$ by
\[
e_0(\mu^{j_0}\chi_1^{j_1}\chi_2^{j_2})=j_0,\quad  e_1(\mu^{j_0}\chi_1^{j_1}\chi_2^{j_2})=j_1 , \quad \text{and} \quad e_2(\mu^{j_0}\chi_1^{j_1}\chi_2^{j_2})=j_2 . 
\]
It is clear that
\[
\a_0=\R e_0\oplus\R e_1\oplus\R e_2 \quad \text{and} \quad \a_0^*=\R\mu\oplus\R\chi_1\oplus\R\chi_2 .
\]

We set
\[
P_1=\left\{ \begin{pmatrix} *&*&*&* \\ *&*&*&* \\ 0&0&*&* \\ 0&0&*&* \end{pmatrix} \in G \right\} \quad \text{and} \quad  P_2=\left\{ \begin{pmatrix} *&*&*&* \\ 0&*&*&* \\ 0&0&*&0 \\ 0&*&*&* \end{pmatrix} \in G \right\} .
\]
Then, we easily see $\{P\in\mathcal{P}\mid P\supset P_0\}=\{P_0,P_1,P_2,G\}$.
Put
\[
M_1=M_{P_1} \quad \mathrm{and} \quad M_2=M_{P_2}.
\]
Let $\alpha_1$ (resp. $\alpha_2$) denote the long (resp. short) root in $\Delta_0$ corresponding to $P_0$, i.e., 
\[
\Delta_0=\{\alpha_1,\alpha_2\} \quad (\alpha_1=2\chi_2, \; \alpha_2=\chi_1-\chi_2).
\]
The symmetry with $\alpha$ is denoted by $s_\alpha$.
We set
\[
s_0=s_{\alpha_2} , \quad s_1=s_{\alpha_1+2\alpha_2} , \quad  s_2=s_{\alpha_1}.
\]
The Weyl group $W_0$ is generated by $s_0$ and $s_2$.
We see that
\[
W_0=\{ 1 ,\; s_0 ,\; s_1,\; s_2,\; s_0s_1,\; s_0s_2,\; s_1s_2,\; s_0s_1s_2 \}.
\]
The elements satisfy the relations
\[
s_0^2=1, \quad s_1^2=1, \quad s_2^2=1, 
\]
\[
s_0s_1=s_2s_0, \quad s_0s_2=s_1s_0, \quad s_1s_2=s_2s_1.
\]
We choose the representatives
\[
w_0=\begin{pmatrix}0&1&0&0 \\ 1&0&0&0 \\ 0&0&0&1 \\ 0&0&1&0 \end{pmatrix}  , \; w_1=\begin{pmatrix}0&0&1&0 \\ 0&1&0&0 \\ -1&0&0&0 \\ 0&0&0&1 \end{pmatrix}  ,  \; w_2=\begin{pmatrix}1&0&0&0 \\ 0&0&0&1 \\ 0&0&1&0 \\ 0&-1&0&0 \end{pmatrix}.
\]
There is an injection $W_0\to G(F)\cap \bK$ mapping $s_i$ to~$w_i$ for $0\le i\le2$.
Let $w_s$ denote the image of $s\in W_0$ under this injection.
For $s\in W_0$ and $\mathcal{G}\subset G$, we set
\[
s\mathcal{G}=w_s\mathcal{G}w_s^{-1}.
\]
Then, we have
\begin{align*}
& \L=\{  M_0 ,\, M_1 ,\, s_1M_1 ,\, M_2 ,\, s_0M_2 ,\, G  \} , \\
& \L(M_1)=\{M_1,\, G\}, \quad \L(M_2)=\{M_2,\, G\} .
\end{align*}
Furthermore, we obtain
\begin{align*}
& \P= \{  sP_0 \, | \, s\in W_0  \},  \quad \P(M_1)= \{ P_1 , \, s_1s_2P_1 \} ,  \quad \P(M_2)= \{ P_2 , \, s_1P_2 \} \\
& \P(s_1M_1)= \{ s_1P_1 , \, s_2P_1 \} , \quad \P(s_0M_2)= \{ s_0P_2 , \, s_0s_1P_2 \} .
\end{align*}

\subsection{Weight factors for unipotent elements}\label{5s2}

Now we compute the weight factors $w_M(1,uv,T)$ for $\GSp(2,F_S)$ as defined in Section~\ref{2s4}.
The weight factors for $\Sp(2,F_S)$ are then obtained by restriction.

From now on, we fix Haar measures on $\a_0^G$, $\a_{M_1}^G$, and $\a_{M_2}^G$.
\begin{cond}\label{5c1}
We choose the Haar measure $\d H$ on $\a_0^G$ such that
\[
\d H=\d r_1  \, \d r_2  \quad \text{for $H=r_1e_1+ r_2e_2\in \a_0^G$}
\]
where $\d r_1$ and $\d r_2$ are the Lebesgue measure on $\R$.
A Haar measure on $\a_{M_1}^G$ (resp. $\a_{M_2}^G$) is fixed by the Lebesgue measure $\d r$ on $\R$ for $r(e_1+e_2)/2\in\a_{M_1}^G$ (resp. $re_1\in\a_{M_2}^G$).
\end{cond}
Under this condition and with our choice of coroots, we have
\[
\vol(\a^G_0/\Z(\Delta_0^\vee))=\vol(\a^G_{M_1}/\Z(\Delta_{P_1}^\vee))=\vol(\a^G_{M_2}/\Z(\Delta_{P_2}^\vee))=1
\]
where $\Delta_P^\vee$ is the basis of $\a_P^G$ dual to $\widehat\Delta_P$.
Now, for each $M=M_0$, $M_1$, and $M_2$, we have fixed the Haar measure on $A_M^+/A_G^+$ by the map $H_M$ and the Haar measure on $\a_M^G$.

We set
\[
\alpha_1^\vee=e_2 ,\quad \alpha_2^\vee=e_1-e_2 \in \a_0^G.
\]
Then, $\Delta_0^\vee=\{ \alpha_1^\vee \, , \; \alpha_2^\vee \}$ is the set of simple coroots corresponding to $\Delta_0$.
We also put
\[
\varpi_1=\alpha_1+\alpha_2=\chi_1+\chi_2 ,\quad \varpi_2=\frac{1}{2}\alpha_1+\alpha_2=\chi_1.
\]
Then, $\widehat\Delta_0=\{  \varpi_1\, , \; \varpi_2\}$ is the set of simple weight corresponding to $\Delta_0$.
We set
\[
T=T_1 \alpha_1^\vee +T_2\alpha_2^\vee\in \a_0^+,
\]
where $T_1$, $T_2\in\R$, and
\begin{equation}\label{u}
\nu(n_{12},n_{13},n_{14},n_{24})=\begin{pmatrix}1&n_{12}&n_{13}&n_{14} \\ 0&1&n_{14}-n_{12}n_{24}&n_{24} \\ 0&0&1&0 \\ 0&0&-n_{12}&1 \end{pmatrix}\in N_{P_0}. 
\end{equation}

\begin{lem}\label{5l3}
If $n=\nu(n_{12},n_{13},n_{14},n_{24})$ and $n_{23}=n_{14}-n_{12}n_{24}$, then
\begin{multline*}
v_{sP_0}(\lambda,n,T)= \\
\begin{cases}
e^{\lambda_1T_1+\lambda_2T_2}  & \text{if $s=1$,} \\
\|(1,n_{12})\|^{-\lambda_2}\, e^{\lambda_1T_1+\lambda_2(T_1-T_2)}  & \text{if $s=s_0$,} \\
\|(1,n_{24})\|^{-\lambda_1}\, e^{-\lambda_1(T_1-2T_2)+\lambda_2T_2}  & \text{if $s=s_2$,} \\
\|(1,n_{24})\|^{\lambda_1+\lambda_2}  \|(1,n_{23},n_{24})\|^{-2\lambda_1-\lambda_2}  & \\
\qquad \times e^{\lambda_1(T_1-2T_2)+\lambda_2(T_1-T_2)}  & \text{if $s=s_0s_1$,} \\
\|(1,n_{12},n_{12},n_{12}^2,n_{13}+n_{12}n_{14})\|^{-\lambda_1-\lambda_2} \|(1,n_{12})\|^{2\lambda_1} & \\
\qquad \times e^{-\lambda_1(T_1-2T_2)-\lambda_2(T_1-T_2)}  & \text{if $s=s_0s_2$,} \\
\|(1,n_{12},n_{12},n_{12}^2,n_{13}+n_{12}n_{14})\|^{\lambda_1}  & \\
\qquad \times \|(1,n_{12},n_{13},n_{14})\|^{-2\lambda_1-\lambda_2}e^{\lambda_1(T_1-2T_2)-\lambda_2T_2}  & \text{if $s=s_1$,} \\
\|(1,n_{23},n_{23},n_{24},n_{13}-n_{12}n_{23},n_{13}n_{24}-n_{14}n_{23})\|^{-\lambda_1-\lambda_2} & \\
\qquad \times \|(1,n_{23},n_{24})\|^{\lambda_2} \, e^{-\lambda_1T_1-\lambda_2(T_1-T_2)}  & \text{if $s=s_0s_1s_2$,} \\
\|(1,n_{23},n_{23},n_{24},n_{13}-n_{12}n_{23},n_{13}n_{24}-n_{14}n_{23})\|^{-\lambda_1} \\
\qquad \times \|(1,n_{12},n_{13},n_{14})\|^{-\lambda_2}\, e^{-\lambda_1T_1-\lambda_2T_2}  & \text{if $s=s_1s_2$}
\end{cases}
\end{multline*}
where $\lambda =\lambda_1\varpi_1+\lambda_2\varpi_2\in \a^*_{0,\C}$.
\end{lem}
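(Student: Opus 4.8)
The plan is to express each $v_{sP_0}(\lambda,n,T)$ through $H_{P_0}$ applied to a single group element, and then to evaluate $H_{P_0}$ inside the two fundamental representations of $G=\GSp(2)$.

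\textbf{Reduction to $H_{P_0}$.} The representatives $w_s$ were chosen so that $w_s\in G(F)\cap\bK$; hence $H_{P_0}(w_s^{-1})=0$, and the identity $T_{\delta P\delta^{-1}}=\Ad(\delta)(T_P-H_P(\delta^{-1}))$ gives $T_{sP_0}=\Ad(w_s)T$. Similarly, feeding the standard Iwasawa decomposition of $w_s^{-1}g$ through conjugation by $w_s\in\bK$ yields $H_{sP_0}(g)=\Ad(w_s)H_{P_0}(w_s^{-1}g)$. Substituting into $v_P(\lambda,g,T)=e^{\lambda(T_P-H_P(g))}$, one obtains
\[
v_{sP_0}(\lambda,n,T)=e^{(w_s^{-1}\lambda)(T-H_{P_0}(w_s^{-1}n))},
\]
where $w_s^{-1}\lambda$ denotes the Weyl action on $\a_{0,\C}^*$. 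Writing $w_s^{-1}\lambda$ in the basis $\{\varpi_1,\varpi_2\}$ using the root datum of Section~\ref{5s1}, and pairing with $T=T_1\alpha_1^\vee+T_2\alpha_2^\vee$ via $\langle\varpi_i,\alpha_j^\vee\rangle=\delta_{ij}$, produces the $T$-exponential prefactors in the statement; this is a finite computation over the eight elements of $W_0$.

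\textbf{Evaluating $H_{P_0}$.} For a fundamental weight $\varpi$ let $(\rho_\varpi,V_\varpi)$ be the corresponding irreducible representation, $v_\varpi^+$ a highest weight vector, and $\|\cdot\|$ the $\bK$-invariant height on $V_\varpi(\A)$ normalized by $\|v_\varpi^+\|=1$. Since $\rho_\varpi(N_{P_0})$ fixes $v_\varpi^+$ and $\rho_\varpi(a)v_\varpi^+=e^{\varpi(H_{M_0}(a))}v_\varpi^+$ for $a\in M_0$, the Iwasawa decomposition gives
\[
\langle H_{P_0}(g),\varpi\rangle=-\log\|\rho_\varpi(g^{-1})v_\varpi^+\|.
\]
For $G=\GSp(2)$ we realize $\varpi_2=\chi_1$ on the standard representation $V=F^4$ with $v_{\varpi_2}^+=e_1$, and $\varpi_1=\chi_1+\chi_2$ on the five-dimensional representation inside $\wedge^2V$ with $v_{\varpi_1}^+=e_1\wedge e_2$; the ambient height on $\wedge^2V(\A)\cong\A^6$ restricts to a $\bK$-invariant height on that subspace, because the complementary $\bK$-stable line is spanned by the symplectic form, on which $G$ acts through~$\mu$. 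Writing $g^{-1}=n^{-1}w_s$, one has $\rho_\varpi(g^{-1})v_\varpi^+=\rho_\varpi(n^{-1})\bigl(\rho_\varpi(w_s)v_\varpi^+\bigr)$, where $\rho_\varpi(w_s)v_\varpi^+$ is, up to sign, a standard basis vector of $V$, respectively of $\wedge^2V$. Applying $\rho_\varpi(n^{-1})$ with $n=\nu(n_{12},n_{13},n_{14},n_{24})$ and $n_{23}=n_{14}-n_{12}n_{24}$ yields an explicit vector whose coordinates are exactly the Plücker-type polynomials in the $n_{ij}$ appearing under the norms in the lemma; reading these off for each $w_s$ and combining with the prefactors from the first part gives all eight formulas. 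For $s=s_0$ and $s=s_2$, which reflect a single simple root, this reduces to the familiar $\GL(2)$ height formula and produces the factors $\|(1,n_{12})\|$ and $\|(1,n_{24})\|$ directly; for instance, for $s=s_2$ a short computation gives $(w_2^{-1}n)^{-1}\cdot(e_1\wedge e_2)=n_{24}\,(e_1\wedge e_2)-e_1\wedge e_4$, of height $\|(1,n_{24})\|$.

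\textbf{Main difficulty.} The structural input is light; the labor lies in the bookkeeping of the second part — determining, for each of the eight Weyl elements, which basis vector of $V$ and of $\wedge^2V$ one lands on after applying $w_s$, computing its image under $\rho(n^{-1})$, and simplifying using $n_{23}=n_{14}-n_{12}n_{24}$ together with the symplectic constraints on $n$ (which force several of the $\wedge^2$-coordinates to coincide, as reflected by the repeated entries $n_{12}$ and $n_{23}$ in the statement). I expect this case-by-case linear algebra, rather than any single conceptual point, to be the main obstacle; one also needs the elementary verification that $e_1\wedge e_2$ genuinely lies in the five-dimensional summand, so that its $\wedge^2$-height really computes $\langle H_{P_0}(\,\cdot\,),\varpi_1\rangle$.
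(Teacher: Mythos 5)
Your proposal is correct and follows essentially the same route as the paper: the paper also computes $v_{P_2}$ and $v_{P_1}$ as heights of a parabolic eigenvector in the standard representation and in $\Lambda^2$ of it (using row vectors $\mathbf e_3$ and $\mathbf e_3\wedge\mathbf e_4$ under the right action, the mirror image of your highest-weight-vector/$g^{-1}$ formulation), and then transports the result to the other seven chambers via $v_{sP}(s\lambda,w_sg)=v_P(\lambda,g)$. The only cosmetic difference is that the paper works with the height on all of $\Lambda^2V\cong\A^6$ rather than the five-dimensional summand, which suffices since one only needs the chosen vector to be a $P_1$-eigenvector and the height to be $\bK$-invariant, so your extra verification about the summand is unnecessary (though harmless).
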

\begin{proof}
We use the tautological right action of $G(F_S)$ on the space $F_S^4$ of row vectors endowed with the $\bK\cap G(F_S)$-invariant height function. Since $\mathbf e_3=(0,0,1,0)$ is an eigenvector of the elements of~$P_2$, we get
\[
v_{P_2}(\chi_1-\mu,g)=\|\mathbf e_3g\|=\|(g_{31},g_{32},g_{33},g_{34})\|.
\]
Moreover, we use the right action of $G$ on the exterior square of the space of row vectors. If $\mathbf e_4=(0,0,0,1)$, then $\mathbf e_3\wedge \mathbf e_4$ is an eigenvector of the elements of~$P_1$, and
\begin{multline*}
v_{P_1}(\chi_1+\chi_2-2\mu,g)=\|(\mathbf e_3\wedge\mathbf e_4)g\|=\\
\left\|\left(
\begin{vmatrix}
g_{31}&g_{32}\\
g_{41}&g_{42}
\end{vmatrix},
\begin{vmatrix}
g_{31}&g_{33}\\
g_{41}&g_{43}
\end{vmatrix},
\begin{vmatrix}
g_{31}&g_{34}\\
g_{41}&g_{44}
\end{vmatrix},
\begin{vmatrix}
g_{32}&g_{33}\\
g_{42}&g_{43}
\end{vmatrix},
\begin{vmatrix}
g_{32}&g_{34}\\
g_{42}&g_{44}
\end{vmatrix},
\begin{vmatrix}
g_{33}&g_{34}\\
g_{43}&g_{44}
\end{vmatrix}
\right)\right\|.
\end{multline*}
We need only the case $g\in G(F_S)^1$, in which $\mu$ can be omitted.
For general $\lambda=\lambda_i\varpi_i\in\a_{M_i,\C}^*$, where $i=1$ or~$2$, we have
\[
v_{P_i}(\lambda,g)=v_{P_i}(\varpi_i,g)^{\lambda_i} .
\]
Moreover, $H_{P_0}(g)$ is determined by its projections $H_{P_1}(g)$ and $H_{P_2}(g)$, hence
\[
v_{P_0}(\lambda,g)=v_{P_1}(\varpi_1,g)^{\lambda_1}v_{P_2}(\varpi_2,g)^{\lambda_2}.
\]
Finally, for $s\in W_{M_i}$, we have $v_{sP}(s\lambda,w_sg)=v_P(\lambda,g)$.
\end{proof}
The proof yields formulas for the case of maximal parabolics as intermediate results. We can also read them off from the lemma if we set $n_{12}=0$ resp.~$n_{24}=0$. Namely, if $\lambda=\lambda_1\varpi_1\in\a_{M_1,\C}^*$, then
\begin{align*}
& v_{P_1}(\lambda,n,T)=e^{\lambda_1 T_1},\\
& v_{s_1s_2P_1}(\lambda,n,T)=\|(1,n_{13},n_{14},n_{23},n_{24},n_{13}n_{24}-n_{23}n_{14})\|^{-\lambda_1}\, e^{-\lambda_1 T_1},
\end{align*}
while for $\lambda=\lambda_2\varpi_2\in\a_{M_2,\C}^*$ we get
\[
v_{P_2}(\lambda,n,T)=e^{\lambda_2T_2},\quad
v_{s_1P_1}(\lambda,n,T)=\|(1,n_{12},n_{13},n_{14})\|^{-\lambda_2}\, e^{-\lambda_2T_2}.
\]

\begin{prop}\label{5p4}
Assume Condition \ref{5c1} and $T=T_1\alpha_1^\vee +T_2\alpha_2^\vee\in \a_0^+$.
For $\nu=\nu(\nu_{12},\nu_{13},\nu_{14},\nu_{24})\in N_{P_0}(F_S)$ we have
\begin{align*}
  w_{M_0}(1,\nu,T)=& \, 2 (\log|\nu_{12}|_S)^2+(\log|\nu_{24}|_S)^2+4(\log|\nu_{12}|_S)(\log|\nu_{24}|_S) \\
& + 4T_1\log|\nu_{12}|_S + 4T_2\log|\nu_{24}|_S +8T_1T_2-2T_1^2-4T_2^2   . 
\end{align*}
\end{prop}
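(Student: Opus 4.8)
The plan is to read the weight factor directly off the $(G,M_0)$-family already computed in Lemma~\ref{5l3}. Since $M_0$ is the minimal Levi, the only unipotent element of $M_0$ is $1$, so in the notation of Section~\ref{2s4} we have $u=1$ and the regularizing factors $r_\beta$ reduce to $1$; thus the reduction of Section~\ref{2s4} identifies $w_{M_0}(1,\nu,T)$ with the value $v_{M_0}(\nu,T)$ of the $(G,M_0)$-family $\{v_{sP_0}(\lambda,\nu,T)\}_{s\in W_0}$, i.e.
\[
w_{M_0}(1,\nu,T)=\lim_{\lambda\to 0}\sum_{s\in W_0}\frac{v_{sP_0}(\lambda,\nu,T)}{\theta_{sP_0}(\lambda)},
\]
with the eight numerators given explicitly by Lemma~\ref{5l3}.

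Next I would convert each of those eight formulas into the exponential form $v_{sP_0}(\lambda,\nu,T)=e^{\lambda(Y_s)}$, where $Y_s=T_{sP_0}-H_{sP_0}(\nu)\in\a_0^G$: writing $\lambda=\lambda_1\varpi_1+\lambda_2\varpi_2$, one just records the coefficients of $\lambda_1$ and $\lambda_2$, each being an affine function of $T_1$, $T_2$ and of the logarithms of the various heights ($\|(1,\nu_{12})\|_S$, $\|(1,\nu_{24})\|_S$, $\|(1,\nu_{23},\nu_{24})\|_S$, $\|(1,\nu_{12},\nu_{12},\nu_{12}^2,\nu_{13}+\nu_{12}\nu_{14})\|_S$, and the rest). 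The family $\{Y_s:s\in W_0\}$ is a $(G,M_0)$-orthogonal set, which is positive once $T$ lies deep in $\a_0^+$; hence by the standard properties of $(G,M)$-families (cf.\ \cite{Arthur2}, \cite{Arthur5}) the limit above equals the volume, with respect to the Haar measure of Condition~\ref{5c1}, of the convex hull of $\{Y_s\}$ in the plane $\a_0^G$, and this polynomial identity in $T$ then extends to all $T\in\a_0^+$. Under Condition~\ref{5c1} the denominators are the products $\theta_{sP_0}(\lambda)=\pm\langle\lambda,\alpha^\vee\rangle\langle\lambda,\beta^\vee\rangle$ over the two roots $\alpha,\beta$ bounding the chamber of $sP_0$, so equivalently one may clear denominators in the sum of eight rational functions and extract the constant term of the resulting function of $(\lambda_1,\lambda_2)$ by a short Taylor expansion.

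It then remains to carry out the planar area (equivalently, the limit) computation, and this is the main obstacle: \emph{a priori} $v_{M_0}(\nu,T)$ depends on all four entries of $\nu$ and on several non-smooth heights, whereas the stated answer is a quadratic polynomial in the smooth quantities $T_1$, $T_2$, $\log|\nu_{12}|_S$ and $\log|\nu_{24}|_S$. Two cancellations must be exhibited in the course of the bookkeeping: first, that the dependence on $\nu_{13}$ and $\nu_{14}$ — and with it the dependence on the composite heights — drops out of the area of $\mathrm{conv}\{Y_s\}$; second, that the surviving height contributions recombine, place by place over $v\in S$ (using $\log^+|a|_v+\log^-|a|_v=\log|a|_v$ and the way the vertices for opposite chambers are paired in the shoelace sum), into the smooth logarithms $\log|\nu_{12}|_S$ and $\log|\nu_{24}|_S$. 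Executing the shoelace computation for the polygon and collecting terms gives the displayed formula. As a cross-check one can specialize to $\nu_{13}=\nu_{14}=0$ and to a single Archimedean place, where the heights are literally $\max(1,|\cdot|)$, verify the quadratic there, and conclude by polynomiality in $T$; alternatively, one can first compute the rank-one weights $v_{M_1}(\nu,T)$, $v_{M_2}(\nu,T)$, $v_{M_0}^{M_1}(\nu,T)$ and $v_{M_0}^{M_2}(\nu,T)$ from the maximal-parabolic specializations given just after Lemma~\ref{5l3} and reassemble $v_{M_0}(\nu,T)$ via Arthur's splitting formula for $(G,M)$-families.
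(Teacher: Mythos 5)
The proposal breaks down at its very first step. You claim that because the unipotent part of the central element is $1$, the factors $r_\beta$ ``reduce to $1$'' and hence $w_{M_0}(1,\nu,T)=v_{M_0}(\nu,T)$. This is not what Section~\ref{2s4} says, and it is false. The case $\gamma=z$ central with $M=M_0$ is precisely the case $G_\gamma\not\subset M$, where the weight is defined through the auxiliary point $a\in A_{M_0}(F)$: one substitutes $n^{-1}an=a\nu$, forms $w_{sP_0}(\lambda,a,\nu,T)=v_{sP_0}(\lambda,n,T)\prod_\beta r_\beta(\lambda,1,a)$ with $r_\beta(\lambda,1,a)=|(a^\beta-1)(1-a^{-\beta})|^{\lambda(\beta^\vee_\gamma)/2}$, and only then lets $a\to1$. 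As $a\to1$ the coordinates of $n$ blow up (e.g. $n_{12}=(1-a_2/a_1)^{-1}\nu_{12}$), and the $r_\beta$-factors, whose exponents $\beta^\vee_\gamma$ are scaled to be nonzero exactly so that the limit exists, compensate this blow-up; it is in this limit that the heights $\|(1,n_{12})\|_S$, $\|(1,n_{12},n_{13},n_{14})\|_S$, \dots{} of Lemma~\ref{5l3} collapse to the pure monomials $|\nu_{12}|_S^{-\lambda_2}$, $|\nu_{12}^2\nu_{24}|_S^{-\lambda_1}$, etc., and the dependence on $\nu_{13},\nu_{14}$ disappears. That is the content of the paper's proof, which then finishes with Arthur's limit formula \cite[Lemma~3.4]{Arthur0}.

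The cancellations you defer to ``bookkeeping'' do not exist at the level of the $v$-family, so the plan cannot be repaired by a more careful shoelace computation. Concretely, take $\nu_{12}=\nu_{24}=1$, $\nu_{14}=0$ and let $|\nu_{13}|_S\to\infty$: by Lemma~\ref{5l3} the vertices attached to $s_1$, $s_0s_2$, $s_0s_1s_2$, $s_1s_2$ recede to infinity (their $\lambda$-exponents contain $-\log\|(1,\nu_{12},\nu_{13},\nu_{14})\|_S$, etc.), so $v_{M_0}(\nu,T)$ grows without bound, while the asserted right-hand side is the constant $8T_1T_2-2T_1^2-4T_2^2$. Likewise, even with $\nu_{13}=\nu_{14}=0$, at a place where $|\nu_{12}|_v<1$ one has $\log\|(1,\nu_{12})\|_v=0\neq\log|\nu_{12}|_v$, so $v_{M_0}(\nu,T)$ and $w_{M_0}(1,\nu,T)$ already differ in the rank-one pieces; your own proposed cross-check would expose this. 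The fallback via the splitting formula inherits the same defect, since it is still assembled from the $v$-weights. What is missing, in short, is the entire $a\to1$ limit with the $r_\beta$-factors, i.e.\ the passage from the $v_{sP_0}$ of Lemma~\ref{5l3} to the $w_{sP_0}(\lambda,1,\nu,T)$ computed in the paper's proof; only after that step is the remaining computation the elementary $(G,M_0)$-family limit you describe.
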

\begin{proof}
Set $\nu_{23}=\nu_{14}-\nu_{12}\nu_{24}$, $n=\nu(n_{12},n_{13},n_{14},n_{24})$, and $n_{23}=n_{14}-n_{12}n_{24}$.
If $a\nu=n^{-1}an$, then we can express $n$ in terms of $\nu$ as
\begin{align*}
n_{12}&=\left(1-\frac{a_2}{a_1}\right)^{-1}\nu_{12},\qquad
n_{24}=\left(1-\frac{a_0}{a_2^2}\right)^{-1}\nu_{24},\\
n_{23}&=\left(1-\frac{a_0}{a_2^2}\right)^{-1}\left(1-\frac{a_0}{a_1a_2}\right)^{-1}\left(\nu_{23}-\frac{a_0}{a_2^2}\nu_{14}\right),\\
n_{14}&=\left(1-\frac{a_2}{a_1}\right)^{-1}\left(1-\frac{a_0}{a_1a_2}\right)^{-1}\left(\nu_{14}-\frac{a_2}{a_1}\nu_{23}\right),\\
n_{13}&=\left(1-\frac{a_0}{a_1^2}\right)^{-1}\\
& \quad \times \left(\nu_{13}
+\left(1-\frac{a_2}{a_1}\right)^{-1}\left(1-\frac{a_0}{a_1a_2}\right)^{-1}\nu_{12}\left(\frac{a_2}{a_1}\nu_{23}-\frac{a_0}{a_1a_2}\nu_{14}\right)\right).
\end{align*}
This also implies that
\[
n_{13}+n_{12}n_{14}=\left(1-\frac{a_0}{a_1^2}\right)^{-1}\left(\nu_{13}
+\left(1-\frac{a_2}{a_1}\right)^{-2}
\nu_{12}\left(\nu_{14}-\frac{a_2^2}{a_1^2}\nu_{23}\right)\right),
\]
\begin{multline*}
n_{13}-n_{12}n_{23}
=\left(1-\frac{a_0}{a_1^2}\right)^{-1}\\
\times\left(\nu_{13}
-\left(1-\frac{a_0}{a_2^2}\right)^{-1}\left(1-\frac{a_0}{a_1a_2}\right)^{-1}\left(1+\frac{a_0}{a_1a_2}\right)
\nu_{12}\left(\nu_{23}-\frac{a_0}{a_2^2}\nu_{14}\right)\right),
\end{multline*}
\begin{multline*}
n_{13}n_{24}-n_{14}n_{23}
=\left(1-\frac{a_0}{a_1^2}\right)^{-1}\left(1-\frac{a_0}{a_2^2}\right)^{-1}\\
\quad\times\left(\nu_{13}\nu_{24}+\left(1-\frac{a_0}{a_1a_2}\right)^{-2}
\left(\frac{a_0}{a_1^2}\nu_{23}^2+\frac{a_0}{a_2^2}\nu_{14}^2-\left(1+\frac{a_0^2}{a_1^2a_2^2}\right)\nu_{23}\nu_{14}\right)\right).
\end{multline*}
Let $\lambda =\lambda_1\varpi_1+\lambda_2\varpi_2\in \a^*_{0,\C}$.
According to the definition of $w_{P}(\lambda,a,\nu,T)$ given in Section~\ref{2s4}, we have to multiply the functions $v_{sP_0}(\lambda,n,T)$ computed in~Lemma~\ref{5l3} with suitable factors $r_\beta(\lambda,1,a)$. Taking the limit as $a\to1$, we obtain
\begin{multline*}
w_{sP_0}(\lambda,1,\nu,T)= \\
\begin{cases}
e^{\lambda_1T_1+\lambda_2T_2} & \text{if $s=1$}, \\
|\nu_{12}|_S^{-\lambda_2}\, e^{\lambda_1T_1+\lambda_2(T_1-T_2)} & \text{if $s=s_0$}, \\
|\nu_{24}|_S^{-\lambda_1} \, e^{-\lambda_1(T_1-2T_2)+\lambda_2T_2} & \text{if $s=s_2$}, \\
|\nu_{12}^2\nu_{24}|_S^{-\lambda_1} \, |\nu_{12}|_S^{-\lambda_2} \, e^{\lambda_1(T_1-2T_2)+\lambda_2(T_1-T_2)} & \text{if $s=s_0s_1$} ,  \\
|\nu_{24}|_S^{-\lambda_1} \, |\nu_{12}\nu_{24}|_S^{-\lambda_2} \, e^{-\lambda_1(T_1-2T_2)-\lambda_2(T_1-T_2)} & \text{if $s=s_0s_2$}, \\
|\nu_{12}^2\nu_{24}|_S^{-\lambda_1} \, |\nu_{12}^2\nu_{24}|_S^{-\lambda_2} \, e^{\lambda_1(T_1-2T_2)-\lambda_2T_2} & \text{if $s=s_1$}, \\
|\nu_{12}^2\nu_{24}^2|_S^{-\lambda_1} \, |\nu_{12}\nu_{24}|_S^{-\lambda_2} \, e^{-\lambda_1T_1-\lambda_2(T_1-T_2)}  & \text{if $s=s_0s_1s_2$} \\
|\nu_{12}^2\nu_{24}^2|_S^{-\lambda_1} \, |\nu_{12}^2\nu_{24}|_S^{-\lambda_2} \, e^{-\lambda_1T_1-\lambda_2T_2} & \text{if $s=s_1s_2$}. \end{cases}
\end{multline*}
By the definition, we have
\[ \theta_{sP_0}(\lambda)=\begin{cases}
\lambda_1\lambda_2 & \text{if $s=1$}, \\
(\lambda_1+\lambda_2)(-\lambda_2) & \text{if $s=s_0$}, \\
(-\lambda_1)(2\lambda_1+\lambda_2) & \text{if $s=s_2$}, \\
(\lambda_1+\lambda_2)(-2\lambda_1-\lambda_1) & \text{if $s=s_0s_1$}, \\
(-\lambda_1-\lambda_2)(2\lambda_1+\lambda_2) & \text{if $s=s_0s_2$}, \\
\lambda_1(-2\lambda_1-\lambda_2) & \text{if $s=s_1$}, \\
(-\lambda_1-\lambda_2)\lambda_2 & \text{if $s=s_0s_1s_2$}, \\
(-\lambda_1)(-\lambda_2) & \text{if $s=s_1s_2$} . 
\end{cases}   \]
The explicit formula for $w_{M_0}(1,\nu,T)$ can now be computed using~\cite[Lemma~3.4]{Arthur0}.
\end{proof}
\begin{prop}\label{5p5}
We assume Condition \ref{5c1} and $T=T_1\alpha_1^\vee +T_2\alpha_2^\vee\in \a_0^+$.
For $u=\nu(u_{12},0,0,0)\in M_1(F_S)$ and $\nu=\nu(0,\nu_{13},\nu_{14},\nu_{24})\in N_{P_1}(F_S)$, we have
\begin{align*}
w_{M_1}(1,\nu,T) =& \, \log|\det(Y)|_S +2T_1 , \\
w_{M_1}(1,u\nu,T)=& \, \log|u_{12}^2 \nu_{24}^2|_S +2T_1 =2\log|u_{12}|_S+2\log|\nu_{24}|_S+2T_1. 
\end{align*}
For $u=\nu(0,0,0,u_{24})\in M_2(F_S)$ and $\nu=\nu(\nu_{12},\nu_{13},\nu_{14},0)\in N_{P_2}(F_S)$, we have
\begin{align*}
w_{M_2}(1,\nu,T)=& \, \log\|(\nu_{12},\nu_{13}/2,\nu_{14})\|_S+2T_2  , \\
w_{M_2}(1,u\nu,T)=& \, \log|\nu_{12}^2u_{24}/2|_S  +2T_2 \\
 =& \,  2\log|\nu_{12}|_S+ \log|u_{24}|_S-\log|2|_S  +2T_2. 
\end{align*}
\end{prop}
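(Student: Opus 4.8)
The plan is to follow the template of the proof of Proposition~\ref{5p4}, using that $M_1$ and $M_2$ are maximal Levi subgroups, so that $\P(M_1)=\{P_1,s_1s_2P_1\}$ and $\P(M_2)=\{P_2,s_1P_2\}$ each have exactly two elements. Write $\bar P_1=s_1s_2P_1$, $\bar P_2=s_1P_2$ and $\lambda=\lambda_i\varpi_i\in\a_{M_i,\C}^*$. With the normalization of Condition~\ref{5c1}, which makes $\vol(\a^G_{M_i}/\Z(\Delta_{P_i}^\vee))=1$, one has $\theta_{P_i}(\lambda)=\lambda_i$ and $\theta_{\bar P_i}(\lambda)=-\lambda_i$, so that
\[
w_{M_i}(1,\mu\nu,T)=\lim_{\lambda_i\to0}\frac{w_{P_i}(\lambda,1,\mu\nu,T)-w_{\bar P_i}(\lambda,1,\mu\nu,T)}{\lambda_i};
\]
it then suffices to identify the two numerators and divide out the common zero at $\lambda_i=0$.

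For $P_i$ itself the attached $r_\beta$-product is empty, and since the element $n$ determined by $n^{-1}a\mu n=a\mu\nu$ lies in $N_{P_i}(F_S)$ one has $H_{P_i}(n)=0$; hence $w_{P_i}(\lambda,1,\mu\nu,T)=v_{P_i}(\lambda,n,T)=e^{\lambda_iT_i}$ in all four cases, by the maximal-parabolic formulas recorded after Lemma~\ref{5l3} together with $\varpi_1(T_{P_1})=T_1$ and $\varpi_2(T_{P_2})=T_2$. For the opposite parabolic $\bar P_i$ I would imitate the computation in the proof of Proposition~\ref{5p4}: solve $n^{-1}a\mu n=a\mu\nu$ for the coordinates of $n$ as functions of the eigenvalues of $a\in A_{M_i}(F_S)$, substitute into the expression for $v_{\bar P_i}(\lambda,n,T)$ from Lemma~\ref{5l3} (a power $\|\cdot\|^{-\lambda_i}$ of a height times $e^{-\lambda_iT_i}$), multiply by $r_\beta(\lambda,\mu,a)$ for the unique reduced root $\beta$ of $(N_{\bar P_i},A_{M_i})$, and pass to the limit $a\to1$. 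The scaling $\beta^\vee_\mu$ of the coroot is precisely the nonnegative multiple for which this limit is finite and nonzero; it is governed by the order to which the dominant coordinate of $n$ blows up, and that order differs for $\mu=1$ and for $\mu$ equal to the nontrivial unipotent $u$ in $M_i$, which is why the two formulas in each of the $M_1$- and $M_2$-cases are not obtained from one another by setting a parameter to zero. I expect the outcome to be
\[
w_{\bar P_1}(\lambda,1,\nu,T)=|\det Y|_S^{-\lambda_1}e^{-\lambda_1T_1},\qquad
w_{\bar P_1}(\lambda,1,u\nu,T)=|u_{12}^2\nu_{24}^2|_S^{-\lambda_1}e^{-\lambda_1T_1},
\]
\[
w_{\bar P_2}(\lambda,1,\nu,T)=\|(\nu_{12},\nu_{13}/2,\nu_{14})\|_S^{-\lambda_2}e^{-\lambda_2T_2},\qquad
w_{\bar P_2}(\lambda,1,u\nu,T)=|\nu_{12}^2u_{24}/2|_S^{-\lambda_2}e^{-\lambda_2T_2}.
\]
Inserting these into the two-term limit and differentiating numerator and denominator at $\lambda_i=0$ yields $w_{M_i}(1,\cdot,T)=2T_i+\log c$ with $c$ the relevant height, which is the assertion. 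Restricting $\lambda$ to $\a^*_{M_i\cap\Sp(2),\C}$ gives the corresponding formulas for $\Sp(2,F_S)$, with the same expressions since the heights are written in terms of matrix entries.

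The step I expect to be the main obstacle is the analysis for $\bar P_2$ in the case $\mu=1$. Here $N_{P_2}$ is a Heisenberg group rather than abelian, so solving $n^{-1}a\mu n=a\mu\nu$ and tracking the distinct rates at which the coordinates $n_{12},n_{13},n_{14}$ tend to infinity as $a\to1$ is delicate, and one must verify that the single factor $r_\beta(\lambda,1,a)$ exactly compensates the growth of the maximum $\|(1,n_{12},n_{13},n_{14})\|$. The appearance of a genuine maximum norm $\|(\nu_{12},\nu_{13}/2,\nu_{14})\|_S$ in the answer---as opposed to the pure products of absolute values in the $M_1$-cases and in the $\mu\ne1$ case---is the feature this analysis has to reproduce, and the factors $1/2$ there trace back to the two-step structure of $N_{P_2}$, via the Campbell--Hausdorff formula, equivalently to the exterior-square normalization of $v_{P_2}$ used in Lemma~\ref{5l3}.
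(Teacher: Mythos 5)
Your proposal is correct and follows essentially the same route as the paper, whose proof of this proposition is simply the remark that it ``follows in a similar way from (\ref{2e1}) and Lemma~\ref{5l3}'': restrict the $(G,M_i)$-family to $\lambda=\lambda_i\varpi_i$, take $w_{P_i}=e^{\lambda_iT_i}$ and the $r_\beta$-modified $v_{\bar P_i}$ from the maximal-parabolic formulas after Lemma~\ref{5l3}, and evaluate the corank-one limit exactly as you describe; the expected forms of $w_{\bar P_i}$ you list agree with the $w_{sP_0}$ table in the proof of Proposition~\ref{5p4} specialized to $\a_{M_i,\C}^*$, and the factor $1/2$ in the $M_2$, $\mu=1$ case indeed arises from the factor $(1+a_0/a_1a_2)\to 2$ coming from the two-step structure of $N_{P_2}$ (though note the exterior-square normalization belongs to $v_{P_1}$, not $v_{P_2}$, which uses the tautological action).
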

\begin{proof}
This follows in a similar way from (\ref{2e1}) and Lemma~\ref{5l3}.
\end{proof}

\subsection{$\cO$-equivalence classes}\label{5s3}

We determine the $\cO$-equivalence classes (cf.~Section~\ref{coarse}) containing non-semisimple elements of $G(F)$.

\begin{lem}\label{5l}
Let $\mathcal G$ be a connected reductive algebraic group over $F$.
Fix a minimal Levi subgroup $\mathcal M_0$.
Assume that $\fo\in\cO^{\mathcal G}$ contains a non-semisimple element of $\mathcal G(F)$.
Then, there exists a semisimple element $\gamma\in \fo$ such that $\gamma\in M(F)$ for a certain $M\in\mathcal L(\mathcal M_0)$ $(M\neq \mathcal G)$.
\end{lem}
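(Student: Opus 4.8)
The plan is to show that every $\fo\in\cO^{\mathcal G}$ that is \emph{not} the class of a central semisimple element contains a semisimple representative lying in a proper Levi subgroup; since $\fo$ contains a non-semisimple element, its semisimple part cannot be central (a central element is its own Jordan decomposition), so $\fo$ is of this type, and the claim follows. First I would pick any element $\delta\in\fo$ and let $\sigma$ be its semisimple part. By definition of $\cO$-equivalence, $\sigma\in\fo$ as well, so it suffices to find $g\in\mathcal G(F)$ with $g\sigma g^{-1}\in M(F)$ for some $M\in\mathcal L(\mathcal M_0)$, $M\neq\mathcal G$.

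The key step is a standard fact about semisimple elements of reductive groups over a field: a semisimple element $\sigma$ of $\mathcal G(F)$ lies in a Levi subgroup $M$ of a parabolic $F$-subgroup precisely when the split component $A_{\mathcal G_\sigma}$ of the centralizer is strictly larger than $A_{\mathcal G}$, and then one may take $M=\mathcal G_\sigma$ or any Levi containing it (cf.\ the role of $\varepsilon^G(\sigma)$ in Section~\ref{desc}). Concretely, $S=A_{\mathcal G_\sigma}$ is an $F$-split torus centralizing $\sigma$; after conjugating by an element of $\mathcal G(F)$ I may assume $S\subset \mathcal M_0$, hence $M:=C_{\mathcal G}(S)\in\mathcal L(\mathcal M_0)$ and $\sigma\in M(F)$. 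It remains to see $M\neq\mathcal G$, i.e.\ $S\not\subset Z(\mathcal G)$, equivalently $A_{\mathcal G_\sigma}\neq A_{\mathcal G}$. If instead $A_{\mathcal G_\sigma}=A_{\mathcal G}$, then $\sigma$ is $F$-elliptic and I must derive a contradiction with $\fo$ containing a non-semisimple element. This is where the descent machinery of Section~\ref{desc} helps: the non-semisimple elements of $\fo$ are of the form (conjugates of) $\sigma u$ with $u\in \mathcal G_\sigma(F)$ unipotent and $u\neq1$; such $u$ exists only if $\mathcal G_\sigma$ has non-trivial unipotent elements, which it always does unless $\mathcal G_\sigma$ is a torus. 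So the real dichotomy is: either $\mathcal G_\sigma$ is not a torus (then $\fo$ genuinely contains non-semisimple elements and we are in the elliptic case, which seems to obstruct the conclusion), or $\mathcal G_\sigma$ is a torus (then every element of $\fo$ is semisimple, contradicting the hypothesis).

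Hence the main obstacle is precisely the $F$-elliptic case $A_{\mathcal G_\sigma}=A_{\mathcal G}$ with $\mathcal G_\sigma$ non-toral: here the above argument produces no proper Levi, and one must instead argue that this case cannot occur for a \emph{proper} $\cO$-class, or reinterpret the statement. I expect the resolution to use that the semisimple part $\sigma$ of a non-semisimple $\delta$ is in particular \emph{not regular}, and more importantly that the union over $w\in W_0$ of the $\cO$-classes meeting $\mathcal M_0(F)$ exhausts all of $\cO^{\mathcal G}$ except the elliptic semisimple classes --- but those elliptic classes consist entirely of semisimple elements, so they are excluded by hypothesis. Thus the logical skeleton is: (i) reduce to the semisimple representative $\sigma$; (ii) show $\sigma$ non-central forces $A_{\mathcal G_\sigma}\supsetneq A_{\mathcal G}$ \emph{or} $\sigma$ elliptic; (iii) rule out the elliptic-and-non-semisimple-class case by observing that an $F$-elliptic semisimple $\sigma$ with $\mathcal G_\sigma$ anisotropic modulo center has $\mathcal G_\sigma$ with no $F$-split torus beyond the center, yet may still have unipotents --- so the clean statement one actually proves is that the class is \emph{represented} in a proper Levi, which fails only for $\mathcal G=\mathcal G_\sigma$, i.e.\ $\sigma$ central, which is semisimple. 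I would write the proof along lines (i)--(ii) and dispatch the elliptic case by noting that in the situations of interest ($\mathcal G=G_\sigma$ for $G=\GSp(2)$ or $\Sp(2)$, as flagged after the lemma) the relevant $\mathcal G$'s are treated case by case in Propositions~\ref{5pm0}--\ref{5pm2} and Lemma~\ref{5l}, so the general statement reduces to the structural fact that a non-central semisimple element lies in a proper Levi.
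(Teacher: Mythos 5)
There is a genuine gap. Your steps (ii)--(iii) rest on the claim that a semisimple $\sigma$ lies in a proper Levi precisely when $A_{\mathcal G_\sigma}\supsetneq A_{\mathcal G}$, and you then try to rule out the $F$-elliptic case. But the elliptic case with $\mathcal G_\sigma$ non-toral cannot be ruled out --- it actually occurs among the classes this lemma is applied to. For $G=\GSp(2)$ take $\sigma_{1,z}=z\,\diag(-1,1,-1,1)$: by Proposition~\ref{5pm0} its centralizer is $\{(g_1,g_2)\in\GL(2)\times\GL(2)\mid\det g_1=\det g_2\}$, whose split center equals $A_G$, so $\sigma_{1,z}$ is $F$-elliptic; its class $\fo_{1,z}$ contains plenty of non-semisimple elements, and yet the lemma holds because $\sigma_{1,z}$ happens to lie in $M_0(F)$. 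So ``elliptic'' does not obstruct membership in a proper Levi, and conversely your fallback assertion that every non-central semisimple element lies in a proper Levi is false in general (an element of $\SL(2,F)$ with irreducible characteristic polynomial is non-central and semisimple but is not $F$-conjugate into the diagonal torus). Your final paragraph does not close this gap: it appeals to the case-by-case Propositions~\ref{5pm0}--\ref{5pm2} and to the lemma itself, which is circular, and in any case the lemma is stated for an arbitrary connected reductive $\mathcal G$.

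The idea you are missing is that the proper parabolic should be manufactured from the \emph{unipotent} part, not from the split component of the centralizer of the semisimple part. Write $\delta=\delta_s\delta_u$ with $\delta_u\neq1$ and let $Q\subsetneq\mathcal G$ be the Jacobson--Morozov (canonical) parabolic $F$-subgroup attached to $\delta_u$. Its uniqueness forces $Q$ to be normalized by $\mathcal G_{\delta_u}(F)$, hence $\mathcal G_{\delta_u}(F)\subset Q(F)$; since $\delta_s$ commutes with $\delta_u$, we get $\delta_s\in Q(F)$, and a semisimple element of $Q(F)$ lies in a Levi subgroup of $Q$ over $F$, which is $Q(F)$-conjugate to a standard one. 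Note that properness of $Q$ comes solely from $\delta_u\neq1$, which is exactly the hypothesis that $\fo$ contains a non-semisimple element; no dichotomy on $A_{\mathcal G_{\delta_s}}$ is needed. Your step (i) (reducing to the semisimple representative $\delta_s$) is correct and coincides with the paper's starting point, but the core of the argument must be replaced as above.
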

\begin{proof}
Let $\delta$ be a non-semisimple element of $\fo$.
Let $\delta_s$ (resp. $\delta_u$) denote the semisimple (resp. unipotent) part of the Jordan decomposition of $\delta$.
Then, we have $\delta=\delta_s\delta_u$ and $\delta_u\neq 1$.
A Jacobson-Morozov parabolic subgroup $Q\neq \mathcal G$ of $\delta_u$ over $F$ is uniquely determined.
For $g\in \mathcal G_{\delta_u}(F)$, we have $\delta_u=g^{-1}\delta_u g$.
By using the uniqueness of $Q$ we find that $Q=g^{-1}Qg$.
This means that $Q(F)$ contains $\mathcal G_{\delta_u}(F)$.
Hence, $\delta_s\in Q(F)$.
Thus, $\delta_s$ is contained in a Levi subgroup of $Q$ over $F$ because $\delta_s$ is semisimple.
This lemma follows from the fact that any Levi subgroup of $Q$ over $F$ is $Q(F)$-conjugate to $M\supset \mathcal M_0$.
\end{proof}

Let $\fo\in\cO^G$.
If $\fo$ contains only semisimple elements, then $\fo$ is just a $G(F)$-conjugacy class.
Hence, we do not consider such $\cO$-equivalence classes.
Let $z\in Z(F)$ and $x\in F^\times$.
For $x\neq 1$, we set
\[
\sigma_{1,z}=z\,\diag(-1,1,-1,1)\quad \text{and}\quad \sigma_{2,z,x}=z\,\diag(x,x,1,1)  .
\]
For $x^2\neq 1$, we put
\[
\sigma_{3,z,x}=z\, \diag(x,1,x^{-1},1).
\]
For $\alpha\in F^\times-(F^\times)^2$, we set
\[
h_\alpha=\begin{pmatrix}0&1\\ \alpha&0 \end{pmatrix}  \quad \text{and} \quad \sigma_{4,z,\alpha}=z\begin{pmatrix}h_\alpha&O_2 \\ O_2& ^th_\alpha\end{pmatrix}.
\]
If $(x,y)\in F\oplus F$ satisfies $x^2-\alpha y^2\neq 0$ (resp. $x^2-\alpha y^2=1$), then we set
\[
\sigma_{5,\alpha,x,y} = \begin{pmatrix} xI_2+ y h_\alpha & O_2 \\ O_2 & xI_2-\alpha y h_{\alpha^{-1}}  \end{pmatrix}
\]
\[
(\text{resp.}\quad   \sigma_{6,z,\alpha,x,y} =z\begin{pmatrix}1&0&0&0 \\ 0&x&0&y \\ 0&0&1&0 \\ 0&\alpha y&0&x   \end{pmatrix} ).
\]
The $\cO$-equivalence class containing $z$ in $G(F)$ is denoted by $\fo_z$.
Let $\fo_{1,z}$, $\fo_{2,z,x}$, $\fo_{3,z,x}$, $\fo_{4,z,\alpha}$, $\fo_{5,\alpha,x,y}$, and $\fo_{6,z,\alpha,x,y}$ denote the $\cO$-equivalence class containing $\sigma_{1,z}$, $\sigma_{2,z,x}$, $\sigma_{3,z,x}$, $\sigma_{4,z,\alpha}$, $\sigma_{5,\alpha,x,y}$, and $\sigma_{6,z,\alpha,x,y}$ respectively.
We can easily prove the following propositions by Lemma \ref{5l} and direct calculation.
\begin{prop}\label{5pm0}
Assume that $\fo$ contains a non-semisimple element of $G(F)$ and a semisimple element of $M_0(F)$.
Then, there exist $z\in Z(F)$ and $x\in F^\times$ such that $\fo=\fo_z$, $\fo_{1,z}$, $\fo_{2,z,x}$ $(x\neq 1)$, or $\fo_{3,z,x}$ $(x^2\neq 1)$.
Moreover, we have
\[
G_{z}=G_{z,+}=G ,
\]
\[
G_{\sigma_{1,z}}=G_{\sigma_{1,z},+}\cong\{ (g_1,g_2)\in \GL(2)\times \GL(2) \, | \, \det(g_1)=\det(g_2)   \} ,
\]
\[
G_{\sigma_{2,z,x}}=G_{\sigma_{2,z,x},+}\cong \GL(1)\times \GL(2),
\]
\[
G_{\sigma_{3,z,x}}=G_{\sigma_{3,z,x},+}\cong\GL(1)\times \GL(2).
\]
\end{prop}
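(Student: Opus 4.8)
The plan is to combine two facts: an $\cO$-equivalence class is determined by the $G(F)$-conjugacy class of the semisimple part of any of its elements, and (Lemma~\ref{5l}) if $\fo$ contains a non-semisimple element then it has a semisimple representative lying in a proper Levi subgroup over $F$, which up to $G(F)$-conjugacy may be taken among $M_0$, $M_1$, $M_2$; Proposition~\ref{5pm0} is the $M_0$ case, and here such a representative is handed to us by the hypothesis. So let $\sigma\in M_0(F)$ be a semisimple representative of $\fo$, written $\sigma=\diag(a_1,a_2,a_0a_1^{-1},a_0a_2^{-1})$ with $a_0=\mu(\sigma)$ and $a_0,a_1,a_2\in F^\times$; put $b_1=a_1$, $b_2=a_2$, $b_3=a_0a_1^{-1}$, $b_4=a_0a_2^{-1}$, so that $b_1b_3=b_2b_4=a_0$. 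Since $\fo$ contains a non-semisimple element, $G_\sigma$ must contain a non-trivial unipotent element; as the centralizer of a semisimple element, $G_\sigma$ is reductive, so this forces $G_\sigma$ not to be a torus. The centralizer of the diagonal matrix $\sigma$ in $\GL(4)$ is the block subgroup attached to the coincidence pattern of $b_1,b_2,b_3,b_4$, and $G_\sigma$ is its intersection with $G=\GSp(2)$; thus the problem reduces to a combinatorial analysis of which coincidence patterns are compatible with the relation $b_1b_3=b_2b_4$ and make this intersection non-toral.

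Next I would carry out that analysis. If the $b_i$ are pairwise distinct, then $G_\sigma=M_0$ is a torus, which is excluded. Exploiting $b_1b_3=b_2b_4$ one checks that a coincidence within a Lagrangian pair (i.e.\ $\{e_i,e_j\}$ with the two vectors orthogonal for the form $\begin{pmatrix}O_2&I_2\\-I_2&O_2\end{pmatrix}$) forces a second coincidence, while a coincidence within a symplectic pair, say $b_1=b_3$ so $b_1^2=a_0$, degenerates into one of the other cases when further entries coincide. Sorting out these implications, the possible patterns, up to the action of $W_0$ on the diagonal entries (the Weyl group of type $C_2$, realized inside $G(F)\cap\bK$ by Section~\ref{5s1}, so that $W_0$-conjugacy implies $G(F)$-conjugacy) and up to rescaling by the centre, are exactly: (a) all four $b_i$ equal, giving $\sigma\sim zI_4$; (b) $b_1=b_3$, $b_2=b_4$, $b_1\neq b_2$, where $b_1^2=b_2^2$ forces $b_2=-b_1$ and $\sigma\sim\sigma_{1,z}$; (c) $b_1=b_2$, $b_3=b_4$, $b_1\neq b_3$, giving $\sigma\sim\sigma_{2,z,x}$ with $x\neq1$; (d) exactly one symplectic pair coincides, which after a Weyl move is $b_2=b_4$ with $b_1,b_2,b_3$ distinct, giving $\sigma\sim\sigma_{3,z,x}$ with $x^2\neq1$. (In cases (a) and (b) the element $x$ of the statement plays no role.) This shows that $\fo$ is one of $\fo_z$, $\fo_{1,z}$, $\fo_{2,z,x}$, $\fo_{3,z,x}$.

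For the centralizers I would then decompose $F^4$ into the eigenspaces of $\sigma$ and record, for each summand, whether it is a non-degenerate (symplectic) subspace for the symplectic form or a sum of two dual isotropic lines, and intersect the $\GL(4)$-centralizer with $G$. For $\sigma=zI_4$ this is all of $G$. For $\sigma_{1,z}$ one gets $F^4=V_1\perp V_2$ as an orthogonal sum of two symplectic planes, each contributing a copy of $\GSp(1)\cong\GL(2)$ with similitude equal to the determinant, and consistency of the global similitude forces the two determinants to agree, whence $G_{\sigma_{1,z}}\cong\{(g_1,g_2)\in\GL(2)\times\GL(2)\mid\det g_1=\det g_2\}$. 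For $\sigma_{2,z,x}$ the two eigenspaces form a pair of dual Lagrangians, so $G_{\sigma_{2,z,x}}$ is the Siegel Levi $\{\diag(A,\mu\,{}^tA^{-1})\}\cong\GL(1)\times\GL(2)$. For $\sigma_{3,z,x}$ one eigenspace is a symplectic plane (contributing $\GSp(1)\cong\GL(2)$) and the other two eigenspaces form a dual pair of isotropic lines (contributing a one-dimensional torus whose similitude is constrained to match), so again $G_{\sigma_{3,z,x}}\cong\GL(1)\times\GL(2)$. Finally, $G_\sigma=G_{\sigma,+}$ in each case follows from Steinberg's connectedness theorem, the derived group $\Sp(2)$ of $\GSp(2)$ being simply connected, and is anyway visible from the explicit descriptions. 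The only delicate point is the bookkeeping in the second step: ensuring the list of coincidence patterns is complete and irredundant, and using $b_1b_3=b_2b_4$ correctly to collapse the seemingly extra cases and to isolate the degenerate values $x=1$ and $x^2=1$; everything else is routine linear algebra.
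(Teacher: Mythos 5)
Your proof is correct and follows exactly the route the paper indicates (it offers no written proof beyond ``Lemma~\ref{5l} and direct calculation''): the coincidence-pattern analysis of the diagonal entries $b_1,b_2,b_3,b_4$ subject to $b_1b_3=b_2b_4$, taken modulo the $W_0$-action and the centre, together with the eigenspace computation of the centralizers, is precisely the direct calculation the authors leave to the reader. The connectedness $G_\sigma=G_{\sigma,+}$, whether via Steinberg's theorem for $\GSp(2)$ with simply connected derived group or by inspection of the explicit descriptions, is also fine.
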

\begin{prop}\label{5pm1}
Assume that $\fo$ contains a non-semisimple element of $G(F)$ and a semisimple element of $M_1(F)$. We also assume that $\fo$ does not have any element of $M_0(F)$.
There exist $z\in Z(F)$, $\alpha\in F^\times-(F^\times)^2$, and $(x,y)\in F\oplus F$ $(x^2-\alpha y^2\neq 0)$ such that $\fo=\fo_{4,z,\alpha}$ or $\fo_{5,z,\alpha,x,y}$.
Let $E=F(h_\alpha)$.
Then
\begin{align*}
&G_{\sigma_{4,z,\alpha}}=G_{\sigma_{4,z,\alpha},+}\\
&=\left\{  \begin{pmatrix}A & B \\ C & D \end{pmatrix}\in G \, \Big| \, A,B\begin{pmatrix}0&1\\1&0\end{pmatrix},\begin{pmatrix}0&1\\1&0\end{pmatrix}C,D\in E  \right\} \\
& \cong \{  g\in R_{E/F}(\GL(2)) \, | \, \det(g)\in \mathbb G_m   \} 
\end{align*}
and
\begin{align*}
G_{\sigma_{5,z,\alpha,x,y}}=G_{\sigma_{5,z,\alpha,x,y}, +} &= \left\{ \begin{pmatrix}A&O_2 \\ O_2& \,^tA^{-1} \end{pmatrix} \begin{pmatrix}aI_2 & b\mathcal D \\ c\mathcal D' & dI_2 \end{pmatrix}\in G \, \Big| \, A \in E \right\} \\
& \cong \mathrm{GU}(1,1,E/F) 
\end{align*}
where $\mathcal D=\begin{pmatrix}-1&0 \\ 0&\alpha \end{pmatrix}$ and $\mathcal D'=\begin{pmatrix}-\alpha&0 \\ 0&1 \end{pmatrix}$.
\end{prop}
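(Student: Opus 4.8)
The plan is to parametrize the relevant $\cO$-equivalence classes through the semisimple element they contain, using the isomorphism $M_1\cong G'=\GL(1)\times\GL(2)$ of \eqref{M1} to translate the problem into the well-understood classification of elliptic regular semisimple conjugacy classes in $\GL(2,F)$. First, by Lemma~\ref{5l} together with the hypothesis, $\fo$ has a semisimple representative $\sigma\in M_1(F)$, and I would write it in block form $\sigma=\mathrm{diag}\bigl(A,\,\mu(\sigma)\,{}^tA^{-1}\bigr)$ with $A\in\GL(2,F)$ and $\mu(\sigma)\in F^\times$, so that under \eqref{M1} the $\GL(2)$-part of $\sigma$ is, up to transpose-inverse, $A$. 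The first observation is that $\fo$ meets $M_0(F)$ precisely when the characteristic polynomial of $A$ splits over $F$ (if it splits, $A$ is $\GL(2,F)$-conjugate into the diagonal torus, hence $\sigma$ is $M_1(F)$-conjugate into $M_0(F)$; conversely an element of $\fo\cap M_0(F)$ is diagonal, hence semisimple and $G(F)$-conjugate to $\sigma$, forcing the eigenvalues of $A$ into $F$). Since the hypothesis excludes this, and also excludes $A$ scalar (a scalar $A$ again puts $\sigma$ into $M_0$), the matrix $A$ is regular elliptic; let $\beta\in E^\times\setminus F^\times$ be one of its eigenvalues and $E=F(\beta)=F(\sqrt\alpha)$ with $\alpha\in F^\times-(F^\times)^2$.

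Next I would bring in the remaining hypothesis — that $\fo$ contains a non-semisimple element, equivalently that $G_\sigma$ is not a torus — to cut down to the two listed families. The eigenvalues of $\sigma$ on $F^4$ are those of $A$ together with those of $\mu(\sigma)\,{}^tA^{-1}$, namely $\beta$, $\bar\beta$, $\mu(\sigma)\beta^{-1}$, $\mu(\sigma)\bar\beta^{-1}$; if these four are distinct then $G_\sigma$ is a maximal torus and $\fo$ is semisimple, contrary to hypothesis. Forcing a coincidence and using $\mu(\sigma)\in F^\times$ together with $\beta\notin F$, one is left with exactly $\mu(\sigma)=\beta^2$ (which then forces $\beta^2\in F^\times$, so $\beta=c\sqrt\alpha$) or $\mu(\sigma)=\beta\bar\beta=N_{E/F}(\beta)$; by Galois symmetry each alternative produces a second coincidence, so $\sigma$ has two eigenvalues on $F^4$, each of multiplicity two. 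In the first case, conjugating $A$ to $c\,h_\alpha$ inside $\GL(2,F)$ — i.e.\ acting on $\sigma$ by an element of $M_1(F)$ — identifies $\sigma$ with $\sigma_{4,c,\alpha}$; in the second case the same device identifies $\sigma$ with $\sigma_{5,\alpha,x,y}$, where $\beta=x+y\sqrt\alpha$. I would then check that no $G(F)$-conjugacy on $M_1(F)$ occurs beyond $M_1(F)$-conjugacy and the action of the Weyl representative $w_{s_1}$, and that the latter merely permutes $\beta$ with its Galois conjugate, hence is absorbed into the choice of $(x,y)$ (or leaves $\sigma_{4,z,\alpha}$ in its own class); this shows the data $(z,\alpha)$, resp.\ $(\alpha,x,y)$, exhaust all classes in question.

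Finally, for the centralizers I would argue by the $\sigma$-eigenspace decomposition of $F^4$ combined with the behaviour of the symplectic form on eigenspaces. For $\sigma_{4,z,\alpha}$ one computes $\mu=z^2\alpha$, so the $\sqrt\alpha$- and $(-\sqrt\alpha)$-eigenspaces are each non-degenerate symplectic planes, Galois-conjugate to one another and mutually orthogonal; the subgroup of $\GSp(2)$ preserving this decomposition is, in the matrix coordinates of the statement, exactly the displayed group, and restriction to one plane realizes the isomorphism with $\{g\in R_{E/F}(\GL(2)):\det g\in\mathbb{G}_m\}$. For $\sigma_{5,z,\alpha,x,y}$ one has $\mu=x^2-\alpha y^2=N_{E/F}(\beta)$, so the $\beta$- and $\bar\beta$-eigenspaces are totally isotropic and dually paired by the form; the stabilizer of this pair of Lagrangians is the displayed block group, which is $\mathrm{GU}(1,1,E/F)$. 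In both cases the described subgroup is connected (it is cut out by open and closed conditions inside a restriction of scalars of $\GL(2)$), so it also equals $G_{\sigma,+}$. The part I expect to be the real work, rather than bookkeeping, is the exhaustiveness of the two-case split — verifying that the two hypotheses on $\fo$ together leave only the two eigenvalue configurations above, and that passing from $M_1(F)$-conjugacy to $G(F)$-conjugacy neither merges nor refines the resulting list; everything past that point is the direct matrix computation referred to in the statement.
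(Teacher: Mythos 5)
Your proposal is correct and is essentially the paper's own argument: the paper proves this proposition simply by invoking Lemma~\ref{5l} and direct calculation, and your analysis (ellipticity of the $\GL(2)$-block forced by the $M_0$-exclusion, the dichotomy $\mu=\beta^2$ versus $\mu=N_{E/F}(\beta)$ forced by the existence of non-semisimple elements, then the centralizer computed from the two orthogonal symplectic planes resp.\ the dual pair of Lagrangians) is just a systematic organization of that calculation. The only step to carry out with a bit more care is the identification of $G_{\sigma_{5,z,\alpha,x,y},+}$ with the \emph{quasi-split} $\mathrm{GU}(1,1,E/F)$: being the stabilizer of a Galois-swapped dual pair of Lagrangians determines the group only up to the underlying Hermitian form, and quasi-splitness comes from the $F$-rational $\sigma$-stable Lagrangian decomposition (equivalently from the explicit block-matrix computation you defer), so that verification should be made explicit.
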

\begin{prop}\label{5pm2}
Assume that $\fo$ contains a non-semisimple element of $G(F)$ and a semisimple element of $M_2(F)$. We also assume that $\fo$ does not have any element of $M_0(F)$.
Then, there exist $z\in Z(F)$, $\alpha\in F^\times-(F^\times)^2$, and $(x,y)\in F\oplus F$ $(x^2-\alpha y^2=1)$ such that $\fo=\fo_{6,z,\alpha,x,y}$.
Let $E=F(h_\alpha)$.
Then we have
\begin{align*}
G_{\sigma_{6,z,\alpha,x,y}}&=G_{\sigma_{6,z,\alpha,x,y},+} \\
&\cong \left\{  (x,g)\in R_{E/F}(\mathbb G_m) \times \GL(2) \, | \, N_{E/F}(x)=\det(g) \right\} . 
\end{align*}
\end{prop}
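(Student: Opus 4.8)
The plan is to first locate the semisimple part of $\fo$ inside the Klingen Levi $M_2$, bring it into the normal form $\sigma_{6,z,\alpha,x,y}$, and then read off the centralizer from the orthogonal decomposition $F^4=W_1\perp W_2$ into hyperbolic planes. For the first step, Lemma~\ref{5l} shows that an $\cO$-class meeting the non-semisimple locus has a semisimple representative in a proper standard Levi, and by hypothesis we may take a semisimple $\sigma\in\fo\cap M_2(F)$. Identify $M_2$ with $\GL(1)\times\GL(2)$, the $\GL(2)$-factor realized as the symplectic similitude group of the hyperbolic plane $W_2=\langle e_2,e_4\rangle$ and the $\GL(1)$-factor acting on $Fe_1$, and write $\sigma=(a_1,g)$ with $g\in\GL(2,F)$ and $\mu(\sigma)=\det g$ (so the four eigenvalues of $\sigma$ are $a_1$, $a_1^{-1}\mu(\sigma)$, and the two roots of the characteristic polynomial $\chi_g$ of $g$). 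Since $\fo$ has no element of $M_0(F)$, the element $\sigma$ is not conjugate into the split torus $M_0$, which is the case exactly when $\chi_g$ is irreducible over $F$; then $g$ is regular semisimple and $E:=F[g]$ is a quadratic field, equal to $F(h_\alpha)$ for the class $\alpha\in F^\times-(F^\times)^2$ of $\operatorname{disc}\chi_g$.

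Next I would observe that $\fo$ can contain a non-semisimple element only when $\sigma$ fails to be regular semisimple in $G$, so the $4\times4$ matrix $\sigma$ has a repeated eigenvalue. Its two irrational eigenvalues (the roots of $\chi_g$, lying in $E\setminus F$) are distinct from each other and from the two rational ones, so necessarily $a_1=a_1^{-1}\mu(\sigma)$, i.e.\ $a_1^2=\det g$. Put $z=a_1\in Z(F)$; then $z^{-1}g\in\SL(2,F)$ still has irreducible characteristic polynomial, necessarily of the shape $t^2-2xt+1$ with $x\in F$, and writing $x^2-1=\alpha y^2$ with $y\in F^\times$ the matrix $\bigl(\begin{smallmatrix}x&y\\\alpha y&x\end{smallmatrix}\bigr)\in\SL(2,F)$ has the same characteristic polynomial and hence is $\GL(2,F)$-conjugate to $z^{-1}g$ (the rational canonical form of a regular element). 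Transporting this conjugation into $G$ shows that $\sigma$ is $G(F)$-conjugate to $\sigma_{6,z,\alpha,x,y}$; since $\cO$-equivalence of semisimple elements is plain $G(F)$-conjugacy, $\fo=\fo_{6,z,\alpha,x,y}$.

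It remains to compute the centralizer. The central scalar $z$ is irrelevant, so it suffices to centralize $\sigma_0=z^{-1}\sigma_{6,z,\alpha,x,y}$, which acts as the identity on $W_1=\langle e_1,e_3\rangle$ and as the regular element $\bigl(\begin{smallmatrix}x&y\\\alpha y&x\end{smallmatrix}\bigr)\in\Sp(W_2)$ on the orthogonal hyperbolic plane $W_2=\langle e_2,e_4\rangle$. An $h\in\GSp(2)$ commuting with $\sigma_0$ preserves each of its eigenspaces, which over $\overline F$ are $W_1$ and the two isotropic eigenlines $W_\pm\subset W_2\otimes\overline F$ forming a $\Gal(E/F)$-conjugate pair; hence $h$ restricts on $W_1$ to a symplectic similitude (an element of a group isomorphic to $\GL(2)$) and on $W_\pm$ to scalars $\mu_\pm$, with the only constraint $\mu_+\mu_-=\det(h|_{W_1})$ coming from matching the similitude factors. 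Descent to $F$ makes $h|_{W_1}$ an arbitrary element of $\GL(2,F)$ and the pair $(\mu_+,\mu_-)$ an element of $E^\times=R_{E/F}(\mathbb{G}_m)(F)$ whose norm equals $\det(h|_{W_1})$, which is precisely the asserted isomorphism; and since the geometric centralizer is the connected group $\GL(2)\times\mathbb{G}_m$, we obtain $G_{\sigma_{6,z,\alpha,x,y},+}=G_{\sigma_{6,z,\alpha,x,y}}$. I expect the one point deserving care to be the normal-form step --- verifying that the two hypotheses genuinely force $a_1^2=\det g$, and arranging the $\GL(2,F)$-conjugation so that it lands on exactly the matrix $\bigl(\begin{smallmatrix}x&y\\\alpha y&x\end{smallmatrix}\bigr)$ --- after which the centralizer computation is the routine block-matrix argument indicated above.
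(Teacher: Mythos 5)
Your argument is correct and is precisely the ``direct calculation'' the paper invokes (the paper gives no written proof beyond citing Lemma~\ref{5l}): the eigenvalue analysis forcing $a_1^2=\det g$, the rational canonical form placing $z^{-1}g$ in the form $\bigl(\begin{smallmatrix}x&y\\ \alpha y&x\end{smallmatrix}\bigr)$, and the block/commutant computation identifying the centralizer with $\{(e,g)\in R_{E/F}(\mathbb G_m)\times\GL(2)\mid N_{E/F}(e)=\det g\}$ all check out, including the connectedness giving $G_{\sigma,+}=G_\sigma$.
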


\section{The geometric side of the trace formula for $\GSp(2)$}\label{6s}

Throughout this section, we set $G=\GSp(2)$ and we keep the notations and the assumptions of Section \ref{5s}.
In particular, we assume Condition \ref{5c1} which fixes Haar measures on $\a_0^G$, $\a_{M_1}^G$, and $\a_{M_2}^G$.

Let $\mathcal G$ be a connected reductive algebraic group over $F$.
For $g\in \mathcal G(F)$, we denote by $\{g\}_{\mathcal G}$ the $\mathcal G(F)$-conjugacy class of $g$.

\subsection{Unipotent contribution}\label{6s1}

We use the notations $u_\alpha$ (cf. (\ref{1e1})), $n_\min(\alpha)$, $n_\sub(x)$, $n_\reg(\alpha)$ (cf. (\ref{1e2})), $\xd_d=\diag(1,-d)$ (cf. (\ref{xd})), and $\para(F)$ (cf. (\ref{para})).
Let $Z$ denote the center of $G$.
For $z\in Z(F)$, $\fo_z$ denotes the $\cO$-equivalence class containing $z$ in $G(F)$.
The set $\fo_z$ is divided into the five classes
\[ \fo_z=O_{\tri,z}\cup O_{\min,z} \cup O_{\sub,z}\cup O_{\sub,z}' \cup O_{\reg,z}  \]
where we set 
\[ O_{\tri,z}=\{ z \} ,\quad  O_{\min,z}=\{z\, n_\min(1)\}_G, \quad O_{\sub,z}=\bigcup_{\ds\in \para(F)} \{z \, n_\sub(\xd_d) \}_G,  \]
\[  O_{\sub,z}'= \{z \, n_\sub(\xd_1) \}_G, \quad O_{\reg,z}=\{z\, n_\reg(1)\}_G . \]
Let $P\in\F$, $P\supset P_0$, and $f\in C_c^\inf(G(\A)^1)$.
For a subset $O$ in $M_P(F)$ we set
\begin{equation}\label{6ekk}
 K_{P,O}(g,h)=\int_{N_P(\A)} \sum_{\gamma\in O} f(g^{-1}\gamma n h) \d n . 
\end{equation}
For $T\in \a_0^+$, integrals $J_{O_{\reg,z}}^T(f)$, $J_{O_{\sub,z}}^T(f)$, $J_{O_{\sub,z}'}^T(f)$, $J_{O_{\min,z}}^T(f)$, and $J_{O_{\tri,z}}^T(f)$ are defined as follows:
\[  J_{O_{\tri,z}}^T(f)=\int_{G(F)\bsl G(\A)^1}   K_{G,O_{\tri,z}}(g,g)  \, \d^1 g , \]
\[  J_{O_{\min,z}}^T(f)=\int_{G(F)\bsl G(\A)^1}   K_{G,O_{\min,z}}(g,g)  \, \d^1 g , \]
\begin{align*}
 J_{O_{\sub,z}}^T(f)=&\int_{G(F)\bsl G(\A)^1} \Big\{  K_{G,O_{\sub,z}}(g,g) \\
& \qquad  -\sum_{\delta\in P_1(F)\bsl G(F)}K_{P_1,\{z \}}(\delta g,\delta g) \, \widehat{\tau}_{P_1}(H_{P_1}(\delta g)-T) \Big\} \, \d^1 g ,
\end{align*}
\begin{align*}
J_{O_{\sub,z}'}^T(f)=&\int_{G(F)\bsl G(\A)^1} \Big\{  K_{G,O_{\sub,z}'}(g,g) \\
& \qquad  -\sum_{\delta\in P_2(F)\bsl G(F)}K_{P_2,\{z \}}(\delta g,\delta g) \, \widehat{\tau}_{P_2}(H_{P_2}(\delta g)-T) \Big\} \, \d^1 g ,
\end{align*}
and
\begin{align*}
J_{O_{\reg,z}}^T(f)=&\int_{G(F)\bsl G(\A)^1} \Big\{  K_{G,O_{\reg,z}}(g,g) \\
&\qquad  -\sum_{\delta\in P_1(F)\bsl G(F)}K_{P_1,\{z u_1\}_{M_1}}(\delta g,\delta g) \, \widehat{\tau}_{P_1}(H_{P_1}(\delta g)-T)       \\
&\qquad  -\sum_{\delta\in P_2(F)\bsl G(F)}K_{P_2,\{z u_1\}_{M_2}}(\delta g,\delta g) \, \widehat{\tau}_{P_2}(H_{P_2}(\delta g)-T)       \\
&\qquad  +  \sum_{\delta\in P_0(F)\bsl G(F)}K_{P_0,\{z \}}(\delta g,\delta g) \, \widehat{\tau}_{P_0}(H_{P_0}(\delta g)-T)  \Big\} \d^1 g .
\end{align*}
We will show that these integrals converge absolutely for any $T\in\a_0^+$ (cf. Theorems \ref{6t1}, \ref{6t2}, \ref{6t3}, and \ref{6t4}).
Hence, the integral $J_{\fo_z}^T(f)$ satisfies the equality
\[
J_{\fo_z}^T(f)=J_{O_{\tri,z}}^T(f)+J_{O_{\min,z}}^T(f)+J_{O_{\sub,z}}^T(f)+J_{O_{\sub,z}'}^T(f)+J_{O_{\reg,z}}^T(f).
\]
Clearly, we see that $J_{O_{\tri,z}}^T(f)=\vol_G \, f(z)$.
We set
\[
f_\bK(g)=\int_\bK f(k^{-1}gk)\, \d k \quad (f\in C_c^\inf(G(\A)^1),\; \, g\in G(\A)^1)
\]
and
\[
f_{\bK_S}(g_S)=\int_{\bK_S} f(k_S^{-1}g_Sk_S)\, \d k_S \quad (f\in C_c^\inf(G(F_S)^1),\; \,  g_S\in G(F_S)^1).
\]
The orbital integral over the $G(F_S)$-conjugacy class of $n_\min(1)$ endowed with a suitable measure is
\[
J_G(z\, n_\min(1),f)= c_S \, \int_{F_S}f_{\bK_S}(z\, n_\min(x)) \, |x|_S \,\d x
\]
where $\d x$ is the Haar measure on $F_S$ defined in Section \ref{2s1} and the constant $c_S$ was defined in Section \ref{2s2}.
\begin{thm}\label{6t1}
Assume that $S$ contains $\Sigma_\inf$ and $z\in Z(\fO_v)$ $(\forall v\not\in S)$.
Let $f\in C_c^\inf(G(F_S)^1)$.
The integral $J_{O_{\min,z}}^T(f)$ converges absolutely and satisfies
\begin{align*}
J_{O_{\min,z}}^T(f)&=\frac{ \vol_{M_2}}{2\, c_F} \int_{\A^\times}f_\bK (z\, n_\min(x))\,|x|^2 \d^\times x\\
&=\frac{ \vol_{M_2} \, \zeta_F^S(2) }{2\, c_F}  \, J_G(z\, n_\min(1),f) ,
\end{align*}
where $\d^\times x$ is the Haar measure on $\A^\times$ which was defined in Section \ref{2s1}.
\end{thm}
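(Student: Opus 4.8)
Looking at this problem, I need to prove a formula for the minimal unipotent contribution $J_{O_{\min,z}}^T(f)$ for $\GSp(2)$.

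The plan is to compute $J_{O_{\min,z}}^T(f)$ by unfolding the conjugacy-class sum against the Klingen parabolic $P_2$ and recognizing the result as a Tate-type integral evaluated at the point $s=2$, where everything converges, so that no truncation is needed (consistent with the absence of $\widehat\tau$-terms in the definition of $J_{O_{\min,z}}^T$). First I would record the structural input: $n_\min(1)$ is a highest-root unipotent element, so it lies in the one-dimensional center $Z(N_{P_2})$ of the unipotent radical of $P_2$, its centralizer $G_{z\,n_\min(1)}=G_{n_\min(1)}$ is contained in $P_2$, and conjugation by $P_2$ acts on $Z(N_{P_2})^\times\cong F^\times$ through a character $\beta\colon P_2\to\mathbb G_m$ which is surjective on $F$-points (the torus $\{\diag(t,1,t^{-1},1)\}$ already acts by $\alpha\mapsto t^2\alpha$, and the similitude direction supplies the rest). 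In particular $O_{\min,z}$ is the disjoint union over $\delta\in P_2(F)\backslash G(F)$ of the single $P_2(F)$-conjugacy classes $\delta^{-1}\{z\,n_\min(\alpha)\mid\alpha\in F^\times\}\delta$.

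Using this decomposition and the standard unfolding of the $\delta$-sum, $J_{O_{\min,z}}^T(f)=\int_{P_2(F)\backslash G(\A)^1}\sum_{\alpha\in F^\times}f(g^{-1}z\,n_\min(\alpha)g)\,\d^1g$. I would then apply the Iwasawa decomposition $G(\A)^1=N_{P_2}(\A)\,(M_2(\A)\cap G(\A)^1)\,\bK$ with the measures normalized as in Section~\ref{2s3} and Condition~\ref{5c1}. Since $n_\min(\alpha)$ is central in $N_{P_2}$, the $N_{P_2}$-variable drops out via $\int_{N_{P_2}(F)\backslash N_{P_2}(\A)}\d n=1$; the $\bK$-variable assembles into $f_\bK$; and on the $M_2$-part one uses $m^{-1}n_\min(\alpha)m=n_\min(\beta(m)\alpha)$ together with the identity $e^{-2\rho_{P_2}(H_{P_2}(m))}=|\beta(m)|^2$, which holds because $2\rho_{P_2}$ restricted to $A_{M_2}$ is twice $\beta$ (the roots of $A_{M_2}$ in $\mathfrak n_{P_2}$ are $\chi_1-\chi_2,\ \chi_1+\chi_2,\ 2\chi_1$, summing to $4\chi_1$). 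Pushing the integral forward along $\beta$ and unfolding the $F^\times$-sum into the domain $\A^\times$ (as in Tate's thesis — legitimate because the integral of $|x|^2$ converges absolutely for compactly supported $f$), the remaining quotient volume $\int_{F^\times\backslash\A^1}\d^1x=c_F$ and the degree-two nature of $\beta$ (whence a $\mu_2$ in the point-stabilizer) produce the constant $\tfrac{\vol_{M_2}}{2\,c_F}$, giving the first displayed identity and, at the same time, the claimed absolute convergence.

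Finally I would deduce the second identity by the elementary local–global factorization $\A^\times=F_S^\times\times(\A^S)^\times$. For $v\notin S$, $f$ is extended by the characteristic function of $\bK_v$ and $z\in Z(\fO_v)$, so $z\,n_\min(x_v)\in\bK_v$ exactly when $x_v\in\fO_v$, whence $\int_{F_v^\times}|x_v|_v^2\,\d^\times x_v$ over $\fO_v\cap F_v^\times$ equals $(1-q_v^{-2})^{-1}=L_v(2,\trep_F)$, and the product over $v\notin S$ is $\zeta_F^S(2)$. The $F_S^\times$-factor is $\int_{F_S^\times}f_{\bK_S}(z\,n_\min(x_S))|x_S|_S^2\,\d^\times x_S$, which equals $J_G(z\,n_\min(1),f)$ after using $c_S\,\d x_S=|x_S|_S\,\d^\times x_S$ from Section~\ref{2s2}. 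I expect the main obstacle to be the bookkeeping in the second paragraph: keeping track of the component group of the centralizer (the source of the factor $\tfrac12$) and of all the measure normalizations carefully enough to land exactly on $\tfrac{\vol_{M_2}}{2\,c_F}$; the convergence estimate and the concluding factorization are routine.
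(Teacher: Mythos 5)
Your proposal is correct and follows exactly the route the paper intends: the paper's proof of Theorem~\ref{6t1} is the one-line remark that one should ``consider the centralizer of $n_\min(1)$'', and your argument — unfolding along $P_2(F)\backslash G(F)$ using that $n_\min(1)$ spans the center of $N_{P_2}$ with centralizer $\ker\beta\subset P_2$, applying the Iwasawa decomposition, and factoring the resulting Tate-type integral at $s=2$ over $v\notin S$ to produce $\zeta_F^S(2)$ — is precisely that computation carried out in detail. The structural facts you invoke (surjectivity of $\beta$ on $F$-points, $\delta_{P_2}=|\beta|^{\pm2}$, the local unramified computation $(1-q_v^{-2})^{-1}$, and $c_S\,\d x_S=|x_S|_S\,\d^\times x_S$) are all correct.
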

\begin{proof}
This theorem is easily proved if we consider the centralizer of $n_\min(1)$.
\end{proof}

If $S$ contains $\Sigma_\inf$, then we have
\[ J_{M_1}^T(z,f)=\int_{N_{P_1}(F_S)} f_{\bK_S}(z\, n) \, w_{M_1}(1,n,T) \, \d n  \]
(cf. Proposition \ref{5p5}).
The spaces $V$ and $V^\ss$ were defined in Section \ref{4s1}.
For $d_S\in F^\times$, we set $V^\ss(F_S,d_S)=\{ x\in V^\ss(F_S) \, | \,  -\det(x)\in d_S(F_S^\times)^2  \}$ (cf. Section \ref{4s4}).
Let $\d x$ be the Haar measure on $V(F_S)$ defined in Section \ref{4s4}.
For any $y\in V^\ss(F_S,d_S)$ and a suitably chosen measure on the $G(F_S)$-conjugacy class of $n_\sub(y)$, we have
\[ J_G(z\, n_\sub(y),f)=  2^{|S|} c_S \int_{V^\ss(F_S,d_S)}f_{\bK_S}(z\, n_\sub(x))  \, \d x, \]
where the constant $c_S$ was defined in Section \ref{2s2}.
Let $\sim_S$ denote the equivalence relation on $V^\ss(F_S)$ such that $x\sim_S y$ if and only if $\det(x^{-1}y)\in (F_S^\times)^2$.
Then, $V^\ss(F_S,d_S)$ is an equivalence class in $V^\ss(F_S)$ with respect to $\sim_S$.
Since any equivalence class contains an element in $V^\ss(F)$, we have $V^\ss(F_S)/{\sim_S}=V^\ss(F)/{\sim_S}$.
Note that there exists a bijection between $V^\ss(F)/{\sim_S}$ and $F^\times/(F^\times\cap(F_S^\times)^2)$.
\begin{thm}\label{6t2}
Assume that $S$ contains $\Sigma_\inf\cup\Sigma_2$ and $z\in Z(\fO_v)$ $(\forall v\not\in S)$.
Let $f\in C_c^\inf(G(F_S)^1)$.
The integral $J_{O_{\sub,z}}^T(f)$ converges absolutely.
If we identify $M_1$ with $G'=\GL(1)\times \GL(2)$ by \eqref{M1} and we set $\Phi_z(x)=f_\bK(z \, n_\sub(x))$, then we have
\begin{align*}
 J_{O_{\sub,z}}^T(f)=& \,  Z^{\GSp(2)}(\Phi_z,3/2,T_1) \\
=& \, \frac{\vol_{M_1}}{2}J_{M_1}^T(z,f) \\
& \, +\frac{ \vol_{M_1}}{2\, c_F} \sum_{x\in V^\ss(F)/{\sim_S}} \fC_F(S,-\det(x)) \, J_G(z \, n_\sub(x),f)
\end{align*}
where $Z^{\GSp(2)}(\Phi,s,T_1)$ and $\fC_F(S,\alpha)$ were defined in Section \ref{4s8}.
\end{thm}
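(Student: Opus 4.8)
\textbf{Proof plan for Theorem~\ref{6t2}.}
The plan is to identify the truncated geometric distribution $J_{O_{\sub,z}}^T(f)$ with the modified prehomogeneous zeta integral $Z^{\GSp(2)}(\Phi_z,3/2,T_1)$ studied in Section~\ref{4s8}, and then invoke Theorem~\ref{4t22} together with the measure normalizations already in place. First I would unwind the definition of $J_{O_{\sub,z}}^T(f)$. The orbit $O_{\sub,z}$ is the union over $\ds\in\para(F)$ of the $G(F)$-conjugacy classes $\{z\,n_\sub(\xd_d)\}_G$; using the Jacobson--Morozov parabolic of the subregular unipotent, which is $P_1$, one writes $O_{\sub,z}\cap M_1(F)$ in terms of $z\cdot n_\sub(x)$ for $x\in V^\ss(F)$ with $-\det(x)\notin(F^\times)^2$, i.e.\ $x\in V^\st(F)$. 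Plugging into $K_{G,O_{\sub,z}}(g,g)$ and folding the sum over $P_1(F)\backslash G(F)$, the first term of the integrand becomes, after the standard manipulation $\sum_{\gamma\in M_1(F)\cap O}\int_{N_{P_1}(\A)}$, a sum $\sum_{x\in V^\st(F)}\Phi_z(x\cdot g)$ where $G$ acts on $V$ as in~\eqref{aG'} under the identification~\eqref{M1} of $M_1$ with $G'$; here $\Phi_z(x)=f_\bK(z\,n_\sub(x))\in\cS(V(\A))$ because $f$ is compactly supported and $\bK$-finiteness lets us average over $\bK$. The subtracted term $\sum_{\delta\in P_1(F)\backslash G(F)}K_{P_1,\{z\}}(\delta g,\delta g)\,\hat\tau_{P_1}(H_{P_1}(\delta g)-T)$ is exactly the constant-in-$x$ contribution, and a short computation (using that the Fourier transform of the $\bK$-averaged $\Phi_z$ at $0$ equals $\hat\Phi_z(0)$ up to the normalizing volume factor, and that $\hat\tau_{P_1}$ is the characteristic function $\tau_{T_1}(H_{H'}(h'))$ in the coordinates of $\a_{M_1}^G\cong\R$) shows it matches the $|\det(h')^2|^{-3/2}\hat\Phi_z(0)\,\tau_{T_1}(H_{H'}(h'))$ term inside the braces of the definition of $Z^{\GSp(2)}$. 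Thus $J_{O_{\sub,z}}^T(f)=Z^{\GSp(2)}(\Phi_z,3/2,T_1)$, which in particular gives absolute convergence by the remark following~\eqref{4e4} (for $\Re(s)>5/4$, hence at $s=3/2$).

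Next I would apply Theorem~\ref{4t22} to $\Phi=\Phi_z$, which is legitimate since $\Phi_z\in C_c^\infty(V(\A))$ and $S$ satisfies Condition~\ref{cond2} (here I use $S\supset\Sigma_\inf\cup\Sigma_2$ and $z\in Z(\fO_v)$ for $v\notin S$, which forces $\Phi_z=\Phi_{z,S}\prod_{v\notin S}\Phi_{0,v}$). This yields
\[
Z^{\GSp(2)}(\Phi_z,3/2,T_1)=\frac{\vol_{G'}}{2}\int_{V(F_S)}\Phi_{z,S}(x_S)\log|\det(x_S)|_S\,\d x_S+\vol_{G'}\hat\Phi_z(0)\,T_1+\frac{2^{|S|}\vol_{G'}}{2\,c_F^S}\sum_{\alpha}\fC_F(S,\alpha)\,Z_S(\Phi_{z,S},3/2,\trep_S;\alpha),
\]
where $\alpha$ runs over $F^\times/(F^\times\cap(F_S^\times)^2)$. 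It then remains to translate the three right-hand terms into the asserted form. The second and third terms combine with the first into $\tfrac{\vol_{M_1}}{2}J_{M_1}^T(z,f)$: by Proposition~\ref{5p5}, $w_{M_1}(1,u\nu,T)=\log|u_{12}^2\nu_{24}^2|_S+2T_1$, and unwinding $J_{M_1}^T(z,f)=\int_{N_{P_1}(F_S)}f_{\bK_S}(z\,n)\,w_{M_1}(1,n,T)\,\d n$ against the weight $\log|\det(Y)|_S+2T_1$ produces precisely $\int_{V(F_S)}\Phi_{z,S}(x_S)\log|\det(x_S)|_S\,\d x_S$ plus a $2T_1$-multiple of $\int_{V(F_S)}\Phi_{z,S}$, and the latter integral is $\hat\Phi_z(0)$ up to the $c_S$/$2^{|S|}$ factors absorbed into $\vol_{M_1}$ versus $\vol_{G'}$. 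The final sum over $\alpha$ is rewritten via the bijection $V^\ss(F)/{\sim_S}\leftrightarrow F^\times/(F^\times\cap(F_S^\times)^2)$, $x\mapsto-\det(x)$, and the identity $J_G(z\,n_\sub(y),f)=2^{|S|}c_S\int_{V^\ss(F_S,d_S)}f_{\bK_S}(z\,n_\sub(x))\,\d x=2^{|S|}c_S\,Z_S(\Phi_{z,S},3/2,\trep_S;d_S)$ (note $Z_S$ at $s=3/2$ has the integrand $|\det(x_S)|_S^{0}\chi_S(x_S)\Phi_S=\Phi_S$ on $V^\ss(F_S,d_S)$), so that $\tfrac{2^{|S|}\vol_{G'}}{2c_F^S}\fC_F(S,-\det(x))Z_S=\tfrac{\vol_{M_1}}{2c_F}\fC_F(S,-\det(x))J_G(z\,n_\sub(x),f)$ once the relation $\vol_{G'}=(c_S/\text{(factor)})\vol_{M_1}$ dictated by the measure conventions of Sections~\ref{2s3} and~\ref{5s1} and Condition~\ref{5c1} is used; here the $c_F^S=c_F/c_S$ from Section~\ref{2s2} and the $2^{|S|}$ and $c_S$ bookkeeping cancel cleanly.

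The main obstacle I anticipate is not any deep analysis---the hard analytic input (meromorphic continuation, location of the pole at $s=3/2$, evaluation of the $T_1$-linear term) is already packaged in Theorem~\ref{4t22}, Lemma~\ref{4l20}, and Proposition~\ref{4p18}---but rather the careful matching of all the measure normalizations and combinatorial constants: verifying that the $\bK$-average $\Phi_z$ is the correct Schwartz--Bruhat function, that the $2^{|S|}$ appearing in the orbital-integral formula for $n_\sub$ (reflecting $|\Pi_d(k)|=2$ and the adelic-versus-rational orbit count) is consistent with the $2^{|S|}$ in Theorem~\ref{4t22}, that $\vol_{M_1}$ as normalized by Condition~\ref{5c1} relates to $\vol_{G'}$ by exactly the factor forced by $\a_{H'}$ carrying half the chosen measure on $\a_{A_{H'}}$ (as flagged before Theorem~\ref{4t22}), and that the subtracted $K_{P_1,\{z\}}$ term lines up sign-for-sign with the truncation term inside $Z^{\GSp(2)}$. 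I would handle this by first proving the clean identity $J_{O_{\sub,z}}^T(f)=Z^{\GSp(2)}(\Phi_z,3/2,T_1)$ at the level of absolutely convergent integrals (which also delivers the convergence assertion), and only afterward substituting the explicit formula of Theorem~\ref{4t22} and collecting constants, so that the bookkeeping is localized to the very last step.
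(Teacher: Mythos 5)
Your overall architecture — identify $J_{O_{\sub,z}}^T(f)$ with $Z^{\GSp(2)}(\Phi_z,3/2,T_1)$ by folding the sum over $P_1(F)\bsl G(F)$, then substitute the explicit formula of Theorem~\ref{4t22} and match constants — is exactly the paper's, and your bookkeeping of the measure normalizations and the $2^{|S|}$, $c_S$, $c_F^S$ factors in the second half is sound. But there is a genuine circularity at the one point where all the real work lives: you dispose of absolute convergence by citing ``the remark following~\eqref{4e4}'' that $Z^{\GSp(2)}(\Phi,s,T_1)$ converges for $\Re(s)>5/4$. That remark, like Lemma~\ref{4l19} and even Lemma~\ref{4l1}, is explicitly derived \emph{from the proof of Theorem~\ref{6t2}}; the paper states inside that proof that the convergence of $Z^{\GSp(2)}$ and $Z^{\Sp(2)}$ for $\Re(s)>5/4$ is a by-product ``already used in the argument of Section~\ref{4s8}.'' So you cannot import it; you must prove it. Likewise, asserting that the hard analysis is ``already packaged in Theorem~\ref{4t22}, Lemma~\ref{4l20}, and Proposition~\ref{4p18}'' gets the logical dependency backwards, since Lemmas \ref{4l19}--\ref{4l20} rest on this convergence.

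The missing content is the several-page estimate showing that the integral~\eqref{6ezeta1} converges for $\Re(s)>5/4$. This is not routine: the truncation term only cancels the zero Fourier coefficient, and on the Siegel domain one must control the full sum over $V^\st(F)$ against the weight $|\det h'|^{2s-3}$. The paper's device is to pass from the prehomogeneous space $(H,V)$, $H=\GL(2)$ (where the stabilizer of a regular split point has nontrivial $F$-rational characters, so the zeta integral cannot be handled directly), to the space $(B,V)$ with $B$ the Borel; one then splits the domain using $F^{P_1}(\cdot,T)$ and $\tau_{P_0}^{P_1}$, stratifies $V(F)$ by the decomposition~\eqref{6deco2} into $V^\st(F)$ and the two degenerate strata, and applies Poisson summation repeatedly in the coordinates $m_1$, $m_{12}$, bounding each of the resulting terms~\eqref{6eadd1}--\eqref{6eadd6} and \eqref{6ezeta6}--\eqref{6ezeta14} separately (the critical ones converging only for $\Re(s)>5/4$). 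The paper also warns that the smoothed-Eisenstein-series route of Shintani and Yukie does not obviously work here because it requires a cancellation. Without supplying an argument of this kind, your proposal establishes the formula only conditionally on a convergence statement that is itself a corollary of the theorem being proved.
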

\begin{proof}
The notation $V^\st(F)$ was defined in Section \ref{4s1}.
For $s\in\C$, we set
\[
\lambda=(2s-3)\varpi_1\in \a^*_{M_1,\C}
\]
throughout this proof.
We consider the zeta integral
\begin{multline*}
J_{O_{\sub,z}}^T(f,\lambda)=\int_{P_1(F)\bsl G(\A)^1} \Big( \sum_{x\in V^\st(F)} f(g^{-1}z n_\sub(x) g) \\
- \int_{N_{P_1}(\A)} f(g^{-1} zn g)\, \d n \, \, \hat\tau_{P_1}(H_{P_1}(g)-T) \Big) \, e^{-\lambda(H_{P_1}(g))}  \, \d^1 g .
\end{multline*}
For $x\in V^\st(F)$ and $\delta\in G(F)$, we see that $\delta\in P_1(F)$ if and only if $\delta^{-1} n_\sub(x)\delta\in P_1(F)$.
Hence, it follows that
\[
K_{G,O_{\sub,z}}(g,g)=\sum_{\delta\in P_1(F)\bsl G(F)} \sum_{x\in V^\st(F)} f(g^{-1}\delta^{-1} z \, n_\sub(x) \, \delta g). 
\]
Therefore, we formally have
\[
J_{O_{\sub,z}}^T(f)=J_{O_{\sub,z}}^T(f,0) .
\]
Of course, we need only the absolute convergence of $J_{O_{\sub,z}}^T(f,\lambda)$ at $\lambda=0$ (i.e., $s=3/2$) for this theorem.
However, it is better to consider $J_{O_{\sub,z}}^T(f,\lambda)$ for any $s$ with a view to the general formulation (cf. \cite{Hoffmann2} and the proof of Theorem \ref{6t4}).
Furthermore, our proof gives absolute convergence and holomorphy of $J_{O_{\sub,z}}^T(f,\lambda)$.
By direct calculation we easily see
\[
e^{-\lambda(H_{P_1}((a,h')))}=e^{-(2s-3)\varpi_1(H_{P_1}((a,h')))}=|\det(h')|^{2s-3}
\]
and
\[
\widehat\tau_{P_1}(H_{P_1}((a,h'))-T)= \begin{cases} 1 & \text{if $-\log|\det(h')|-T_1>0$}, \\ 0 & \text{if $-\log|\det(h')|-T_1<0$}, \end{cases}
\]
where $(a,h')\in \GL(1,\A)^1\times \GL(2,\A)\cong M_1(\A)\cap G(\A)^1$ by \eqref{M1}.
Hence, we also get absolute convergence and holomorphy of $Z^{\GSp(2)}(\Phi_z,s,T_1)$ and $Z^{\Sp(2)}(\Phi_z,s,T_1)$ for $\Re(s)>5/4$.
They were already used in the argument of Section \ref{4s8}.

We will show that
\begin{multline}\label{6ezeta1}
 \int_{P_1(F)\bsl G(\A)^1} \Big|  \sum_{x\in V^\st(F)} f(g^{-1}\delta^{-1}zn_\sub(x)\delta g) \\
  -\int_{N_{P_1}(\A)}f(g^{-1}zn g) \, \d n \, \hat\tau_{P_1}(H_{P_1}(g)-T) \Big| \, |e^{-(2s-3)\varpi_1(H_{P_1}(g))}|  \, \d^1 g 
\end{multline}
converges for $\Re(s)>5/4$.
From this we have the absolute convergence of $J_{O_{\sub,z}}^T(f)$ and the equality
\[
J_{O_{\sub,z}}^T(f)=J_{O_{\sub,z}}^T(f,0)=Z^{\GSp(2)}(\Phi_z,3/2,T_1).
\]
Moreover, the formula follows from (\ref{4e3}) and Theorem \ref{4t22}.

We now give a proof for the convergence of (\ref{6ezeta1}) for $\Re(s)>5/4$ by an argument similar to the other cases, which is inspired by Shintani's argument in \cite[Section 2 in Chapter 1]{Shintani}.
We are studying the prehomogeneous vector space $(H,V)$, where the group $H=\GL(2)$ acts by $x\cdot h\mapsto \,^thxh$ $(h\in H$, $x\in V)$. For $x\in V$, let $H_x$ be the connected component of $1$ in the stabilizer of $x$ in $H$. Since $X(H_{\xd_1})_F$ is not trivial (cf. Lemma \ref{4l2}) and $\xd_1$ is a regular point, it is difficult to deal directly with zeta functions for~$(H,V)$. The idea is to deduce their properties from those of zeta functions for the prehomogeneous  vector space~$(B,V)$, where $B$ is the subgroup of lower triangular matrices. Indeed, $X(B_x)_F$ is trivial for any regular point $x$ in $V$.
In addition, we do not use the smoothed Eisenstein series.
This is completely different from \cite{Shintani} and \cite{Yukie}.
It seems difficult to prove the convergence of (\ref{6ezeta1}) using the smoothed Eisenstein series, because such argument requires a cancellation (cf. \cite[Section 3.8]{Datskovsky} and \cite[p.373]{Yukie}).

We will bound \eqref{6ezeta1} by $\eqref{6ezeta2}+\eqref{6ezeta4}+\eqref{6ezeta5}$.
This means that we reduce the problem of the convergence of \eqref{6ezeta1} to that of \eqref{6ezeta5}, which is identified with the absolute value of a part of a modified zeta integral for $(B,V)$. First, we have
\[
\eqref{6ezeta1} \leq \eqref{6ezeta2} + \eqref{6ezeta3} ,
\]
\begin{multline}\label{6ezeta2}
\int_{P_1(F)\bsl G(\A)^1} \Big|  \sum_{x\in V^\st(F)} f(g^{-1}zn_\sub(x) g)  \Big| \\
\{1-\hat\tau_{P_1}(H_{P_1}(g)-T)\} \, |e^{-(2s-3)\varpi_1(H_{P_1}(g))}|  \, \d^1 g ,
\end{multline}
\begin{multline}\label{6ezeta3}
\int_{P_1(F)\bsl G(\A)^1} \Big|  \sum_{x\in V^\st(F)} f(g^{-1}zn_\sub(x) g)   -\int_{N_{P_1}(\A)}f(g^{-1} zn g) \, \d n  \Big| \\
\hat\tau_{P_1}(H_{P_1}(g)-T) \, |e^{-(2s-3)\varpi_1(H_{P_1}(g))}|  \, \d^1 g.
\end{multline}
It is clear that (\ref{6ezeta2}) converges absolutely for any $s$.
Hence, we will show the absolute convergence of (\ref{6ezeta3}).
We use the same notation $F^P(x,T)$ as \cite[p.37]{Arthur1} and \cite[$\S$6]{Arthur6}.
Let $\tau_P^Q$ denote the characteristic function of $\{a\in\a_P^Q \, | \, \langle a,\alpha\rangle >0 \, (\forall \alpha\in\Delta_P^Q)\}$.
Then, we have
\[  \eqref{6ezeta3} \leq  \eqref{6ezeta4} + \eqref{6ezeta5}  , \]
\begin{multline}\label{6ezeta4}
 \int_{P_1(F)\bsl G(\A)^1} \Big|  \sum_{x\in V^\st(F)} f(g^{-1} zn_\sub(x) g)   -\int_{N_{P_1}(\A)}f(g^{-1} zn g) \, \d n  \Big|  \\
 |e^{-(2s-3)\varpi_1(H_{P_1}(g))}|  \, F^{P_1}(g,T)\, \hat\tau_{P_1}(H_{P_1}(g)-T) \, \d^1 g , 
\end{multline}
\begin{multline}\label{6ezeta5}
 \int_{P_0(F)\bsl G(\A)^1} \Big|  \sum_{x\in V^\st(F)} f(g^{-1} zn_\sub(x) g)   -\int_{N_{P_1}(\A)}f(g^{-1} zn g) \, \d n  \Big|  \\
|e^{-(2s-3)\varpi_1(H_{P_1}(g))}|  \, \tau_{P_0}^{P_1}(H_{P_0}(g)-T)\, \hat\tau_{P_1}(H_{P_1}(g)-T) \, \d^1 g . 
\end{multline}

Now, there are $T'\in \R$ and a compact subset $\omega$ of $P_0(\A)^1$ such that $F^{P_1}(g,T)$ is the characteristic function in $g$ of the projection of the set
\[
\mathfrak{S}^{P_1}(T',T)= \left\{ \, g=pak \; \Big| \; \begin{array}{c} p\in \omega , \; \, k\in\bK , \; \, e^{T'}<t_1^{-1}t_2\leq e^{2T_2-T_1}, \\  a=\diag(t_1^{-1},t_2^{-2},t_1,t_2)\in A_{M_0}^+ \end{array} \,  \right\} 
\]
onto $P_1(F)\bsl G(\A)^1$.
We use a decomposition of $g\in G(\A)^1$ as
\begin{equation}\label{6deco1}
g = \begin{pmatrix}I_2&* \\ O_2& I_2 \end{pmatrix} \begin{pmatrix} 1&-b&0&0 \\  0&1&0&0 \\ 0&0&1&0 \\ 0&0&b&1 \end{pmatrix}\begin{pmatrix}a^{-1}_1&0&0&0 \\ 0&a_2^{-1}&0&0 \\ 0&0&a_1c&0 \\ 0&0&0&a_2c \end{pmatrix}k \in G(\A)^1  
\end{equation}
where $k\in\bK$.
By $P_1(F)=P_0(F)\cup P_0(F)w_0N_{P_0}(F)$, for any $g\in G(F)\mathfrak{S}^{P_1}(T',T)$, there exists $\delta\in M_0(F)\cup P_0(F)w_0$ such that $\delta g\in N_0(F)\mathfrak{S}^{P_1}(T',T)$.
Hence, using
\[
T'< \alpha_2(H_{P_0}(g))\leq 2T_2-T_1 \quad (g\in N_0(F)\mathfrak{S}^{P_1}(T',T))
\]
and
\[
\alpha_2(H_{P_0}(w_0g))=-\log|a_1^{-1}a_2| -2\log||(1,a_1a_2^{-1}b)|| \quad (g\in G(\A)^1) ,
\]
we easily deduce
\begin{equation}\label{6eineq1}
-2T_2+T_1-2\log\|(1,a_1a_2^{-1}b)\| < \alpha_2(H_{P_0}(g)) \leq 2T_2-T_1 
\end{equation}
for any $g\in G(F)\mathfrak{S}^{P_1}(T',T)$.
Let $\tau_{T,\sub}^1(r_1,r_2)$ denote the characteristic function of
\[
\{ (r_1,r_2)\in\R^2 \, | \, r_1+r_2<-T_1 \; \text{ and } \; r_2-r_1\leq 2T_2-T_1 \} .
\]
Using \eqref{6eineq1} and the Poisson summation formula on $V(F)$, we have
\[
\eqref{6ezeta4} < \mathrm{(constant)}\times\{  \eqref{6eadd1}+\eqref{6eadd2}+\eqref{6eadd3}+\eqref{6eadd4}+\eqref{6eadd5}+\eqref{6eadd6}  \} ,
\]
where the summands are as follows:
\begin{equation}\label{6eadd1}
\int_{M_1(F)\bsl M_1(\A)^1} F^{P_1}(m,T) \, \d^1 m \, \int_0^{e^{-T_1}}t^{2s}\d^\times t \, \Big|\Phi_z(\begin{pmatrix} 0&0 \\ 0&0 \end{pmatrix})\Big| ,
\end{equation}
\begin{multline}\label{6eadd2}
\int_{F^\times\bsl\A^1}\d^1 c\int_{F^\times\bsl\A^\times} \d^\times a_1 \int_{F^\times\bsl\A^\times} \d^\times a_2 \, |a_1|^{2s+1}|a_2|^{2s-1} \\ \tau_{T,\sub}^1(\log|a_1|,\log|a_2|)  \sum_{x\in F^\times}\Big|\Phi_z(\begin{pmatrix} a_1^2cx&0 \\ 0&0 \end{pmatrix})\Big|  ,
\end{multline}
\begin{multline}\label{6eadd3}
\int_{F^\times\bsl\A^1}\d^1 c\int_{\A} \d b \int_{F^\times\bsl\A^\times , \, |a_2|<e^{-T_1}} \d^\times a_2 \, |a_2|^{2s} \, \{ 1+ \log\|(1,b)\| \} \\ \sum_{x\in F^\times}\Big|\Phi_z(c\begin{pmatrix} 2a_2b&a_2x \\ a_2x&0 \end{pmatrix})\Big|  ,
\end{multline}
\begin{multline}\label{6eadd4}
\int_{F^\times\bsl\A^1}\d^1 c \int_{\GL(2,\A), \, |\det(h')|<e^{-T_1}} \d h' \, |\det(h')|^{2s-3} \\ \sum_{x\in V^\st(F)} \big|\hat\Phi_z(c^{-1}h^{\prime -1}x\, ^th^{\prime -1})\big|  ,
\end{multline}
\begin{multline}\label{6eadd5}
\int_{F^\times\bsl\A^1}\d^1 c\int_{\A} \d b \int_{F^\times\bsl\A^\times , \, |a_1|<e^{-T_1}} \d^\times  a_1 \\ |a_1|^{2s-3} \, \{ 1+ \log\|(1,b)\| \} \sum_{x\in F^\times}\Big|\hat\Phi_z(c\begin{pmatrix} 0&a_1^{-1}x \\ a_1^{-1}x&-2a_1^{-1}b \end{pmatrix})\Big|  ,
\end{multline}
\begin{multline}\label{6eadd6}
\int_{F^\times\bsl\A^1}\d^1 c\int_{F^\times\bsl\A^\times} \d^\times a_1 \int_{F^\times\bsl\A^\times} \d^\times a_2 \\ |a_1|^{2s-2}|a_2|^{2s-4} \, \tau_{T,\sub}^1(\log|a_1|,\log|a_2|) \sum_{x\in F^\times}\Big|\hat\Phi_z(\begin{pmatrix} 0&0 \\ 0&a_2^{-2}c^{-1}x \end{pmatrix})\Big|  .
\end{multline}
The function $\tau_{T,\mathrm{sub}}^1(\log|a_1|,\log|a_2|)$ means that the domain of the integration for $(a_1,a_2)$ is
\[
\Big\{ (a_1,a_2)\in F^\times\bsl\A^\times \times F^\times\bsl\A^\times \, \Big| \begin{array}{c}  \text{($|a_1|>e^{-T_2}$ and $|a_2|<|a_1|^{-1}e^{-T_1}$) or} \\ \text{($|a_1|<e^{-T_2}$ and $|a_2|<|a_1|e^{2T_2-T_1}$)}  \end{array}  \Big\} .
\]
Hence, \eqref{6eadd2} converges for $\Re(s)>1/2$.
We can similarly see that \eqref{6eadd6} converges for $\Re(s)>1/2$.
The terms \eqref{6eadd3} and \eqref{6eadd5} converge for $\Re(s)>1$ by \cite[Proposition 2.12]{Yukie} (cf. Section \ref{4s9}).
The convergence of \eqref{6eadd1} and \eqref{6eadd4} is trivial.
Thus, \eqref{6ezeta4} converges for $\Re(s)>1$.

In order to simplify notation, we use the characteristic function $\tau_{T,\sub}^2(r_1,r_2)$ (resp. $\tau_{T,\sub}^3(r_1,r_2)$) of
\[
\{ (r_1,r_2)\in\R^2 \, | \, r_2>-T_1+T_2 \; \text{ and } \; r_1<-r_2-T_1 \}
\]
\[
\mathrm{(resp.}\quad  \{ (r_1,r_2)\in\R^2 \, | \, r_2<-T_1+T_2 \; \text{ and } \; r_1<r_2+T_1-2T_2 \}  \; \mathrm{)}.
\]
By the decomposition \eqref{6deco1} we have
\begin{multline*}
\tau_{P_0}^{P_1}(H_{P_0}(g)-T)\, \hat\tau_{P_1}(H_{P_1}(g)-T)=\\
\begin{cases} 1 & \text{if $\log|a_1a_2|<-T_1$ and $\log|a_1a_2^{-1}|<T_1-2T_2$}, \\ 0 & \text{otherwise}. \end{cases}
\end{multline*}
Hence,
\[  \eqref{6ezeta5}=\eqref{6eaddx1}+\eqref{6eaddx2},   \]
\begin{multline}\label{6eaddx1}
 \int_{P_0(F)\bsl G(\A)^1} \Big|  \sum_{x\in V^\st(F)} f(g^{-1} zn_\sub(x) g)   -\int_{N_{P_1}(\A)}f(g^{-1} zn g) \, \d n  \Big|  \\
|e^{-(2s-3)\varpi_1(H_{P_1}(g))}|  \, \tau_{T,\sub}^2(\log|a_1|,\log|a_2|) \, \d^1 g , 
\end{multline}
and
\begin{multline}\label{6eaddx2}
 \int_{P_0(F)\bsl G(\A)^1} \Big|  \sum_{x\in V^\st(F)} f(g^{-1} zn_\sub(x) g)   -\int_{N_{P_1}(\A)}f(g^{-1} zn g) \, \d n  \Big|  \\
|e^{-(2s-3)\varpi_1(H_{P_1}(g))}|  \, \tau_{T,\sub}^3(\log|a_1|,\log|a_2|) \, \d^1 g  
\end{multline}
where $a_1$ and $a_2$ are determined by \eqref{6deco1}.
The set
\begin{multline*}
\Big\{ x_{m_1,m_{12},m_2}=\begin{pmatrix}1& m_{12} \\ 0&1 \end{pmatrix}\begin{pmatrix}m_1&0 \\ 0& m_2 \end{pmatrix}\begin{pmatrix}1&0 \\ m_{12}&1 \end{pmatrix}\in V(F)  \\
\Big| \, m_2\in F^\times \, , \; m_1,m_{12}\in F  \Big\} 
\end{multline*}
is decomposed into
\begin{align}\label{6deco2}
& V^\st(F) \cup   \{ x_{m_1,m_{12},m_2} \, | \, m_2\in F^\times, \, \; m_{12}\in F \, , \;  m_1=0 \} \\
& \cup \{ x_{m_1,m_{12},m_2} \, | \, m_2\in F^\times, \, \; m_{12}\in F \, , \; m_1=-m_2(F^\times)^2 \}   \nonumber
\end{align}
(disjoint union), since $V^\st(F)$ can be expressed by
\begin{multline*}
V^\st(F)= \Big\{ x_{m_1,m_{12},m_2}\in V(F) \\
\Big| \, m_2\in F^\times \, , \; m_{12}\in F  \, , \; m_1\in F^\times-(-m_2)(F^\times)^2 \Big\}.
\end{multline*}
It is clear that \eqref{6eaddx1} is bounded by $\eqref{6ezeta13}+\eqref{6ezeta14}$, where we set
\begin{multline}\label{6ezeta13}
 \int_\A  \d b  \, \int_{F^\times\bsl\A^1}\d^1 c  \int_{F^\times\bsl\A^\times}\d^\times a_1 \int_{F^\times\bsl\A^\times}\d^\times a_2 \\
|a_1|^{2s} \, |a_2|^{2s-2} \, \tau_{T,\sub}^2(\log|a_1|,\log|a_2|) \\
 \sum_{m_2\in F^\times } \sum_{m_1\in F^\times-(-m_2)(F^\times)^2} \Big|  \Phi_z(c\begin{pmatrix} a_1^2m_1+ m_2^{-1}a_2^{-2}b^2 & b \\ b & a_2^2m_2 \end{pmatrix})  \Big| , 
\end{multline}
and
\begin{equation}\label{6ezeta14}
\frac{c_F}{2s-2}\int_{F^\times\bsl\A^1}\d^1 c   \int_{F^\times\bsl\A^\times, \, |a_2|>e^{-T_1+T_2}}\d^\times a_2 \, |a_2|^{-2} \, \Big| \hat\Phi_z(\begin{pmatrix} 0 & 0 \\ 0 &0 \end{pmatrix})  \Big| 
\end{equation}
if $\Re(s)>1$.
There exist functions $\phi_1$, $\phi_2\in C_c^\inf(\A)$ such that
\[ \sum_{m_1\in F^\times-(-m_2)(F^\times)^2} |\Phi_z(*) | < |a_1|^{-2}\times \phi_1(cb)\times\phi_2(ca_2^2m_2).   \]
Therefore, it follows that \eqref{6ezeta13} and \eqref{6ezeta14} are convergent for $\Re(s)>1$.
So, we get the absolute convergence of \eqref{6eaddx1} for $\Re(s)>1$.
By the decompositions \eqref{6deco1} and \eqref{6deco2} and repeated application of the Poisson summation formula we have
\[
\text{(\ref{6eaddx2})}< \text{(constant)}\times\{ \eqref{6ezeta6} + \eqref{6ezeta7} + \eqref{6ezeta8} + \eqref{6ezeta9}  + \eqref{6ezeta11} + \eqref{6ezeta12}  \} ,
\]
where the summands are as follows:
\begin{multline}\label{6ezeta6}
 \int_\A  \d b  \, \int_{F^\times\bsl\A^1}\d^1 c  \int_{F^\times\bsl\A^\times}\d^\times a_1 \int_{F^\times\bsl\A^\times}\d^\times a_2 \\
|a_1|^{2s} \, |a_2|^{2s-2} \, \tau_{T,\sub}^3(\log|a_1|,\log|a_2|)  \sum_{m_2\in F^\times }\Big|  \Phi_z(c\begin{pmatrix}m_2^{-1}a_2^{-2}b^2 & b \\ b & a_2^2m_2 \end{pmatrix})  \Big| , 
\end{multline}
\begin{multline}\label{6ezeta7}
\int_\A  \d b  \, \int_{F^\times\bsl\A^1}\d^1 c  \int_{F^\times\bsl\A^\times}\d^\times a_1 \int_{F^\times\bsl\A^\times}\d^\times a_2 \\
|a_1|^{2s}  |a_2|^{2s-2} \, \tau_{T,\sub}^3(\log|a_1|,\log|a_2|) \\
\sum_{m_2\in F^\times } \sum_{\alpha_1\in F^\times}\Big|  \Phi_z(c\begin{pmatrix}-m_2(a_1\alpha_1)^2 + m_2^{-1}a_2^{-2}b^2 & b \\ b & a_2^2m_2 \end{pmatrix})  \Big| ,
\end{multline}
\begin{multline}\label{6ezeta8}
\int_\A  \d b  \, \int_{F^\times\bsl\A^1}\d^1 c  \int_{F^\times\bsl\A^\times}\d^\times a_1 \int_{F^\times\bsl\A^\times}\d^\times a_2 \\
|a_1|^{2s-2} |a_2|^{2s-2} \, \tau_{T,\sub}^3(\log|a_1|,\log|a_2|) \\
\sum_{m_1\in F^\times } \sum_{m_2\in F^\times} \Big| \hat\Phi_z^1(\begin{pmatrix} c^{-1} a_1^{-2} m_1 & cb \\ cb & ca_2^2m_2 \end{pmatrix})  \Big| ,
\end{multline}
\begin{multline}\label{6ezeta9}
\int_{F\bsl \A}  \d b  \, \int_{F^\times\bsl\A^1}\d^1 c  \int_{F^\times\bsl\A^\times}\d^\times a_1 \int_{F^\times\bsl\A^\times}\d^\times a_2 \\
|a_1|^{2s-2} |a_2|^{2s-2} \tau_{T,\sub}^3(\log|a_1|,\log|a_2|) \\
\sum_{m_{12}\in F^\times} \sum_{m_2\in F^\times} \Big| \hat\Phi_z^{1,12}(\begin{pmatrix} 0 & c^{-1}a_1^{-1}a_2^{-1}m_{12} \\ c^{-1}a_1^{-1}a_2^{-1}m_{12} & ca_2^2m_2 \end{pmatrix})  \Big| , 
\end{multline}
\begin{equation}\label{6ezeta11}
\frac{c_F}{2s-2}\int_{F^\times\bsl\A^1}\d^1 c   \int_{F^\times\bsl\A^\times, \, |a_2|<e^{-T_1+T_2}}\d^\times a_2 \, |a_2|^{4s-4} \, \Big| \hat\Phi_z^{1,12}(\begin{pmatrix} 0 & 0 \\ 0 &0 \end{pmatrix})  \Big| , 
\end{equation}
\begin{multline}\label{6ezeta12}
\frac{c_F}{2s-2}\int_{F^\times\bsl\A^1}\d^1 c   \int_{F^\times\bsl\A^\times, \, |a_2|<e^{-T_1+T_2}}\d^\times a_2 \, |a_2|^{4s-6} \\
\sum_{m_2\in F^\times }  \Big| \hat\Phi_z(\begin{pmatrix} 0 & 0 \\ 0 & c^{-1}a_2^{-2}m_2 \end{pmatrix})  \Big| .
\end{multline}
In the above equations, we assume $\Re(s)>1$ and we set
\[
\hat\Phi_z^1(\begin{pmatrix}x_1&x_{12}\\ x_{12}&x_2 \end{pmatrix})=\int_\A \Phi_z(\begin{pmatrix}y_1& x_{12} \\ x_{12} & x_2 \end{pmatrix}) \, \psi_F(x_1y_1) \, \d y_1
\]
and
\[  \hat\Phi_z^{1,12}(\begin{pmatrix}x_1&x_{12}\\ x_{12}&x_2 \end{pmatrix})=\int_\A\int_\A \Phi_z(\begin{pmatrix}y_1& y_{12} \\ y_{12} & x_2 \end{pmatrix}) \, \psi_F(x_1y_1+2x_{12}y_{12}) \, \d y_1 \, \d y_{12} . \]
It is trivial that \eqref{6ezeta6}, \eqref{6ezeta11}, and \eqref{6ezeta12} converge for $\Re(s)>1$.
We also see that \eqref{6ezeta8} and \eqref{6ezeta9} converge for any $s$.
For \eqref{6ezeta7}, there exist functions $\phi_3$, $\phi_4\in C_c^\inf(\A)$ such that
\[ \sum_{\alpha_1\in F^\times}|\Phi_z(*)| < |a_1|^{-1}\times \phi_3(cb) \times \phi_4(ca_2^2m_2). \]
Therefore, \eqref{6ezeta7} converges for $\Re(s)>5/4$.
Hence, we have proved that \eqref{6eaddx2} and \eqref{6ezeta5} converge for $\Re(s)>5/4$.
Thus, we obtain the convergence of \eqref{6ezeta1} for $\Re(s)>5/4$.
\end{proof}

For $S\supset\Sigma_\inf$ we have
\[
J_{M_2}^T(z,f)=\int_{N_{P_2}(F_S)} f_{\bK_S}(z\, n) \, w_{M_2}(1,n,T) \, \d n 
\]
by the definition (cf. Proposition \ref{5p5}).
\begin{thm}\label{6t3}
Assume that $S$ contains $\Sigma_\inf\cup\Sigma_2$ and $z\in Z(\fO_v)$ $(\forall v\not\in S)$.
Let $f\in C_c^\inf(G(F_S)^1)$.
The integral $J_{O_{\sub,z}'}^T(f)$ converges absolutely.
We also have
\[ J_{O_{\sub,z}'}^T(f)= \frac{\vol_{M_2}}{2} J_{M_2}^T(z,f) + \frac{\vol_{M_1}}{2\, c_F} \frac{\frac{\d}{\d s}\zeta_F^S(s)|_{s=3} }{\zeta_F^S(3)} J_G(z \, n_\sub(\xd_1),f) . \]
\end{thm}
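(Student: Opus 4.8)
The plan is to follow the proof of Theorem~\ref{6t2}, systematically replacing the Siegel parabolic $P_1$ by the Klingen parabolic $P_2$, with the essential simplification that $O_{\sub,z}'$ is a single $G(F)$-conjugacy class rather than a continuum parametrized by $\para(F)$. First I would carry out a geometric reduction: a direct matrix computation (in the spirit of Lemma~\ref{5l}) shows that the split subregular unipotent $n_\sub(\xd_1)$ is $M_1(F)$-conjugate to $n_\sub(y)$ for $y$ the split (hyperbolic) binary form, and that $n_\sub(y)$ lies in $N_{P_2}(F)$ --- it is the exponential of a single root vector for $\alpha_1+\alpha_2$. Unfolding the kernel $K_{G,O_{\sub,z}'}(g,g)$ over $P_2(F)\backslash G(F)$ then identifies the inner sum with a sum over the single $M_2(F)$-orbit of $z\,n_\sub(y)$ in $z\,N_{P_2}(F)$; under the standard action of the $\GL(2)$-factor of $M_2\cong\GL(1)\times\GL(2)$ this orbit surjects onto the nonzero vectors of the abelianization $N_{P_2}/[N_{P_2},N_{P_2}]\cong F^2$. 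Hence, after subtracting the constant term $K_{P_2,\{z\}}$, the remaining integral is governed by the prehomogeneous vector space $\GL(1)\times\GL(2)$ acting on $F^2$ --- the same one underlying the minimal unipotent coefficient $a^{\GL(3)}(S,u')$ --- so that one may invoke the mean value formula for $\SL(2)$ from Section~\ref{2s6}. This is also the structural reason the truncation is taken along $P_2$ and not $P_1$: although $n_\sub(\xd_1)$ itself lies in $N_{P_1}$, the orbit acquires its ``rank-one, Tate-type'' shape only in the $P_2$-picture, which in turn is why the final answer carries a Dedekind-zeta factor rather than a Shintani factor.

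Next, exactly as in the proof of Theorem~\ref{6t2}, I would insert a complex parameter by multiplying the integrand by $e^{-\lambda(H_{P_2}(g))}$ with $\lambda\in\a_{M_2,\C}^*$, normalized so that $\lambda=0$ is the point of interest (which will turn out to be $s=3$ in the resulting Dedekind-zeta-type integral), and show that the truncated, weighted integral $J_{O_{\sub,z}'}^T(f,\lambda)$ is absolutely convergent and holomorphic in a right half-plane, with $J_{O_{\sub,z}'}^T(f)=J_{O_{\sub,z}'}^T(f,0)$. The convergence is proved by the Shintani-type estimate used for \eqref{6ezeta1}: one bounds the truncated kernel by a finite sum of explicit pieces obtained by repeated Poisson summation along the layers of $N_{P_2}$ and $N_{P_0}$, the last of them being a fragment of a modified zeta integral attached to the Borel subgroup. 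Because the governing prehomogeneous space is now the Tate-type one rather than the space of binary quadratic forms, this estimate is markedly shorter than the chain \eqref{6ezeta1}--\eqref{6ezeta12}, and the mean value formula of Section~\ref{2s6} does most of the work carried out in Theorem~\ref{6t2} by Saito's and Yukie's results.

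Finally, with $J_{O_{\sub,z}'}^T(f)$ written as the subtracted constant-term piece plus a convergent modified Tate integral over the $\GL(1)\times\GL(2)$-orbit, I would evaluate the latter using the mean value formula, the Poisson summation formula, and the Laurent expansion of $\zeta_F^S$ at $s=1$ from Section~\ref{2s2}; the $\widehat\tau_{P_2}$-truncation cancels the polar part exactly as for $\GL(2)$ in Example~\ref{3ex1}, leaving $\zeta_F^S(3)^{-1}\,\tfrac{\d}{\d s}\zeta_F^S(s)|_{s=3}$ times the ordinary orbital integral $J_G(z\,n_\sub(\xd_1),f)$, while the subtracted term produces $\tfrac{\vol_{M_2}}{2}J_{M_2}^T(z,f)$. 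The factor $\vol_{M_1}/(2c_F)$ in front of the orbital integral comes from converting the $N_{P_2}$-normalization of that orbital integral (natural in the $P_2$-descent) back to the $N_{P_1}=V$-normalization used in \eqref{M1} and in Theorem~\ref{6t2}; the remaining bookkeeping of Tamagawa measures is the same as in the proof of Theorem~\ref{4t13}.

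The step I expect to be the real obstacle is the pair consisting of (a) the geometric unfolding that pins down exactly which $M_2(F)$-orbit in $N_{P_2}(F)$ the class $O_{\sub,z}'$ contributes, with the correct invariant measure on it, and (b) the absolute convergence of the parametrized zeta integral above; moreover the measure-matching between the $P_1$- and $P_2$-pictures is what forces the asymmetric appearance of $\vol_{M_1}$ and $\vol_{M_2}$ in the statement, and has to be tracked carefully. Once both are in hand, the evaluation is a reprise of the $\GL(2)$ and $\GL(3)$ computations.
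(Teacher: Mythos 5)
Your plan is essentially the paper's own proof: the paper likewise takes the split representative $\mathfrak u_z=z\,\nu(0,0,1,0)\in zN_{P_2}(F)$, unfolds along $P_2$ (using that $G_{\mathfrak u_z}\subset P_2$), cancels the subtracted constant term $K_{P_2,\{z\}}$ by the mean value formula of Section~\ref{2s6}, proves absolute convergence by Poisson-summation estimates along $N_{P_2}$ and $N_{P_0}$, and obtains the factor $\vol_{M_1}/(2c_F)$ by the same $P_1$--$P_2$ measure comparison as in the proof of Theorem~\ref{4t13}. The only cosmetic deviations are that the paper works directly at the evaluation point (no auxiliary parameter $\lambda$), must also track the component group $G_{\mathfrak u_z,+}/G_{\mathfrak u_z}\cong\Z/2\Z$ with representative $w_0\notin P_2$ (whence the factor $\tfrac12$), and that both $\tfrac{\vol_{M_2}}{2}J_{M_2}^T(z,f)$ and $\zeta_F^S(3)^{-1}\tfrac{\d}{\d s}\zeta_F^S(s)|_{s=3}$ arise from the truncation-induced weight $\log\|(n_{12},n_{13}/2,n_{14})\|+2T_2$ (its $S$-part and its part over $v\notin S$, respectively) rather than from a pole cancellation, since the relevant zeta value at $s=3$ is regular.
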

\begin{proof}
Throughout this proof, we set $\mathfrak u_z=z\,n_\sub(\begin{pmatrix}0&1 \\ 1&0 \end{pmatrix})$.
Then we have $O_{\sub,z}'=\{\mathfrak u_z\}_G$, $G_{\mathfrak u_z}=\{ \diag(a,b,b,a) \, | \, a,b\in\mathbb G_m  \}N_{P_1}$, and $G_{\mathfrak u_z,+}=G_{\mathfrak u_z} \cup w_0G_{\mathfrak u_z}$.

Let $\d^\times x$ (resp. $\d n_{13}$) denote the Haar measure on $\A^\times$ (resp. $\A$) which was defined in Section \ref{2s1}.
First, we will show
\begin{multline}\label{6e6}
J_{O_{\sub,z}'}^T(f)= \frac{\vol_{M_0}}{2\, c_F} \int_{\A^\times} \int_{\A} f_\bK(z\, \nu_2(0,n_{13},x) ) \\
|x|^2 \, \{ \log\|(x,n_{13}/2)\| + 2T_2 \} \, \d n_{13}\, \d^\times x .
\end{multline}
It is clear that the integral converges absolutely (cf. \cite[Section 2]{Yukie} and Section \ref{4s9}).
We consider the integral
\begin{multline}\label{6ell}
\int_{G(F)\bsl G(\A)^1}\Big\{   \sum_{ \delta\in G_{\mathfrak u_z,+} \bsl G(F) }  f(g^{-1}\delta^{-1} \mathfrak u_z \delta g) \\
 - \sum_{ \delta_1 \in G_{\mathfrak u_z} \bsl G(F) }  f(g^{-1}\delta_1^{-1} \mathfrak u_z \delta_1 g) \widehat\tau_{P_2}(H_{P_2}(\delta_1 g)-T) \Big\} \d^1 g.
\end{multline}
Since
\begin{multline*}
\sum_{ \delta\in G_{\mathfrak u_z,+} \bsl G(F) }  f(g^{-1}\delta^{-1}\, \mathfrak u_z \, \delta g) \\
 - \sum_{ \delta_1\in G_{\mathfrak u_z} \bsl G(F) }  f(g^{-1}\delta_1^{-1}\, \mathfrak u_z \, \delta_1 g) \widehat\tau_{P_2}(H_{P_2}(\delta_1 g)-T) =\\
\frac{1}{2}\sum_{ \delta\in G_{\mathfrak u_z} \bsl G(F) } f(g^{-1}\delta^{-1}\, \mathfrak u_z \, \delta g) \{ 1- \widehat\tau_{P_2}(H_{P_2}(g)-T) - \widehat\tau_{P_2}(H_{P_2}(w_0g)-T)\} ,
\end{multline*}
we easily prove the absolute convergence of (\ref{6ell}) by using Lemma \ref{5l3} and the absolute convergence of (\ref{6e6}).
Formally we have
\[ J_{O_{\sub,z}'}^T(f)= \eqref{6ell} + \eqref{6ellt}  \]
where
\begin{multline}\label{6ellt}
 \int_{G(F)\bsl G(\A)^1}  \sum_{ \delta_1\in P_2(F) \bsl G(F) } \Big\{   \sum_{\delta\in G_{\mathfrak u_z} \bsl P_2(F) }  f(g^{-1}\delta_1^{-1}\delta^{-1}\, \mathfrak u_z \, \delta \delta_1 g) \\
  - \int_{N_{P_2}(\A)} f(g^{-1}\delta_1^{-1}zn\delta_1g)\, \d n \Big\}\, \widehat\tau_{P_2}(H_{P_2}(\delta_1 g)-T) \, \d^1 g. 
\end{multline}
If we show that the integral (\ref{6ellt}) converges absolutely, then the absolute convergence of $J_{O_{\sub,z}'}^T(f)$ follows.
We easily see
\begin{multline*}
\int_{P_2(F)\bsl G(\A)^1} \Big|   \sum_{ \delta\in G_{\mathfrak u_z} \bsl P_2(F) }  f(g^{-1}\delta^{-1}\, \mathfrak u_z \, \delta g)   -\int_{N_{P_2}(\A)} f(g^{-1}zn g)\, \d n \Big| \\
\widehat\tau_{P_2}(H_{P_2}( g)-T) \, \d^1 g < \text{(\ref{6eab1})}+\text{(\ref{6eab2})}
\end{multline*}
where
\begin{multline}\label{6eab1}
\int_{P_2(F)\bsl G(\A)^1} \Big|   \sum_{ \delta\in G_{\mathfrak u_z} \bsl P_2(F) }  f(g^{-1}\delta^{-1}\, \mathfrak u_z \, \delta g)   -\int_{N_{P_2}(\A)} f(g^{-1}zn g)\, \d n \Big|  \\
F^{P_2}(g,T) \, \widehat\tau_{P_2}(H_{P_2}( g)-T) \, \d^1 g 
\end{multline}
and
\begin{multline}\label{6eab2}
\int_{P_0(F)\bsl G(\A)^1} \Big|   \sum_{ \delta\in G_{\mathfrak u_z} \bsl P_2(F) }  f(g^{-1}\delta^{-1}\, \mathfrak u_z \, \delta g)   -\int_{N_{P_2}(\A)} f(g^{-1}zn g)\, \d n \Big|  \\
\tau_{P_0}^{P_2}(H_{P_0}(g)-T) \, \widehat\tau_{P_2}(H_{P_2}( g)-T) \, \d^1 g .  
\end{multline}
Note that
\[
\{\delta^{-1}\, \mathfrak u_z \, \delta \, | \, \delta\in G_{\mathfrak u_z} \bsl P_2(F)  \}=(z\, N_{P_2}(F))\cap O_{\sub,z}'.
\]
Since Theorem \ref{6t1} is already proved, it suffices to show convergence of the integral with $O'_{\mathrm{sub},z}$ replaced by
\[
\bar O'_{\mathrm{sub},z}= O'_{\mathrm{sub},z} \cup O_{\mathrm{min},z}\cup O_{\mathrm{tri},z}.
\]
However, the intersection of the latter union with $P_2(F)$ is $N_{P_2}$-invariant, and we can apply Poisson summation as in \cite[p.945--947]{Arthur6} to prove the convergence of (\ref{6eab1}).
Therefore, we have only to prove the convergence of \eqref{6eab2}.

For this purpose, we use the notation
\[
\nu_2(n_{12},n_{13},n_{14})=\nu(n_{12},n_{13},n_{14},0) \in N_{P_2}  .
\]
where $\nu(n_{12},n_{13},n_{14},n_{24})$ was defined in \eqref{u}.
Let $\d n_*$ denote the Haar measure on $\A$ defined in Section \ref{2s1}.
We set
\[
f_{z,13}(x_{12},x_{14})=\int_\A f(z\, \nu_2(x_{12},n_{13},x_{14}))\, \d n_{13},
\]
\[
\hat f_{z,13}(x_{12},x_{14})= \int_\A \int_\A f_{z,13}(n_{12},n_{14}) \, \psi_F(x_{12}n_{12}) \, \psi_F(x_{14}n_{14}) \, \d n_{12} \, \d n_{14}.
\]
Put
\begin{multline}\label{6deco3}
g=\begin{pmatrix}I_2&* \\ O_2& I_2 \end{pmatrix} \begin{pmatrix} 1&-n_{13}/2&0&0 \\  0&1&0&0 \\ 0&0&1&0 \\ 0&0&n_{13}/2&1 \end{pmatrix}\begin{pmatrix}a_1^{-1}&0&0&0 \\ 0&a_2^{-1}&0&0 \\ 0&0&a_1 c&0 \\ 0&0&0&a_2c \end{pmatrix}k \\
\in G(\A)^1 
\end{multline}
where $k\in\bK$.
Then, we have
\begin{multline*}
\tau_{P_0}^{P_2}(H_{P_0}(g)-T)\, \hat\tau_{P_2}(H_{P_2}(g)-T)=\\
\begin{cases} 1 & \text{if $\log|a_1|<-T_2$ and $\log|a_2|<-2T_1+2T_2$}, \\ 0 & \text{otherwise}. \end{cases}
\end{multline*}
We denote by $\tau_{T,\sub}'(t_{12},t_{14})$ the characteristic function of 
\[
\{(t_{12},t_{14})\in\R^2 \, | \, t_{12}+t_{14}<-2T_2 \; \text{ and } \; -t_{12}+t_{14}<-4T_1+4T_2  \},
\]
to simplify the description of $\tau_{P_0}^{P_2}(H_{P_0}(g)-T)\, \hat\tau_{P_2}(H_{P_2}(g)-T)$ for $|a_1|=e^{(t_{12}+t_{14})/2}$ and $|a_2|=e^{(-t_{12}+t_{14})/2}$.
By using the decomposition \eqref{6deco3}, change of variable ($(a_{12},a_{14},a_2)=(a_1a_2^{-1},a_1a_2c,a_2)$ on $\A^1\times\A^1\times\A^1$, $(|a_{12}|,|a_{14}|)=(|a_1a_2^{-1}|,|a_1a_2|)$ on $\Rp\times\Rp$), and the Poisson summation formula, we have
\[ \text{(\ref{6eab2})}< \text{(constant)}\times \{  \eqref{6eabc1} + \eqref{6eabc2} + \eqref{6eabc3} + \eqref{6eabc4} + \eqref{6eabc5} \}, \]
where the summands are as follows:
\begin{equation}\label{6eabc1}
\int_{\A^\times / F^\times}\d^\times a_{12} \int_{\A^\times / F^\times}\d^\times a_{14} \, |a_{14}|^2\, \tau_{T,\sub}'(\log|a_{12}|,\log|a_{14}|)\, |f_{z,13}(0,0)| ,
\end{equation}
\begin{multline}\label{6eabc2}
\int_{\A^\times / F^\times}\d^\times a_{12} \int_{\A^\times / F^\times}\d^\times a_{14} \, |a_{12}|^{-1}\, |a_{14}|\\
\tau_{T,\sub}'(\log|a_{12}|,\log|a_{14}|) \sum_{\alpha_{12}\in F^\times} \sum_{\alpha_{14}\in F^\times} |\hat f_{z,13}(a_{12}^{-1}\alpha_{12},a_{14}^{-1}\alpha_{14})| , 
\end{multline}
\begin{equation}\label{6eabc3}
\int_{\A^\times / F^\times \, , \, |a_{12}|^{-1}<e^{-2T_1+3T_2}}\d^\times a_{12}  \, |a_{12}|^{-2}   \sum_{\alpha_{12}\in F^\times}  |\hat f_{z,13}(a_{12}^{-1}\alpha_{12},0)| , 
\end{equation}
\begin{equation}\label{6eabc4}
\int_{\A^\times / F^\times \, , \, |a_{12}|^{-1}>e^{-2T_1+3T_2}}\d^\times a_{12}  \,   \sum_{\alpha_{12}\in F^\times}  |\hat f_{z,13}(a_{12}^{-1}\alpha_{12},0)|,
\end{equation}
\begin{equation}\label{6eabc5}
\int_{\A^\times / F^\times \, , \, |a_{14}|^{-1}>e^{-2T_1+T_2}}\d^\times a_{14} \, (1+|a_{14}|^2)  \,   \sum_{\alpha_{14}\in F^\times}  |\hat f_{z,13}(0,a_{14}^{-1}\alpha_{14})| .
\end{equation}
The function $\tau_{T,\mathrm{sub}}'(\log|a_{12}|,\log|a_{14}|)$ means that the domain of the integration for $(a_{12},a_{14})$ is
\begin{multline*}
\Big\{ (a_{12},a_{14})\in F^\times\bsl\A^\times \times F^\times\bsl\A^\times \, \\
\Big| \begin{array}{c}  \text{($|a_{12}|>e^{-2T_1+3T_2}$ and $|a_{14}|<|a_{12}|^{-1}e^{-2T_2}$) or} \\ \text{($|a_{12}|<e^{-2T_1+3T_2}$ and $|a_{14}|<|a_{12}|e^{-4T_1+4T_2}$)}  \end{array}  \Big\} . 
\end{multline*}
Hence, the convergence of \eqref{6eabc1} and \eqref{6eabc2} follows.
The convergence of \eqref{6eabc3}, \eqref{6eabc4}, and \eqref{6eabc5} is trivial.
Therefore, the integral (\ref{6eab2}) converges absolutely.
Furthermore, we have proved that the integrals \eqref{6ellt} and $J_{O_{\sub,z}'}^T(f)$ are absolutely convergent.

Let $\d^1 p$ denote the Haar measure on $P_2(\A)^1$ induced from $\d^1 g$. We deduce from the mean-value formula~(\ref{2e4}) that
\begin{align*}
&  \int_{P_2(F)\bsl P_2(\A)^1} \sum_{ \delta\in G_{\mathfrak u_z} \bsl P_2(F) }  f_\bK(a^{-1}p^{-1}\delta^{-1}\, \mathfrak u_z \, \delta p a) \, \d^1p \\
& = \vol_{M_2}\, \int_{N_{P_2}(\A)}f_\bK(za^{-1}n a)\, \d n \\
& = \int_{P_2(F)\bsl P_2(\A)^1} \int_{N_{P_2}(\A)} f_\bK(a^{-1}p^{-1} \, zn\, pa) \, \d n \, \d^1 p \nonumber 
\end{align*}
where $a\in (A_{M_2}^G)^+$.
Therefore, (\ref{6ellt}) vanishes and we have proved (\ref{6e6}).

If we consider the Iwasawa decomposition of $\SL(2,\A)$ in the right hand side of (\ref{2e4}), then (\ref{6e6}) is equal to
\begin{multline*}
 J_{O_{\sub,z}'}^T(f)= \frac{\vol_{M_2}}{2} \int_{\A}\int_{\A}\int_{\A} f_\bK(z\,\nu_2(n_{12},n_{13},n_{14}) )  \\
 \{ \log\|(n_{12},n_{13}/2,n_{14})\| + 2T_2 \} \, \d n_{12}\, \d n_{13}\, \d n_{14}  . 
\end{multline*}
Let $\d n_{*,v}$ denote the Haar measure on $F_v$ defined in Section \ref{2s1}.
For each $v\not\in S$, we can show that
\[
\int_{N_{P_2}(\fO_v)}    \log\|(n_{12,v},n_{13,v},n_{14,v})\|_v \, \d n_{12,v} \, \d n_{13,v} \, \d n_{14,v} = \frac{q_v^{-3}\log(q_v^{-1})}{1-q_v^{-3}}. 
\]
by direct calculation.
Using (\ref{2e4}) and the same argument as the proof of Theorem \ref{4t13}, we have
\begin{align*}
& \frac{\vol_{M_2}}{2} \int_{N_{P_2}(F_S)}f_{\bK_S}(z\, n) \, \d  n= \frac{\vol_{M_2}}{2} \int_{N_{P_2}(\A)}f_{\bK}(z\, n) \, \d n \\
& = \frac{1}{2} \int_{N_1(\A) M_{2,\mathfrak u_z}(\A) \bsl N_2(\A) M_2(\A)}f_{\bK}( m^{-1}n^{-1} \mathfrak u_z n m) \, e^{ -\rho_{P_2}(H_{P_2}(m))} \, \d n  \, \d m \\
& = \frac{1}{2} \int_{N_1(\A) M_{1,\mathfrak u_z}(\A) \bsl N_1(\A) M_1(\A)}f_{\bK}( m^{-1}n^{-1} \mathfrak u_z n m) \, e^{ -\rho_{P_1}(H_{P_1}(m))} \, \d n  \, \d m \\
& = \frac{\vol_{M_1}}{2 \, c_F} \, J_G(z \, n_\sub(\mathfrak x_1),f).
\end{align*}
Hence, the proof is completed.
\end{proof}

For $S\supset\Sigma_\inf$, we have
\[
J_{M_0}^T(z,f)= \int_{N_{P_0}(F_S)} f_{\bK_S}(z\, n) \, w_{M_0}(1,n,T) \, \d n
\]
by definition (cf. Proposition \ref{5p4}).
Again, we use the notation
\[
\nu(n_{12},n_{13},n_{14},n_{24})
\]
defined in \eqref{u}.
Choosing Haar measures on unipotent conjugacy classes over $F_S$, the weighted orbital integrals $J_{M_1}^T(z\nu(1,0,0,0),f)$, $J_{M_2}^T(z\nu(0,0,0,1),f)$, and $J_{G}(z\nu(1,0,1,1),f)$ are given by
\[
J_{M_1}^T(z\nu(1,0,0,0),f)= c_S \int_{N_{P_0}(F_S)} f_{\bK_S}(z\, n) \, w_{M_1}(1,n,T) \, \d n ,
\]
\[
J_{M_2}^T(z\nu(0,0,0,1),f)= c_S \int_{N_{P_0}(F_S)} f_{\bK_S}(z\, n) \, w_{M_2}(1,n,T) \, \d n ,
\]
and
\[
J_G(z\nu(1,0,1,1),f)=c_S^2 \int_{N_{P_0}(F_S)} f_{\bK_S}(z\, n)   \, \d n 
\]
(cf. Proposition \ref{5p5}).
\begin{thm}\label{6t4}
Assume that $S$ contains $\Sigma_\inf\cup\Sigma_2$ and $z\in Z(\fO_v)$ $(\forall v\not\in S)$.
Let $f\in C_c^\inf(G(F_S)^1)$.
The integral $J_{O_{\reg,z}}^T(f)$ converges absolutely.
Furthermore, we obtain
\begin{align*}
 J_{O_{\reg,z}}^T(f)=& \, \frac{\vol_{M_0}}{8} \, J_{M_0}^T(z,f)+\frac{ \vol_{M_0}}{4 \, c_F} \, \mathfrak c_F(S) \, J_{M_1}^T(z\nu(1,0,0,0),f) \\
& +\frac{ \vol_{M_0}}{4 \, c_F} \, \mathfrak c_F(S) \, J_{M_2}^T(z\nu(0,0,0,1),f) \\
&+\frac{\vol_{M_0}}{4 \, c_F^2} \Big\{  2 (\mathfrak c_F(S))^2 + 3 \mathfrak c_F'(S) \, c_F^S \Big\} \, J_G(z\nu(1,0,1,1),f)   .
\end{align*}
\end{thm}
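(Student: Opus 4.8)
The plan is to follow the strategy of the proof of Theorem~\ref{6t2}, now adapted to the ``deepest'' unipotent orbit, whose Jacobson--Morozov parabolic over $F$ is the (Borel) subgroup $P_0$. First I would introduce, for $\lambda\in\a_{0,\C}^*$ with $\Re(\lambda)$ in a suitable chamber, a regularized analogue of $J_{O_{\reg,z}}^T(f)$ in which each summand is weighted by $e^{-\lambda(H_{P_0}(\delta g))}$; the parameter serves to separate the various boundary contributions, so that absolute convergence can be established on a cone and the value at $\lambda=0$ recovered by holomorphic continuation, exactly as $J_{O_{\sub,z}}^T(f,\lambda)$ is handled in Theorem~\ref{6t2}. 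Unwinding the kernels, $K_{G,O_{\reg,z}}(g,g)$ becomes a sum over $P_0(F)\backslash G(F)$ of $f$ evaluated on the regular elements of $zN_{P_0}(F)$, while the three correction terms attached to $P_0$, $P_1$, $P_2$ are precisely the constant terms that cancel the divergences at the three standard cusps.

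The technical core, and the step I expect to be the main obstacle, is the absolute convergence of $J_{O_{\reg,z}}^T(f)$ (equivalently, of the regularized integral for $\Re(\lambda)$ large). This requires an estimate of Shintani--Arthur type: after the Iwasawa decomposition $G(\A)^1=N_{P_0}(\A)M_0(\A)^1\bK$ and repeated use of the Poisson summation formula on the root subspaces of $N_{P_0}$ (the coordinates $n_{12}$, $n_{24}$ of the abelianization, attached to the simple roots $\alpha_2$ and $\alpha_1$, together with $n_{13}$, $n_{14}$), the truncated expression is to be bounded by a finite sum of integrals whose domains are cut out by the functions $\tau_{P_0}^{P_i}(H_{P_0}(g)-T)$ and $\hat\tau_{P_i}(H_{P_i}(g)-T)$, each such integral being majorized by a product of compactly supported functions in the remaining variables times powers of the $|a_i|$ subject to those truncations. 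Because $P_0$ now has two maximal standard parabolics above it, the case analysis is considerably longer than in Theorem~\ref{6t2}, and the bookkeeping of which Poisson steps and which truncation regions occur is the delicate point; as there, no smoothed Eisenstein series is used, the cancellations being supplied by the inclusion--exclusion over $\{P_0,P_1,P_2,G\}$.

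Once convergence is secured, I would identify $J_{O_{\reg,z}}^T(f)$ with a modified zeta integral for the prehomogeneous action of $M_0$ on $N_{P_0}$. Since the action on the abelianization $N_{P_0}/[N_{P_0},N_{P_0}]$ splits off the two one-dimensional root spaces for $\alpha_1$ and $\alpha_2$, and the test-function support confines $n_{13}$, $n_{14}$ to compact sets once $n_{12}$, $n_{24}$ are regular, this zeta integral factors essentially into a product of two rank-one Tate-type integrals over $\A^\times/F^\times$, one carrying the Laurent data of $\zeta_F^S$ for the short root (the $M_2$-direction) and one for the long root (the $M_1$-direction); this is the $C_2$ counterpart of the $\SL(3)$ regular-unipotent computation indicated in Section~\ref{3s2}. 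The logarithmic factors in $w_{M_0}(1,\nu,T)$ from Proposition~\ref{5p4}, in particular the coupling term $4(\log|\nu_{12}|_S)(\log|\nu_{24}|_S)$ and the two quadratic terms, force the two simple-root contributions to interact, and expanding $\zeta_F^S(s)$ about $s=1$ as in Section~\ref{2s2} produces the combination $2(\mathfrak c_F(S))^2+3\,\mathfrak c_F'(S)\,c_F^S$; the numerical coefficients $\tfrac18$, $\tfrac14$ and the factors $2$ and $3$ track through from Condition~\ref{5c1} (which normalizes $\vol(\a_0^G/\Z(\Delta_0^\vee))=1$), the ratio of the long and short coroots $\alpha_1^\vee=e_2$, $\alpha_2^\vee=e_1-e_2$, and the relation $c_S\,\d x_S=|x_S|_S\,\d^\times x_S$.

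Finally I would convert the resulting zeta-integral values back into the weighted orbital integrals appearing in the statement, using the explicit weight factors of Propositions~\ref{5p4} and~\ref{5p5} and the integral formulas for $J_{M_0}^T(z,f)$, $J_{M_1}^T(z\nu(1,0,0,0),f)$, $J_{M_2}^T(z\nu(0,0,0,1),f)$, and $J_G(z\nu(1,0,1,1),f)$ recorded just before the theorem, and verify that the constants assemble into the asserted identity.
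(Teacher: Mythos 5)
Your proposal follows essentially the same route as the paper's proof: the $\lambda$-regularized integral $J_{O_{\reg,z}}^T(f,\lambda)$ over $P_0(F)\bsl G(\A)^1$, absolute convergence on a cone via Iwasawa decomposition, Poisson summation on the coordinates of $N_{P_0}$ and case analysis over the truncation regions, followed by reduction to the two-variable zeta integrals of Section~\ref{3s} (the paper integrates out $U=N_{P_1}\cap N_{P_2}$ and applies Theorem~\ref{3t4} to $\zeta_{F\oplus F}^{(1,1)}$), and finally reassembly of the Laurent data against the weight factors of Propositions~\ref{5p4} and~\ref{5p5}. This matches the paper's argument, which additionally observes that absolute convergence already follows by subtracting the previously established terms from Arthur's convergent $J_{\fo_z}^T(f)$.
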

\begin{proof}
By \cite{Arthur6}, the integral defining $J_{\fo_z}^T(f)$ is absolutely convergent.
Subtracting the integrals defining $J_{O_{\tri,z}}^T(f)$, $J_{O_{\min,z}}^T(f)$, $J_{O_{\sub,z}}^T(f)$ and $J_{O'_{\sub,z}}^T(f)$, which are absolutely convergent by the preceding theorems, we obtain the absolute convergence of $J_{O_{\reg,z}}^T(f)$.
We will also get an alternative proof for the absolute convergence in the process of our proof of the above formula.
A change of variables shows that $J_{O_{\reg,z}}^T(f)$ is the value at $\lambda=0$ of
\begin{align*}
J_{O_{\reg,z}}^T(f,\lambda)&=\int_{P_0(F)\bsl G(\A)^1}\Bigg(
\sum_{\substack{\nu\in N_{P_0}(F)\\
\nu\notin N_{P_1}(F)\cup N_{P_2}(F)}} f(g^{-1}z\nu g)\\
&\quad-\sum_{\substack{\nu\in N_{P_0}(F)/N_{P_1}(F)\\
\nu\notin N_{P_1}(F)}}
\int_{N_{P_1}(\A)} f(g^{-1}z\nu ng)\,\d n\,
\hat\tau_{P_1}(H_{P_1}(g)-T)\\
&\quad-\sum_{\substack{\nu\in N_{P_0}(F)/N_{P_2}(F)\\
\nu\notin N_{P_2}(F)}}
\int_{N_{P_2}(\A)} f(g^{-1}z\nu ng)\,\d n\,
\hat\tau_{P_2}(H_{P_2}(g)-T)\\
&\quad+\int_{N_{P_0}(\A)} f(g^{-1}zng)\,\d n\,
\hat\tau_{P_0}(H_{P_0}(g)-T)\Bigg)e^{-\lambda(H_{P_0}(g))}\,\d^1g.
\end{align*}
We will show that $J_{O_{\reg,z}}^T(f,\lambda)$ converges absolutely for a range containing $\lambda=0$.
Now, the absolute integral of $J_{O_{\reg,z}}^T(f,\lambda)$ is bounded by $\eqref{6era1} + \eqref{6era2} + \eqref{6era3}$ where
\begin{multline}\label{6era1}
\int_{P_0(F)\bsl G(\A)^1} \Big|\,e^{-\lambda(H_{P_0}(g))}\\
\Big\{ \sum_{n_{12},n_{24}\in F^\times} \sum_{n_{13},n_{14}\in F}f(g^{-1}\nu(n_{12},n_{13},n_{14},n_{24})g) \\
- \sum_{n_{12}\in F^\times} \int_{N_{P_1}(\A)} f(g^{-1}\nu(n_{12},0,0,0)ng)\, \d n \, \tau_{P_0}^{P_2}(H_{P_0}(g)-T)   \\
- \sum_{n_{24}\in F^\times} \int_{N_{P_2}(\A)} f(g^{-1}\nu(0,0,0,n_{24})ng)\, \d n \, \tau_{P_0}^{P_1}(H_{P_0}(g)-T)  \\
+ \int_{N_{P_0}(\A)} f(g^{-1}ng)\, \d n \, \tau_{P_0}^{G}(H_{P_0}(g)-T)  \Big\}  \Big| \, \d^1 g , 
\end{multline}
\begin{multline}\label{6era2}
\int_{P_0(F)\bsl G(\A)^1} \Big| e^{-\lambda(H_{P_0}(g))} \Big\{ \sum_{n_{12}\in F^\times} \int_{N_{P_1}(\A)} f(g^{-1}\nu(n_{12},0,0,0)ng)\, \d n \\
- \int_{N_{P_0}(\A)} f(g^{-1}ng)\, \d n\, \tau_{P_0}^{P_1}(H_{P_0}(g)-T) \Big\} \\
\{\tau_{P_0}^{P_2}(H_{P_0}(g)-T)-\widehat\tau_{P_1}(H_{P_0}(g)-T)\} \, \Big| \, \d^1 g , 
\end{multline}
and
\begin{multline}\label{6era3}
\int_{P_0(F)\bsl G(\A)^1} \Big| e^{-\lambda(H_{P_0}(g))} \, \Big\{ \sum_{n_{24}\in F^\times} \int_{N_{P_2}(\A)} f(g^{-1}\nu(0,0,0,n_{24})ng)\, \d n   \\
- \int_{N_{P_0}(\A)} f(g^{-1}ng)\, \d n \, \tau_{P_0}^{P_2}(H_{P_0}(g)-T)   \Big\} \\
\{ \tau_{P_0}^{P_1}(H_{P_0}(g)-T) - \widehat\tau_{P_2}(H_{P_0}(g)-T) \} \,\Big|\, \d^1 g .
\end{multline}
We set
\begin{multline*}
\tilde f_{z,g}(n_{12},n_{13},n_{14},n_{24})=\\
\int_{\A^{\oplus 2}}  f(g^{-1}z\, \nu(n_{12},u_{13},u_{14},n_{24})g) \, \psi_F(n_{13}u_{13}+n_{14}u_{14}) \, \d u_{13} \, \d u_{14}
\end{multline*}
and
\[  g=\nu(u_{12},u_{13},u_{14},u_{24})\, \diag(a_1^{-1},a_2^{-1},ca_1,ca_2) \, k \in P_0(F)\bsl G(\A)^1 \]
where $k\in\bK$.
Then, using the Poisson summation formula, we have
\begin{multline}\label{6t4reg1}
\sum_{\nu\in N_{P_0}(F) , \, \nu\notin N_{P_1}(F)\cup N_{P_2}(F)} f(g^{-1}z\nu g) = \\
\sum_{n_{12},n_{24}\in F^\times} \sum_{n_{13},n_{14}\in F} \tilde f_{z,k}(a_1a_2^{-1}n_{12},a_1^{-2}c^{-1}n_{13},a_1^{-1}a_2^{-1}c^{-1}n_{14},a_2^2cn_{24}) \\
\times  |a_1|^{-3}|a_2|^{-1} \, \psi_F(-a_1^{-2}c^{-1}n_{13}\diamondsuit)\, \psi_F(-a_1^{-1}a_2^{-1}c^{-1}n_{14}\heartsuit) 
\end{multline}
where $\diamondsuit$ and $\heartsuit$ are polynomials of $a_1$, $a_2$, $c$, $u_*$, and $n_*$.
Let
\[
\lambda=\lambda_1\varpi_1+\lambda_2\varpi_2\in\a_{0,\C}^*.
\]
Then, we have
\[
e^{-\lambda( H_{P_0}(g))}=|a_1a_2|^{\lambda_1}|a_1|^{\lambda_2}=|a_1a_2^{-1}|^{\lambda_1+\lambda_2}|a_2^2|^{\lambda_1+\lambda_2/2}.
\]
We will show that \eqref{6era1} converges if $\Re(\lambda_1+\lambda_2+1)>0$ and $\Re(\lambda_1+\lambda_2/2+1)>0$.
We consider the domain
\[
D_{i,j}=\{ (a_1,a_2)\in\A^\times\times\A^\times \, | \, (-1)^i\log|a_1a_2^{-1}|>0 \, , \; \, (-1)^j\log|a_2^2|>0     \} 
\]
where $(i,j)=(0,0)$, $(1,0)$, $(0,1)$, or $(1,1)$.
By \eqref{6t4reg1} and the argument of Section \ref{3s}, it is sufficient to prove that the integral
\begin{multline}\label{6t4reg2}
 \int_{D_{i,j}}\d^\times a_1 \, \d^\times a_2 \int_{\A^1}\d^\times c \\
|a_1a_2^{-1}|^{\Re(\lambda_1+\lambda_2+1)}|a_2^2|^{\Re(\lambda_1+\lambda_2/2+1)}\sum_{n_{12},n_{24}\in F^\times} \\
\sum_{(n_{13},n_{14})\in F\oplus F-\{(0,0)\}} |\phi(a_1a_2^{-1}n_{12},a_1^{-2}c^{-1}n_{13},a_1^{-1}a_2^{-1}c^{-1}n_{14},a_2^2cn_{24})|   
\end{multline}
converges for any $\phi\in C^\inf_c(\A^{\oplus 4})$ and any $(i,j)$.
If $(i,j)=(1,0)$, then we have
\[
|a_1^{-2}||a_2^2|=|a_1a_2^{-1}|^{-2}>1, \quad |a_1^{-1}a_2^{-1}||a_2^2|=|a_1a_2^{-1}|^{-1}>1.
\]
If $(i,j)=(0,1)$, then
\[
|a_1^{-2}||a_1a_2^{-1}|^2=|a_2^2|^{-1}>1,\quad |a_1^{-1}a_2^{-1}||a_1a_2^{-1}|=|a_2^2|^{-1}>1.
\]
If $(i,j)=(1,1)$, then
\[
|a_1a_2^{-1}||a_1^{-1}a_2^{-1}|=|a_2^{-2}|>1 ,\quad |a_1^{-2}||a_2^2|=|a_1a_2^{-1}|^{-2}>1.
\]
Hence, by using the argument on the convergence of the Tate integrals, we can see that \eqref{6t4reg2} is convergent for any $(i,j)$ .
Hence, the convergence of \eqref{6era1} follows.
%It seems that the root system plays a role in the proof.
By change of variable $a_1a_2^{-1}=a_1'$, we see that $|a_1|^{\lambda_1+\lambda_2}|a_2|^{\lambda_1}\mapsto |a_1'|^{\lambda_1+\lambda_2}|a_2|^{2\lambda_1+\lambda_2}$,
\begin{multline*}
\int_{\A^\times}|a_2|^{2\lambda_1+\lambda_2} \{ \tau_{P_0}^{P_2}(H_{P_0}(g)-T)-\widehat\tau_{P_1}(H_{P_0}(g)-T)\} \d^\times a_2 \\
=c_F \begin{cases} (e^{(-T_1+T_2)(2\lambda_1+\lambda_2)}-|a_1'|^{-(\lambda_1+\lambda_2/2)}e^{-T_1(\lambda_1+\lambda_2/2)} ) & \\ \quad \times(2\lambda_1+\lambda_2)^{-1}  & \text{if $2\lambda_1+\lambda_2\neq 0$} , \\  -\frac{T_1}{2}+T_2+\frac{1}{2}\log|a_1'| & \text{if $2\lambda_1+\lambda_2=0$} , \end{cases}
\end{multline*}
and the integral \eqref{6era2} converges if $\Re(\lambda_2/2+1)>0$ and $\Re(\lambda_1+\lambda_2+1)>0$.
We also find that
\begin{multline*}
\int_{\A^\times}|a_1|^{\lambda_1+\lambda_2} \{ \tau_{P_0}^{P_1}(H_{P_0}(g)-T)-\widehat\tau_{P_2}(H_{P_0}(g)-T)\} \d^\times a_1 \\
=c_F \begin{cases} (|a_2|^{\lambda_1+\lambda_2}e^{(T_1-2T_2)(\lambda_1+\lambda_2)}-e^{-T_2(\lambda_1+\lambda_2)} ) & \\ \quad \times (\lambda_1+\lambda_2)^{-1} & \text{if $\lambda_1+\lambda_2\neq 0$} , \\  T_1-T_2+\log|a_2| & \text{if $\lambda_1+\lambda_2=0$} , \end{cases}
\end{multline*}
and the integral \eqref{6era3} converges if $\Re(\lambda_1/2+1)>0$ and $\Re(\lambda_1+\lambda_2/2+1)>0$.
Thus, we have proved that $J_{O_{\reg,z}}^T(f,\lambda)$ converges absolutely for the range
\[
\left\{ \lambda=\lambda_1\varpi_1+\lambda_2\varpi_2\in\a_{0,\C}^* \, \Big| \,   \begin{array}{c}  \Re(\lambda_1+\lambda_2+1)>0, \\ \Re(\lambda_1+\lambda_2/2+1)>0, \\ \Re(\lambda_1/2+1)>0  , \;\, \Re(\lambda_2/2+1)>0 \end{array}  \right\}. 
\]
Hence, the convergence assertion follows.
Furthermore, it follows from the above-mentioned argument that $J_{O_{\reg,z}}^T(f,\lambda)$ uniformly converges on a neighborhood at $\lambda=0$.
Therefore, we have
\[
J_{O_{\reg,z}}^T(f)=\lim_{\lambda\to 0} J_{O_{\reg,z}}^T(f,\lambda) .
\]

Assume that $\lambda$ belongs to the above range and $\lambda\neq 0$.
Let $U=N_{P_1}\cap N_{P_2}$, the commutator subgroup of~$N_{P_0}$.
For each~$\nu$ in the first sum, we have a bijection $N_{P_0,\nu}\bsl N_{P_0}\to\nu U$ given by $\eta\mapsto\eta^{-1}\nu\eta$.
When we apply the Iwasawa decomposition to the integral over~$g$, the integral over $N_{P_0}(F)\bsl N_{P_0}(\A)$ can be combined with the sum over~$\eta$.
Then we can apply the adelic version of that bijection to the integral over $N_{P_0,\nu}(\A)\bsl N_{P_0}(\A)$, while the integral over $N_{P_0,\nu}(F)\bsl N_{P_0,\nu}(\A)$ disappears.
We consider $V=N_{P_0}/U$ as a two-dimensional vector space with subspaces $V_1=N_{P_2}U/U$ and $V_2=N_{P_1}U/U$.
If we denote
\[
\tilde f_{\bK,z}(v)=\int_{U(\A)} f_{\bK}(zvu)\,\d u
\]
for $v\in V(\A)$, we obtain
\begin{multline*}
J_{O_{\reg,z}}^T(f,\lambda)=\int_{M_0(F)\bsl M_0(\A)\cap G(\A)^1}
\Bigg(\sum_{\substack{\nu\in V(F)\\
\nu\notin V_1(F)\cup V_2(F)}} \tilde f_{\bK,z}(m^{-1}\nu m)\\
-\sum_{\substack{\nu\in V_1(F)\\
\nu\ne0}}
\int_{V_2(\A)} \tilde f_{\bK,z}(m^{-1}\nu nm)\,\d n\,
\hat\tau_{P_1}(H_{M_0}(m)-T)\\
-\sum_{\substack{\nu\in V_2(F)\\
\nu\ne0}}
\int_{V_1(\A)} \tilde f_{\bK,z}(m^{-1}\nu nm)\,\d n\,
\hat\tau_{P_2}(H_{M_0}(m)-T)\\
+\int_{V(\A)} \tilde f_{\bK,z}(m^{-1}nm)\,\d n\,
\hat\tau_{P_0}(H_{M_0}(m)-T)\Bigg) \\
e^{-\lambda(H_{M_0}(m))}\,
\delta_V(m)^{-1}\d m,
\end{multline*}
where $\delta_V$ is the modular character of the action of $M_0(\A)$ on~$V(\A)$.
To simplify notation, we write
\[
(\lambda_1+\lambda_2+1,\lambda_1+\lambda_2/2+1)=(s_1,s_2), \quad  (-T_1+2T_2,2T_1-2T_2)=(T_1',T_2').
\]
If we replace $\hat\tau_{P_1}$, $\hat\tau_{P_2}$ and $\hat\tau_{P_0}$ in this expression by $\tau_{P_0}^{P_2}$, $\tau_{P_0}^{P_1}$ and $\tau_{P_0}$, respectively, (as \eqref{6era1}), we obtain the zeta integral 
\begin{equation}\label{6e4a1}
\frac{\vol_{M_0}}{2\, c_F^2}\,\zeta_{F\oplus F}^{(1,1)}(\tilde f_{\bK,z},(s_1,s_2),(T_1',T_2'))
\end{equation}
in the notation of Section~\ref{3s}, where $H=\{1\}$ and therefore $\iota$ is omitted.
We have identified $V_1(F)$ and $V_2(F)$ with $F$ via the coordinates $n_{12}$ resp.~$n_{24}$.
If we denote
\[
\tilde f_{\bK,i,z}(v_i)=\int_{N_{P_i}(\A)} f_{\bK}(zv_in)\,\d n
\]
for $v_i\in V_i(\A)$, we get
\[
J_{O_{\reg,z}}^T(f,\lambda)=\eqref{6e4a1}+\eqref{6e4a2}+\eqref{6e4a3}
\]
where
\begin{multline}\label{6e4a2}
\frac{\vol_{M_0}}{2\, c_F}\\
\times\frac{e^{-(s_2-1)T_2'}\zeta_F^1(\tilde f_{\bK,1,z},s_1,T_1')-e^{-(s_2-1)T_1} \zeta_F^1(\tilde f_{\bK,1,z},s_1-(s_2-1),T_1')}{s_2-1}
\end{multline}
and
\begin{multline}\label{6e4a3}
\frac{\vol_{M_0}}{2\, c_F}\\
\times \frac{e^{-(s_1-1)T_1'}\zeta_F^1(\tilde f_{\bK,2,z},s_2+(s_1-1)/2,T_2')-e^{-(s_1-1)T_2}\zeta_F^1(\tilde f_{\bK,2,z},s_2,T_2')}{s_1-1}
\end{multline}
where \eqref{6e4a2} (resp. \eqref{6e4a3}) corresponds to \eqref{6era2} (resp. \eqref{6era3}).
We set
\begin{multline*}
\mathfrak F(l_1,l_2) = \int_{F_S^{\oplus 4}} f_{\bK_S}(z\, \nu_0(n_{12},n_{13},n_{14},n_{24})) \\ (\log|n_{12}|_S)^{l_1}\, (\log|n_{24}|_S)^{l_2}   \, \d n_{12}\, \d n_{13}\, \d n_{14}\, \d n_{24} 
\end{multline*}
where $l_1$, $l_2\in\Z_{\geq 0}$.
It follows from (\ref{3e2}) and Theorem \ref{3t4} that
\begin{align*}
& \lim_{(s_1,s_2)\to(1,1)}\eqref{6e4a1} \\
&= \frac{\vol_{M_0}}{2\, c_F^2}\,\zeta_{F\oplus F}^{(1,1)}(\tilde f_{\bK,z},(1,1),(T_1',T_2'))
\\
&= \, \frac{\vol_{M_0}}{2}\, \mathfrak F(1,1)+\frac{\vol_{M_0}}{2\, c_F} \, \mathfrak c_F(S) \, c_S \, \{\mathfrak F(1,0)+\mathfrak F(0,1)\} \\
& \quad +\frac{\vol_{M_0}}{2\, c_F^2} \, (\mathfrak c_F(S))^2 \, (c_S)^2 \, \mathfrak F(0,0)  \\
& \quad +(-T_1+2T_2)\,\frac{\vol_{M_0}}{2\, c_F} \, \big\{  c_F \, \mathfrak F(0,1) +  \fc_F(S) \, c_S \, \mathfrak F(0,0) \big\} \\
& \quad +(2T_1-2T_2)\,\frac{\vol_{M_0}}{2\, c_F} \, \big\{  c_F \, \mathfrak F(1,0) +  \fc_F(S) \, c_S \, \mathfrak F(0,0) \big\}\\
& \quad +(-T_1+2T_2) (T_1-T_2)\,  \vol_{M_0} \, \mathfrak F(0,0) .
\end{align*}
Furthermore, using the argument of Section \ref{3s}, we obtain
\begin{align*}
& \lim_{(s_1,s_2)\to(1,1)}\eqref{6e4a2} \\
&=\frac{\vol_{M_0}}{2\, c_F}\,\frac{\d}{\d s}\zeta_F^1(\tilde f_{\bK,1,z},s,T_1')\Big|_{s=1}+\frac{\vol_{M_0}}{2\, c_F}\,(-T_1+2T_2)\zeta_F^1(\tilde f_{\bK,1,z},1,T_1') \\
&=\frac{\vol_{M_0}}{4\, c_F}\lim_{s\to 1}\frac{\d^2}{\d s^2}(s-1)^2\zeta_F^1(\tilde f_{\bK,1,z},s,T_1') - (T_1')^2 \frac{\vol_{M_0}}{4} \mathfrak F(0,0)\\
&\quad +\frac{\vol_{M_0}}{2\, c_F}\,(-T_1+2T_2)\zeta_F^1(\tilde f_{\bK,1,z},1,T_1') \\
&=\frac{\vol_{M_0}}{4}\, \mathfrak F(2,0)+\frac{\vol_{M_0}}{2\, c_F} \, \mathfrak c_F(S) \, c_S \, \mathfrak F(1,0)+\frac{\vol_{M_0}}{2\, c_F} \, \mathfrak c_F'(S) \, c_S \, \mathfrak F(0,0) \\
& \quad + (-T_1+2T_2)\frac{\vol_{M_0}}{2\, c_F}\big\{ c_F\, \mathfrak F(1,0) +  \fc_F(S) \, c_S \, \mathfrak F(0,0) \big\} \\
& \quad + (-T_1+2T_2)^2\frac{\vol_{M_0}}{4} \mathfrak F(0,0).
\end{align*}
It similarly follows that
\begin{align*}
&\lim_{(s_1,s_2)\to(1,1)}\eqref{6e4a3} \\
&=\frac{\vol_{M_0}}{4\, c_F}\,\frac{\d}{\d s}\zeta_F^1(\tilde f_{\bK,2,z},s,T_2')\Big|_{s=1}+\frac{\vol_{M_0}}{2\, c_F}\,(T_1-T_2)\zeta_F^1(\tilde f_{\bK,2,z},1,T_2') \\
&= \frac{\vol_{M_0}}{8}\, \mathfrak F(0,2)+\frac{\vol_{M_0}}{4\, c_F} \, \mathfrak c_F(S) \, c_S \, \mathfrak F(0,1)+\frac{\vol_{M_0}}{4\, c_F} \, \mathfrak c_F'(S) \, c_S \, \mathfrak F(0,0)\\
&\quad + (T_1-T_2)\frac{\vol_{M_0}}{2\, c_F} \big\{ c_F \,\mathfrak F(0,1) +  \fc_F(S) \, c_S \, \mathfrak F(0,0) \big\} \\
&\quad  + (T_1-T_2)^2\frac{\vol_{M_0}}{2} \mathfrak F(0,0).
\end{align*}
Therefore, the proof is completed.
\end{proof}

Theorems \ref{6t1}, \ref{6t2}, \ref{6t3}, and \ref{6t4} are summarized as follows.
\begin{thm}\label{6t5}
Assume that $S$ contains $\Sigma_\inf\cup\Sigma_2$ and $z\in Z(\fO_v)$ $(\forall v\not\in S)$.
Let $f\in C_c^\inf(G(F_S)^1)$.
The integrals $J_{O_{\min,z}}^T(f)$, $J_{O_{\sub,z}}^T(f)$, $J_{O_{\sub,z}'}^T(f)$, and $J_{O_{\reg,z}}^T(f)$ converge absolutely for any $T\in\a_0^+$.
Now, we see that
\[
(\cU_G(F))_{G,S}=\{ 1 , \; n_\min(1) , \; n_\sub(x), \; n_\reg(1) \, | \, x\in V^\ss(F)/{\sim_S} \,  \}.
\]
For the coefficients in the fine expansion of $J_{\fo_z}(f)$ (cf. Section \ref{basobj}), we have
\[
a^G(S,z\, n_\min(1))= \frac{ \vol_{M_2}  }{2\, c_F} \, \zeta_F^S(2) ,
\]
\begin{align*}
a^G(S,z\, n_\sub(x))=&\frac{\vol_{M_1}}{2 \, c_F}  \, \fC_F(S,\alpha)  \\
&+ \frac{\vol_{M_1}}{2\, c_F}  \begin{cases} \zeta_F^S(3)^{-1} \frac{\d}{\d s}\zeta_F^S(s)|_{s=3}   & \text{if $x\sim_S\xd_1$} , \\ 0 & \text{otherwise} , \end{cases}
\end{align*}
and
\[ a^G(S,z\, n_\reg(1))= \frac{\vol_{M_0}}{2\, c_F^2}\, (\mathfrak c_F(S))^2  +\frac{3\, \vol_{M_0}}{4\, c_F^2 }\, \mathfrak c_F'(S)  \, c_F^S. \]
\end{thm}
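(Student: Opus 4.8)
The proof will be essentially a matching exercise: all the analytic work is already contained in Theorems~\ref{6t1}--\ref{6t4}, and what remains is to read off the coefficients. The four convergence statements are exactly the convergence assertions of those four theorems; together with the trivial identity $J^T_{O_{\tri,z}}(f)=\vol_G\,f(z)=\vol_G\,J_G(z,f)$ and the decomposition $\fo_z=O_{\tri,z}\cup O_{\min,z}\cup O_{\sub,z}\cup O_{\sub,z}'\cup O_{\reg,z}$ of Section~\ref{6s1}, they give an absolutely convergent expansion of $J^T_{\fo_z}(f)$. On the other hand, grouping the fine geometric expansion of $J^T(f)$ by $\cO$-class (legitimate by the coarse expansion of Section~\ref{coarse}) yields $J^T_{\fo_z}(f)=\sum_{M\in\L}|W_0^M|\,|W_0^G|^{-1}\sum_{\gamma}a^M(S,\gamma)\,J^T_M(\gamma,f)$, the inner sum over the $(M,S)$-equivalence classes in $M(F)\cap\fo_z$ (cf.\ \cite[Theorem~9.2]{Arthur4}); since the weighted orbital integrals $J^T_M(\gamma,f)$ occurring here are linearly independent as distributions, the coefficients $a^M(S,\gamma)$ are uniquely pinned down by this identity, and it suffices to write out both sides and compare.

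The first step is the classification $(\cU_G(F))_{G,S}=\{1,n_\min(1),n_\sub(x),n_\reg(1)\mid x\in V^\ss(F)/{\sim_S}\}$. The group $G=\GSp(2)$ has four geometric unipotent orbits --- unit, minimal, subregular, regular. For the unit, minimal and regular orbits the centre $\mathbb{G}_m$ of $G$ together with $M_0(F)$ acts transitively on the $F$-points, so each is a single $G(F_S)$-conjugacy class with representatives $1$, $n_\min(1)$, $n_\reg(1)$. For the subregular orbit, the adjoint action of $M_1$ on $N_{P_1}\cong V$ is the prehomogeneous action~\eqref{aG'}, whose orbits on $V^\ss$ over $F$ (resp.\ over $F_S$) are parametrised by $F^\times/(F^\times)^2$ (resp.\ by $F_S^\times/(F_S^\times)^2$) through the discriminant $-\det$, by Lemmas~\ref{4l3} and~\ref{4l7}; intersecting the two parametrisations identifies the subregular classes in $(\cU_G(F))_{G,S}$ with $V^\ss(F)/{\sim_S}$. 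This is exactly the splitting $O_{\sub,z}\cup O_{\sub,z}'$ of Section~\ref{6s1}, with $O_{\sub,z}'$ the single split class $\{z\,n_\sub(\xd_1)\}_G$, which meets the $F_S$-refinement of $O_{\sub,z}$ only through the discriminants that are non-square in $F$ but square in $F_S$.

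For the second step I add up the right-hand sides of Theorems~\ref{6t1}--\ref{6t4} and of $J^T_{O_{\tri,z}}(f)=\vol_G\,J_G(z,f)$. The terms of type $M=G$ carry Weyl weight $|W_0^G|\,|W_0^G|^{-1}=1$ and, by the descent formula of Section~\ref{desc}, $a^G(S,z\gamma)=a^G(S,\gamma)$; hence the coefficient of $J_G(z\gamma,f)$ on the two sides must agree, which gives: $a^G(S,z)=\vol_G$; $a^G(S,z\,n_\min(1))=\tfrac{\vol_{M_2}}{2c_F}\zeta_F^S(2)$ from Theorem~\ref{6t1}; $a^G(S,z\,n_\sub(x))=\tfrac{\vol_{M_1}}{2c_F}\fC_F(S,-\det(x))$ from the sum in Theorem~\ref{6t2}, augmented by $\tfrac{\vol_{M_1}}{2c_F}\zeta_F^S(3)^{-1}\tfrac{\d}{\d s}\zeta_F^S(s)|_{s=3}$ for the class $x\sim_S\xd_1$ on account of Theorem~\ref{6t3}; and $a^G(S,z\,n_\reg(1))=\tfrac{\vol_{M_0}}{2c_F^2}(\fc_F(S))^2+\tfrac{3\vol_{M_0}}{4c_F^2}\fc_F'(S)c_F^S$ from Theorem~\ref{6t4}, using $\nu(1,0,1,1)=n_\reg(1)$. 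These are the asserted formulas. As a consistency check that is not strictly needed, the remaining terms --- the weighted integrals $J^T_{M_i}(z,f)$ and $J^T_{M_i}(z\nu(\dots),f)$ appearing in Theorems~\ref{6t2}--\ref{6t4} --- reproduce the Levi-level contributions with $a^{M_i}(S,z)=\vol_{M_i}$ (as for any central semisimple element) and the $\GL(2)$-coefficient $\tfrac{\vol_{M_0}}{2c_F}\fc_F(S)$ of Example~\ref{3ex1}, once the weights $|W_0^M|\,|W_0^G|^{-1}$ are summed over the standard Levi subgroups in each conjugacy class.

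I expect the only genuine difficulty to be bookkeeping rather than mathematics: one must verify that the Haar-measure conventions of Condition~\ref{5c1} and Sections~\ref{2s3} and~\ref{5s2} --- for $\a_0^G$, the $\a_{M_i}^G$, the maximal compacts, the unipotent radicals, and the measures on $G_\gamma(F_S)\bsl G(F_S)$ implicit in the orbital integrals $J_G(z\gamma,f)$ --- are precisely the ones with respect to which the fine expansion is stated, so that the constants $\vol_{M_i}$, $c_F$, $c_F^S$ occurring in Theorems~\ref{6t1}--\ref{6t4} and the Weyl weights line up exactly when the two expansions of $J^T_{\fo_z}(f)$ are identified. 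No further analytic input (in particular, no smoothed Eisenstein series) is needed beyond Theorems~\ref{6t1}--\ref{6t4}.
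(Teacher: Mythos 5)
Your proposal is correct and is essentially the paper's own argument: the paper presents Theorem~\ref{6t5} simply as a summary of Theorems~\ref{6t1}--\ref{6t4}, so the content is exactly the coefficient-matching you carry out (including the identification $\nu(1,0,1,1)=n_\reg(1)$ and the splitting of the subregular contribution between $O_{\sub,z}$ and $O_{\sub,z}'$). Your added remarks on the uniqueness of the coefficients and on the measure normalizations are consistent with the conventions fixed in Condition~\ref{5c1} and Sections~\ref{2s3} and~\ref{5s2}, and you correctly read $\fC_F(S,\alpha)$ in the statement as $\fC_F(S,-\det(x))$.
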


The contribution of subregular unipotent elements can be expressed by special values of zeta integrals at $s=0$.
\begin{thm}\label{6t6}
Fix a test function $f\in C_c^\inf(G(\A)^1)$.
We set $\Phi_z(x)=f_\bK(z \, n_\sub(x))$ and we recall the identification $M_1\cong G'=\GL(1)\times \GL(2)$ of \eqref{M1}.
%In Section \ref{4s9}, $\GL(2)$ (resp. $\GL(1)$) corresponds to $H'$ (resp. $A_G$).
If we assume $\hat\Phi_z(0)=0$, then we have
\[ J_{O_{\sub,z}}^T(f)+J_{O_{\sub,z}'}^T(f)= Z^{\GSp(2)}_\mathrm{ad}(\hat\Phi_z,0) + \frac{\vol_{M_0}}{2\, c_F} \mathfrak{T}_3(\widetilde{R_0\Phi_{z,\bK}},0,T_2) \]
where the notations were defined in Section \ref{4s9}.
\end{thm}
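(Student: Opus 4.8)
The plan is to build the identity out of the two explicit evaluations already obtained, Theorems~\ref{6t2} and~\ref{6t3}, and then to invoke Proposition~\ref{4p26}. After enlarging $S$ and using linearity, I would first reduce to the case $f=f_S\otimes\phi_{\bK^S}$ with $S\supset\Sigma_\inf\cup\Sigma_2$, $z\in Z(\fO_v)$ for every $v\notin S$ and $f_S\in C_c^\inf(G(F_S)^1)$, so that Theorems~\ref{6t2} and~\ref{6t3} are applicable (neither imposes a condition on $\hat\Phi_z(0)$, so this reduction is harmless). Theorem~\ref{6t2} gives $J_{O_{\sub,z}}^T(f)=Z^{\GSp(2)}(\Phi_z,3/2,T_1)$ at once; moreover, because the action~\eqref{aG'} is the adjoint action of $M_1$ on $N_{P_1}$ and $z$ is central, averaging $\Phi_z$ over $\bK\cap M_1(\A)$ only re-averages the $\bK$-average already contained in $f_\bK$, so that $\Phi_{z,\bK}=\Phi_z$. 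The task therefore reduces to identifying $J_{O_{\sub,z}'}^T(f)$ with the truncation term $\frac{\vol_{M_0}}{2\,c_F}\mathfrak{T}_2(R_0\Phi_{z,\bK},3/2,T_2)$ on the left-hand side of Proposition~\ref{4p26}.

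For that I would start from formula~\eqref{6e6} in the proof of Theorem~\ref{6t3}, which expresses $J_{O_{\sub,z}'}^T(f)$ as $\frac{\vol_{M_0}}{2\,c_F}\int_{\A^\times}\int_{\A} f_\bK(z\,\nu_2(0,n_{13},x))\,|x|^2\,\{ \log\|(x,n_{13}/2)\|+2T_2 \}\,\d n_{13}\,\d^\times x$, with $\nu_2$ as in that proof. A direct matrix computation from~\eqref{u} gives $\nu_2(0,n_{13},x)=n_\sub\left(\begin{smallmatrix}n_{13}&x\\x&0\end{smallmatrix}\right)$, hence $f_\bK(z\,\nu_2(0,n_{13},x))=\Phi_z\left(\begin{smallmatrix}n_{13}&x\\x&0\end{smallmatrix}\right)=R_0\Phi_{z,\bK}(2x,n_{13})$ by the definition of $R_0$ in Section~\ref{4s9}. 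Next I would perform the substitution $y=2x$, $n_{13}=yu$: here the product formula $|2|=1$ eliminates the factor $2$, so that $|x|=|y|$, $\d n_{13}=|y|\,\d u$ and $\log\|(x,n_{13}/2)\|=\log|y|+\log\|(1,u)\|$, turning the integral into
\[
\frac{\vol_{M_0}}{2\,c_F}\int_{\A^\times}\int_{\A}R_0\Phi_{z,\bK}(y,yu)\,|y^2|^{3/2}\bigl\{\log|y|+\log\|(1,u)\|+2T_2\bigr\}\,\d u\,\d^\times y.
\]
By the definitions in Section~\ref{4s9} the three summands are $\mathfrak{T}_0(R_0\Phi_{z,\bK},3/2)$, $\mathfrak{T}(R_0\Phi_{z,\bK},3/2)$ and $2T_2\,\mathfrak{T}_1(R_0\Phi_{z,\bK},3/2)$, each absolutely convergent at $s=3/2$ because $3/2>1$; their sum is $\mathfrak{T}_2(R_0\Phi_{z,\bK},3/2,T_2)$, so $J_{O_{\sub,z}'}^T(f)=\frac{\vol_{M_0}}{2\,c_F}\mathfrak{T}_2(R_0\Phi_{z,\bK},3/2,T_2)$.

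Adding the two contributions gives $J_{O_{\sub,z}}^T(f)+J_{O_{\sub,z}'}^T(f)=Z^{\GSp(2)}(\Phi_z,3/2,T_1)+\frac{\vol_{M_0}}{2\,c_F}\mathfrak{T}_2(R_0\Phi_{z,\bK},3/2,T_2)$, which, via the normalization identity $\vol_M\,\vol_{A_G}=\vol_{M_0}$ forced by Condition~\ref{5c1} and the measures fixed in Section~\ref{4s9}, is exactly the left-hand side of the first formula in Proposition~\ref{4p26} with $\Phi=\Phi_z$. Since $\hat\Phi_z(0)=0$ by hypothesis (and $T_1,T_2>0$, as throughout), that proposition delivers the claimed equality. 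The main friction I anticipate is precisely this bookkeeping in the change of variables --- making the spurious $|2|$ vanish through the product formula, checking $\Phi_{z,\bK}=\Phi_z$, and matching the two volume constants --- rather than anything conceptually substantial; once~\eqref{6e6} and Proposition~\ref{4p26} are in hand the statement follows essentially formally.
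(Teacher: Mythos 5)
Your proposal is correct and follows the same route as the paper, whose proof is precisely the one-line combination of Proposition~\ref{4p26}, Theorems~\ref{6t2} and~\ref{6t3}, and formula~\eqref{6e6}. The change-of-variables bookkeeping you carry out (identifying \eqref{6e6} with $\frac{\vol_{M_0}}{2c_F}\mathfrak{T}_2(R_0\Phi_{z,\bK},3/2,T_2)$ via $y=2x$, $n_{13}=yu$, together with $\Phi_{z,\bK}=\Phi_z$ and $\vol_M\vol_{A_G}=\vol_{M_0}$) is exactly the verification the paper leaves implicit, and it checks out.
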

\begin{proof}
The formula follows from Proposition \ref{4p26}, Theorems \ref{6t2} and \ref{6t3}, and (\ref{6e6}).
\end{proof}
This formula is essentially the same as Shintani's formula \cite[Proposition 8]{Shintani}.
It is suitable for explicit calculations (cf. \cite[Section 3]{Shintani} and \cite[Section 5]{Wakatsuki}).
Actually, by Saito's calculation \cite[Theorem 2.2 and Corollary 2.3]{Saito1}, we can see that an explicit form of $Z_v(\hat\Phi_{0,v},s,\chi_v;d_v)$ is much simpler than that of $Z_v(\Phi_{0,v},s,\chi_v;d_v)$ for $v\in\Sigma_2$.
This means that the functional equation makes zeta functions simpler.
It would be interesting to find the reason for this phenomenon.

\subsection{Contribution of mixed elements}

We use the same notations as in Section \ref{5s3}.
Set
\[
u_1(\alpha,\beta)=\begin{pmatrix}I_2& \diag(\alpha,\beta) \\ O_2&I_2 \end{pmatrix}\quad \text{where} \quad \diag(\alpha,\beta)=\begin{pmatrix}\alpha&0 \\ 0 &\beta \end{pmatrix}   .
\]
Since
\[
\fo_{1,z}=\bigcup_{\alpha,\beta\in F}\{ \sigma_{1,z}\, u_1(\alpha,\beta) \}_G,
\]
the $\cO$-equivalence class $\fo_{1,z}$ is decomposed into four classes
\[
\fo_{1,z} = O^{1,z}_\tri \cup O^{1,z}_{\min,1} \cup O^{1,z}_{\min,2} \cup O^{1,z}_\reg
\]
where
\[
O^{1,z}_\tri=\{\sigma_{1,z}\}_G, \quad O^{1,z}_{\min,1}= \{\sigma_{1,z} u_1(1,0) \}_G, \quad O^{1,z}_{\min,2}= \{\sigma_{1,z} u_1(0,1) \}_G,
\]
\[
O^{1,z}_\reg= \bigcup_{\alpha\in F^\times/(F^\times)^2 } \{\sigma_{1,z} u_1(1,\alpha) \}_G.
\]
According to this decomposition we formally have
\[
J^T_{\fo_{1,z}}(f)= J_{O^{1,z}_\tri}^T(f) + J_{O^{1,z}_{\min,1}}^T(f)+ J_{O^{1,z}_{\min,2}}^T(f)+ J_{O^{1,z}_\reg}^T(f)
\]
where
\[
J_{O^{1,z}_\tri}^T(f)=\vol_{G_{\sigma_{1,z}}} \, J_G(\sigma_{1,z},f) ,
\]
\begin{multline*}
J_{O^{1,z}_{\min,1}}^T(f)=\int_{P_2(F)\bsl G(\A)^1}\\
\Big\{ \sum_{\delta \in (P_2(F)\cap G_{\sigma_{1,z}}(F))  \bsl P_2(F)}\sum_{\alpha\in F^\times} f(g^{-1}\delta^{-1}\sigma_{1,z}u_1(\alpha,0)\, \delta g) \\
 - \int_{N_{P_2}(\A)}f(g^{-1}\sigma_{1,z}ng)\, \d n \, \widehat{\tau}_{P_2}(H_{P_2}(g)-T)   \Big\} \d^1g , 
\end{multline*}
\begin{multline*}
J_{O^{1,z}_{\min,2}}^T(f)=\int_{s_0P_2(F)\bsl G(\A)^1}\\
\Big\{ \sum_{\delta \in (s_0P_2(F)\cap G_{\sigma_{1,z}}(F)) \bsl s_0P_2(F)} \sum_{\beta\in F^\times} f(g^{-1}\delta^{-1}\sigma_{1,z}u_1(0,\beta)\, \delta g) \\
- \int_{N_{s_0P_2}(\A)}f(g^{-1}\sigma_{1,z}ng)\, \d n \, \widehat{\tau}_{s_0P_2}(H_{s_0P_2}(g)-s_0T)  \Big\} \d^1g , 
\end{multline*}
and
\begin{align*}
& J_{O^{1,z}_\reg}^T(f) = \\
& \int_{G(F)\bsl G(\A)^1} \Big\{ \sum_{\delta\in (P_0(F)\cap G_{\sigma_{1,z}}(F))\bsl G(F) } \sum_{ \alpha, \, \beta \in F^\times} f(g^{-1}\delta^{-1} \sigma_{1,z}u_1(\alpha,\beta) \delta g)  \\
& - \sum_{\delta\in P_0(F)\bsl G(F)} \sum_{\gamma=\pm \sigma_{1,z}} \sum_{\beta\in F^\times} \\
&\qquad \int_{N_{P_2}(\A)} f(g^{-1}\delta^{-1}\gamma u_1(0,\beta)n \delta g)\, \d n \, \widehat{\tau}_{P_2}(H_{P_2}(\delta g)-T) \\
& - \sum_{\delta\in M_0(F)N_{P_1}(F)\bsl G(F)} \int_{N_{P_1}(\A)}f(g^{-1}\delta^{-1}\sigma_{1,z}n\delta g)\, \d n \, \widehat{\tau}_{P_1}(H_{P_1}(\delta g)-T) \\
& + \sum_{\delta\in P_0(F)\bsl G(F)} \sum_{\gamma=\pm \sigma_{1,z}} \int_{N_{P_0}(\A)} f(g^{-1}\delta^{-1} \gamma n \delta g) \, \d n\, \widehat{\tau}_{P_0}(H_{P_0}(\delta g)-T) \Big\} \, \d^1g .
\end{align*}
\begin{prop}\label{6p7}
Assume that $S$ contains $\Sigma_\inf$ and $z\in Z(\fO_v)$ $(\forall v\not\in S)$.
Let $f\in C_c^\inf(G(F_S)^1)$.
The integrals $J_{O^{1,z}_{\min,1}}^T(f)$, $J_{O^{1,z}_{\min,2}}^T(f)$, and $J_{O^{1,z}_{\reg}}^T(f)$ converge absolutely.
Furthermore, we have
\begin{align*}
J_{O^{1,z}_{\min,1}}^T(f)=&\vol_{M_2\cap G_{\sigma_{1,z}}} J_{M_2}^T(\sigma_{1,z},f)\\
&  + a^{G_{\sigma_{1,z}}}(S,u_1(1,0))\, J_G( \sigma_{1,z}u_1(1,0),f),
\end{align*}
\begin{align*}
J_{O^{1,z}_{\min,2}}^{T}(f)=&\vol_{s_0M_2\cap G_{\sigma_{1,z}}} J_{s_0M_2}^T(\sigma_{1,z},f) \\
&+ a^{G_{\sigma_{1,z}}}(S,u_1(0,1))\, J_G( \sigma_{1,z}u_1(0,1),f), 
\end{align*}
and
\begin{align*}
J_{O^{1,z}_{\reg}}^T(f)=& \, \vol_{M_0\cap G_{\sigma_{1,z}}} J_{M_0}^T(\sigma_{1,z},f)\\
&+a^{M_2\cap G_{\sigma_{1,z}}}(S,u_1(0,1))\, J_{M_2}^T( \sigma_{1,z}u_1(0,1),f) \\
&+a^{s_0M_2\cap G_{\sigma_{1,z}}}(S,u_1(1,0))\, J_{s_0M_2}^T( \sigma_{1,z}u_1(1,0),f) \\
&+\sum_{\alpha\in F^\times/(F^\times\cap(F_S^\times)^2)} a^{ G_{\sigma_{1,z}}}(S,u_1(\alpha,1))\, J_G( \sigma_{1,z}u_1(\alpha,1),f).
\end{align*}
For formulas of the coefficients, we refer to Examples \ref{3ex1} and \ref{3ex10}.
\end{prop}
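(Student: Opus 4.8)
The plan is to mirror the arguments of Theorems~\ref{6t1}, \ref{6t3} and~\ref{6t4}, carried out relative to the reductive group $\mathcal G=G_{\sigma_{1,z}}$, which by Proposition~\ref{5pm0} is isomorphic to $\{(g_1,g_2)\in\GL(2)\times\GL(2)\mid\det(g_1)=\det(g_2)\}$, i.e.\ the group $\mathcal G$ of Example~\ref{3ex10}. Its unipotent $\mathcal G(F_S)$-conjugacy classes meeting $\mathcal G(F)$ are $1$, $u_{1,0}$, $u_{0,1}$ and $u_{\alpha,1}$ with $\alpha\in F^\times/(F^\times\cap(F_S^\times)^2)$, and the decomposition $\fo_{1,z}=O^{1,z}_\tri\cup O^{1,z}_{\min,1}\cup O^{1,z}_{\min,2}\cup O^{1,z}_\reg$ is the image of that list under $\mathcal G\hookrightarrow G$. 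The conceptual backdrop is the descent of Section~\ref{desc}: since $G_{\sigma_{1,z},+}=G_{\sigma_{1,z}}$ (Proposition~\ref{5pm0}), $\iota^G(\sigma_{1,z})$ is trivial and $A_{G_{\sigma_{1,z}}}=A_G$, so the coefficients attached to $\sigma_{1,z}u$ in $G$ coincide with the unipotent coefficients of $\mathcal G$ and of its Levi subgroups; this is why the answer is phrased through $a^{\mathcal G}$, $a^{M_2\cap\mathcal G}$ and $a^{s_0M_2\cap\mathcal G}$. As in Section~\ref{6s1}, the truncation insertions in the definitions of the four pieces are arranged to telescope, so that once each summand is shown to converge absolutely one recovers $J^T_{\fo_{1,z}}(f)$ as their sum, the first summand being the elementary semisimple term $\vol_{G_{\sigma_{1,z}}}J_G(\sigma_{1,z},f)$. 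It therefore remains to treat the other three integrals.

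For $J^T_{O^{1,z}_{\min,1}}(f)$ I would adapt the proof of Theorem~\ref{6t3}: only one truncation is present, and the union of $O^{1,z}_{\min,1}$ with the smaller class $O^{1,z}_\tri$ has $N_{P_2}$-invariant intersection with $P_2(F)$, so the Poisson summation argument of~\cite[p.~945--947]{Arthur6} yields absolute convergence and, in passing, the required convergence statement. Unfolding the orbital sum over $\sigma_{1,z}u_1(\alpha,0)$, $\alpha\in F^\times$, against the subtracted constant term separates the semisimple-part contribution $\vol_{M_2\cap G_{\sigma_{1,z}}}J^T_{M_2}(\sigma_{1,z},f)$ from a purely unipotent remainder whose coefficient is, by the fine expansion applied inside $\mathcal G$, the number $a^{\mathcal G}(S,u_1(1,0))$ evaluated in Example~\ref{3ex10}. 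Conjugation by the representative $w_0$ interchanges $M_2$ with $s_0M_2$ and $u_1(1,0)$ with $u_1(0,1)$, and delivers the formula for $J^T_{O^{1,z}_{\min,2}}(f)$ verbatim.

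The integral $J^T_{O^{1,z}_\reg}(f)$ is the analogue of Theorem~\ref{6t4} for $\mathcal G$. I would realize it as the value at $\lambda=0$ of a holomorphic family $J^T_{O^{1,z}_\reg}(f,\lambda)$ and prove absolute convergence on a neighbourhood of $\lambda=0$ by the same scheme: insert the functions $F^{P}(\cdot,T)$ and the partial truncations $\tau_{P_0}^{P_1}$, $\tau_{P_0}^{P_2}$, reduce after the Iwasawa decomposition to integrals over $N_{P_0}(F)\bsl G(\A)^1$, apply the Poisson summation formula on the two-dimensional space of elements $u_1(\alpha,\beta)$ and on its two coordinate lines (the analogue for $\mathcal G$ of the space $V=N_{P_0}/U$ and the lines $V_1,V_2$ in Theorem~\ref{6t4}), and bound the terms by products of Tate integrals for $\GL(1)$. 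The only zeta functions that enter are the products of Tate integrals of Section~\ref{3s} and Example~\ref{3ex10}, not the Shintani zeta function of Section~\ref{4s}, so the estimates are strictly simpler than those in Theorems~\ref{6t2} and~\ref{6t4}. Letting $\lambda\to0$ and collecting the descent terms produces the main term $\vol_{M_0\cap G_{\sigma_{1,z}}}J^T_{M_0}(\sigma_{1,z},f)$, the two intermediate terms indexed by the Levi subgroups $M_2\cap G_{\sigma_{1,z}}$ and $s_0M_2\cap G_{\sigma_{1,z}}$ of $\mathcal G$, and the sum over $\alpha\in F^\times/(F^\times\cap(F_S^\times)^2)$ indexed by $\mathcal G$ itself; that index set is precisely the regular part of $(\cU_{\mathcal G}(F))_{\mathcal G,S}$ from Example~\ref{3ex10}, and the coefficients are read off from Examples~\ref{3ex1} and~\ref{3ex10}.

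The main obstacle will be combinatorial rather than analytic: one must check that the several truncation terms entering the definition of $J^T_{O^{1,z}_\reg}(f)$ reorganize, under the repeated Poisson summation and the Bruhat decomposition $P_1(F)=P_0(F)\cup P_0(F)w_0N_{P_0}(F)$ performed inside $\mathcal G$, exactly into the truncated unipotent integrals of $\mathcal G$ and of its Levi subgroups appearing on the right, with the parameter $T$ restricting correctly to a truncation parameter of $\mathcal G$; and one must track the Haar measure normalizations through the isomorphism $G_{\sigma_{1,z}}\cong\mathcal G$ of Proposition~\ref{5pm0} so that the volume factors agree with those of Examples~\ref{3ex1} and~\ref{3ex10}.
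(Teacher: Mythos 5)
Your outline identifies the right structure---descent to the centralizer $\mathcal G=G_{\sigma_{1,z}}\cong\{(g_1,g_2)\in\GL(2)\times\GL(2)\mid\det(g_1)=\det(g_2)\}$ and the coefficients of Examples \ref{3ex1} and \ref{3ex10}---but the paper's proof is far shorter and avoids essentially all of the analysis you propose to redo. For the convergence of $J^T_{O^{1,z}_{\min,1}}(f)$ and $J^T_{O^{1,z}_{\min,2}}(f)$ the paper simply observes that on $(A_{M_0}^G)^+$ the truncation functions $\widehat\tau_{P_2}(H_{P_2}(a)-T)$ and $\widehat\tau_{s_0P_2}(H_{s_0P_2}(a)-s_0T)$ cut the integration down to a region where the integrand is visibly integrable, so no Poisson-summation argument in the style of Theorem \ref{6t3} is needed; and the convergence of $J^T_{O^{1,z}_\reg}(f)$ then follows by subtraction, using the absolute convergence of $J^T_{\fo_{1,z}}(f)$ from \cite{Arthur6}---a simpler device than the $\lambda$-family and repeated Poisson summation you set up in analogy with Theorem \ref{6t4}. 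More importantly, the identities themselves are not obtained by unfolding at all: they follow at once from the $T$-dependent version of \cite[Theorem~8.1]{Arthur4} established in Section \ref{desc}, i.e.\ from the descent formula expressing $J^T_{\fo}$ through truncated unipotent distributions of $G_\sigma$ and its Levi subgroups together with the coefficient descent $a^G(S,\gamma)=\varepsilon^G(\sigma)\,|\iota^G(\sigma)|^{-1}\sum a^{G_\sigma}(S,u)$. The ``combinatorial obstacle'' you flag at the end---checking that the truncations attached to parabolic subgroups of $G$ reorganize into truncated unipotent integrals of $\mathcal G$ and its Levi subgroups with a correctly induced truncation parameter---is precisely the content of that descent formula; on your route it is the hardest step and you leave it unverified, whereas on the paper's route it has been proved once and for all in Section \ref{desc}. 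If you replace your planned re-derivation of Theorems \ref{6t3} and \ref{6t4} inside $\mathcal G$ by a citation of Section \ref{desc} (keeping your measure bookkeeping through the isomorphism of Proposition \ref{5pm0}), your argument reduces to the paper's.
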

\begin{proof}
It is clear that the integrals $J_{O^{1,z}_{\min,1}}^T(f)$ and $J_{O^{1,z}_{\min,2}}^T(f)$ converge absolutely, since
\begin{align*}
& \widehat{\tau}_{P_2}(H_{P_2}(a)-T)=\begin{cases}  1 & \text{if $\log t_{13}<-2T_2$}, \\ 0 & \text{if $\log t_{13}\geq -2T_2$}, \end{cases}  \\
& \widehat{\tau}_{s_0P_2}(H_{s_0P_2}(a)-s_0T)=\begin{cases}  1 & \text{if $\log t_{24}<-2T_2$}, \\ 0 & \text{if $\log t_{24}\geq -2T_2$} \end{cases}     
\end{align*}
where $a=(t_{13}^{-1/2},t_{24}^{-1/2},t_{13}^{1/2},t_{24}^{1/2})\in (A_{M_0}^G)^+$.
Hence, the integral $J^T_{O^{1,z}_\reg}(f)$ also converges absolutely, because the integral $J_{\fo_{1,z}}^T(f)$ is absolutely convergent.
The second assertion follows from the $T$-dependent version of~\cite[Theorem~8.1]{Arthur4} given in Section~\ref{desc}.
\end{proof}
For the other mixed elements of $\GSp(2,F)$ and $\Sp(2,F)$ (cf. Section \ref{5s3}), we can easily get results similar to Proposition \ref{6p7} by the same argument as in the proof of Proposition \ref{6p7} and the results in Section \ref{3s}.
We omit the details.

\section{The geometric side of the trace formula for $\Sp(2)$}\label{7s}

Throughout this section, we set $G=\Sp(2)$.
We use the same notation and assumptions as those in Sections \ref{5s} and \ref{6s} by restricting from $\GSp(2)$ to~$G$.
Especially, we assume Condition \ref{5c1} which gives normalizations of Haar measures on $\a_{M_0}$, $\a_{M_1}$, and $\a_{M_2}$.
We note that $\a_G=\{0\}$ and $G(\A)^1=G(\A)$.

For $z\in Z(F)=\{\pm I_4\}$, the $\cO$-equivalence class $\fo_z$ containing $z$ is divided into the five classes
\[
\fo_z=O_{\tri,z}\cup O_{\min,z} \cup O_{\sub,z}\cup O_{\sub,z}' \cup O_{\reg,z},  
\]
where $ O_{\tri,z}=\{ z \}$,
\[
O_{\min,z}=\bigcup_{\alpha\in F^\times/(F^\times)^2}\{z\, n_\min(\alpha)\}_G, \quad O_{\sub,z}=\bigcup_{x\in V^\st(F)} \{z \, n_\sub(x) \}_G,
\]
\[
O_{\sub,z}'= \{z \, n_\sub(\xd_1) \}_G, \quad O_{\reg,z}= \bigcup_{\alpha\in F^\times/(F^\times)^2} \{z\, n_\reg(\alpha)\}_G ,
\]
where $n_\min(\alpha)$, $n_\sub(x)$, $n_\reg(\alpha)$ were defined in (\ref{1e2}), $V^\st(F)$ was defined in Section \ref{4s1}, and $\xd_d=\diag(1,-d)$ (cf. (\ref{xd})).
Let $T\in \a_0^+$ and $f\in C_c^\inf(G(\A))$.
The definitions of $J_{O_{\reg,z}}^T(f)$, $J_{O_{\sub,z}}^T(f)$, $J_{O_{\sub,z}'}^T(f)$, $J_{O_{\min,z}}^T(f)$, and $J_{O_{\tri,z}}^T(f)$ are the same as in Section~\ref{6s}.
Obviously, $J_{O_{\tri,z}}^T(f)=\vol_G\, f(z)$ and
\[
J^T_{\mathrm{unip}}(f) = J_{O_{\tri,z}}^T(f)+ J_{O_{\min,z}}^T(f) + J_{O_{\sub,z}}^T(f) + J_{O_{\sub,z}'}^T(f) +J_{O_{\reg,z}}^T(f) .
\]

Choosing Haar measures on minimal unipotent conjugacy classes over $F_S$, we have
\[
J_G(z\, n_\min(\alpha),f)= c_S \, \int_{\alpha(F_S^\times)^2}f_{\bK_S}(z\, n_\min(x)) \, |x|_S \, \d x 
\]
where $\alpha\in F_S^\times$, $\d x$ is the Haar measure on $F_S$ defined in Section \ref{2s1}, and the constant $c_S$ was defined in Section \ref{2s2}.
\begin{thm}\label{7t1}
Assume that $S$ contains $\Sigma_\inf$.
Let $f\in C_c^\inf(G(F_S))$.
The integral $J_{O_{\min,z}}^T(f)$ converges absolutely and satisfies
\begin{align*}
&J_{O_{\min,z}}^T(f) \\
&= \frac{ \vol_{M_2}}{c_F} \int_{\A^\times/F^\times} \sum_{y\in F^\times} f_\bK (z\, n_\min(x^2 y))\,|x^2|^2 \d^\times x \\
&= \frac{ \vol_{M_2}}{2\, c_F} \sum_\chi \int_{\A^\times} f_\bK(z n_\min(x)) \, |x|^2 \, \chi(x) \, \d^\times x \\
&= \frac{ \vol_{M_2}}{2\, c_F} \sum_{\alpha\in F^\times/(F^\times\cap(F_S^\times)^2)}   \Big\{ \sum_\chi \chi_S(\alpha) \, L^S(2,\chi)  \Big\} J_G(z\, n_\min(\alpha),f) ,
\end{align*}
where $\d^\times x$ is the Haar measure on $\A^\times$ which was defined in Section \ref{2s1} and $\chi$ runs over all quadratic characters on $\A^1/F$ unramified outside~$S$.
\end{thm}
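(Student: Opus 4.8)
\emph{Strategy and orbit structure.} The plan is to imitate the proof of Theorem~\ref{6t1}: first unfold $J_{O_{\min,z}}^T(f)$ geometrically to obtain the first displayed expression, then convert it into a character sum by the formula of Labesse and Langlands, exactly as Example~\ref{3ex2} does for $\SL(2)$. The only new feature relative to $\GSp(2)$ is that the minimal class now breaks up into several $G(F)$-conjugacy classes. These are controlled by the Klingen parabolic $P_2$: its unipotent radical $N_{P_2}$ has $1$-dimensional centre $Z(N_{P_2})=\{n_\min(\alpha)\mid\alpha\in F\}$, on which the $\SL(2)$-factor of $M_2$ acts trivially while the $\GL(1)$-factor $\diag(t,1,t^{-1},1)$ acts by $n_\min(\alpha)\mapsto n_\min(t^{-2}\alpha)$. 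Since $n_\min(\alpha)$ lies in $Z(N_{P_2})(F)$ and, by the Jacobson--Morozov argument of Lemma~\ref{5l}, has Jacobson--Morozov parabolic $P_2$, the set $O_{\min,z}$ is exactly the $G(F)$-saturation of $z\cdot(Z(N_{P_2})(F)\setminus\{1\})$, and the displayed action shows that the conjugacy classes inside it are indexed by $F^\times/(F^\times)^2$. Hence
\[
J_{O_{\min,z}}^T(f)=\int_{P_2(F)\bsl G(\A)}\sum_{\alpha\in F^\times}f(g^{-1}z\,n_\min(\alpha)\,g)\,\d g .
\]

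\emph{Geometric unfolding.} Writing $g=nmk$ with $n\in N_{P_2}(\A)$, $m\in M_2(\A)$, $k\in\bK$, and using that $n_\min(\alpha)$ is central in $N_{P_2}$, the integral over $N_{P_2}(F)\bsl N_{P_2}(\A)$ contributes $1$; the integral over the $\SL(2)$-factor of $M_2(F)\bsl M_2(\A)$ contributes $\vol(\SL(2)(F)\bsl\SL(2)(\A))=\vol_{M_2}/c_F$ by Condition~\ref{5c1}; and the remaining $\GL(1)$-integral, after inserting the modular factor $e^{-2\rho_{P_2}(H_{P_2}(m))}=|t|^{-4}$ and the action character $t^{-2}$ read off from the description of $P_2$ in Section~\ref{5s}, and substituting $x=t^{-1}$, becomes precisely the first expression of the theorem. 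The same computation gives absolute convergence and the absence of $T$-dependence: no truncation is needed, because the resulting integral is of Tate type with the convergent exponent $2>1$, just as in Theorem~\ref{6t1}.

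\emph{Passage to $L$-functions.} Set $\phi_z(x)=f_\bK(z\,n_\min(x))$, a Schwartz--Bruhat function on $\A$. The first expression is nothing but the prehomogeneous zeta integral $\zeta_A^{\kappa,\iota}(\phi_z,2)$ of Section~\ref{3s} with $A=F$, $H=\{1\}$, $\iota$ trivial, $\kappa(m)=m^2$; since $s=2$ already lies in the region of convergence, Theorem~\ref{3t} gives $\zeta_A^{\kappa,\iota}(\phi_z,2)=\frac{1}{2}\sum_\chi\zeta(\phi_z,2,\chi)$, the sum over the (finitely many, for fixed $f$) quadratic characters $\chi$ of $\A^1/F^\times$, which yields the second expression. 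For the third, expand each Tate integral $\zeta(\phi_z,2,\chi)$ by the factorization of Section~\ref{2s2}: it vanishes when $\chi$ is ramified at some $v\notin S$, because the local factor $\int_{F_v^\times}\phi_{0,v}(x)\,|x|_v^2\,\chi_v(x)\,\d^\times x$ is then $0$; otherwise it equals $\zeta_S(\phi_{z,S},2,\chi_S)\,L^S(2,\chi)$. Splitting $\zeta_S(\phi_{z,S},2,\chi_S)$ over the square classes of $F_S^\times$ and using $c_S\,\d x_S=|x_S|_S\,\d^\times x_S$ to identify $\int_{\alpha(F_S^\times)^2}\phi_{z,S}(x)\,|x|_S^2\,\d^\times x$ with $J_G(z\,n_\min(\alpha),f)$ --- each $F_S$-square class containing an element of $F^\times$ by weak approximation --- reassembles the third expression.

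\emph{Main difficulty.} Essentially all of the substance sits in Theorem~\ref{3t}, which performs the Poisson-summation argument on $\A^1/F^\times$ turning $\int\sum_{y}\phi(x^2y)$ into a sum over quadratic characters; the remainder is organisational. The one point that genuinely needs care is the exact matching of normalisations --- the modular character $e^{-2\rho_{P_2}}$, the squaring action of the $\GL(1)$-factor of $M_2$ on $Z(N_{P_2})$, the identity $\vol_{M_2}=c_F\,\vol_{\SL(2)}$ forced by Condition~\ref{5c1}, and the Haar measure chosen on the orbit in the definition of $J_G(z\,n_\min(\alpha),f)$ --- so that the three displayed formulas hold with the stated constants.
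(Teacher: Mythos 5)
Your proposal is correct and follows essentially the same route as the paper, whose own proof consists of the single remark that the result "follows from the arguments in Section \ref{3s}": you unfold over $P_2(F)\bsl G(\A)$ using that $n_\min(\alpha)$ is central in the Heisenberg group $N_{P_2}$ and that the $\GL(1)$-factor of $M_2$ acts by squares, then apply Theorem \ref{3t} with $A=F$, $H=\{1\}$, $\kappa_1=2$ exactly as in Example \ref{3ex2}, and finally factor the Tate integrals as in Section \ref{2s2}. The normalization checks you flag (the modular factor $|t|^{-4}$, $\vol_{\SL(2)}=\vol_{M_2}/c_F$, and $c_S\,\d x_S=|x_S|_S\,\d^\times x_S$ matching $J_G(z\,n_\min(\alpha),f)$) are precisely the points that make the stated constants come out, and they are handled correctly.
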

\begin{proof}
This theorem follows from the arguments in Section \ref{3s}.
\end{proof}

Let $T=T_1\alpha_1^\vee +T_2\alpha_2^\vee\in \a_0^+$.
If $S$ contains $\Sigma_\inf$, then we have the form of $J_{M_1}^T(z,f)$ as in Section \ref{6s1}.
Let $d\in F^\times$ and $d_S\in F_S^\times$.
Recall that the notations $V$, $V^\ss$, $V^\ss(F_S,d_S)$, $\varepsilon_S$, $\chi_{d,S}(x)$, and $V^\ss(F_S,d_S,\varepsilon_S)$ were defined in Sections \ref{4s1} and \ref{4s4}.
For any $y\in V^\ss(F_S,d_S,\varepsilon_S)$, by choosing a measure on the unipotent conjugacy class, we have
\[
J_G(z\, n_\sub(y),f)=  2^{|S|} c_S \int_{V^\ss(F_S,d_S,\varepsilon_S)}f_{\bK_S}(z\, n_\sub(x))  \, \d x
\]
where $\d x$ is the Haar measure on $V(F_S)$ defined in Section \ref{4s4}.
We denote the group $\GSp(2)$ by~$G^*$, because the next result can also be stated in terms of the distribution $J_{G^*}(z\, n_\sub(y),f^*)$ from Theorem~\ref{6t2} and its twisted version
\[
J_{G^*,\chi_d}(z\, n_\sub(\xd_d),f')=  2^{|S|} c_S \int_{V^\ss(F_S,d)}f^*_{\bK_S}(z\, n_\sub(x)) \, \chi_{d,S}(x) \, \d x
\]
for $d\in F^\times-(F^\times)^2$ and $f^*\in C_c^\inf(G^*(F_S)^1)$, where $\xd_d=\diag(1,-d)$ (cf. \eqref{4e1}).
We denote by $\varepsilon_v(x_v)$ the Hasse invariant of $x_v\in V^\ss(F_v)$ over $F_v$.
For $x=(x_v)_{v\in S}\in V^\ss(F_S)$, we put $\varepsilon_v(x)=\varepsilon_v(x_v)$.
Let $\sim'_S$ denote an equivalence relation in $V^\ss(F_S)$ such that $x\sim'_S y$ if and only if $\det(x^{-1}y)\in (F_S^\times)^2$ and $\varepsilon_v(x)=\varepsilon_v(y)$ $(\forall v\in S)$.
Note that every equivalence class has a representative in~$V^\ss(F)$, so that $V^\ss(F_S)/{\sim_S'} \, =V^\ss(F)/{\sim_S'}$.
\begin{thm}\label{7t2}
Assume that $S$ contains $\Sigma_\inf\cup\Sigma_2$.
Let $f\in C_c^\inf(G(F_S))$.
The integral $J_{O_{\sub,z}}^T(f)$ converges absolutely, and
\begin{multline*}
J_{O_{\sub,z}}^T(f)= \frac{\vol_{M_1}}{2}J_{M_1}^T(z,f)  + \frac{ \vol_{M_1}}{2\, c_F} \sum_{x\in V^\ss(F)/{\sim_S'}}  J_G(z \, n_\sub(x),f) \\
 \times \Big\{ \fC_F(S,-\det(x)) +\Big(\prod_{v\in S}\varepsilon_v(x) \Big)  \sum_{\ds\in\para^{\mathrm{ur}}(F,S,-\det(x))} L^S(1,\chi_d), \Big\}
\end{multline*}
where $\mathfrak C_F(S,\alpha)$ and $\para^{\mathrm{ur}}(F,S,\alpha)$ were defined in Section \ref{4s8}. We identify $M_1$ with $H'=\GL(2)$ by \eqref{M1}.
If $f$ is the restriction of a function $f^*\in C_c^\inf(G^*(F_S)^1)$ and $\Phi_z(x)=f_\bK(z \, n_\sub(x))$, then we have
\begin{align*}
 J_{O_{\sub,z}}^T(f)=& \,  Z^{\Sp(2)}(\Phi_z,3/2,T_1) \\
=& \, \frac{\vol_{M_1}}{2}J_{M_1}^T(z,f) \\
& + \frac{ \vol_{M_1}}{2\, c_F} \sum_{ x\in V^\ss(F)/{\sim_S} }\fC_F(S,-\det(x)) \, J_{G^*}(z \, n_\sub(x),f^*) \\
& + \frac{ \vol_{M_1}}{2\, c_F} \sum_{\ds\in\para^{\mathrm{ur}}(F,S)} L^S(1,\chi_d)\,   J_{G^*,\chi_d}(z\, n_\sub(\xd_d),f^*) \\
\end{align*}
where $Z^{\Sp(2)}(\Phi,s,T_1)$, and $\para^{\mathrm{ur}}(F,S)$ were defined in Section \ref{4s8} and $\sim_S$ was defined in Section \ref{6s1}.
\end{thm}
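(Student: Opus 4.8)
The plan is to follow the proof of Theorem~\ref{6t2} line by line, replacing every $\GSp(2)$-object by its $\Sp(2)$-analogue and using the decomposition~\eqref{4e4} in place of~\eqref{4e3}. First I would fix $s\in\C$, put $\lambda=(2s-3)\varpi_1\in\a_{M_1,\C}^*$ (noting that now $\a_G=\{0\}$ and $G(\A)^1=G(\A)$), and introduce $J_{O_{\sub,z}}^T(f,\lambda)$ exactly as there. Since for $x\in V^\st(F)$ and $\delta\in G(F)$ one has $\delta\in P_1(F)$ if and only if $\delta^{-1}n_\sub(x)\delta\in P_1(F)$, the kernel unfolds as $K_{G,O_{\sub,z}}(g,g)=\sum_{\delta\in P_1(F)\bsl G(F)}\sum_{x\in V^\st(F)}f(g^{-1}\delta^{-1}z\,n_\sub(x)\delta g)$, and under the identification~\eqref{M1} (with the similitude direction trivial on $G(\A)$) parabolic descent gives, formally, $J_{O_{\sub,z}}^T(f)=J_{O_{\sub,z}}^T(f,0)=Z^{\Sp(2)}(\Phi_z,3/2,T_1)$ with $\Phi_z(x)=f_\bK(z\,n_\sub(x))$. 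For absolute convergence I would reuse the Shintani-type estimate in the proof of Theorem~\ref{6t2}: it reduces convergence of the truncated zeta integral of the prehomogeneous vector space $(H',V)$ to that of a modified zeta integral for $(B,V)$, $B$ the lower-triangular Borel, and it applies verbatim to the action $x\mapsto{}^th'xh'$ without the $\GL(1)$-twist; indeed this was already invoked in Section~\ref{4s8} to obtain holomorphy of $Z^{\Sp(2)}(\Phi,s,T_1)$ for $\Re(s)>5/4$.

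I would then evaluate $Z^{\Sp(2)}(\Phi_z,3/2,T_1)$ by feeding $\Phi=\Phi_z$ into Theorem~\ref{4t23}; since $z=\pm I_4$ and $S\supset\Sigma_\inf\cup\Sigma_2$, the function $\Phi_z$ satisfies Condition~\ref{cond2} and lies in $C_c^\inf(V(\A))$, with $\Phi_{z,S}(x_S)=f_{\bK_S}(z\,n_\sub(x_S))$. Theorem~\ref{4t23} produces a leading term $\frac{\vol_{H'}}{2}\int_{V(F_S)}\Phi_{z,S}(x_S)\log|\det(x_S)|_S\,\d x_S+\vol_{H'}\hat\Phi_z(0)\,T_1$ plus a sum over $\mathcal E(F,S)$ of $\tilde Z_S(\Phi_{z,S},3/2;\alpha,\varepsilon_S)$ times $\fC_F(S,\alpha)+(\prod_{v\in S}\varepsilon_v)\sum_{\ds\in\para^{\mathrm{ur}}(F,S,\alpha)}L^S(1,\chi_d)$. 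The leading term I would identify with $\frac{\vol_{M_1}}{2}J_{M_1}^T(z,f)$ via the $\Sp(2)$-analogue of Proposition~\ref{5p5} (obtained by restriction, as announced at the start of Section~\ref{5s2}) together with $J_{M_1}^T(z,f)=\int_{N_{P_1}(F_S)}f_{\bK_S}(z\,n)\,w_{M_1}(1,n,T)\,\d n$ and $w_{M_1}(1,n,T)=\log|\det Y|_S+2T_1$. Since $|\det|^{s-3/2}=1$ at $s=3/2$, the partial zeta integral satisfies $\tilde Z_S(\Phi_{z,S},3/2;\alpha,\varepsilon_S)=(2^{|S|}c_S)^{-1}J_G(z\,n_\sub(x),f)$ for any $x$ in the corresponding $\sim_S'$-class, and Lemma~\ref{4l8} matches the sum over $\mathcal E(F,S)$ with the sum over $V^\ss(F)/{\sim_S'}$. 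Collecting these identities and tracking the constants $\vol_{H'}$, $\vol_{M_1}$, $c_S$, $c_F^S$, $2^{|S|}$ yields the first displayed formula.

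For the second displayed formula I would run the same argument with Theorem~\ref{4t22} in place of Theorem~\ref{4t23}, using~\eqref{4e3} to express the trivial-character part of~\eqref{4e4} through the $\GSp(2)$ modified zeta integral. By Theorem~\ref{4t15} (equivalently Theorem~\ref{4t13}) only the characters $\chi=\chi_d$ with $\chi_{d,v}$ unramified for all $v\notin S$ survive, i.e.\ $\ds\in\para^{\mathrm{ur}}(F,S)$. When $f=f^*|_{G(F_S)}$ we have $\Phi_{z,S}(x_S)=f^*_{\bK_S}(z\,n_\sub(x_S))$, so directly from the definitions $Z_S(\Phi_{z,S},3/2,\trep_S;\alpha)=(2^{|S|}c_S)^{-1}J_{G^*}(z\,n_\sub(x),f^*)$ and $Z_S(\Phi_{z,S},3/2,\chi_{d,S};d)=(2^{|S|}c_S)^{-1}J_{G^*,\chi_d}(z\,n_\sub(\xd_d),f^*)$; here Theorem~\ref{4t22} groups the trivial-character part by $\sim_S$ (the $\GSp(2)$-orbit being insensitive to the Hasse invariant) and the twists by $\para^{\mathrm{ur}}(F,S)$. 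The consistency of the two displayed formulas is precisely relation~\eqref{4e2}, which converts the $\sim_S$-grouping plus character twists into the $\sim_S'$-grouping by Hasse invariant.

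The main obstacle will be the bookkeeping: verifying that the log-determinant-plus-$T_1$ term of the modified zeta integral really equals $\frac{\vol_{M_1}}{2}J_{M_1}^T(z,f)$ — which forces the weight factor $w_{M_1}(1,n,T)$ and the normalizations of $\a_{H'}$, $\a_{A_{H'}}$, $\a_{M_1}$ fixed by Condition~\ref{5c1} and the remark preceding Theorem~\ref{4t22} to be mutually compatible — and carrying the constants $\vol_{H'}$, $\vol_{M_1}$, $\vol_{A_{H'}}$, $\vol_{A_G}$, $c_S$, $c_F^S$, $2^{|S|}$ correctly through~\eqref{4e3}, \eqref{4e4}, and the Haar-measure conventions of Section~\ref{4s4}. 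By contrast, the analytic input — absolute convergence of $J_{O_{\sub,z}}^T(f)$ for $\Sp(2)$ — is essentially free, being the restriction of the $(B,V)$-argument already carried out in the proof of Theorem~\ref{6t2}.
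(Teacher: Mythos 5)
Your proposal is correct and follows essentially the same route as the paper, whose proof of this theorem is simply a citation of Theorems \ref{4t22} and \ref{4t23} together with the proof of Theorem \ref{6t2}; you have merely spelled out the unfolding, the convergence transfer, and the constant bookkeeping ($\vol_{H'}=\vol_{M_1}$, $2^{|S|}c_S$, $c_F^S=c_F/c_S$) that the paper leaves implicit. The matching of $\mathcal E(F,S)$ with $V^\ss(F)/{\sim_S'}$ via Lemma \ref{4l8} is likewise the paper's own (tacit) step, so no genuine gap remains.
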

\begin{proof}
This theorem is deduced from Theorems \ref{4t22} and \ref{4t23} and the proof of Theorem \ref{6t2}.
\end{proof}

For $S\supset \Sigma_\inf$, the definition of $J_{M_2}^T(z,f)$ is the same as in Section \ref{6s1}.
\begin{thm}\label{7t3}
Assume that $S$ contains $\Sigma_\inf\cup\Sigma_2$.
Let $f\in C_c^\inf(G(F_S))$.
The integral $J_{O_{\sub,z}'}^T(f)$ converges absolutely and we have
\[ J_{O_{\sub,z}'}^T(f)= \frac{\vol_{M_2}}{2} J_{M_2}^T(z,f) + \frac{\vol_{M_1}}{2\, c_F} \frac{\frac{\d}{\d s}\zeta_F^S(s)|_{s=3} }{\zeta_F^S(3)} J_G(z \, n_\sub(\xd_1),f) . \]
\end{thm}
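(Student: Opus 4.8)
The plan is to follow the proof of Theorem~\ref{6t3} almost verbatim, replacing $\GSp(2)$ by $\Sp(2)$ throughout; I will only flag the points where the restriction needs a word of comment. Set $\mathfrak u_z=z\,n_\sub(\begin{pmatrix}0&1\\1&0\end{pmatrix})$, so that $O_{\sub,z}'=\{\mathfrak u_z\}_G$ is a single $G(F)$-conjugacy class — its Hasse invariants are trivial at every place, so the refinement $\sim_S'$ of Theorem~\ref{7t2} collapses here and no extra twisted term appears — and $G_{\mathfrak u_z,+}=G_{\mathfrak u_z}\cup w_0G_{\mathfrak u_z}$, where $G_{\mathfrak u_z}$ is the intersection with $\Sp(2)$ of the centralizer computed in the proof of Theorem~\ref{6t3}, namely a one-dimensional torus times $N_{P_1}$. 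The first goal is to establish the analogue of (\ref{6e6}),
\begin{multline*}
J_{O_{\sub,z}'}^T(f)=\frac{\vol_{M_0}}{2\,c_F}\int_{\A^\times}\int_\A f_\bK(z\,\nu_2(0,n_{13},x))\\
\times\,|x|^2\,\{\log\|(x,n_{13}/2)\|+2T_2\}\,\d n_{13}\,\d^\times x,
\end{multline*}
whose absolute convergence follows, exactly as in Section~\ref{4s9}, from \cite[Proposition~2.12]{Yukie}.

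To obtain this identity I would introduce the auxiliary integral formed, as in the proof of Theorem~\ref{6t3}, by subtracting from $\sum_{\delta\in G_{\mathfrak u_z,+}\bsl G(F)}f(g^{-1}\delta^{-1}\mathfrak u_z\delta g)$ the descent term weighted by $\hat\tau_{P_2}$; the integrand equals one half of $\sum_{\delta\in G_{\mathfrak u_z}\bsl G(F)}f(g^{-1}\delta^{-1}\mathfrak u_z\delta g)\{1-\hat\tau_{P_2}(H_{P_2}(g)-T)-\hat\tau_{P_2}(H_{P_2}(w_0g)-T)\}$, whose absolute convergence is read off from Lemma~\ref{5l3} and the displayed formula. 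Formally $J_{O_{\sub,z}'}^T(f)$ then differs from the auxiliary integral by a term of the shape (\ref{6ellt}), which vanishes by the mean-value formula (\ref{2e4}) (applied to the $\SL(2)$ inside $P_2$, as in the proof of Theorem~\ref{6t3}). The remaining work is the absolute convergence of the difference integral, which one bounds by an $F^{P_2}$-part and a $\tau_{P_0}^{P_2}$-part. Since Theorem~\ref{7t1} (the minimal contribution for $\Sp(2)$) is already available, it suffices to treat the union $\bar O_{\sub,z}'=O_{\sub,z}'\cup O_{\min,z}\cup O_{\tri,z}$; the fact that $O_{\min,z}$ now breaks into several $G(F)$-orbits is irrelevant, because $\bar O_{\sub,z}'\cap P_2(F)$ is still $N_{P_2}$-invariant (the Klingen radical $N_{P_2}$ meets only the split subregular orbit and its degenerations). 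Hence the $F^{P_2}$-part converges by Poisson summation as in \cite[p.~945--947]{Arthur6}, and the $\tau_{P_0}^{P_2}$-part is estimated by iterated Poisson summation over $V(F)$ exactly as in the treatment of (\ref{6eab2}). The absence of the central similitude factor merely removes one integration variable and changes no convergence range.

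With the displayed identity in hand I would finish as in the last part of the proof of Theorem~\ref{6t3}. Carrying out the Iwasawa decomposition of $\SL(2,\A)$ inside the $\GL(2)$-factor of $M_1$ turns it into
\begin{multline*}
J_{O_{\sub,z}'}^T(f)=\frac{\vol_{M_2}}{2}\int_{\A^3}f_\bK(z\,\nu_2(n_{12},n_{13},n_{14}))\\
\times\{\log\|(n_{12},n_{13}/2,n_{14})\|+2T_2\}\,\d n_{12}\,\d n_{13}\,\d n_{14}.
\end{multline*}
Splitting $\log\|(n_{12},n_{13}/2,n_{14})\|$ into its $S$-part and its contributions at the places $v\notin S$: the $S$-part together with $2T_2$ reassembles the weight factor $w_{M_2}(1,n,T)$ of Proposition~\ref{5p5}, so that this piece contributes $\tfrac{\vol_{M_2}}{2}J_{M_2}^T(z,f)$, while the local integrals $\int_{N_{P_2}(\fO_v)}\log\|\cdot\|_v=q_v^{-3}\log(q_v^{-1})/(1-q_v^{-3})$ sum over $v\notin S$ to $\zeta_F^S(3)^{-1}\tfrac{\d}{\d s}\zeta_F^S(s)|_{s=3}$ and multiply the log-free integral $\tfrac{\vol_{M_2}}{2}\int_{N_{P_2}(F_S)}f_{\bK_S}(zn)\,\d n$, which in turn equals $\tfrac{\vol_{M_1}}{2\,c_F}J_G(z\,n_\sub(\xd_1),f)$ by the chain of measure identities (switching the Iwasawa decomposition from $P_2$ to $P_1$) that closes the proof of Theorem~\ref{6t3}. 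Adding the two contributions yields the asserted formula. I expect the only genuine obstacle to be the same one as in the $\GSp(2)$ case, namely the absolute convergence of the $\tau_{P_0}^{P_2}$-part: this is where the repeated Poisson summations and the careful bookkeeping of the resulting elementary adelic integrals are needed, whereas everything else is inherited from Theorem~\ref{6t3}.
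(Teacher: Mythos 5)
Your proposal is correct and follows exactly the route the paper takes: the paper's proof of Theorem~\ref{7t3} consists of the single remark that the argument of Theorem~\ref{6t3} carries over verbatim to $\Sp(2)$, and your adaptation (same $\mathfrak u_z$, same splitting into the $F^{P_2}$- and $\tau_{P_0}^{P_2}$-parts, vanishing of the analogue of (\ref{6ellt}) via the mean-value formula, and the same reassembly of the weight factor and the local logarithmic integrals into $\zeta_F^S(3)^{-1}\frac{\d}{\d s}\zeta_F^S(s)|_{s=3}$) is precisely that argument, with the relevant checkpoints (the single $G(F)$-orbit, the restricted centralizer, the $N_{P_2}$-invariance of $\bar O'_{\sub,z}\cap P_2(F)$ despite $O_{\min,z}$ splitting into several orbits) correctly verified.
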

\begin{proof}
The proof is completely the same as Theorem \ref{6t3}.
\end{proof}

We use the notation $\nu(n_{12},n_{13},n_{14},n_{24})$ defined in \eqref{u}.
We have the forms of $J_{M_0}^T(z,f)$ and $J_{M_1}^T(z\nu(1,0,0,0),f)$ as in Section \ref{6s1}.
Let $\d n_*$ denote the Haar measure on $F_S$ which was defined in Section \ref{2s1}.
For $\alpha\in F_S^\times$, by choosing measures on unipotent conjugacy classes, we have
\begin{multline*}
J_{M_2}^T(z\nu(0,0,0,\alpha),f)= c_S \int_{F_S^{\oplus 3}} \d n_{12} \, \d n_{13} \, \d n_{14} \, \int_{\alpha(F_S^\times)^2}\d n_{24} \\
f_{\bK_S}(z\, \nu(n_{12},n_{13},n_{14},n_{24}))\, w_{M_2}(1,\nu(n_{12},n_{13},n_{14},n_{24}),T)
\end{multline*}
and
\begin{multline*}
J_G(z\nu(1,0,\alpha,\alpha),f)= \\
c_S^2 \int_{F_S^{\oplus 3}} \d n_{12} \, \d n_{13} \, \d n_{14} \, \int_{\alpha(F_S^\times)^2}\d n_{24} f_{\bK_S}(z\, \nu(n_{12},n_{13},n_{14},n_{24}))  . 
\end{multline*}
As in Theorem~\ref{7t2} we denote the group $\GSp(2)$ by $G^*$ and its Levi subgroup corresponding to $M_2$ by~$M_2^*$. For a quadratic character $\chi=\prod_{v\in\Sigma} \chi_v$ on $\A^1/F^\times$ and $f^*\in C_c^\inf(G^*(F_S)^1)$, we define the twisted versions
\begin{multline*}
J_{M_2^*,\chi}^T(z\nu(0,0,0,1),f^*)= c_S \int_{F_S^{\oplus 3}} \d n_{12} \, \d n_{13} \, \d n_{14} \, \int_{F_S^\times}\d n_{24} \\
\chi_S(n_{24})\,  f^*_{\bK_S}(z\, \nu(n_{12},n_{13},n_{14},n_{24}))\,  w_{M_2}(1,\nu(n_{12},n_{13},n_{14},n_{24}),T)
\end{multline*}
and
\begin{multline*}
 J_{G^*,\chi}(z\nu(1,0,1,1),f^*)= c_S^2 \int_{F_S^{\oplus 3}} \d n_{12} \, \d n_{13} \, \d n_{14} \, \int_{F_S^\times}\d n_{24} \\
  \chi_S(n_{24}) \,  f^*_{\bK_S}(z\, \nu(n_{12},n_{13},n_{14},n_{24}))  
\end{multline*}
of the distributions occurring in Theorem~\ref{6t4}.
\begin{thm}\label{7t4}
Assume that $S$ contains $\Sigma_\inf\cup\Sigma_2$.
Let $f\in C_c^\inf(G(F_S))$.
The integral $J_{O_{\reg,z}}^T(f)$ converges absolutely, and we have
\begin{align*}
& J_{O_{\reg,z}}^{T}(f)= \\
& \frac{\vol_{M_0}}{8} \, J_{M_0}^T(z,f)+\frac{ \vol_{M_0}}{4 \, c_F} \, \mathfrak c_F(S) \, J_{M_1}^T(z\nu(1,0,0,0),f) \\
& +\frac{ \vol_{M_0}}{4 \, c_F} \sum_{\alpha\in F^\times/(F^\times\cap(F_S^\times)^2)} \Big\{ \sum_\chi \mathfrak c_F(S,\chi) \, \chi_S(\alpha) \Big\} \, J_{M_2}^T(z\nu(0,0,0,\alpha),f) \\
&+\frac{ \vol_{M_0}}{4\, c_F^2 } \sum_{\alpha\in F^\times/(F^\times\cap(F_S^\times)^2)} J_G(z\nu(1,0,\alpha,\alpha),f) \\
& \qquad \qquad \qquad \times \sum_\chi \big\{  2\, \fc_F(S,\chi) \, \fc_F(S) +  w(\chi) \, \fc_F'(S,\chi) \, c_F^S \big\} \, \chi_S(\alpha) 
\end{align*}
where $\chi$ runs over all quadratic characters on $\A^1/F^\times$ unramified outside~$S$, we set $w(\chi)=\begin{cases} 3 & \text{if $\chi=\trep_F$,} \\ 1 &\text{if $\chi\neq \trep_F$,}   \end{cases}$ and the notations $\fc_F(S)$, $\fc_F(S,\chi)$, and $\fc_F'(S,\chi)$ were defined in Section \ref{2s2}. If $f$ is the restriction of a function $f^*\in C_c^\inf(G^*(F_S)^1)$, we also have
\begin{align*}
J_{O_{\reg,z}}^T(f)= & \frac{\vol_{M_0}}{8} \, J_{M_0}^T(z,f)+\frac{ \vol_{M_0}}{4 \, c_F} \, \mathfrak c_F(S) \, J_{M_1}^T(z\nu(1,0,0,0),f) \\
& +\frac{ \vol_{M_0}}{4 \, c_F} \sum_\chi \mathfrak c_F(S,\chi) \, J_{M_2^*,\chi}^T(z\nu(0,0,0,1),f^*) \\
& + \frac{\vol_{M_0}}{4 \, c_F^2} \sum_\chi \big\{  2\, \fc_F(S,\chi) \, \fc_F(S) +  w(\chi) \, \fc_F'(S,\chi) \, c_F^S \big\} \\
& \qquad \qquad \qquad \qquad \times J_{G^*,\chi}(z\nu(1,0,1,1),f^*) 
\end{align*}
where $\chi$ runs over all quadratic characters on $\A^1/F^\times$ unramified outside~$S$.
\end{thm}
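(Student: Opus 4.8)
The plan is to follow the proof of Theorem~\ref{6t4} essentially verbatim, working with the subgroup $G=\Sp(2)$ of $\GSp(2)$; the single structural change is that the similitude variable $c$ occurring in the Iwasawa decomposition~\eqref{6deco1} is now absent (since $A_G=\{0\}$ and $G(\A)^1=G(\A)$), and this is precisely what forces nontrivial quadratic characters into the answer. First I would introduce the deformed distribution $J_{O_{\reg,z}}^T(f,\lambda)$ for $\lambda=\lambda_1\varpi_1+\lambda_2\varpi_2\in\a_{0,\C}^*$ exactly as there, and bound its integrand by the three expressions analogous to \eqref{6era1}, \eqref{6era2}, \eqref{6era3}. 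Using that the torus in the Iwasawa decomposition is now $\diag(a_1^{-1},a_2^{-1},a_1,a_2)$, so that under conjugation the coordinate $n_{12}$ of $g^{-1}\nu g$ scales by $a_1a_2^{-1}$ while $n_{24}$ scales by $a_2^{2}$, the convergence estimates proceed by Poisson summation on $V(F)=N_{P_0}(F)/U(F)$ with $U=N_{P_1}\cap N_{P_2}$, in the same way as in the $\GSp(2)$ case, and yield absolute convergence of $J_{O_{\reg,z}}^T(f,\lambda)$ on a neighbourhood of $\lambda=0$; convergence at $\lambda=0$ is in any case free, by subtracting from $J_{\fo_z}^T(f)$ (convergent by~\cite{Arthur6}) the already-established convergent integrals $J_{O_{\tri,z}}^T(f)$, $J_{O_{\min,z}}^T(f)$, $J_{O_{\sub,z}}^T(f)$, $J_{O_{\sub,z}'}^T(f)$. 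Hence $J_{O_{\reg,z}}^T(f)=\lim_{\lambda\to0}J_{O_{\reg,z}}^T(f,\lambda)$.

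For generic $\lambda$ I would then carry out the reduction to zeta integrals exactly as in \eqref{6e4a1}--\eqref{6e4a3}: after the Iwasawa decomposition and the bijection $N_{P_0,\nu}\bsl N_{P_0}\to\nu U$, the integral splits as a main term attached to the two-dimensional vector space $V=N_{P_0}/U$ with subspaces $V_1\leftrightarrow n_{12}$ and $V_2\leftrightarrow n_{24}$, together with two boundary terms attached to $V_1$ and $V_2$. The crucial point is that, because the variable $c$ is gone, the torus $M_0\cong\GL(1)^2$ acts on $V_1\oplus V_2$ through the sublattice of $X(M_0)$ spanned by the weights $\chi_1-\chi_2$ and $2\chi_2$; choosing the basis $\{\chi_1-\chi_2,\chi_2\}$ of $X(M_0)$ identifies this with the prehomogeneous structure of Section~\ref{3s} having $A=F\oplus F$, $H=\{1\}$ and $\kappa=(1,2)$. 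Thus, up to the usual measure constants, the main term is $\zeta_{F\oplus F}^{(1,2)}(\tilde f_{\bK,z},(s_1,s_2),(T_1',T_2'))$ for suitable linear changes of variables $(\lambda_1,\lambda_2)\mapsto(s_1,s_2)$ and $(T_1,T_2)\mapsto(T_1',T_2')$, the $V_1$-boundary term is an untwisted one-variable Tate-type integral producing the $M_1$-term exactly as in Theorem~\ref{6t4}, and the $V_2$-boundary term is the $\SL(2)$-type zeta integral of Example~\ref{3ex2} (the case $\kappa_1=2$), which is what generates the sum over quadratic characters and the classes $\alpha\in F^\times/(F^\times\cap(F_S^\times)^2)$.

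Finally, taking $\lambda\to0$ and applying \eqref{3e2} together with Theorems~\ref{3t3} and~\ref{3t4} to the main term, and the argument of Section~\ref{3s} (as in the evaluation of \eqref{6e4a2} and \eqref{6e4a3}) to the two boundary terms, yields the first displayed formula. The weight $w(\chi)$ appears for the following reason: for $\chi=\trep_F$ the relevant one-variable integral in the $V_2$ (squared) direction has a pole at $s=1$, so one must extract $\lim_{s\to1}\tfrac{\d^2}{\d s^2}(s-1)^2\zeta_F^1(\cdots)$, which reproduces the coefficient $3$ already seen in Theorem~\ref{6t4}; for $\chi\ne\trep_F$ the integral is holomorphic at $s=1$ and only the plain value $\fc_F'(S,\chi)=\tfrac{\d}{\d s}L^S(s,\chi)|_{s=1}$ enters, giving the factor $1$. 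The $T$-dependent contributions cancel across \eqref{6e4a1}--\eqref{6e4a3} just as in the $\GSp(2)$ computation, leaving the stated $T$-free answer.

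The second displayed formula is then a regrouping of the first. When $f$ is the restriction of $f^*\in C_c^\inf(G^*(F_S)^1)$, summing $J_{M_2}^T(z\nu(0,0,0,\alpha),f)$ (resp.\ $J_G(z\nu(1,0,\alpha,\alpha),f)$) over the square classes $\alpha$ against the weight $\chi_S(\alpha)$ reconstitutes, by the very definitions of $J_{M_2^*,\chi}^T(z\nu(0,0,0,1),f^*)$ and $J_{G^*,\chi}(z\nu(1,0,1,1),f^*)$ given immediately before the theorem, the twisted $\GSp(2)$-distributions; interchanging the order of the sums over $\chi$ and over $\alpha$ converts the first formula into the second. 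I expect the main obstacle to be the bookkeeping in the last two steps: correctly pinning down the prehomogeneous structure as $\kappa=(1,2)$ (rather than, say, $(2,2)$) and tracking the truncation parameters and the measure normalizations through the limit $\lambda\to0$ so that the polynomial-in-$T$ terms cancel and the constants $\fc_F(S,\chi)$, $\fc_F'(S,\chi)$, $c_F^S$, $\fc_F(S)$ emerge with exactly the coefficients stated, with the asymmetry between the trivial and nontrivial characters accounted for by $w(\chi)$.
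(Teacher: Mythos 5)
Your proposal is correct and takes exactly the paper's route: the paper's proof of this theorem is a one-line reference to Theorems \ref{3t3} and \ref{3t4} together with the proof of Theorem \ref{6t4}, and your elaboration---rerunning that argument with the similitude coordinate removed so that the $M_0$-action on $N_{P_0}/U$ has exponents $\kappa=(1,2)$, whence Theorems \ref{3t3} and \ref{3t4} insert the quadratic characters, while the two boundary terms and the final regrouping into the twisted $\GSp(2)$-distributions proceed as you describe---is precisely what is intended. The only slip is cosmetic: the coefficient $w(\trep_F)=3$ arises as $2+1$ from the two boundary terms (the analogues of \eqref{6e4a2} and \eqref{6e4a3}), not from the $V_2$ direction alone, but since the $\trep_F$-component of the whole computation coincides with the $\GSp(2)$ one, this does not affect your conclusion.
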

\begin{proof}
This theorem follows from Theorems \ref{3t3} and \ref{3t4} and the proof of Theorem \ref{6t4}.
\end{proof}

Theorems \ref{7t1}, \ref{7t2}, \ref{7t3}, and \ref{7t4} are summarized as follows.
\begin{thm}\label{7t5}
Assume that $S$ contains $\Sigma_\inf\cup\Sigma_2$.
Let $f\in C_c^\inf(G(F_S))$.
The integrals $J_{O_{\min,z}}^T(f)$, $J_{O_{\sub,z}}^T(f)$, $J_{O_{\sub,z}'}^T(f)$, and $J_{O_{\reg,z}}^T(f)$ converge absolutely for any $T\in\a_0^+$.
We have
\[ (\cU_G(F))_{G,S}=\Big\{ 1 , \; n_\min(\alpha) , \; n_\sub(x), \; n_\reg(\alpha) \, \Big| \, \begin{array}{c} \alpha\in F^\times/F^\times\cap(F_S^\times)^2 , \\  x\in V^\ss(F)/{\sim_S'} \end{array} \Big\} . \]
For the coefficients in the fine expansion of $J_{\fo_z}(f)$ (cf. Section \ref{basobj}), we have
\[
a^G(S,z\,n_\min(\alpha))= \frac{ \vol_{M_2}  }{2\, c_F} \sum_\chi \chi_S(\alpha) \, L^S(2,\chi),
\]
where $\chi$ runs over all quadratic characters on $\A^1/F^\times$ unramified outside~$S$,
\begin{align*}
 a^G(S,z\,n_\sub(x))=& \, \frac{\vol_{M_1}}{2 \, c_F}  \, \fC_F(S,-\det(x))\\
& + \frac{\vol_{M_1}}{2 \, c_F} \, \Big( \prod_{v\in S}\varepsilon_v(x) \Big) \sum_{\ds\in\para^{\mathrm{ur}}(F,S,-\det(x))}  L^S(1,\chi_d) \\
& + \frac{\vol_{M_1}}{2\, c_F}  \begin{cases} \zeta_F^S(3)^{-1} \frac{\d}{\d s}\zeta_F^S(s)|_{s=3}   & \text{if $x\sim_S'\xd_1$} , \\ 0 & \text{otherwise} , \end{cases} 
\end{align*}
and
\begin{multline*}
a^G(S,z\,n_\reg(\alpha))= \\
\frac{\vol_{M_0}}{4\, c_F^2} \sum_\chi \big\{  2\, \fc_F(S,\chi) \, \fc_F(S) +  w(\chi) \, \fc_F'(S,\chi) \, c_F^S \big\} \, \chi_S(\alpha)
\end{multline*}
where $w(\chi)$ was defined in Theorem \ref{7t4} and $\chi$ runs over all quadratic characters on $\A^1/F^\times$ unramified outside~$S$.
\end{thm}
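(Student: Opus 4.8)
The plan is to derive Theorem~\ref{7t5} by assembling the four preceding theorems and comparing the outcome with the $T$-dependent fine geometric expansion. First I would record that the absolute convergence of $J_{O_{\min,z}}^T(f)$, $J_{O_{\sub,z}}^T(f)$, $J_{O_{\sub,z}'}^T(f)$ and $J_{O_{\reg,z}}^T(f)$ for every $T\in\a_0^+$ is exactly the content of Theorems~\ref{7t1}, \ref{7t2}, \ref{7t3} and \ref{7t4}, so that the identity $J_{\fo_z}^T(f)=J_{O_{\tri,z}}^T(f)+J_{O_{\min,z}}^T(f)+J_{O_{\sub,z}}^T(f)+J_{O_{\sub,z}'}^T(f)+J_{O_{\reg,z}}^T(f)$ is legitimate. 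Next I would justify the parametrization of $(\cU_G(F))_{G,S}$: over $\bar F$ there are the four unipotent classes (unit, minimal, subregular, regular), the minimal and regular ones split over $F$ according to $F^\times/(F^\times\cap(F_S^\times)^2)$ exactly as in the $\SL(2)$ case recalled in the introduction, while the subregular ones split according to $V^\ss(F)/{\sim_S'}$ via the identification \eqref{M1} of $M_1$ with $\GL(2)$ acting on symmetric $2\times2$ matrices, using that $G(F_S)$-conjugacy on $n_\sub(x)$ is controlled by the class of $-\det(x)$ in $(F_S^\times)^2$ together with the local Hasse invariants $\varepsilon_v(x)$, $v\in S$; the relevant case analysis is the $\Sp(2)$-analogue of Lemma~\ref{5l} and Propositions~\ref{5pm0}--\ref{5pm2}.

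For the main point I would write down the $T$-dependent fine geometric expansion $J_{\fo_z}^T(f)=\sum_{M\in\L}|W_0^M|\,|W_0^G|^{-1}\sum_{u}a^M(S,zu)\,J_M^T(zu,f)$ (the inner sum over $u\in(\cU_M(F))_{M,S}$, legitimate since $z$ is central, so $G_z=G$ and $\varepsilon^G(z)=1$), and substitute the explicit formulas of Theorems~\ref{7t1}--\ref{7t4} into the left-hand side. Each of those formulas is a linear combination of (i) weighted orbital integrals $J_{M_i}^T(zu',f)$ with $M_i\in\{M_0,M_1,M_2\}$ a proper Levi, and (ii) ordinary orbital integrals $J_G(zn_\bullet(\cdot),f)$. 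Since $J_G^T=J_G$, the terms of type (ii) are precisely the $M=G$ part of the fine expansion, so their coefficients are by definition the numbers $a^G(S,zn_\bullet(\cdot))$, and reading them off from Theorems~\ref{7t1}, \ref{7t2} and \ref{7t3} (for the two subregular pieces), and \ref{7t4} yields the three displayed formulas for $a^G(S,zn_\min(\alpha))$, $a^G(S,zn_\sub(x))$ and $a^G(S,zn_\reg(\alpha))$. To see that the identification is unambiguous, one verifies that the terms of type (i) already reconstitute the $M\neq G$ part of the expansion: for each proper $M_i$ the coefficient of $J_{M_i}^T(zu',f)$ produced by Theorems~\ref{7t1}--\ref{7t4} equals the number of $G(F)$-conjugates of $M_i$ in $\L$, times $|W_0^{M_i}|\,|W_0^G|^{-1}$, times the independently known value of $a^{M_i}(S,zu')$ (from the $\GL(2)$ and $\SL(2)$ coefficients recalled in the introduction, the computations of Section~\ref{3s}, and the descent formula of Section~\ref{desc} for the central factor $z$); in particular this accounts for the semisimple ``cross'' terms $\tfrac{\vol_{M_0}}{8}J_{M_0}^T(z,f)$, $\tfrac{\vol_{M_1}}{2}J_{M_1}^T(z,f)$, $\tfrac{\vol_{M_2}}{2}J_{M_2}^T(z,f)$ distributed among the five pieces. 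This is a finite bookkeeping matching, consistent because of the fixed normalizations of Haar measures on $\a_0^G$, $\a_{M_1}^G$, $\a_{M_2}^G$ (Condition~\ref{5c1}), on the orbits $O_u(G(F_S))$, and on $\bK$ from Sections~\ref{2s3} and~\ref{5s2}.

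Finally I would note that the right-hand sides do not depend on $z\in Z(F)=\{\pm I_4\}$: this is immediate from the descent formula of Section~\ref{desc}, since $z$ is central, so $G_z=G_{z,+}=G$, $\iota^G(z)=1$, $\varepsilon^G(z)=1$, whence $a^G(S,zu)=a^G(S,u)$ for every unipotent $u$. The genuinely analytic work has already been carried out in Theorems~\ref{7t1}--\ref{7t4}; the only delicate step remaining is the bookkeeping of the previous paragraph --- checking that, after substitution, no contribution is dropped or double-counted and that the coefficients of the proper-Levi weighted orbital integrals agree with the previously computed $a^{M_i}$ --- which doubles as a consistency check on the measure normalizations, and is where I expect the main (though not conceptual) difficulty to lie.
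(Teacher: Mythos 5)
Your proposal is correct and matches the paper's intent: Theorem~\ref{7t5} is stated there as a direct summary of Theorems~\ref{7t1}--\ref{7t4}, so assembling those results, invoking the $T$-dependent fine expansion with the descent formula for the central $z$, and reading off the coefficients of the invariant orbital integrals $J_G(z\,n_\bullet(\cdot),f)$ (with the subregular coefficient collecting contributions from both Theorems~\ref{7t2} and~\ref{7t3}) is exactly the argument the authors leave implicit. Your additional bookkeeping check that the proper-Levi weighted orbital integral terms reproduce the known $a^{M_i}$ coefficients is a harmless consistency verification rather than a departure from the paper's route.
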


As an analogue of Theorem \ref{6t6}, we also see that the contribution of subregular unipotent elements is expressed by special values of zeta integrals at $s=0$.
\begin{thm}\label{7t6}
Fix a test function $f\in C_c^\inf(G(\A))$.
We set $\Phi_z(x)=f_\bK(z \, n_\sub(x))$ and we identify $M_1$ with $H'=\GL(2)$ by \eqref{M1}.
If we assume $\hat\Phi_z(0)=0$, then we have
\[ J_{O_{\sub,z}}^T(f)+J_{O_{\sub,z}'}^T(f)= Z^{\Sp(2)}_\mathrm{ad}(\hat\Phi_z,0) + \frac{\vol_{M_0}}{2\, c_F} \mathfrak{T}_3(\widetilde{R_0\Phi_{z,\bK}},0,T_2) \]
in the notation defined in Section~\ref{4s9}.
\end{thm}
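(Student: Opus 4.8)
The plan is to mirror the proof of Theorem~\ref{6t6}, since each ingredient used there has an $\Sp(2)$-counterpart already at hand. First I would observe that the argument in the proof of Theorem~\ref{7t2} — which is the proof of Theorem~\ref{6t2} transported to $\Sp(2)$ — produces, prior to any restriction on $S$ or on $f$, the identity $J_{O_{\sub,z}}^T(f)=Z^{\Sp(2)}(\Phi_z,3/2,T_1)$ for every $f\in C_c^\infty(G(\A))$; the hypothesis that $f$ descend from $\GSp(2)$ in Theorem~\ref{7t2} is only needed to re-express the right-hand side through $J_{G^*}$ and $J_{G^*,\chi_d}$. Likewise the proof of Theorem~\ref{7t3}, being verbatim that of Theorem~\ref{6t3}, yields the $\Sp(2)$-analogue of~\eqref{6e6}, namely
\[
J_{O_{\sub,z}'}^T(f)=\frac{\vol_{M_0}}{2\,c_F}\int_{\A^\times}\int_{\A} f_\bK(z\,\nu_2(0,n_{13},x))\,|x|^2\,\{\log\|(x,n_{13}/2)\|+2T_2\}\,\d n_{13}\,\d^\times x .
\]

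Next I would carry out the substitution already employed in the proof of Theorem~\ref{6t6}. Since $z$ is central and $\bK\cap M_1(\A)$ is carried by~\eqref{M1} onto the maximal compact subgroup of $H'$, we have $(\Phi_z)_\bK=\Phi_z$ and $f_\bK(z\,\nu_2(0,n_{13},x))=R_0\Phi_{z,\bK}(2x,n_{13})$; putting $a=2x$, $n_{13}=au$ and using the product formula $|2|=1$ together with $\|(a,n_{13})\|=|a|\,\|(1,u)\|$ transforms the displayed integral into $\frac{\vol_{M_0}}{2\,c_F}\mathfrak{T}_2(R_0\Phi_{z,\bK},3/2,T_2)$. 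Here I use that the minimal Levi subgroup $M_0$ of $\Sp(2)$ is identified under~\eqref{M1} with the diagonal torus $M$ of $H'$, so $\vol_{M_0}=\vol_M$, the same constant that appears in the definition of $Z_{\mathrm{ad}}^{\Sp(2)}$ in Section~\ref{4s9}.

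Finally I would add the two identities and insert the sum into the second displayed identity of Proposition~\ref{4p26}, applied with $\Phi=\Phi_z$ (legitimate because $\hat\Phi_z(0)=0$ by assumption): it converts $Z^{\Sp(2)}(\Phi_z,3/2,T_1)+\frac{\vol_{M_0}}{2\,c_F}\mathfrak{T}_2(R_0\Phi_{z,\bK},3/2,T_2)$ into $Z_{\mathrm{ad}}^{\Sp(2)}(\hat\Phi_z,0)+\frac{\vol_{M_0}}{2\,c_F}\mathfrak{T}_3(\widetilde{R_0\Phi_{z,\bK}},0,T_2)$, which is the claim. Since every step is a citation or the elementary substitution already done for $\GSp(2)$, I expect no real obstacle; the only point calling for care is the bookkeeping of measure normalizations — checking that the change of variables gives exactly the constant $\vol_{M_0}/(2c_F)$, with no stray factor of $|2|$ (this is where the product formula enters), and that this is the same normalization that is built into the adjusting term of $Z_{\mathrm{ad}}^{\Sp(2)}$.
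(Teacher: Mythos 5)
Your argument is correct and is exactly the route the paper takes (its proof of Theorem~\ref{7t6} simply cites Proposition~\ref{4p26}, Theorem~\ref{7t2}, and the proof of Theorem~\ref{6t3}); you have merely written out the citations, including the correct observation that the identity $J_{O_{\sub,z}}^T(f)=Z^{\Sp(2)}(\Phi_z,3/2,T_1)$ needs no descent hypothesis on $f$ and that the substitution $a=2x$, $n_{13}=au$ turns the $\Sp(2)$-analogue of \eqref{6e6} into $\frac{\vol_{M_0}}{2c_F}\mathfrak{T}_2(R_0\Phi_{z,\bK},3/2,T_2)$ with $\vol_M=\vol_{M_0}$. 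No gaps.
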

\begin{proof}
The formula follows from Proposition \ref{4p26}, Theorem \ref{7t2}, and the proof of Theorem \ref{6t3}.
\end{proof}
In a way analogous to Theorem \ref{6t6}, this theorem is also suitable for explicit calculations of traces of Hecke operators (cf. \cite{Arakawa,IS,Shintani,Wakatsuki}).

\appendix 

\section{The group $\GL(3)$}\label{appen1}

\subsection{Setup}\label{appen11}

Throughout this appendix, we set $G=\GL(3)$.
We fix the maximal compact subgroup
\[
\bK=\prod_{v\in\Sigma}\bK_v \quad \text{where} \quad  \bK_v=\begin{cases} \mathrm{U}(3) & \text{if $v\in\Sigma_\C$}, \\ \mathrm{O}(3) & \text{if $v\in\Sigma_\R$}, \\ \GL(3,\fO_v) & \text{if $v\in\Sigma_\fin$}, \end{cases}
\]
and choose the minimal Levi subgroup $M_0$ and the minimal parabolic subgroup $P_0$ as
\[
M_0=\left\{  \begin{pmatrix}*&0&0 \\ 0&*&0 \\ 0&0&* \end{pmatrix}\in G   \right\} \quad \text{and} \quad P_0=\left\{  \begin{pmatrix}*&*&* \\ 0&*&* \\ 0&0&* \end{pmatrix}\in G   \right\} . 
\]

The rational homomorphism $\chi_j : M_0 \to \GL(1)$ is defined by
\[
\chi_j( \diag(a_1,a_2,a_3)) = a_j , \quad 1\leq j\leq 3  . 
\]
Then $\{\chi_1,\chi_2,\chi_3\}$ is a basis of the free abelian group $X(M_0)_F$.
An element $e_j\in \a_0$ is defined by $e_j(  \chi_1^{k_1}\chi_2^{k_2}\chi_3^{k_3}  ) = k_j$.
Then, we see that
\[
\a_0^*=\R\chi_1\oplus\R\chi_2\oplus\R\chi_3 \quad \text{and} \quad \a_0=\R e_1\oplus \R e_2 \oplus \R e_3  
\]
and we have
\[ \Delta_0=\{\alpha_1 \, , \; \alpha_2 \} \quad (\alpha_1=\chi_1-\chi_2 \, , \; \alpha_2=\chi_2-\chi_3).  \]

We set
\[
P_1=\left\{  \begin{pmatrix}*&*&* \\ *&*&* \\ 0&0&* \end{pmatrix}\in G   \right\} \quad \text{and} \quad P_2=\left\{  \begin{pmatrix}*&*&* \\ 0&*&* \\ 0&*&* \end{pmatrix}\in G   \right\}  .
\]
Then, we have $\{ P\in\P\, |\, P\supset P_0  \}=\{ P_0, \, P_1, \, P_2 , \, G \}$.
Let $W_0$ denote the symmetric group of degree three.
The group $W_0$ acts on $\a_0$ as $se_j=e_{s(j)}$ for $s\in W_0$.
Let $w_s$ denote the matrix representing the action of $s\in W_0$.
For $s\in W_0$ and a subgroup $H$ of $G$, we set $s H=w_s Hw_s^{-1}$. 
Using this action, we have
\[
\L=\{ M_0,\, M',\, (13)M', \, (23)M',\, G   \} , \quad  \L(M')=\{ M', \, G \}
\]
where we set $M'=M_{P_1}$.
Note that $M_{P_2}=(13)M'$.
Furthermore, we have
\[
\P=\{ s P_0 \; | \; s \in W_0   \}, \quad \P(M')=\{ P_1 , \; (13)P_2  \},
\]
\[
\P((13)M')=\{ P_2 , \; (13)P_1  \}, \quad \P((23)M')=\{ (23)P_1 , \; (12)P_2\}.
\]

\subsection{Weight factors}\label{appen13}
From now on, we fix Haar measures on $\a_0^G$ and $\a_{M'}^G$.
\begin{cond}\label{a1c1}
We choose a Haar measure on $\a_0^G\cong A_{M_0}^+/A_G^+$ as $\d r_1  \, \d r_2$ for $r_1(e_1-e_2)+ r_2(e_1-e_2)\in \a_0^G$, where $\d r_1$ and $\d r_2$ are the Lebesgue measure on $\R$.
A Haar measure on $\a_{M'}^G\cong A_{M'}^+/A_G^+$ is fixed by the Lebesgue measure $\d r$ on $\R$ for $r( \frac{e_1+e_2}{2}- e_3)\in\a_{M'}^G$.
\end{cond}
Under Condition \ref{a1c1}, we have
\[
\vol(\a^G_{M_0}/\Z(\Delta^\vee_{P_0}))=\vol(\a^G_{M'}/\Z(\Delta^\vee_{P_1}))=1,
\]
where $\Delta^\vee_P$ is the basis of $\a_P^G$ dual to $\widehat\Delta_P$.

We set
\[
\alpha_1^\vee=e_1-e_2, \quad  \alpha_2^\vee=e_2-e_3\in\a_0^G,
\]
\[
\varpi_1=\frac{2}{3}\alpha_1+\frac{1}{3}\alpha_2,\quad  \varpi_1=\frac{1}{3}\alpha_1+\frac{2}{3}\alpha_2\in(\a_0^G)^* .
\]
Then, we have $\Delta_0^\vee=\{\alpha_1^\vee\, , \; \alpha_2^\vee\}$ and $\widehat\Delta_0=\{\varpi_1 \, , \; \varpi_2\}$.
We set
\[
T=T_1\alpha_1^\vee +T_2\alpha_2^\vee\in \a_0,
\]
where $T_1$, $T_2\in\R$, and
\[
u(x_{12},x_{13},x_{23})=\begin{pmatrix}1&x_{12}&x_{13} \\ 0&1&x_{23} \\ 0&0&1 \end{pmatrix}.
\]
\begin{lem}\label{al1}
For the element $n=u(n_{12},n_{13},n_{23})\in N_{P_0}(\A)$ we have
\begin{multline*}
v_{sP_0}(\lambda,n,T)= \\
\begin{cases}
e^{\lambda_1T_1+\lambda_2T_2}  & \text{if $s=1$,} \\
\| (1,n_{12}) \|^{-\lambda_1} \,e^{\lambda_1(T_2-T_1)+\lambda_2T_2}  & \text{if $s=(12)$,} \\
\|(1,n_{23})\|^{-\lambda_2} \,e^{\lambda_1T_1+\lambda_2(T_1-T_2)}  & \text{if $s=(23)$,} \\
\|(1,n_{12})\|^{\lambda_2} \,\|(1,n_{12},n_{13})\|^{-\lambda_1-\lambda_2} \, e^{-\lambda_1T_2+\lambda_2(T_1-T_2)}  & \text{if $s=(123)$,} \\
\|(1,n_{23})\|^{\lambda_1} \,\|(1,n_{13}-n_{12}n_{23},n_{23})\|^{-\lambda_1-\lambda_2} &  \\
\quad \times  e^{\lambda_1(T_2-T_1)-\lambda_2T_1}  & \text{if $s=(132)$,} \\
\|(1,n_{12},n_{13})\|^{-\lambda_1} \,\|(1,n_{13}-n_{12}n_{23},n_{23})\|^{-\lambda_2} &  \\
\quad \times e^{-\lambda_1T_2-\lambda_2T_1}  & \text{if $s=(13)$} 
\end{cases}
\end{multline*}
where $\lambda=\lambda_1\varpi_1+\lambda_2\varpi_2$.
\end{lem}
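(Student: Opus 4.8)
The plan is to follow, in a simplified form, the method used to prove Lemma~\ref{5l3}. First I would produce closed formulas for the functions attached to the two maximal standard parabolics $P_1$ and $P_2$. Using the tautological right action of $G(F_S)=\GL(3,F_S)$ on the space $F_S^{3}$ of row vectors, endowed with the $\bK\cap G(F_S)$-invariant height $\|\cdot\|_S$, and the induced action on the exterior square $\Lambda^{2}F_S^{3}$, one checks that $\mathbf e_3=(0,0,1)$ spans a $P_1$-stable line and that $\mathbf e_2\wedge\mathbf e_3$ spans a $P_2$-stable line, and that these eigenlines carry exactly the weights $\varpi_2$ and $\varpi_1$ (after dropping the determinant character, which is harmless since we only need $g$ unipotent). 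Hence $v_{P_1}(\lambda_2\varpi_2,g)=\|\mathbf e_3 g\|_S^{\lambda_2}$ and $v_{P_2}(\lambda_1\varpi_1,g)=\|(\mathbf e_2\wedge\mathbf e_3)g\|_S^{\lambda_1}$, where $(\mathbf e_2\wedge\mathbf e_3)g$ is identified with the triple of $2\times2$ minors formed from the last two rows of $g$. Since $H_{P_0}(g)$ is determined by its images $H_{P_1}(g)$ and $H_{P_2}(g)$, this gives $v_{P_0}(\lambda,g)=\|(\mathbf e_2\wedge\mathbf e_3)g\|_S^{\lambda_1}\,\|\mathbf e_3 g\|_S^{\lambda_2}$ for $\lambda=\lambda_1\varpi_1+\lambda_2\varpi_2$.

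Next I would pass to an arbitrary $sP_0$ with $s\in W_0$. Because the chosen representatives $w_s$ lie in $\bK$ one has $H_{P_0}(w_s^{-1})=0$, so the cocycle identity for $H_P$ recalled in Section~\ref{basobj} gives $H_{sP_0}(h)=\Ad(w_s)\bigl(H_{P_0}(w_s^{-1}h)\bigr)$ and $T_{sP_0}=\Ad(w_s)(T_{P_0})$ with $T_{P_0}=T_1\alpha_1^\vee+T_2\alpha_2^\vee$. Substituting $h=n=u(n_{12},n_{13},n_{23})$ into $v_{sP_0}(\lambda,n,T)=e^{\lambda(T_{sP_0})}\,v_{P_0}(s^{-1}\lambda,w_s^{-1}n)$ reduces the computation, for each of the six $s$, to evaluating $\mathbf e_{s^{-1}(3)}\,n$ and $(\mathbf e_{s^{-1}(2)}\wedge\mathbf e_{s^{-1}(3)})\,n$. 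These are immediate: for instance $\mathbf e_1 n=(1,n_{12},n_{13})$, $(\mathbf e_1\wedge\mathbf e_2)n$ corresponds to $(1,n_{23},n_{13}-n_{12}n_{23})$, while factors involving $\mathbf e_3$ or $\mathbf e_2\wedge\mathbf e_3$ contribute $1$ and the remaining ones contribute $\|(1,n_{12})\|_S$ or $\|(1,n_{23})\|_S$. Assembling the resulting powers of $\lambda_1,\lambda_2$ together with the exponential $e^{\lambda(\Ad(w_s)T)}$ yields the table in the lemma; as in the proof of Lemma~\ref{5l3}, setting $n_{12}=0$ or $n_{23}=0$ reads off the intermediate formulas for the maximal parabolics.

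The only delicate part is the bookkeeping: one must keep track of the sign in $v_P(\lambda,g,0)=e^{-\lambda(H_P(g))}$ (which is why norms enter with exponents such as $-\lambda_1$), of which of $\lambda_1,\lambda_2$ is attached to which norm once $\varpi_1,\varpi_2$ have been moved by $s^{-1}$ (so that the longest element $s=(13)$ interchanges the roles of $\lambda_1$ and $\lambda_2$, since $-w_0$ swaps $\varpi_1$ and $\varpi_2$), and of the six exponential factors $e^{\lambda_1(\cdots)+\lambda_2(\cdots)}$ coming from $\Ad(w_s)T$. No idea beyond the proof of Lemma~\ref{5l3} is required, so the main work is simply making sure all six lines of the table are internally consistent, e.g.\ with the $s=1$ line $e^{\lambda_1T_1+\lambda_2T_2}$.
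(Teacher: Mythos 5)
Your proposal is correct and is essentially the paper's own argument: the proof of Lemma~\ref{al1} is exactly the method of Lemma~\ref{5l3} transported to $\GL(3)$, namely reading off $\|\mathbf e_3 g\|$ and $\|(\mathbf e_2\wedge\mathbf e_3)g\|$ (the weights $\varpi_2$ resp.\ $\varpi_1$ attached to $P_1$ resp.\ $P_2$) for $g=w_s^{-1}n$ and combining them with the exponential factors coming from $T_{sP_0}=\Ad(w_s)T_{P_0}$, which is legitimate since $w_s\in\bK\cap G(F)$. The only nitpick is that the determinant character can be dropped because $w_s^{-1}n$ lies in $G(F_S)^1$ (not because the argument is unipotent), which is precisely the remark ``we need only the case $g\in G(F_S)^1$'' in the proof of Lemma~\ref{5l3}.
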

\begin{proof}
This follows from an argument similar to the proof of Lemma \ref{5l3}.
\end{proof}

\begin{prop}\label{ap2}
Assume Condition \ref{a1c1}.
For $\nu=u(\nu_{12},\nu_{13},\nu_{23}) \in N_{P_0}(F_S)$ we have
\begin{align*}
w_{M_0}(1,\nu,T)  = & \frac{1}{2}\{  (\log|\nu_{12}|_S)^2+(\log|\nu_{23}|_S)^2+4(\log|\nu_{12}|_S)(\log|\nu_{23}|_S)  \} \\
& + 3T_2\log|\nu_{12}|_S + 3T_1\log|\nu_{23}|_S - \frac{3}{2}T_1^2 - \frac{3}{2}T_2^2 + 6 T_1T_2.
\end{align*}
\end{prop}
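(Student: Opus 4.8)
The plan is to follow the proof of Proposition~\ref{5p4} verbatim, now with the root datum of $\GL(3)$ in place of that of $\GSp(2)$. By the definition of $w_{M_0}$ in Section~\ref{2s4}, we must realize $w_{M_0}(1,\nu,T)$ as the unipotent limit of the weight attached to the $(G,M_0)$-family $\{v_{sP_0}(\lambda,n,T)\}_{s\in W_0}$ of Lemma~\ref{al1}. First I would solve the conjugacy relation $a\nu=n^{-1}an$ for $n=u(n_{12},n_{13},n_{23})$ in terms of $\nu=u(\nu_{12},\nu_{13},\nu_{23})$ and $a=\diag(a_1,a_2,a_3)\in A_{M_0}^+$, obtaining
\[
n_{12}=\Bigl(1-\frac{a_2}{a_1}\Bigr)^{-1}\nu_{12},\qquad
n_{23}=\Bigl(1-\frac{a_3}{a_2}\Bigr)^{-1}\nu_{23},
\]
\[
n_{13}=\Bigl(1-\frac{a_3}{a_1}\Bigr)^{-1}\Bigl(\nu_{13}-\frac{a_3-a_2}{a_1}\,n_{12}n_{23}\Bigr),
\qquad
n_{13}-n_{12}n_{23}=\Bigl(1-\frac{a_3}{a_1}\Bigr)^{-1}\Bigl(\nu_{13}-\frac{a_2}{a_2-a_3}\,\nu_{12}\nu_{23}\Bigr),
\]
the last identity being the one that feeds the $s=(132)$ and $s=(13)$ rows of Lemma~\ref{al1}.

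Next I would substitute these into the six functions $v_{sP_0}(\lambda,n,T)$ of Lemma~\ref{al1}, multiply by the factors $r_\beta(\lambda,1,a)$ attached to the positive roots of $A_{M_0}$ in $N_{P_0}$, with the coroots scaled as in Section~\ref{2s4} so that the limit $a\to1$ exists and is nonzero, and pass to that limit. As in Proposition~\ref{5p4}, all the height functions $\|(1,n_{12})\|$, $\|(1,n_{23})\|$, $\|(1,n_{12},n_{13})\|$, $\|(1,n_{13}-n_{12}n_{23},n_{23})\|$ collapse in the limit to monomials in $|\nu_{12}|_S$ and $|\nu_{23}|_S$ --- the $\nu_{13}$-dependence and the divergent cross terms drop out --- so that each $w_{sP_0}(\lambda,1,\nu,T)$ comes out in the form $|\nu_{12}|_S^{\ell_s(\lambda)}\,|\nu_{23}|_S^{m_s(\lambda)}\,e^{L_s(\lambda)}$ for explicit linear forms $\ell_s,m_s,L_s$ in $\lambda=\lambda_1\varpi_1+\lambda_2\varpi_2$, with $L_s$ depending linearly on $T_1,T_2$. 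In parallel I would record the polynomials $\theta_{sP_0}(\lambda)=\langle\lambda,(s\alpha_1)^\vee\rangle\,\langle\lambda,(s\alpha_2)^\vee\rangle$, which under Condition~\ref{a1c1} carry no extraneous normalizing constant (exactly as in the $\GSp(2)$ list in the proof of Proposition~\ref{5p4}); for instance $\theta_{P_0}(\lambda)=\lambda_1\lambda_2$ and $\theta_{(12)P_0}(\lambda)=(-\lambda_1)(\lambda_1+\lambda_2)$, and similarly for the remaining four.

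Finally, writing $|\nu_{ij}|_S^{c\lambda}=e^{c\lambda\log|\nu_{ij}|_S}$, the collection $\{w_{sP_0}(\lambda,1,\nu,T)/\theta_{sP_0}(\lambda)\}_{s\in W_0}$ becomes an instance of the combinatorial configuration handled by \cite[Lemma~3.4]{Arthur0}, so $w_{M_0}(1,\nu,T)=\lim_{\lambda\to0}\sum_{s\in W_0}w_{sP_0}(\lambda,1,\nu,T)/\theta_{sP_0}(\lambda)$ is a quadratic polynomial in $\log|\nu_{12}|_S$, $\log|\nu_{23}|_S$, $T_1$, $T_2$; reading off its coefficients yields the stated formula. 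The main obstacle is the bookkeeping in the middle step: fixing the scaled coroots $\beta^\vee$ (hence the exponents of the $r_\beta$-factors) correctly, and verifying that every one of the six height functions in Lemma~\ref{al1} really does degenerate, as $a\to1$, to the expected monomial in $|\nu_{12}|_S,|\nu_{23}|_S$, so that Arthur's lemma applies with the two variables $\log|\nu_{12}|_S$ and $\log|\nu_{23}|_S$. Everything after that is the same finite computation as in Proposition~\ref{5p4}, only smaller, since $|W_0|=6$ here rather than $8$.
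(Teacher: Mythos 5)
Your proposal is correct and follows essentially the same route as the paper: the paper's proof of this proposition likewise specializes the method of Proposition~\ref{5p4}, reducing each $v_{sP_0}(\lambda,n,T)$ of Lemma~\ref{al1} to a monomial $|\nu_{12}|_S^{\ell_s(\lambda)}|\nu_{23}|_S^{m_s(\lambda)}e^{L_s(\lambda)}$ after multiplying by the $r_\beta$-factors and letting $a\to1$, and then invoking \cite[Lemma~3.4]{Arthur0}. Your explicit formulas for $n_{12}$, $n_{23}$, $n_{13}$ and $n_{13}-n_{12}n_{23}$ in terms of $\nu$ check out, and the resulting table of $w_{sP_0}(\lambda,1,\nu,T)$ and the $\theta_{sP_0}$ agree with what the paper records.
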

\begin{proof}
It follows from Lemma \ref{al1} that
\[  w_{sP_0}(\lambda,1,\nu,T)=\begin{cases}
e^{\lambda_1T_1+\lambda_2T_2}  & \text{if $s=1$,} \\
|\nu_{12}|_S^{-\lambda_1} \,e^{\lambda_1(T_2-T_1)+\lambda_2T_2}  & \text{if $s=(12)$,} \\
|\nu_{23}|_S^{-\lambda_2} \,e^{\lambda_1T_1+\lambda_2(T_1-T_2)}  & \text{if $s=(23)$,} \\
|\nu_{12}\nu_{23}|_S^{-\lambda_1}|\nu_{23}|_S^{-\lambda_2} \, e^{-\lambda_1T_2+\lambda_2(T_1-T_2)}  & \text{if $s=(123)$,} \\
|\nu_{12}|_S^{-\lambda_1}|\nu_{12}\nu_{23}|_S^{-\lambda_2} \, e^{\lambda_1(T_2-T_1)-\lambda_2T_1}  & \text{if $s=(132)$,} \\
|\nu_{12}\nu_{23}|_S^{-\lambda_1}|\nu_{12}\nu_{23}|_S^{-\lambda_2} \, e^{-\lambda_1T_2-\lambda_2T_1}  & \text{if $s=(13)$} 
\end{cases}    \]
where $\lambda=\lambda_1\varpi_1+\lambda_2\varpi_2$.
Hence, this proposition follows.%is derived from (\ref{2e1}).
\end{proof}
Using Lemma \ref{al1} we can easily compute the following.
\begin{prop}\label{ap3}
Assume Condition \ref{a1c1}.
For $u=u(u_{12},0,0)$ and $\nu=u(0,\nu_{13},\nu_{23}) \in N_{P_1}(F_S)$ we have
\begin{align*}
& w_{M'}(1,\nu,T)=\log \|(\nu_{13},\nu_{23})\|_S+T_1+T_2 \, , \\ 
& w_{M'}(1,u\nu,T)= \log|\nu_{23}u_{12}|_S+T_1+T_2 . 
\end{align*}
\end{prop}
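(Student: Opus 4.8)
The plan is to mimic the proof of Proposition~\ref{5p5}, using that $M'$ is a maximal Levi subgroup, so that $\P(M')=\{P_1,(13)P_2\}$ consists of two opposite parabolic subgroups and the sum defining $w_{M'}(1,\mu\nu,T)$ in~\eqref{2e1} collapses to a single difference quotient. Concretely, $\a_{M'}^*=\R\varpi_2$, and under Condition~\ref{a1c1} one has $\theta_{P_1}(\lambda_2\varpi_2)=\lambda_2$ and $\theta_{(13)P_2}(\lambda_2\varpi_2)=-\lambda_2$ (the normalization of the coroots in Condition~\ref{a1c1} is exactly what makes the constant equal $1$). Hence
\[
w_{M'}(1,\mu\nu,T)=\lim_{\lambda_2\to0}\frac{w_{P_1}(\lambda_2\varpi_2,1,\mu\nu,T)-w_{(13)P_2}(\lambda_2\varpi_2,1,\mu\nu,T)}{\lambda_2},
\]
which is the value at $\lambda_2=0$ of the derivative in $\lambda_2$ of the numerator, so that $w_{M'}(1,\mu\nu,T)$ is a polynomial of degree one in the relevant logarithms, with coefficients read off from the linear terms of the two functions $w_P$.

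Next I would extract the two ingredients from Lemma~\ref{al1} specialized to $\lambda=\lambda_2\varpi_2$ (i.e. $\lambda_1=0$). The case $s=1$ gives $v_{P_1}(\lambda_2\varpi_2,n,T)=e^{\lambda_2T_2}$, and since $P_1$ contributes no reduced root $\beta$ to the modified $(G,M')$-family of Section~\ref{2s4}, we get $w_{P_1}(\lambda_2\varpi_2,1,\mu\nu,T)=e^{\lambda_2T_2}$ unconditionally. The case $s=(13)$, together with $(13)P_2\supset(13)P_0$ and $\varpi_2\in\a_{(13)P_2}^*$, gives $v_{(13)P_2}(\lambda_2\varpi_2,n,T)=\|(1,n_{13}-n_{12}n_{23},n_{23})\|^{-\lambda_2}e^{-\lambda_2T_1}$. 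Here $(13)P_2$ does carry a single reduced root $\beta$ of $A_{M'}$, so one must form $v_{(13)P_2}(\lambda,n,T)\,r_\beta(\lambda,\gamma,a)$, where $n=u(n_{12},n_{13},n_{23})$ is determined by $n^{-1}a\mu n=a\mu\nu$, solve for $n$ in terms of $\mu$, $\nu$ and $a\in A_{M'}$ (exactly as in the displayed identities in the proof of Proposition~\ref{5p4}), and let $a\to1$; the coroot normalization $\beta^\vee_\gamma$ in $r_\beta$ is chosen precisely so the singular factors $|1-a^{\pm\beta}|$ from the blow-up of $n$ cancel and leave a finite nonzero limit. For $\gamma=1$ (so $\mu=1$, $\nu\in N_{P_1}$) one finds $n_{12}=0$ and $n_{13}=\nu_{13}/(1-a^{-\beta})$, $n_{23}=\nu_{23}/(1-a^{-\beta})$, and after the cancellation $w_{(13)P_2}(\lambda_2\varpi_2,1,\nu,T)=\|(\nu_{13},\nu_{23})\|_S^{-\lambda_2}e^{-\lambda_2T_1}$; for $\gamma=u=u(u_{12},0,0)$ the analogous elimination of $n$ yields $w_{(13)P_2}(\lambda_2\varpi_2,1,u\nu,T)=|u_{12}\nu_{23}|_S^{-\lambda_2}e^{-\lambda_2T_1}$.

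Finally I would substitute into the difference quotient and differentiate at $\lambda_2=0$: in the first case
\[
\frac{\d}{\d\lambda_2}\Bigl(e^{\lambda_2T_2}-\|(\nu_{13},\nu_{23})\|_S^{-\lambda_2}e^{-\lambda_2T_1}\Bigr)\Big|_{\lambda_2=0}=T_2+\log\|(\nu_{13},\nu_{23})\|_S+T_1,
\]
which is the first formula of Proposition~\ref{ap3}, and replacing $\|(\nu_{13},\nu_{23})\|_S$ by $|u_{12}\nu_{23}|_S$ gives the second. I expect the only genuine work to be the middle step, namely writing $n$ explicitly as a function of $\nu$ (respectively of $u$, $\nu$) and $a$ and checking that the $|1-a^{\pm\beta}|$ factors cancel so that exactly the asserted height $\|(\nu_{13},\nu_{23})\|_S$ (resp.\ the monomial $|u_{12}\nu_{23}|_S$) survives; this is routine but must be carried out with the same care as the displayed computations in the proof of Proposition~\ref{5p4}, and it is where a sign or normalization slip would most easily occur.
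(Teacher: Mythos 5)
Your proposal is correct and is exactly the computation the paper intends: the paper's one-line proof ("using Lemma~\ref{al1}") refers to the same procedure as in Propositions~\ref{5p4} and~\ref{5p5}, namely forming the two-term $(G,M')$-family from $v_{P_1}$ and $v_{(13)P_2}$ (read off from Lemma~\ref{al1} at $\lambda_1=0$, $n_{12}=0$), eliminating $n$ in terms of $\nu$ (resp.\ $u$, $\nu$) and $a$, cancelling the $|1-a^{\pm\beta}|$ factors against $r_\beta$, and taking the difference quotient over $\theta_{P_1}(\lambda)=-\theta_{(13)P_2}(\lambda)=\lambda_2$. Your intermediate limits $\|(\nu_{13},\nu_{23})\|_S^{-\lambda_2}e^{-\lambda_2T_1}$ and $|u_{12}\nu_{23}|_S^{-\lambda_2}e^{-\lambda_2T_1}$, and the final differentiation at $\lambda_2=0$, all check out.
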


\subsection{Unipotent contribution}\label{appen14}
Throughout this subsection, we assume Condition \ref{a1c1} and $S\supset\Sigma_\inf$.
Let $Z$ denote the center of $G$.
For each $z\in Z(F)$, we denote by $\fo_z$ the $\cO$-equivalence class containing $z$ in $G(F)$.
The notation $\{x\}_G$ means the $G(F)$-conjugacy class of $x\in G(F)$.
The set $\fo_z$ is divided into the three classes
\[ \fo_z=O_{\tri,z}\cup O_{\min,z}  \cup O_{\reg,z}  \]
where $O_{\tri,z}=\{ z \}$, $O_{\min,z}=\{z\, u(0,1,0)\}_G$, and $O_{\reg,z}=\{z\, u(1,0,1)\}_G$.
Let $P\in\F$, $P\supset P_0$, and $f\in C_c^\inf(G(\A)^1)$.
The function $K_{P,O}(g,h)$ is defined by (\ref{6ekk}) for each subset $O$ in $M_P(F)$.
For $T\in \a_0^+$, we have the terms on the geometric side
\[  J_{O_{\tri,z}}^T(f)=\int_{G(F)\bsl G(\A)^1}   K_{G,O_{\tri,z}}(g,g)  \, \d^1 g , \]
\begin{align*}
 J_{O_{\min,z}}^T(f)=&\int_{G(F)\bsl G(\A)^1} \Big\{  K_{G,O_{\min,z}}(g,g) \\
& \qquad -\sum_{\delta\in P_1(F)\bsl G(F)}K_{P_1,\{z \}}(\delta g,\delta g) \, \widehat{\tau}_{P_1}(H_{P_1}(\delta g)-T) \\
& \qquad -\sum_{\delta\in P_2(F)\bsl G(F)}K_{P_2,\{z \}}(\delta g,\delta g) \, \widehat{\tau}_{P_2}(H_{P_2}(\delta g)-T) \Big\} \, \d^1 g ,
\end{align*}
and
\begin{align*}
J_{O_{\reg,z}}^T(f)=&\int_{G(F)\bsl G(\A)^1} \Big\{  K_{G,O_{\reg,z}}(g,g) \\
&\quad  -\sum_{\delta\in P_1(F)\bsl G(F)}K_{P_1,\{z u(1,0,0) \}_{M_{P_1}}}(\delta g,\delta g) \, \widehat{\tau}_{P_1}(H_{P_1}(\delta g)-T)       \\
&\quad  -\sum_{\delta\in P_2(F)\bsl G(F)}K_{P_2,\{z u(0,0,1)\}_{M_{P_2}}}(\delta g,\delta g) \, \widehat{\tau}_{P_2}(H_{P_2}(\delta g)-T)       \\
&\quad  +  \sum_{\delta\in P_0(F)\bsl G(F)}K_{P_0,\{z \}}(\delta g,\delta g) \, \widehat{\tau}_{P_0}(H_{P_0}(\delta g)-T)  \Big\} \d^1 g .
\end{align*}

It is clear that $J_{O_{\tri,z}}^T(f)=\vol_G \, f(z)$.
The function $f_{\bK_S}(g)$ is defined as
\[   f_{\bK_S}(g)=\int_{\bK_S} f(k_S^{-1}g k_S)\, \d k_S .  \]
Now, the weighted orbital integrals $J_{M'}^T(z,f)$ is
\[ J_{M'}^T(z,f)=  \int_{N_{P_1}(F_S)} f_{\bK_S}(z\, n) \, w_{M'}(1,n,T) \, \d n    \]
(cf. Proposition \ref{ap3}).
Choosing a Haar measure on the $G(F_S)$-conjugacy class of $u(0,1,0)$ we have
\[ J_G(z\, u(0,1,0),f)=  \int_{N_{P_1}(F_S)} f_{\bK_S}(z\, n) \, \d n .  \]
\begin{thm}\label{at4}
Assume $z\in Z(\fO_v)$ $(\forall v\not\in S)$.
Let $f\in C_c^\inf(G(F_S)^1)$.
The integral $J_{O_{\min,z}}^T(f)$ converges absolutely and satisfies
\[ J_{O_{\min,z}}^T(f) = \vol_{M'}\, J_{M'}^T(z,f) + \vol_{M'}\, \frac{ \frac{\d}{\d s}\zeta_F^S(s)|_{s=2} }{\zeta_F^S(2)} \, J_G(z\, u(0,1,0),f) . \]
\end{thm}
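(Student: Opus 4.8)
The plan is to follow the pattern of Theorems \ref{6t1}, \ref{6t2} and \ref{6t3}, in a setting that is much simpler than the subregular case of $\GSp(2)$. The first step is the geometry of the minimal class. A direct computation shows that the centralizer $G_{u(0,1,0)}$ consists of the matrices $\begin{pmatrix} a&*&* \\ 0&*&* \\ 0&0&a \end{pmatrix}$ in $G$; in particular it is connected, equals $G_{u(0,1,0),+}$, and is contained in $P_1$ (and also in $P_2$). Since $u(0,1,0)$ is the identity except for a $1$ in position $(1,3)$, it lies in the abelian unipotent radical $N_{P_1}(F)$ with nonzero last--column coordinate, and its $P_1(F)$-conjugacy class inside $N_{P_1}(F)$ is the \emph{single} $M'(F)$-orbit $\cong F^2\setminus\{0\}$; the analogous statement holds for $P_2$. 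Using that $z$ is central and $N_{P_1}$ abelian, I would parametrise $G_{u(0,1,0)}(F)\bsl G(F)$ through $P_1(F)\bsl G(F)$, rewrite $K_{G,O_{\min,z}}(g,g)$ accordingly, and express $K_{P_1,\{z\}}(g,g)=\int_{N_{P_1}(\A)}f(g^{-1}zng)\,\d n$ together with its $P_2$-analogue as in \eqref{6ekk}. Unfolding the sums over $P_1(F)\bsl G(F)$ and $P_2(F)\bsl G(F)$ against the integral over $G(F)\bsl G(\A)^1$ turns the defining integral of $J_{O_{\min,z}}^T(f)$ into a truncated prehomogeneous zeta integral for the prehomogeneous vector space $(M',N_{P_1})\cong(\GL(2)\times\GL(1),F^2)$. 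The crucial simplification over Theorem \ref{6t2} is that $F^2\setminus\{0\}$ is a single $M'(F)$-orbit, so no Shintani zeta function for binary quadratic forms occurs; by Tate's thesis (cf.\ Section \ref{2s2}) the relevant global zeta function is just the Dedekind zeta function.

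For absolute convergence I would argue as in the proof of Theorem \ref{6t2} and in \cite{Arthur6}: bound the absolute value of $K_{G,O_{\min,z}}(g,g)$ minus the two correction terms by a sum of pieces, using the truncation functions $F^{P_1}(g,T)$, $F^{P_2}(g,T)$ and the characteristic functions $\tau_{P_0}^{P_1}$, $\tau_{P_0}^{P_2}$ to reduce the estimate to the minimal parabolic $P_0$, followed by Poisson summation on $N_{P_0}(\A)$ and elementary estimates on the resulting domains. (Convergence also follows by subtracting the already-established $J_{O_{\tri,z}}^T(f)$ and $J_{O_{\reg,z}}^T(f)$ from the absolutely convergent $J_{\fo_z}^T(f)$ of \cite{Arthur6}, but the direct argument is what identifies the value.) The reduction to $P_0$ is also where the two correction terms must be reconciled: although $u(0,1,0)$ lies in the centre of $N_{P_0}$ and is equally a $P_1$- and a $P_2$-class, $O_{\min,z}$ is a single $G(F)$-conjugacy class, so the $P_2$-subtraction has to combine with the $P_1$-picture instead of producing a second main term; this bookkeeping is the analogue of the decompositions such as \eqref{6deco2} in the proof of Theorem \ref{6t2}. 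I expect this convergence-and-reconciliation step to be the main obstacle; everything afterwards is routine.

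To read off the value I would use the mean value formula \eqref{2e4} (applied to $\SL(2)$, completed over the $\det$- and $\GL(1)$-tori by a Tate integral, which produces the factor $\vol_{M'}$) to identify $J_{O_{\min,z}}^T(f)$ with $\vol_{M'}$ times the \emph{adelic} weighted integral $\int_{N_{P_1}(\A)}f_{\bK}(zn)\,w_{M'}(1,n,T)\,\d n$, where $w_{M'}(1,n,T)=\log\|(n_{13},n_{23})\|_S+T_1+T_2$ is the weight of Proposition \ref{ap3}; this is the exact analogue of \eqref{6e6} in the proof of Theorem \ref{6t3}. Splitting $\A=F_S\A^S$ and using that $z\in Z(\fO_v)$ and $N_{P_1}(F_v)\cap\bK_v=N_{P_1}(\fO_v)$ for $v\notin S$, the part of the weight supported on $F_S$ reproduces $J_{M'}^T(z,f)$, while the part coming from the places $v\notin S$ contributes $\sum_{v\notin S}\int_{N_{P_1}(\fO_v)}\log\|n_v\|_v\,\d n_v$ times $\int_{N_{P_1}(F_S)}f_{\bK_S}(zn)\,\d n=J_G(z\,u(0,1,0),f)$. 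Here $\int_{N_{P_1}(\fO_v)}\log\|n_v\|_v\,\d n_v=q_v^{-2}\log(q_v^{-1})/(1-q_v^{-2})$, the exponent $2$ being $\dim N_{P_1}$, and summing over $v\notin S$ gives exactly $\zeta_F^S(2)^{-1}\frac{\d}{\d s}\zeta_F^S(s)|_{s=2}$ --- the analogue of the computation $\int_{N_{P_2}(\fO_v)}\log\|n_v\|_v\,\d n_v=q_v^{-3}\log(q_v^{-1})/(1-q_v^{-3})$ in the proof of Theorem \ref{6t3}, with the dimension-$3$ radical there replaced by $N_{P_1}$. This yields the asserted identity, and comparison with the fine geometric expansion recovers the coefficient $a^{\GL(3)}(S,u')$ quoted in the introduction.
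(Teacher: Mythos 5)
Your proposal is correct and follows essentially the route the paper intends, namely the argument of Theorem~\ref{6t3} transported to $\GL(3)$: unfold over the single open $M'(F)$-orbit in $N_{P_1}(F)$, kill the Poisson-corrected boundary pieces with the mean value formula \eqref{2e4}, identify the remainder as $\vol_{M'}$ times the adelic weighted integral with the weight of Proposition~\ref{ap3}, and convert the places outside $S$ via $\int_{N_{P_1}(\fO_v)}\log\|n_v\|_v\,\d n_v=q_v^{-2}\log(q_v^{-1})/(1-q_v^{-2})$ into $\zeta_F^S(2)^{-1}\frac{\d}{\d s}\zeta_F^S(s)|_{s=2}$. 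The only imprecision is your pointer to \eqref{6deco2} for reconciling the two boundary subtractions: the correct analogue is the identity in the proof of Theorem~\ref{6t3} producing the factor $\{1-\widehat\tau_{P_2}(H_{P_2}(g)-T)-\widehat\tau_{P_2}(H_{P_2}(w_0g)-T)\}$ — here, since $G_{u(0,1,0)}\subset P_1\cap P_2$ and $P_2$ is conjugate to the parabolic opposite to $P_1$, both subtractions refine through $G_{u(0,1,0)}(F)\bsl P_i(F)$ and combine into $\{1-\widehat\tau_{P_1}-\widehat\tau_{P_2}\}$ against the sum over $G_{u(0,1,0)}(F)\bsl G(F)$, whose integral over $A_{M'}^+/A_G^+$ is exactly the weight $w_{M'}(1,n,T)$, plus two correction terms each of the form \eqref{6ellt} that vanish by the mean value formula.
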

\begin{proof}
This theorem can be proved by an argument similar to the proof of Theorem \ref{6t3}.
\end{proof}

The weighted orbital integrals $J_{M_0}(z,f)$ is
\[ J_{M_0}^T(z,f)=  \int_{N_{P_0}(F_S)}  f_{\bK_S}(z\, n) \, w_{M_0}(1,n,T) \, \d n \]
(cf. Proposition \ref{ap2}).
Choosing Haar measures on unipotent conjugacy classes the weighted orbital integrals $J_{M'}(z\, u(1,0,0),f)$ and $J_G(z\, u(1,0,1),f)$ are 
\[ J_{M'}^T(z\, u(1,0,0),f)= c_S\, \int_{N_{P_0}(F_S)} f_{\bK_S}(z\, n) \, w_{M'}(1,n,T) \, \d n \]
and
\[ J_G(z\, u(0,1,0),f)= c_S^2 \int_{N_{P_0}(F_S)}  f_{\bK_S}(z\, n) \, \d n  \]
where the constant $c_S$ was given in Section \ref{2s2}.
\begin{thm}\label{at5}
Assume $z\in Z(\fO_v)$ $(\forall v\not\in S)$.
Let $f\in C_c^\inf(G(F_S)^1)$.
The integral $J_{O_{\reg,z}}^T(f)$ converges absolutely and satisfies
\begin{align*}
J_{O_{\reg,z}}^T(f)=& \frac{\vol_{M_0}}{6}\, J_{M_0}^T(z,f) + \frac{\vol_{M_0}}{2 c_F}\, \fc_F(S) \, J_{M'}^T(z\, u(1,0,0),f) \\
& + \frac{\vol_{M_0}}{3 \, c_F^2} \Big\{ (\mathfrak c_F(S))^2 +  \fc_F'(S)\, c_F^S \Big\}\, J_G(z\, u(1,0,1),f) .
\end{align*}
\end{thm}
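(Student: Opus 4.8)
The plan is to transplant, with simplifications, the argument from the proof of Theorem~\ref{6t4}: in the present rank-two situation the vector space governing the regular unipotent orbit of $\GL(3)$ carries no strictly semistable locus and its generic stabilizers are trivial, so none of the auxiliary convergence work attached to $V^\st$ in Theorem~\ref{6t2} is needed. Absolute convergence of $J_{O_{\reg,z}}^T(f)$ is in fact immediate: by \cite{Arthur6} the integral defining $J_{\fo_z}^T(f)$ is absolutely convergent, and subtracting off $J_{O_{\tri,z}}^T(f)=\vol_G\,f(z)$ and $J_{O_{\min,z}}^T(f)$, which are absolutely convergent by Theorem~\ref{at4}, leaves $J_{O_{\reg,z}}^T(f)$ absolutely convergent; a second, independent convergence proof will nevertheless emerge from the evaluation below.

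First I would introduce, for $\lambda=\lambda_1\varpi_1+\lambda_2\varpi_2\in\a_{0,\C}^*$, the deformed distribution $J_{O_{\reg,z}}^T(f,\lambda)$: unfold the three coset sums over $P_i(F)\bsl G(F)$ in the definition of $J_{O_{\reg,z}}^T(f)$ against the Iwasawa coordinates into sums over $N_{P_i}(F)\bsl N_{P_0}(F)$, and insert the factor $e^{-\lambda(H_{P_0}(g))}$, so that $J_{O_{\reg,z}}^T(f)=J_{O_{\reg,z}}^T(f,0)$. The core of the argument is to show that $J_{O_{\reg,z}}^T(f,\lambda)$ converges absolutely on an open neighbourhood of $\lambda=0$. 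Following the $\GSp(2)$ template, I would bound its absolute integral by three pieces, the analogues of \eqref{6era1}, \eqref{6era2}, \eqref{6era3}: the first keeps the regular term together with the three truncation terms rewritten with $\tau_{P_0}^{P_i}$ in place of $\widehat\tau_{P_i}$, the other two collect the differences $\tau_{P_0}^{P_2}-\widehat\tau_{P_1}$ and $\tau_{P_0}^{P_1}-\widehat\tau_{P_2}$. On each piece, after the Iwasawa decomposition and Poisson summation on the commutator quotient $V=N_{P_0}/U$, where $U=N_{P_1}\cap N_{P_2}$ is the one-dimensional center of $N_{P_0}$, the torus integrals factor into products of Tate integrals whose domains of absolute convergence are half-spaces in $(\lambda_1,\lambda_2)$ (of the type $\Re(\lambda_1+\lambda_2+1)>0$, $\Re(\lambda_i+1)>0$, $\Re(\lambda_i/2+1)>0$), all of which contain $\lambda=0$. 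This yields $J_{O_{\reg,z}}^T(f)=\lim_{\lambda\to0}J_{O_{\reg,z}}^T(f,\lambda)$, with uniform convergence near~$0$.

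For $\lambda$ in that region and $\lambda\neq0$, I would then carry out the descent exactly as in Theorem~\ref{6t4}: collapsing the $N_{P_0}$-integration against the coset sums identifies the regular part (written with $\tau_{P_0}^{P_i}$) with the zeta integral $\zeta_{F\oplus F}^{(1,1)}(\tilde f_{\bK,z},(s_1,s_2),(T_1',T_2'))$ of Section~\ref{3s}, attached to $H=\{1\}$ (so $\iota$ is omitted) and $\kappa=\mathrm{Id}$, under a linear change $(\lambda_1,\lambda_2)\mapsto(s_1,s_2)$ and $(T_1,T_2)\mapsto(T_1',T_2')$. The two correction terms, after integrating out one torus variable, produce one-dimensional zeta integrals $\zeta_F^1(\tilde f_{\bK,i,z},\cdot,T_i')$ attached to the two axes $N_{P_2}U/U$ and $N_{P_1}U/U$; since $M_{P_1}$ and $M_{P_2}$ are $W_0$-conjugate in $\GL(3)$, these two contributions combine into a single term in $J_{M'}^T(z\,u(1,0,0),f)$. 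Passing to the limit $\lambda\to0$ and invoking \eqref{3e2}, \eqref{3e3} together with Theorems~\ref{3t3} and~\ref{3t4}, each limit becomes a Laurent-coefficient expression in $\zeta_F^S$ multiplied by an integral over $N_{P_0}(F_S)$; Propositions~\ref{ap2} and~\ref{ap3} let one recognize these integrals as $J_{M_0}^T(z,f)$, $J_{M'}^T(z\,u(1,0,0),f)$ and $J_G(z\,u(1,0,1),f)$, the $T$-dependent contributions cancel as forced by the $T$-independence of the extracted coefficients (cf.\ Section~\ref{basobj}), and one is left with the stated identity, with coefficients $\vol_{M_0}/6$, $\tfrac{\vol_{M_0}}{2c_F}\fc_F(S)$ and $\tfrac{\vol_{M_0}}{3c_F^2}\{(\fc_F(S))^2+\fc_F'(S)c_F^S\}$.

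As for Theorem~\ref{6t4}, the main obstacle is the convergence step: manoeuvring the truncation functions into a form to which Poisson summation on $V$ applies, and then tracking the several regions of the torus integration so that every resulting Tate integral is seen to converge on one common neighbourhood of $\lambda=0$. Everything downstream of that is bookkeeping with the apparatus of Section~\ref{3s}, and the final answer can be cross-checked against \cite{Flicker} and \cite{Matz}.
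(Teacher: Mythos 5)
Your proposal is correct and follows essentially the same route as the paper, whose own proof of this theorem consists of the single remark that it follows by an argument similar to the proof of Theorem~\ref{6t4}; your adaptation (deformation by $e^{-\lambda(H_{P_0}(g))}$, splitting into the analogues of \eqref{6era1}--\eqref{6era3}, Poisson summation on $V=N_{P_0}/U$, and evaluation via the zeta integrals of Section~\ref{3s} with $H=\{1\}$) is exactly the intended one. The only loose phrase is that the $T$-dependent terms do not ``cancel'' but are absorbed into the $T$-dependence of $J_{M_0}^T$ and $J_{M'}^T$ through the weight factors of Propositions~\ref{ap2} and~\ref{ap3}.
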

\begin{proof}
This theorem follows from an argument similar to the proof of Theorem \ref{6t4}.
\end{proof}

It is clear that $(\cU_G(F))_{G,S}=\{ \, I_3 \, , \; u(0,1,0) \, , \; u(1,0,1) \}$.
Hence, it follows that
\[   a^{\GL(3)}(S,z\,u(0,1,0))= \vol_{M'} \frac{ \frac{\d}{\d s}\zeta_F^S(s)|_{s=2} }{\zeta_F^S(2)}  \]
and
\[ a^{\GL(3)}(S,z\,u(1,0,1))= \frac{\vol_{M_0}}{3 \, c_F^2} \Big\{ (\mathfrak c_F(S))^2 +  \fc_F'(S)\, c_F^S \Big\} \]
if $z\in Z(\fO_v)$ $(\forall v\not\in S)$.

\section{The group $\SL(3)$}\label{appen2}

Throughout this section, we set $G=\SL(3)$.
We will use the same notation as in Appendix \ref{appen1} by restricting from $\GL(3)$ to $G$.
We also assume Condition \ref{a1c1} and $S\supset\Sigma_\inf$.
Let $z\in Z(F)=\{\pm I_3 \}$.
The set $\fo_z$ is divided into the three classes
\[
\fo_z=O_{\tri,z}\cup O_{\min,z}  \cup O_{\reg,z},
\]
where $O_{\tri,z}=\{ z \}$,
\[
O_{\min,z}=\{z\, u(0,1,0)\}_G, \quad \text{and} \quad O_{\reg,z}= \bigcup_{\alpha\in F^\times/(F^\times)^3} \{z\, u(1,0,\alpha) \}_G.
\]
Now, the contribution $J_{\fo_z}^T(f)$ is the sum of $J_{O_{\tri,z}}^T(f)$, $J_{O_{\min,z}}^T(f)$, and $J_{O_{\reg,z}}^T(f)$.

It is clear that $J_{O_{\tri,z}}^T(f)=\vol_G \, f(z)$.
The weighted orbital integrals $J_{M'}^T(z,f)$ and $J_G(z\, u(0,1,0),f)$ are the same as those of Appendix \ref{appen14}.
\begin{thm}\label{bt1}
%Assume that $S$ contains $\Sigma_\inf$.
Let $f\in C_c^\inf(G(F_S))$.
The integral $J_{O_{\min,z}}^T(f)$ converges absolutely and satisfies
\[ J_{O_{\min,z}}^T(f) = \vol_{M'}\, J_{M'}^T(z,f) + \vol_{M'}\, \frac{ \frac{\d}{\d s}\zeta_F^S(s)|_{s=2} }{\zeta_F^S(2)} \, J_G(z\, u(0,1,0),f) . \]
\end{thm}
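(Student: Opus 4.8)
The plan is to reduce Theorem~\ref{bt1} for $\SL(3)$ to Theorem~\ref{at4} for $\GL(3)$, exactly as the analogous minimal-unipotent results for $\SL(2)$ (Example~\ref{3ex2}) were obtained from those for $\GL(2)$ (Example~\ref{3ex1}). The minimal unipotent class $u(0,1,0)$ of $\SL(3)$ is a \emph{single} $G(F_S)$-conjugacy class, not a family indexed by $F^\times/(F^\times)^3$; indeed $(\cU_{\SL(3)}(F))_{\SL(3),S}$ contains only one minimal element, so no $\chi$-twisting occurs here — the twisting appears only in the regular class. Hence the shape of the formula is identical to the $\GL(3)$ case, and the point is just to track measures and centralizers.

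First I would set up the truncated distribution $J_{O_{\min,z}}^T(f)$ for $G=\SL(3)$ using the same combination of $K_{G,O_{\min,z}}$, $K_{P_1,\{z\}}$ and $K_{P_2,\{z\}}$ with the functions $\hat\tau_{P_i}$ as in Appendix~\ref{appen14}, the only change being that $N_{P_1}$, $M'=M_{P_1}$, etc.\ are now the subgroups of $\SL(3)$. Since $u(0,1,0)$ lies in $N_{P_1}(F)$ and its $\SL(3)$-conjugacy class is the same subset of $N_{P_1}$ as its $\GL(3)$-conjugacy class (the extra diagonal torus in $\GL(3)$ acts by scaling, which is absorbed when one conjugates back into $N_{P_1}(F)$), the absolute convergence follows verbatim from the proof of Theorem~\ref{6t3}/Theorem~\ref{at4}: one bounds the truncated kernel difference by a modified zeta integral for the prehomogeneous pair $(M',N_{P_1})$ — here $N_{P_1}\cong F^2$ with the Levi $M'\cong\GL(2)$ (or its $\SL$-variant) acting — and invokes Poisson summation together with the standard Tate-integral convergence estimates.

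Next I would identify the main term. The centralizer of $u(0,1,0)$ in $M'$ contains the unipotent radical of the parabolic of $M'$ whose Levi is the middle $\GL(1)$, and the mean-value argument (as in the proof of Theorem~\ref{6t3}, using the $\SL(2)$ mean-value formula~(\ref{2e4})) collapses the $P_1(\A)^1$-integral of the kernel into $\vol_{M'}$ times the unipotent integral over $N_{P_1}(\A)$; the surviving weight factor is $w_{M'}(1,n,T)$, which by Proposition~\ref{ap3} is $\log\|(\nu_{13},\nu_{23})\|_S+T_1+T_2$, giving precisely $\vol_{M'}J_{M'}^T(z,f)$. The residual local factor coming from the places $v\notin S$ is, exactly as in the $\GL(3)$ computation (or as in the argument behind Theorem~\ref{at4}), the logarithmic derivative $\frac{\d}{\d s}\zeta_F^S(s)|_{s=2}/\zeta_F^S(2)$, multiplying $J_G(z\,u(0,1,0),f)$. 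Assembling these gives the stated identity.

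The main obstacle is purely bookkeeping rather than conceptual: one must check that the normalizations of Haar measures on $\a_{M'}^G$, on $A_{M'}^+/A_G^+$, on $N_{P_1}(F_S)$ and on the $\SL(3)$-conjugacy class of $u(0,1,0)$ match those fixed in Condition~\ref{a1c1} and in Appendix~\ref{appen14}, so that the constants $\vol_{M'}$ and the factor $\zeta_F^S(2)^{-1}$ come out with no spurious powers of $c_F$ or $c_S$. Because passing from $\GL(3)$ to $\SL(3)$ removes the central $\GL(1)$ but does not affect $M'$, $N_{P_1}$, or the relevant root data, these normalizations are literally the same; once that is verified, the proof is "an argument similar to the proof of Theorem~\ref{6t3}" as the excerpt already anticipates, and no new estimate is needed. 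I would therefore present the proof as a short reduction: state that the kernel decomposition, the convergence bound, and the mean-value collapse are identical to the $\GL(3)$ case, and simply note the coincidence of the minimal unipotent class structure that forces the absence of any character sum.
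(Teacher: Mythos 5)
Your proposal is essentially the paper's own proof: the paper establishes Theorem~\ref{bt1} precisely by ``an argument similar to the proof of Theorem~\ref{6t3}'', i.e.\ the truncated kernel with both $P_1$- and $P_2$-corrections, the Poisson-summation convergence bound, the mean-value collapse (via \eqref{2e4}) to $\vol_{M'}\,J_{M'}^T(z,f)$ with weight $w_{M'}(1,n,T)$ from Proposition~\ref{ap3}, and the unramified computation of $\int_{N_{P_1}(\fO_v)}\log\|(n_{13},n_{23})\|_v\,\d n$ producing $\frac{\d}{\d s}\zeta_F^S(s)|_{s=2}/\zeta_F^S(2)$ --- exactly the steps you outline. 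One caveat: since you in fact carry out the argument directly inside $\SL(3)$, the ``reduction to Theorem~\ref{at4}'' framing is harmless, but the remark that the normalizations are ``literally the same'' should not be taken to identify $\vol_{M'}$ for $\GL(3)$ with $\vol_{M'}$ for $\SL(3)$ (the Levis differ by the central $\GL(1)$, hence the volumes by a factor $c_F$); each formula is internally consistent within its own group, which is all your direct argument needs.
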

\begin{proof}
We can prove this by an argument similar to the proof of Theorem \ref{6t3}.
\end{proof}

The weighted orbital integrals $J_{M_0}^T(z,f)$ and $J_{M'}^T(z\, u(1,0,0),f)$ are the same as Appendix \ref{appen14}.
Let $\d x_*$ denote the Haar measure on $F_S$ defined in Section \ref{2s1}.
For an element $\alpha\in F_S$ we set
\begin{multline*}
J_G(z\,u(1,0,\alpha),f)= \\
c_S^2 \int_{(x_{12},x_{13},x_{23})\in D_S} f_{\bK_S}(z\, u(x_{12},x_{13},x_{23}))  \, \d x_{12} \, \d x_{13}\, \d x_{23},
\end{multline*}
where $D_S=\{ (x_{12},x_{13},x_{23})\in F_S\oplus F_S\oplus F_S \, | \, x_{23}\in x_{12}\alpha(F_S^\times)^3\}$. We denote the group $\GL(3)$ by $G^*$, and for a cubic character $\chi=\prod_v \chi_v$ on $\A^1/F^\times$ we set
\begin{multline*}
J_{G^*,\chi}(z\,u(1,0,1),f^*)= \\
c_S^2 \int_{F_S^{\oplus 3}} f^*_{\bK_S}(z\, u(x_{12},x_{13},x_{23})) \, \chi_S(x_{12}^2x_{23}) \, \d x_{12} \,  \d x_{13} \,  \d x_{23}, 
\end{multline*}
where $\chi_S=\prod_{v\in S}\chi_v$.
\begin{thm}\label{bt2}
%Assume that $S$ contains $\Sigma_\inf$.
Let $f\in C_c^\inf(G(F_S))$.
The integral $J_{O_{\reg,z}}^T(f)$ converges absolutely and satisfies
\begin{align*}
J_{O_{\reg,z}}^{T}(f)=& \frac{\vol_{M_0}}{6}\, J_{M_0}^T(z,f) + \frac{\vol_{M_0}}{2 c_F}\, \fc_F(S) \, J_{M'}^T(z\, u(1,0,0),f) \\
& + \frac{\vol_{M_0}}{3 \, c_F^2} \sum_{\alpha\in F^\times/(F^\times\cap(F_S^\times)^3)} J_G(z\, u(1,0,\alpha),f) \\
& \qquad \qquad \times  \Big\{ \fc_F'(S)\, c_F^S + \sum_\chi \chi_S(\alpha) \, \fc_F(S,\chi)\, \fc_F(S,\chi^{-1})  \Big\} 
\end{align*}
where $\chi$ runs over all non-trivial cubic characters on $\A^1/F^\times$ unramified outside~$S$.
If $f$ is the restriction of a function $f^*\in C_c^\inf(G^*(F_S)^1)$, then
\begin{align*}
J_{O_{\reg,z}}^T(f)=& \frac{\vol_{M_0}}{6}\, J_{M_0}^T(z,f) + \frac{\vol_{M_0}}{2 c_F}\, \fc_F(S) \, J_{M'}^T(z\, u(1,0,0),f) \\
& + \frac{\vol_{M_0}}{3 \, c_F^2}  \Big\{ (\fc_F(S))^2 +  \fc_F'(S)\, c_F^S \Big\}\,  J_{G^*}(z\, u(1,0,1),f^*) \\
& + \frac{\vol_{M_0}}{3 \, c_F^2} \sum_{\chi\neq\trep_F}  \fc_F(S,\chi)\, \fc_F(S,\chi^{-1}) \,  J_{G^*,\chi}(z\, u(1,0,1),f^*),
\end{align*}
where $\chi$ runs over all cubic characters on $\A^1/F^\times$ unramified outside~$S$.
\end{thm}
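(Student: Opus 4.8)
The plan is to run the argument of Theorem~\ref{6t4} — in its untwisted form this is also the proof behind Theorem~\ref{at5} — while carrying the cubic characters along exactly as Theorem~\ref{7t4} carries the quadratic ones. Absolute convergence of $J_{O_{\reg,z}}^T(f)$ itself comes for free: by \cite{Arthur6} the integral defining $J_{\fo_z}^T(f)$ is absolutely convergent, and subtracting the (trivial) term $J_{O_{\tri,z}}^T(f)$ and the term $J_{O_{\min,z}}^T(f)$, which is absolutely convergent by Theorem~\ref{bt1}, gives the convergence assertion. For the actual evaluation I would first deform: replacing each $\widehat\tau$-truncation in the definition of $J_{O_{\reg,z}}^T(f)$ by a version twisted by $e^{-\lambda(H_{P_0}(g))}$ with $\lambda=\lambda_1\varpi_1+\lambda_2\varpi_2\in\a_{0,\C}^*$ produces a family $J_{O_{\reg,z}}^T(f,\lambda)$ with $J_{O_{\reg,z}}^T(f)=J_{O_{\reg,z}}^T(f,0)$. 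Bounding its absolute value by three pieces in the style of \eqref{6era1}--\eqref{6era3}, applying the Poisson summation formula on $N_{P_0}/(N_{P_1}\cap N_{P_2})$ and on the two simple-root lines, and reducing to convergence of Tate-type integrals, one obtains absolute and locally uniform convergence of $J_{O_{\reg,z}}^T(f,\lambda)$ on a neighbourhood of $\lambda=0$, hence $J_{O_{\reg,z}}^T(f)=\lim_{\lambda\to0}J_{O_{\reg,z}}^T(f,\lambda)$. Since $\a_G=\{0\}$ here, $G(\A)^1=G(\A)$ throughout.

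Next I would reduce to prehomogeneous zeta integrals. Let $U=N_{P_1}\cap N_{P_2}$ be the $(1,3)$-entry subgroup, i.e.\ the commutator subgroup of $N_{P_0}$, put $V=N_{P_0}/U$ — a two-dimensional vector space with coordinate lines $V_1=N_{P_2}U/U$ (the $x_{12}$-coordinate) and $V_2=N_{P_1}U/U$ (the $x_{23}$-coordinate) — and set $\tilde f_{\bK,z}(v)=\int_{U(\A)}f_{\bK}(zvu)\,\d u$ for $v\in V(\A)$. After the Iwasawa decomposition, folding the integral over $N_{P_0}(F)\backslash N_{P_0}(\A)$ into the lattice sum over $V(F)$, the inner integral over $M_0(F)\backslash M_0(\A)$ becomes a modified zeta integral of the type $\zeta_{F\oplus F}^{\kappa,\iota}(\tilde f_{\bK,z},(s_1,s_2),(T_1',T_2'))$, with $(s_1,s_2)$ linear in $(\lambda_1,\lambda_2)$, $(T_1',T_2')$ linear in $(T_1,T_2)$, and prehomogeneous data $A=F\oplus F$, $H=\mathbb{G}_m$, $\kappa(m_1,m_2)=(m_1^3,m_2^3)$, $\iota$ the diagonal embedding $\mathbb{G}_m\hookrightarrow\mathbb{G}_m^2$ — the $n=3$ case of the data recorded after Theorem~\ref{3t4}. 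The corrections at $P_1$ and $P_2$ contribute two lower-rank terms of the form $\zeta_F^{\kappa',\iota'}(\cdot,s,T')$ with $\kappa'(m)=m^3$, and $J_{O_{\reg,z}}^T(f,\lambda)$ decomposes into these three pieces exactly as in $\eqref{6e4a1}+\eqref{6e4a2}+\eqref{6e4a3}$.

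Finally I would pass to the limit $(s_1,s_2)\to(1,1)$ using \eqref{3e2}, \eqref{3e3} and Theorems~\ref{3t3} and~\ref{3t4}. The relevant set $\mathfrak K_{Y,S}$ consists of the pairs $(\chi,\chi^{-1})$ with $\chi$ a cubic character of $\A^1/F^\times$ unramified outside $S$: triviality on $\kappa(M(\A)^1)$ forces $\chi_1^3=\chi_2^3=\trep_F$, and triviality on $\iota(H(\A)^1)$ forces $\chi_1\chi_2=\trep_F$. The character $\chi=\trep_F$ reproduces the $\GL(3)$ expression $(\fc_F(S))^2+\fc_F'(S)\,c_F^S$ of Theorem~\ref{at5} together with the $\fc_F(S)$-term from the $P_1$-correction, while a nontrivial cubic $\chi$ contributes $\fc_F(S,\chi)\fc_F(S,\chi^{-1})=L^S(1,\chi)L^S(1,\chi^{-1})$ times the $M_0$-orbital data; the $T$-dependent pieces recombine into $J_{M_0}^T(z,f)$, $J_{M'}^T(z\,u(1,0,0),f)$ and $J_G(z\,u(1,0,\alpha),f)$ after the same simplification as in the passage following \eqref{6e4a3}, yielding the first displayed formula. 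For the second, when $f=f^{*}|_{G(F_S)}$ for $f^{*}\in C_c^\infty(G^{*}(F_S)^1)$ with $G^{*}=\GL(3)$, one applies Fourier inversion on $F^\times/(F^\times\cap(F_S^\times)^3)$ to reassemble $\sum_{\alpha}\sum_{\chi}\chi_S(\alpha)(\cdots)J_G(z\,u(1,0,\alpha),f)$ into $J_{G^{*}}(z\,u(1,0,1),f^{*})$ for $\chi=\trep_F$ and into $J_{G^{*},\chi}(z\,u(1,0,1),f^{*})$ for $\chi\neq\trep_F$, as in the $\SL(2)$--$\GL(2)$ comparison; reading off the coefficient of $J_G(z\,u(1,0,\alpha),f)$ then gives $a^{\SL(3)}(S,z\,u(1,0,\alpha))$. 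The only genuinely technical step is the Shintani-style convergence of the deformed integral $J_{O_{\reg,z}}^T(f,\lambda)$ near $\lambda=0$; but this is literally the estimate carried out in the proof of Theorem~\ref{6t4} (the finite centre $Z$ plays no role), and the one place needing a little care beyond it is the bookkeeping of Haar measures passing from $M_0$ to $\mathbb{G}_m^2$ through the cubic isogeny, which produces the factors $\tfrac13$ and is handled as in Section~\ref{3s}.
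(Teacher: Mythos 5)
Your proposal is correct and follows essentially the same route as the paper, whose own proof of this theorem is just the reference ``Theorems~\ref{3t3} and~\ref{3t4} and an argument similar to the proof of Theorem~\ref{6t4}'': you reproduce the convergence argument and the $\lambda$-deformation from Theorem~\ref{6t4}, identify the correct prehomogeneous data $A=F\oplus F$, $H=\mathbb{G}_m$, $\kappa(m_1,m_2)=(m_1^3,m_2^3)$, $\iota(h)=(h,h)$ (the $n=3$ case of the remark after Theorem~\ref{3t4}), and correctly compute $\mathfrak K_{Y,S}$ as the pairs $(\chi,\chi^{-1})$ with $\chi$ cubic and unramified outside~$S$, which yields the coefficients $\fc_F(S,\chi)\,\fc_F(S,\chi^{-1})$ and the reassembly into the twisted $\GL(3)$ integrals. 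The only loose ends are minor and acknowledged by you (the exact Haar-measure bookkeeping through the cubic isogeny producing the factor $\tfrac13$, and the precise placement of the Poisson summation on $U=N_{P_1}\cap N_{P_2}$), neither of which affects the validity of the argument.
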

\begin{proof}
This theorem follows from Theorems \ref{3t3} and \ref{3t4} and an argument similar to the proof of Theorem \ref{6t4}.
\end{proof}
It is clear that
\[
(\cU_G(F))_{G,S}=\{ \, I_3 \, , \; u(0,1,0) \, , \; u(1,0,\alpha)\, | \, \alpha\in F^\times/(F^\times\cap(F_S^\times)^3) \}.
\]
It follows from Theorems \ref{bt1} and \ref{bt2} that
\[
a^{\SL(3)}(S,z\,u(0,1,0))= \vol_{M'} \frac{ \frac{\d}{\d s}\zeta_F^S(s)|_{s=2} }{\zeta_F^S(2)}
\]
and
\begin{multline*}
a^{\SL(3)}(S,z\,u(1,0,\alpha))= \\
\frac{\vol_{M_0}}{3 \, c_F^2} \Big\{   \fc_F'(S)\, c_F^S + \sum_\chi \chi_S(\alpha) \, \fc_F(S,\chi)\, \fc_F(S,\chi^{-1}) \Big\} \quad (\alpha\in F^\times)
\end{multline*}
where $\chi$ runs over all cubic characters on $\A^1/F^\times$ unramified outside~$S$.
\bigskip

\noindent
{\bf Acknowledgement}.
The second author thanks Prof. Kaoru Hiraga, Prof. Tamotsu Ikeda, Prof. Takuya Konno, and Prof. Steven Spallone for their advice and discussions on endoscopy.
The authors were partially supported by the German Research Council~(DFG) within the CRC~701.
The second author was partially supported by JSPS Grant-in-Aid for Scientific Research No.~20740007.

\end{document}